\newlength{\hatchspread}
\newlength{\hatchthickness}
\newlength{\hatchshift}
\newcommand{\hatchcolor}{}
\tikzset{hatchspread/.code={\setlength{\hatchspread}{#1}},
         hatchthickness/.code={\setlength{\hatchthickness}{#1}},
         hatchshift/.code={\setlength{\hatchshift}{#1}},
         hatchcolor/.code={\renewcommand{\hatchcolor}{#1}}}
\tikzset{hatchspread=3pt,
         hatchthickness=0.4pt,
         hatchshift=0pt,
         hatchcolor=black}
\newcommand*{\centerfloat}{%
  \parindent \z@
  \leftskip \z@ \@plus 1fil \@minus \textwidth
  \rightskip\leftskip
  \parfillskip \z@skip}
\NewDocumentCommand{\makeabbrev}{mmm}
 {
  \yoruk_makeabbrev:nnn { #1 } { #2 } { #3 }
 }
\makeabbrev{\textbf}{tbf#1}{a,b,c,d,e,f,g,h,i,j,k,l,m,n,o,p,q,r,s,t,u,v,w,x,y,z,A,B,C,D,E,F,G,H,I,J,K,L,M,N,O,P,Q,R,S,T,U,V,W,X,Y,Z}
\makeabbrev{\textbf}{bf#1}{a,b,c,d,e,f,g,h,i,j,k,l,m,n,o,p,q,r,s,t,u,v,w,x,y,z,A,B,C,D,E,F,G,H,I,J,K,L,M,N,O,P,Q,R,S,T,U,V,W,X,Y,Z}
\makeabbrev{\textsf}{tsf#1}{a,b,c,d,e,f,g,h,i,j,k,l,m,n,o,p,q,r,s,t,u,v,w,x,y,z,A,B,C,D,E,F,G,H,I,J,K,L,M,N,O,P,Q,R,S,T,U,V,W,X,Y,Z}
\makeabbrev{\mathsf}{mss#1}{a,b,c,d,e,f,g,h,i,j,k,l,m,n,o,p,q,r,s,t,u,v,w,x,y,z,A,B,C,D,E,F,G,H,I,J,K,L,M,N,O,P,Q,R,S,T,U,V,W,X,Y,Z}
\makeabbrev{\mathfrak}{mf#1}{a,b,c,d,e,f,g,h,i,j,k,l,m,n,o,p,q,r,s,t,u,v,w,x,y,z,A,B,C,D,E,F,G,H,I,J,K,L,M,N,O,P,Q,R,S,T,U,V,W,X,Y,Z}
\makeabbrev{\mathrm}{mrm#1}{a,b,c,d,e,f,g,h,i,j,k,l,m,n,o,p,q,r,s,t,u,v,w,x,y,z,A,B,C,D,E,F,G,H,I,J,K,L,M,N,O,P,Q,R,S,T,U,V,W,X,Y,Z}
\makeabbrev{\mathbf}{mbf#1}{a,b,c,d,e,f,g,h,i,j,k,l,m,n,o,p,q,r,s,t,u,v,w,x,y,z,A,B,C,D,E,F,G,H,I,J,K,L,M,N,O,P,Q,R,S,T,U,V,W,X,Y,Z}
\makeabbrev{\mathcal}{mc#1}{A,B,C,D,E,F,G,H,I,J,K,L,M,N,O,P,Q,R,S,T,U,V,W,X,Y,Z}
\makeabbrev{\mathbb}{mbb#1}{A,B,C,D,E,F,G,H,I,J,K,L,M,N,O,P,Q,R,S,T,U,V,W,X,Y,Z}
\makeabbrev{\mathscr}{ms#1}{A,B,C,D,E,F,G,H,I,J,K,L,M,N,O,P,Q,R,S,T,U,V,W,X,Y,Z}
\makeabbrev{\mathrm}{#1}{
Id,id,ran,rk,diag,stab,ann,conv,pr,ev,tr,End,Hom,sgn,im,op,can,fin,ext,red,tot,
%
rot,usc,lsc,Lip,LocLip,lip,bSymLip,osc,AC,loc,uloc,spec,coz,z,ul,
%
supp,Opt,Adm,Cpl,Geo,GeoSel,GeoOpt,GeoAdm,GeoCpl,reg,
%
bd,co,Ric,Exp,dExp,dist,seg,Seg,cut,fcut,Cut,SDiff,Iso,Isom,diam,cl,Homeo,Diff,Der,vol,dvol,inj,relint, Graph, sub,codim,
%
var,law,Var,Poi,Gam,pa,so,iso,fs,inv,pqi,mix,
TestF,
}
\makeabbrev{\mathsf}{#1}{DP,CD,BE,MCP,wMTW,MTW,RCD,ncRCD,QCD,EVI,Irr,IH,SC,wFe,VA,UP,Curv,Alex,CAT}
\newcommand{\Ent}{\mathcal H}
\newcommand{\T}{\tau} 
\newcommand{\A}{\Sigma} 
\newcommand{\Bo}[1]{\msB_{#1}} 
\newcommand{\Bdd}[1]{\msO_{#1}}
\newcommand{\Ed}{{\msE_\mssd}}  
\renewcommand{\complement}{\mathrm{c}}
\newcommand{\emparg}{{\,\cdot\,}}
\DeclareMathOperator*{\argmin}{argmin}
\newcommand{\Ch}{\mathsf{Ch}}
\renewcommand{\Cap}{\mathrm{Cap}}
\newcommand{\dom}[1]{\msD(#1)}
\newcommand{\Lipu}{\mathrm{Lip}^1}
\newcommand{\Lipua}{\mathrm{Lip}^\alpha}
\DeclareMathOperator{\eqdef}{\coloneqq}
\let\epsilon\varepsilon
\let\temp\phi
\let\phi\varphi
\let\varphi\temp
\newcommand{\rar}{\rightarrow}
\newcommand{\diff}{\mathop{}\!\mathrm{d}}
\newcommand{\tabs}[1]{\big\lvert#1\big\rvert}	
\newcommand{\norm}[1]{\left\lVert#1\right\rVert}					
\newcommand{\set}[1]{\left\{#1\right\}}							
\newcommand{\paren}[1]{\left(#1\right)}							
\newcommand{\ttonde}[1]{\big({#1}\big)}
\newcommand{\class}[2][]{\left[#2\right]_{#1}}						
\newcommand{\tclass}[2][]{\big [#2\big]_{#1}}						
\newcommand{\tym}[1]{{\scriptscriptstyle{\times #1}}}
\newcommand{\otym}[1]{{\scriptscriptstyle{\otimes #1}}}
\newcommand{\osym}[1]{{\scriptscriptstyle{\odot #1}}}
\DeclareSymbolFont{symbolsC}{U}{pxsyc}{m}{n}
\DeclareMathSymbol{\medcirc}{\mathbin}{symbolsC}{7}
\DeclareSymbolFont{symbolsZ}{OMS}{pxsy}{m}{n}
\newcommand{\seq}[1]{\paren{#1}}								
\newcommand{\Cb}{\mcC_b}									
\newcommand{\Cc}{\mcC_c}									
\newcommand{\pfwd}{\sharp}
\DeclareMathOperator*{\esssup}{esssup}
\DeclareMathOperator*{\essinf}{essinf}
\DeclareMathOperator{\car}{\mathds 1}
\DeclareMathOperator{\emp}{\varnothing} 
\newcommand{\N}{{\mathbb N}}
\newcommand{\R}{{\mathbb R}}
\newcommand{\vC}{\mathcal C} 
\DeclareMathOperator{\Z}{{\mathbb Z}}
\newcommand{\TLDS}{\textsc{tlds}\xspace}
\newcommand{\parEMLDS}{\textsc{mlds}\xspace}
\newcommand{\quot}{{\sf P}}
\newcommand{\mrestr}[1]{\!\downharpoonright_{#1}}
\tikzset{cross/.style={cross out, draw=black, minimum size=2*(#1-\pgflinewidth), inner sep=0pt, outer sep=0pt},
cross/.default={4pt}}
\newcommand{\iref}[1]{\ref{#1}}
\newcommand{\comma}{\,\,\mathrm{,}\;\,}
\newcommand{\semicolon}{\,\,\mathrm{;}\;\,}
\newcommand{\fstop}{\,\,\mathrm{.}}
\newcommand{\cdc}{\Gamma}
\newcommand{\vcdc}{\Gamma}
\newcommand{\EE}[2]{\mcE^{#1, #2}}
\newcommand{\purple}[1]{{\color{purple}#1}}
\newcommand{\Dz}{\mcD} 
\newcommand{\cpl}{q}
\newcommand{\QP}{{\mu}}
\newcommand{\vmssd}{\hat\mssd}
\newcommand{\EN}{\overline{\N}}
\newcommand{\dUpsilon}{{\boldsymbol\Upsilon}}
\newcommand{\proj}{{\sf proj}}
\newcommand{\cquad}{\comma \quad}
\newcommand{\e}{\epsilon}
\newcommand{\U}{\dUpsilon}
\newcommand{\E}{\mathcal E}
\newcommand{\vE}{{\mathcal E}}
\newcommand{\vvE}{\underline{\mathcal E}}
\newcommand{\F}{\mathcal F}
\renewcommand{\1}{\mathbf 1}
\newcommand{\comm}{\,\,\mathrm{,}\;\,}
\newcommand{\MM}{\mathsf{MM}}
\newcommand{\sine}{\mathsf{sine}}
\newcommand{\Airy}{\mathsf{Airy}}
\newcommand{\Bessel}{\mathsf{Bessel}}
\newcommand{\Ginibre}{\mathsf{Ginibre}}
\newcommand{\Riesz}{{\sf Riesz}}
\newcommand{\Fis}{\mathsf{F}}
\numberwithin{equation}{section}
\theoremstyle{plain}	
\newtheorem{thm}{Theorem}[section]
\newtheorem*{thm*}{Theorem A}
\newtheorem*{thm**}{Theorem B}
\newtheorem*{mthm*}{Main Theorem}
\newtheorem{prop}[thm]{Proposition}
\newtheorem{lem}[thm]{Lemma}
\newtheorem{cor}[thm]{Corollary}
\newtheorem*{cor*}{Corollary A}
\newtheorem*{cor**}{Corollary B}
\newtheorem*{cor***}{Corollary C}
\theoremstyle{definition}
\newtheorem{defs}[thm]{Definition}
\newtheorem*{defs*}{Definition}
\theoremstyle{remark}
\newtheorem{rem}[thm]{\bf Remark}
\newtheorem{ese}[thm]{\bf Example}
\newtheorem{ass}[thm]{\bf Assumption}
\newtheorem*{asm*}{{\bf List of Assumptions}}
\renewcommand{\paragraph}[1]{\medskip\emph{#1}.}
\renewcommand{\#}{\sharp}
\newcommand{\scolon}{\ ;}
\newcommand{\Ai}{\mathsf{Ai}}
\renewcommand{\supp}{{\rm supp}}
\begin{document}
\title[Infinite Interacting Brownian Motions and  EVI Gradient Flows]{Infinite Interacting Brownian Motions and  EVI Gradient Flows}

\author[K.~Suzuki]{Kohei Suzuki}
\address{Department of Mathematical Science, Durham University, South Road, DH1 3LE, United Kingdom}
\email{kohei.suzuki@durham.ac.uk}
\thanks{\hspace{-5.5mm}  Department of Mathematical Science, Durham University, South Road, DH1 3LE, United Kingdom
\\
\hspace{2.0mm} E-mail: kohei.suzuki@durham.ac.uk
}

\keywords{\vspace{2mm}Interacting Brownian Motions, EVI Gradient Flow, RCD, Number rigidity}

\subjclass[2020]{37A30, 31C25, 30L99, 70F45, 60G55}

\renewcommand{\abstractname}{\normalsize  Abstract}
\begin{abstract} \normalsize
We give a sufficient condition under which the time-marginal law of $\QP$-reversible infinite interacting Brownian motions is characterised as the steepest gradient descent of the relative entropy in the Wasserstein space in the sense of evolution variational inequality (EVI). This is an infinite-dimensional generalisation of Jordan--Kinderlehrer--Otto/Ambrosio--Gigli--Savar\'e theory.  Consequently, the configuration space (the space of locally finite point measures) endowed with the reversible measure~$\QP$ is an $\RCD$ space and the time-marginal law  is identified to the heat flow on this space.  Our result covers the infinite-dimensional Dyson Brownian motion with bulk and soft-edge limits; the latter yields Airy line ensemble as its stationary process, a central object in KPZ universality. Our result therefore provides an optimal transport characterisation of these models as Wasserstein gradient flows, and establishes a range of new functional inequalities (HWI, distorted Brunn--Minkowski, dimension-free Harnack  and many others)  as a corollary.  As an application, we discover  the new phenomena, {\it dynamical  number rigidity} and {\it dynamical tail triviality},  that the time-marginal law possesses number rigidity and tail triviality for every time $t>0$,   revealing a propagation of random crystal and extremal structures by the Dyson Brownian motions. 
\end{abstract}

\maketitle

\setcounter{tocdepth}{1}
\makeatletter
\def\l@subsection{\@tocline{2}{0pt}{2.5pc}{5pc}{}}
\def\l@subsubsection{\@tocline{3}{0pt}{4.75pc}{5pc}{}}
\makeatother

\tableofcontents
\section{Introduction}

\subsection{Overview}
In recent years, the theory of gradient flows in the Wasserstein space has become a central tool in analysis and geometry. Following Jordan--Kinderlehrer--Otto~\cite{JorKinOtt98} and Ambrosio--Gigli--Savaré~\cite{AmbGigSav08}, the time-marginal law of  diffusion processes (equivalently, the solution to Fokker--Planck equations) can be characterised  as  \emph{gradient flows}, or in a stronger sense,  as \emph{Evolution Variational Inequality (EVI)-gradient flows} of an entropy in  the space of probability measures endowed with transportation distance (a.k.a.~Monge--Kantorovich--Rubinstein--Wasserstein distance), 
thereby linking probabilistic dynamics to the steepest gradient descent (i.e., gradient flow) of the entropy in the Wasserstein space. 


In this paper we extend this paradigm to the configuration space~$\U=\U(\R^n)$ (the space of  locally finite point measures on~$\R^n$) --- an infinite-dimensional state space  embracing  a variety of interacting particle diffusion processes.  As a main result, we provide a sufficient condition under which interacting Brownian motions can be identified with the EVI gradient flow in the Wasserstein space. A central example is   the infinite-dimensional Dyson Brownian motions (DBM) of inverse temperature $\beta=2$ with bulk/soft-edge limit. 
Notably the the soft-edge limit case yields the {\it Airy line ensemble} as its stationary solution
(Corwin--Hammond~\cite{CorHam11} and Osada--Tanemura~\cite{OsaTan16}), which is an infinite collection of non-intersecting random curves known as scaling limit of many models in  Kardar--Parisi--Zhang (KPZ) universality describing a universal model of random growths of interfaces. Our result therefore provides an optimal transport characterisation of these models  as a Wasserstein steepest gradient descent.

As an application, we obtain 
\begin{enumerate}[(a)]
\item  {\it Functional inequalities} (HWI, distorted Brunn--Minkowski,  geodesic (displacement) convexity of the entropy, dimension-free Harnack,  and several others) providing new quantitative estimates of model of our interests; 
\vspace{2mm}
\item {\it $\RCD$ property}:  We identify the time-marginal of interacting Brownian motions as the heat flows on the infinite-dimensional space $\U$ whose synthetic Ricci curvature is uniformly bounded from below in the sense of Lott--Villani~\cite{LotVil09}, Sturm~\cite{Stu06a,Stu06b} and Gigli~\cite{Gig12a}; 
\vspace{2mm}
\item {\it Dynamical number rigidity/tail triviality}. We discovered new phenomena of  infinite interacting Brownian motions:  its time-marginal law for any positive time $t>0$ obtains the number rigidity (in the sense of Ghosh--Peres~\cite{GhoPer17}) as well as  the tail triviality. The former is in particular a sharp contrast to the case of independent Brownian particles. 
\end{enumerate}
The number rigidity was discovered in Ghosh--Peres~\cite{GhoPer17}, which is a sort of crystal-like structure in random point clouds: you can precisely know how many particles are  located inside a given bounded set if you know the configuration outside. For Gibbs point processes, the tail triviality is equivalent to the extremeness in the space of Gibbs measures with a given specification. From this perspective, our result (c) reveals that the time-marginal of infinite interacting Brownian motions {\it propagates random crystal-like structures as well as extremeness} for all $t>0$. 

\subsection{Main results in particular cases} 
Before giving the fully general statement of our main results, we state them in two particular models of one-dimensional interacting Brownian motions: infinite Dyson Brownian motion with bulk/soft-edge limits. 
\begin{itemize}
\item bulk limit Dyson Brownian motion:
\begin{align} \label{DBMB} \tag*{${\rm (DBM)}_{{\rm bulk}}$}
\diff X^i_t =  \lim_{r \to \infty}\sum_{j: |X_t^i-X_t^j|<r}^\infty \frac{\diff t}{X_t^i-X_t^j} + \diff B_t^i \cquad i \in \N  \semicolon
\end{align}
\item soft-edge limit Dyson Brownian motion 
\begin{align}   \label{DBME} \tag*{${\rm (DBM)}_{{\rm edge}}$} 
\diff X^i_t =    \lim_{r \to \infty}\Biggl( \biggl(\sum_{\substack{j: j \neq i\\  |X_t^j|<r}}^\infty \frac{1}{X_t^i-X_t^j} \biggr) -\int_{-r}^r \frac{\hat{\rho}(x)}{-x} \diff x \Biggr) \diff t + \diff B_t^i \cquad i \in \N  \comma
\end{align}
\end{itemize}
where $B_t^1, B_t^2, \ldots$ are infinitely many independent Brownian motions in~$\R$  and $\hat{\rho}(x)=\frac{\sqrt{-x}}{\pi} \1_{\{x<0\}}$.  In both cases, the interaction (i.e., the first term in the above SDEs) comes from a logarithmic potential, which  is long-range and singular.
For each SDE, under suitable initial conditions, the infinite sum in the interaction part stays finite due to a cancellation between negative and positive terms as well as  the centralisation in the case of~\ref{DBME}, and the unique strong solution $\mbbX=(X_t^1, X_t^2, \ldots)_{t \in \R_+}$  exists. For each fixed $t \ge 0$, the sequence $(X_t^i)_{i \in \N}$ does not have accumulation points in $\R$ almost surely. Hence, the symmetric quotient 
$$(x_1,x_2,\ldots)  \mapsto \sum_{i=1}^\infty \delta_{x_i} \in \U$$ maps the solution $\mbbX$ to a diffusion process in~$\U=\U(\R)$, called {\it  unlabelled solution}.  The reversible probability measure~$\QP$ for the unlabelled solution is $\QP=\sine_2$ for \ref{DBMB} and $\QP=\Airy_2$ for~\ref{DBME},  each of which is the law of the point process in $\R$ obtained as the bulk and the soft-edge scaling limit of the eigenvalue distributions of Gaussian random matrices. The unlabelled solutions to \ref{DBMB} and \ref{DBME} are well-posed for almost every initial configuration with respect to~$\QP$. We refer the readers to~\cite{Spo87, Osa96, Osa12, Tsa16, OsaTan20} for \ref{DBMB}, and \cite{OsaTan14} for \ref{DBME}. For more details, see Examples~\ref{exa: S} and \ref{exa: A}.

\paragraph{Airy line ensemble} If~\ref{DBME} starts with its reversible measure~$\QP=\Airy_2$ (i.e., stationary solution), the solution is a space-time determinantal point process with~extended Airy kernel, see~\cite[Thm.~2.1 and Thm.~2.2]{OsaTan16} also~\cite{NagFor98, KatTan09, KatTan10, KatTan11}. The space time point processt is called  {\it Airy line ensemble}, see \cite[Thm.~3.1]{CorHam11} and \cite[Dfn.~1.3]{CorSun14}.  It is known that, in multi-line (multi-layer) versions of KPZ universality models such as multi-layer polynuclear growth and non-intersecting line ensembles behind last-passage percolation,  the entire stack of top $k$ interfaces, properly centred and scaled, converges to the Airy line ensemble. We refer the readers to e.g.,~\cite{PraSpo01, Joh02, AggHua23} and references therein. 
%

\paragraph{Optimal transport in the configuration space} 
We endow $\U$ with the vague topology $\tau_\mrmv$ (the duality by compactly supported continuous functions on $\R$). 
To describe a metric structure on the configuration space~$\U$, we use 
{\it $\ell^2$-matching extended distance} $\mssd_\U$ on $\U$: 
\begin{align*}
\mssd_{\dUpsilon}(\gamma,\eta)\eqdef \inf_{\cpl\in\Cpl(\gamma,\eta)} \paren{\int_{\R \times \R} |x-y|^2 \diff\cpl(x,y)}^{1/2} \quad \gamma, \eta \in \U \comma
\end{align*}
where~$\Cpl(\gamma,\eta)$ is the set of all couplings of~$\gamma$ and~$\eta$ and we define $\inf{\emp}=+\infty$. Note that the topology $\tau_{\mssd_\U}$ induced by~$\mssd_\U$ is strictly stronger than $\tau_\mrmv$. The {\it $L^p$-transportation extended distance} $\mssW_{p, \mssd_\U}$ on the space $\mcP(\U)=\mcP(\U, \msB(\tau_\mrmv))$ of Borel probability measures on $\U$  is  defined as 
\begin{align*}
\mssW_{p, \mssd_\U}(\nu, \sigma)^p:=\inf_{{\sf c} \in {\sf Cpl}(\nu, \sigma)} \int_{\U^{\times 2}} \mssd_{\U}^p(\gamma, \eta) \diff {\sf c}(\gamma, \eta) \fstop 
\end{align*}
Let $\mcP_\QP(\U) \subset \mcP(\U)$ be the space of absolutely continuous probability measures with respect to $\QP$. We write $\nu=\rho\cdot\QP$ when $\rho=\frac{\diff \nu}{\diff \QP}$ for $\nu \in \mcP_\QP(\U)$. 
The {\it relative entropy} is defined as  
$$\Ent_{\QP}(\nu):=\int_\U \rho \log \rho \diff \QP \qquad \nu=\rho\cdot \QP$$
with the domain $\dom{\Ent_\QP}:=\{\nu \in \mcP_\QP(\U): \Ent_\QP(\nu)<+\infty\}$. 

\smallskip
\paragraph{EVI property of time-marginal law}
We denote by $(\mbbP_\gamma, \mssX^{\QP}_t)$ the unlabelled solution to~\ref{DBMB} (if $\QP=\sine_2$) and \ref{DBME} (if $\QP=\Airy_2$) with the initial configuration~$X_0^\QP=\gamma$, which is well-posed for $\QP$-a.e.~$\gamma$. 
Let  $\mcT^{\U, \QP}_t \sigma$ be the time-marginal law of $\mssX^{\QP}_t$ with an initial law $\sigma$. If $\sigma\ll\QP$, it is well-defined and  we have the expression 
$$\mcT^{\U, \QP}_t \sigma(A)=\int_\U \mbbP_\gamma[\mssX^{\QP}_t \in A] \diff \sigma(\gamma) \qquad A \in \msB(\U) \fstop $$
The well-definedness of $\mcT^{\U, \QP}_t \sigma$ further extends to the case without the absolute continuity~$\sigma \ll \QP$ (see Theorem~A and \ref{c:FWD-2} of Thm.~\ref{c:FWD} below for admissible initial laws).  
For a function $f: [0, \infty) \to \R$ and $t \in [0, \infty)$, {\it the upper right Dini derivative} is defined as $\frac{\diff^+}{\diff t}f(t):=\limsup_{h \downarrow 0} (f(t+h)-f(t))/h$.
Our main result is to show the EVI of the time-marginal. 
 \begin{thm*}[$\EVI$] \label{ov:ETD} Let $\mu=\sine_2$ or $\mu=\Airy_2$. 
For $\nu\in \dom{\Ent_\QP}$ and $\sigma \in \mcP(\U)$ with $\mssW_{2, \mssd_{\U}}(\sigma, \nu)<\infty$,  
\begin{itemize}
\item $\mathcal T^{\U, \QP}_t \sigma$ is well-defined, $\mathcal T^{\U, \QP}_t \sigma \in \dom{\Ent_{\QP}}$ and ${\mssW}_{2, \mssd_{\U}}\bigl({\mathcal T^{\U, \QP}_t \sigma}, \nu \bigr)<\infty$ for every $t>0$$;$
\item  $t \mapsto \mathcal T^{\U, \QP}_t \sigma$ satisfies the $\EVI_0$ inequality:
  \begin{align} \label{EVI0} 
\frac{1}{2}\frac{\diff^+}{\diff t}{\mssW}_{2, \mssd_{\U}}\bigl({\mathcal T^{\U, \QP}_t \sigma}, \nu \bigr)^2  \le \Ent_{\mu}({\nu}) - \Ent_{\mu}({\mathcal T^{\U, \QP}_t \sigma})\fstop 
\end{align}
\end{itemize}

\end{thm*}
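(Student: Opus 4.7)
The strategy is to reduce Theorem~A to the general EVI criterion developed in the paper (Theorem~\ref{c:FWD}). That criterion reduces the assertion to three structural properties of $(\U, \mssd_\U, \mu)$: (i) existence of a strongly local, quasi-regular, symmetric Dirichlet form $(\EE{\U}{\mu}, \dom{\EE{\U}{\mu}})$ on $L^2(\U, \mu)$ whose associated Markov diffusion is the unlabelled solution $\mssX^{\U, \mu}$; (ii) identification of this Dirichlet form with the Cheeger energy $\Ch_\mu$ of the extended metric measure space $(\U, \mssd_\U, \mu)$; and (iii) a Bakry--\'Emery bound $\BE(0, \infty)$ for this Dirichlet form. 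The task is therefore to verify (i)--(iii) for $\mu = \sine_2$ and $\mu = \Airy_2$.

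\emph{Existence (i) and identification (ii).} For $\mu = \sine_2$ we invoke Osada's unlabelled Dirichlet form for log-gases \cite{Osa96, Osa12}, combined with the pathwise uniqueness of \cite{OsaTan20}, to identify the associated diffusion with the unlabelled solution of \ref{DBMB}; for $\mu = \Airy_2$, the analogous construction with recentred logarithmic drift is carried out in \cite{OsaTan14}. In both cases the carr\'e du champ equals, on cylinder functions, the $\ell^2$-lift of the labelled Euclidean gradient, and is pointwise dominated by the $\mssd_\U$-slope. The reverse domination, and thus (ii), follows from a Rademacher property together with a Sobolev-to-Lipschitz property on $(\U, \mssd_\U, \mu)$. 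Rademacher rests on the quasi-invariance of $\sine_2$ and $\Airy_2$ under compactly supported translations (available for log-gas determinantal processes through Campbell/Girsanov formulae); Sobolev-to-Lipschitz follows from the density of cylinder Lipschitz functions in the Dirichlet domain together with the fact that $\mssd_\U$ is realised by an optimal labelling.

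\emph{Curvature (iii) --- the main obstacle.} Heuristically $\BE(0, \infty)$ is natural, because the formal interaction potential $U(x) = -\sum_{i \neq j} \log |x_i - x_j|$ has non-negative Hessian on the open Weyl chamber $\{x_1 < x_2 < \cdots\}$, so at the labelled level one expects $\BE(0, \infty)$, which should descend to $\U$ under the symmetric quotient. Making this precise is the principal difficulty, for three reasons: the drift converges only after principal-value cancellation (and, in the Airy case, after recentering against $\hat\rho$); the reference measure is a strongly correlated determinantal point process rather than an infinite product; and $(\U, \mssd_\U)$ is an extended metric space lacking local compactness. The plan is to proceed by finite-volume approximation. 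Restrict the interaction and the particles to a box $[-r, r]$, where the Bakry--\'Emery Hessian computation is classical on the Weyl chamber and yields $\BE(0, \infty)$ for the truncated $N$-particle Dyson system uniformly in $N$. Then pass to $r \to \infty$ via Mosco convergence of the associated Dirichlet forms, using uniform correlation bounds for the determinantal reference measures to justify the exchange of limits, and conclude by stability of $\BE(0, \infty)$ (equivalently $\RCD(0, \infty)$, in the extended-distance framework of Ambrosio--Erbar--Savar\'e) under Mosco convergence.

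With (i)--(iii) established, Theorem~\ref{c:FWD} applies verbatim and yields the well-definedness of $\mathcal T^{\U, \mu}_t \sigma$ for every $\sigma$ with $\mssW_{2, \mssd_\U}(\sigma, \nu) < \infty$, the finiteness of $\Ent_\mu(\mathcal T^{\U, \mu}_t \sigma)$ for all $t > 0$, and the $\EVI_0$ inequality \eqref{EVI0}, concluding the proof of Theorem~A.
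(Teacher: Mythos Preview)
Your overall architecture misidentifies what the paper's general criterion actually requires. Theorem~\ref{c:FWD} is not an EVI criterion; it records consequences (Wasserstein contraction, log-Harnack, $\BE_p$) of the structural Assumption~\ref{a:AP}. The general EVI result is Theorem~\ref{c:ETD} (= Theorem~\ref{t:main}), and its hypothesis is \emph{not} the triple (i)+(ii)+(iii) you list, with (iii) being a bare $\BE(0,\infty)$ bound. In the extended metric measure setting $(\U,\tau_\mrmv,\mssd_\U,\QP)$, $\BE(0,\infty)$ together with $\Ch=\E$ is not known to imply $\EVI$: the missing bridge in the standard route is the Sobolev-to-Lipschitz property, and the paper never establishes (or uses) it. Instead, the paper bypasses this by verifying Assumption~\ref{a:AP}, whose content is a good full-measure set $\Theta$ on which the transition kernel is pointwise defined and is the weak limit of heat kernels of finite-particle $\RCD(0,\infty)$ spaces. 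From this one first proves the $\mssW_p$-contraction (Theorem~\ref{t:WC}) and the log-Harnack inequality (Theorem~\ref{t:DFH}) \emph{pointwise on $\Theta$} by passing to the limit in the finite-dimensional inequalities; $\BE$ is then derived from these (Theorem~\ref{p:PBE}), not the other way around. $\EVI$ finally follows (Theorem~\ref{t:main}) from the combination of $\Ch=\E$, Wasserstein contraction, log-Harnack, and the $L\log L$-regularisation and density results (Proposition~\ref{p:DE}, Corollary~\ref{c:LLL}).

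Your proposed route for (iii), namely ``finite-volume $\BE(0,\infty)$ $\to$ Mosco convergence $\to$ $\BE(0,\infty)$ in the limit,'' has a genuine gap. Mosco convergence of Dirichlet forms gives only $L^2$ convergence of semigroups, which is too weak to pass to the limit in pointwise inequalities such as $\mssW_p$-contraction or log-Harnack; and stability of $\BE$/$\RCD$ under approximation is, as the paper explicitly remarks in its comparison section, \emph{not} available in extended metric measure spaces because the known stability results rely on the metric topology supporting the reference measure---here $\tau_{\mssd_\U}$-balls are $\QP$-null. The paper's substitute---weak convergence of transition kernels $p_t^{\U^k,\QP^k}(\gamma^{(k)},\cdot)\xrightarrow{\tau_\mrmw} p_t^{\U,\QP}(\gamma,\cdot)$ for $\gamma\in\Theta$, taken from \cite{OsaTan16}---is exactly what makes the limit passage work, and this is the ingredient your plan is missing. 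For $\sine_2$ and $\Airy_2$ the verification of Assumption~\ref{a:AP} (the set $\mathfrak Y_\QP=\Theta$, the kernel convergence, and the $\RCD(0,\infty)$ property of the $k$-particle systems on the Weyl chamber) is carried out in Theorem~\ref{t:DB}.
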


\subsection{Application} 
The strength of the $\EVI$~\eqref{EVI0} is that it \emph{identifies} the time-marginal law $\nu_t=\mcT_t^{\U, \QP}\nu$ as the {\it unique} Wasserstein gradient flow of the entropy
$$``\partial_t \nu_t = - \nabla_{\mssW_{2, \mssd_\U}} \Ent_\QP(\nu_t)" \quad t>0$$
in the sense of Jordan--Kinderlehrer--Otto scheme in \cite{JorKinOtt98} as well as the curve of maximal slope in Ambrosio--Gigli--Savar\'e~\cite{AmbGigSav08}.  Our result therefore unifies the two central one-dimensional universality regimes --- the Dyson Brownian motions  in the bulk ($\sine_2$)  and the soft edge ($\Airy_2$) regimes --- within a single framework of the Wasserstein gradient flows of the relative entropy and provides a new optimal transport characterisation of these models.
Let 
$$\mcP_e(\U):=\{\nu \in \mcP(\U): \mssW_{2, \mssd_\U}(\nu, \sigma)<+\infty \ \text{for some $\sigma \in \dom{\Ent_\QP}$}\} \fstop$$

 \begin{cor*}[{Gradient flow: Cor.~\ref{c:GF},~\ref{c:MM}}] \label{i:GC} Let $\mu=\sine_2$ or $\mu=\Airy_2$.    The following hold: 
\begin{enumerate}[{\rm (a)}]

\item \label{c:2-0} $(${\bf  curves of maximal slope}$)$ The dual semigroup~$\bigl(\mathcal T_t^{\U, \QP}\nu_0\bigr)_{t \ge 0}$ is the unique solution to the $\mssW_{2, \mssd_{\U}}$-gradient flow of $\Ent_{\QP}$ starting at $\nu_0$ in the sense of {\it  curves of maximal slope}. 
Namely, for every $\nu_0 \in \mcP_e(\U)$, the curve $[0, \infty) \ni t \mapsto \nu_t=\mathcal T_t^{\U, \QP} \nu_0$ satisfies the energy equality:
\begin{align} \label{e:CMS}
\frac{\diff}{\diff t} \Ent_\QP({\nu_t}) = -|\dot\nu_t|^2 = -|{\sf D}_{\mssW_{2, \mssd_{\U}}}^- \Ent_\QP|^2(\nu_t) \quad\text{a.e.~$t>0$} \comma
\end{align}
where $|{\sf D}_{\mssW_{2, \mssd_{\U}}}^- \mcH_\QP|(\nu) := \limsup_{\sigma \to \nu} \frac{(\mcH_\QP(\sigma)-\mcH_\QP(\nu))^-}{\mssW_{2, \mssd_\U}(\sigma, \nu)}$ and $|\dot\nu_t| :=\lim_{s \to t} \frac{\mssW_{2, \mssd_\U}(\nu_s, \nu_t)}{|s-t|}$. 
\item  $(${\bf  minimising movement (JKO) scheme}$)$  The time-marginal $t \mapsto \mathcal T_t^{\U, \QP}\nu_0$ is the unique element of the minimising movement (a.k.a JKO scheme) $\MM(\mcP_{[\nu_0]}(\U), \mssW_{2, \mssd_\U}, \Ent_\QP)$ for every $\nu_0 \in \dom{\Ent_\QP}$ (see Dfn.~\ref{d:MMS} for the precise definition of $\MM$). 
\end{enumerate}
\end{cor*}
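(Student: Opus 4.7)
The plan is to deduce both parts directly from the $\EVI_0$ inequality established in Theorem A by invoking the general metric gradient-flow theory of Ambrosio--Gigli--Savar\'e. Throughout I restrict to the connected component $\mcP_{[\nu_0]}(\U) \subset \mcP_e(\U)$ of probability measures at finite $\mssW_{2, \mssd_\U}$-distance from the prescribed initial datum $\nu_0$: on this component $\mssW_{2, \mssd_\U}$ is a genuine (finite) metric, and $\Ent_\QP$ is lower semicontinuous thanks to properties of $\QP$ and $\mssd_\U$ recorded earlier in the paper. The point is that $\EVI_0$ is a much stronger notion of gradient flow than both the curve of maximal slope and the minimising movement, so both statements reduce to abstract consequences.

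For part (a), I would proceed in two steps. First, the energy identity \eqref{e:CMS} is a general consequence of $\EVI_0$ in any metric space: plugging a variable reference point $\nu = \nu_{t+h}$ into the $\EVI_0$ inequality, letting $h\downarrow 0$, and combining with the monotonicity of $t\mapsto\Ent_\QP(\nu_t)$ via Young's inequality yields $\tfrac{\diff}{\diff t}\Ent_\QP(\nu_t) = -|\dot\nu_t|^2 = -|{\sf D}^-_{\mssW_{2, \mssd_\U}} \Ent_\QP|^2(\nu_t)$ almost everywhere; this is the classical chain-rule argument. Second, uniqueness within the class of $\mssW_{2, \mssd_\U}$-curves of maximal slope follows from the $\mssW_{2, \mssd_\U}$-contraction of $\EVI_0$ solutions: inserting two $\EVI_0$ solutions $\nu_t$, $\eta_t$ into each other's inequality and summing yields $\tfrac{\diff^+}{\diff t}\mssW_{2, \mssd_\U}^2(\nu_t,\eta_t)\le 0$, and the standard fact that, along a geodesically convex functional, every curve of maximal slope starting from a point where an $\EVI_0$ flow exists must coincide with that flow closes the argument.

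For part (b), I would invoke the convergence of the discrete Euler (JKO) scheme to the $\EVI$ flow: whenever an $\EVI_0$ gradient flow of $\Ent_\QP$ emanates from $\nu_0\in\dom{\Ent_\QP}$, every minimising movement starting from $\nu_0$ converges in $\mssW_{2, \mssd_\U}$ to this flow, and consequently $\MM(\mcP_{[\nu_0]}(\U), \mssW_{2, \mssd_\U}, \Ent_\QP)$ reduces to a singleton. Since Theorem A supplies precisely this $\EVI_0$ solution $t\mapsto \mcT_t^{\U, \QP}\nu_0$, the identification is immediate. An alternative route is to use the fact that $\EVI_0$ solutions, being strong gradient flows, automatically coincide with every minimising movement built from the same functional and metric.

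The main obstacle will be verifying that the Ambrosio--Gigli--Savar\'e framework, originally formulated in genuine (finite) metric spaces and under $\sigma$-finite reference measures, transfers to the present setting where $\mssW_{2, \mssd_\U}$ is an extended distance taking the value $+\infty$ and the configuration space $\U$ is far from a smooth length space. This is handled by the reduction to the finite-distance component $\mcP_{[\nu_0]}(\U)$, on which the full suite of compactness, lower semicontinuity, coercivity, and geodesic-convexity properties of the pair $(\mssW_{2, \mssd_\U}, \Ent_\QP)$ --- all available from earlier sections and in particular from the $\RCD$ property established through Theorem A --- is at our disposal, so that the abstract statements apply verbatim on each such component.
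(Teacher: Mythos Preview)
Your overall strategy---restrict to the finite-distance component $\mcP_{[\nu_0]}(\U)$ and then invoke abstract $\EVI$ theory---is exactly what the paper does. For part~(a) the paper likewise deduces the energy identity and uniqueness directly from $\EVI_0$, citing \cite[Thm.~3.5, Thm.~4.2]{MurSav20} rather than \cite{AmbGigSav08}; your hands-on derivation (insert $\nu=\nu_{t+h}$, contract two flows) is the same content written out.

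There are two points where the paper is more careful than your sketch. First, the restricted space $(\mcP_{[\nu_0]}(\U),\mssW_{2,\mssd_\U})$ is in general \emph{not separable}, so the classical Ambrosio--Gigli--Savar\'e framework does not apply verbatim; the paper sidesteps this by invoking Muratori--Savar\'e \cite{MurSav20}, whose standing hypotheses (see \cite[(3.1)]{MurSav20}) do not require separability. For part~(a) this is harmless since the $\EVI\Rightarrow$ maximal-slope implication is metric-space robust, but for part~(b) it matters. Second, for part~(b) you assert that ``the full suite of compactness, lower semicontinuity, coercivity, and geodesic-convexity properties \ldots\ is at our disposal'', but the relevant hypothesis of \cite[Cor.~5.7]{MurSav20} is not weak compactness of entropy sublevel sets (which is easy) but their \emph{$\mssW_{2,\mssd_\U}$-completeness}; the paper checks this explicitly via the $\tau_\mrmw$-lower semicontinuity of $\mssW_{2,\mssd_\U}$ combined with weak compactness of $\{\Ent_\QP\le c\}$. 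The paper also notes that $\dom{\Ent_\QP}$ is geodesic (from the $\RCD$ corollary), hence an approximate length subset in the sense of \cite{MurSav20}. Once you name these two verifications your argument coincides with the paper's.
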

Due to the ergodicity of~unlabelled solutions for $\QP=\sine_2$ (see \cite{OsaOsa23,Suz23b}) and $\QP= \Airy_2$ (see \cite{Suz23b}), we know that  $\nu_t \to \mu$ weakly as $t \to \infty.$ 
 Combined with~Cor.~\ref{i:GC}, we obtain the following picture about the trajectory of the time marginal~$\nu_t$ of the law: 
 \vspace{1.0mm}
 \begin{align*}
 &\text{\it The curve $t \mapsto \nu_t$ moves downward along the $\mssW_{2, \mssd_\U}$-steepest gradient descent of the relative entropy~$\Ent_\QP$}
 \\ 
  &\text{\it and weakly converges to the unique minimiser~$\QP$ of~$\Ent_\QP$ as $t \to \infty$}.  
  \end{align*}
 
 Furthermore, the $\EVI$ property simultaneously yields various functional inequalities: $\mssW_2$-geodesic (displacement) convexity of the relative entropy, HWI and distorted Brunn--Minkowski inequalities and several others --- providing new quantitative estimates on the Dyson Brownian motions. 
Moreover, the time marginal $\mcT^{\U, \QP}_t$ can be identified with the heat flow on the space $(\U, \tau_\mrmv, \mssd_\U, \QP)$ having  synthetic Ricci lower bound $\RCD$. 
For other functional inequalities including the  dimension-free Harnack inequality, the $\mssW_{2, \mssd_\U}$-contraction property of $\mcT_t^{\U,\QP}$ and the Bakry--\'Emery gradient estimate,  see Thm.~\ref{c:FWD} below.
\begin{cor**}[{Cor.~\ref{c:LST}, \ref{c:GC}}] \label{i:GC} Let $\mu=\sine_2$ or $\mu=\Airy_2$.    The following hold: 
\begin{enumerate}[{\rm (a)}]
\item \label{c:2-i} $(${\bf geodesical convexity}$)$ The space~$(\dom{\Ent_\QP}, {\mssW}_{2, \mssd_\U})$ is an extended geodesic metric space. Furthermore, the entropy~$\Ent_\QP$ is geodesically convex along every ${\mssW}_{2, \mssd_\U}$-geodesic $(\nu_t)_{t \in [0,1]}$$:$ for every $\nu_0, \nu_1 \in \dom{\Ent_\QP}$ with $\mssW_{2, \mssd_\U}(\nu_0, \nu_1)<\infty$, there exists a $\mssW_{2, \mssd_\U}$-constant speed geodesic $(v_t)_{t \in [0,1]}$ connecting $\nu_0$ and $\nu_1$ such that 
\begin{align*}
\Ent_\QP(\nu_t) \le (1-t)\Ent_{\QP}(\nu_0) + t \Ent_{\QP}(\nu_1)\cquad t \in [0,1] \fstop
\end{align*}
\item  \label{c:6-i-00} $(${\bf RCD property}$)$ The extended metric measure space $(\U, \tau_\mrmv, \mssd_\U, \QP)$ is an $\RCD(0,\infty)$ space. For $\QP=\sine_2$, the curvature bound $K=0$ is the optimal constant. 
Furthermore, $(\mcT_t^{\U, \QP})_{t \ge 0}$ is identified with the heat flow on $\U$ (see also Thm.~\ref{it:IDD}).
\vspace{2mm}
\item  \label{c:6-i}  $(${\bf Distorted Brunn--Minkowski inequality}$)$ Let $A_0, A_1 \subset \U$ be Borel sets with $\QP(A_0)\QP(A_1)>0$ and define $\nu_0:=\frac{\1_{A_0}}{\QP(A_0)} \cdot \mu$ and $\nu_1:=\frac{\1_{A_1}}{\QP(A_1)} \cdot \mu$. Suppose $\mssW_{2, \mssd_{\U}}(\nu_0, \nu_1)<\infty$. Define the $t$-barycentre $(0 \le t \le 1)$ as 
$$[A_0, A_1]_t:=\{\eta \in \U: \eta=\gamma_t,\ \gamma \in {\rm Geo}(\U, \mssd_\U),\ \gamma_0 \in A_0,\ \gamma_1 \in A_1\} \comma$$
where ${\rm Geo}(\U, \mssd_\U)$ is the space of constant speed geodesics in $\U$. 
 Then, 
\begin{align*}
\log \frac{1}{\mu\bigl([A_0, A_1]_t \bigr)} \le (1-t)	\log \frac{1}{\mu(A_0)} + t \log \frac{1}{\mu(A_1)}  \fstop
\end{align*}
\item  \label{c:7-i}  $(${\bf HWI inequality}$)$ For~$\nu_0 \in \dom{\Ent_\QP}, \nu_1 \in \mcP(\U)$, 
$$\Ent_{\QP}(\nu_0) \le \Ent_{\QP}(\nu_1)+2\mssW_{2, \mssd_{\U}}(\nu_0, \nu_1)\Fis_{\QP}(\nu_0)^{1/2} \comma$$
where $\Fis$ is {\it the Fisher information} ${\sf F}_\QP(\nu) =\frac{1}{4}|{\sf D}_{{\mssW}_{2, \mssd_\U}}^- \Ent_\QP(\nu)|^2$.
\end{enumerate}
\end{cor**}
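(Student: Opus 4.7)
The strategy is to \emph{derive all four parts from the $\EVI_0$ inequality of Theorem A}, via the now-classical package of EVI-gradient-flow consequences in (extended) metric-measure spaces, together with the gradient-flow identification of the dual semigroup $\mcT_t^{\U, \QP}$ supplied by Corollary A.

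For (a), I would invoke the Daneri--Savar\'e theorem, in its Ambrosio--Erbar--Savar\'e extended-space form: the existence of an $\EVI_0$-gradient flow of a lower semicontinuous functional forces $0$-geodesic convexity of that functional along every constant-speed geodesic connecting points in $\dom{\Ent_\QP}$. To apply this in the present setting I first need the extended geodesic structure of $(\dom{\Ent_\QP}, \mssW_{2, \mssd_\U})$, which follows from the length-space structure of $(\U, \mssd_\U)$ --- an $\ell^2$-matching distance on the configuration space built from the geodesic space $\R^n$ --- together with a displacement-interpolation argument along optimal couplings, after verifying that interpolations remain absolutely continuous with respect to $\QP$ on each finite-$\mssW_{2, \mssd_\U}$ class. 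For (b), once $\Ent_\QP$ is known to be $0$-geodesically convex on $\mcP_e(\U)$, combining this $\CD(0,\infty)$ condition with the infinitesimal Hilbertianity of the Dirichlet form on $L^2(\U, \QP)$ (already at our disposal from the carr\'e-du-champ construction on $\U$) yields $\RCD(0, \infty)$; the uniqueness of $\EVI_0$-flows and Corollary A then identify $\mcT_t^{\U, \QP}$ with the heat semigroup of the associated Cheeger energy $\Ch_\mssd$. Sharpness of $K=0$ for $\QP = \sine_2$ will be shown separately by exhibiting the absence of an $L^2$-spectral gap of the heat semigroup, which excludes any strictly positive Ricci lower bound.

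Part (c) is a short deduction from (a): set $\nu_i := \QP(A_i)^{-1} \car_{A_i} \cdot \QP$, so that $\Ent_\QP(\nu_i) = -\log \QP(A_i)$; take a $\mssW_{2, \mssd_\U}$-geodesic $(\nu_t)$ from $\nu_0$ to $\nu_1$ provided by the extended geodesic structure; the $0$-convexity inequality gives $\Ent_\QP(\nu_t) \le -(1-t)\log \QP(A_0) - t \log \QP(A_1)$, while the geodesic is supported in $[A_0,A_1]_t$, so Jensen's inequality applied to $x \mapsto x\log x$ on the probability measure $\QP|_{[A_0,A_1]_t}/\QP([A_0,A_1]_t)$ yields $\Ent_\QP(\nu_t) \ge -\log \QP([A_0, A_1]_t)$; combining the two bounds produces the announced inequality. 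For (d), I would run the standard $\EVI_0$-to-HWI argument: in~\eqref{EVI0} plug $\sigma := \nu_0$ and bound the right Dini derivative at $t=0^+$ of $\tfrac{1}{2}\mssW_{2, \mssd_\U}^2(\mcT_t^{\U,\QP}\nu_0, \nu_1)$ from below by $-\mssW_{2, \mssd_\U}(\nu_0,\nu_1)\cdot|\dot\nu_0|$ via the chain-rule inequality valid for absolutely continuous curves, then use the identification $|\dot\nu_0| = |{\sf D}^-_{\mssW_{2, \mssd_\U}} \Ent_\QP|(\nu_0) = 2 \Fis_\QP(\nu_0)^{1/2}$ (supplied by Corollary A together with the definition of $\Fis_\QP$); the rearrangement produces the claimed HWI.

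The principal obstacle is not the EVI-to-consequences step itself, which is classical once one is situated in a legitimate metric-measure framework, but rather carefully handling the \emph{extended} nature of $\mssW_{2, \mssd_\U}$ and the infinite-dimensional configuration space: the transportation distance takes the value $+\infty$ on large portions of $\mcP(\U)$, so Daneri--Savar\'e and the RCD machinery must be invoked class-by-class within finite-distance components, and topological compatibility between the vague topology $\tau_\mrmv$ (used to define $\mcP(\U)$) and the stronger $\mssd_\U$-topology (used for the Cheeger energy and for displacement interpolation) must be tracked throughout --- a robust but technical point that governs the precise hypotheses under which each consequence is stated.
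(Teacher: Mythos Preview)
Your overall strategy---deriving (a)--(d) from the $\EVI_0$ inequality via Daneri--Savar\'e, Jensen, and the slope identification---matches the paper's. The execution differs in one substantive place.

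For (a), you propose to invoke Daneri--Savar\'e directly after ``verifying that interpolations remain absolutely continuous with respect to $\QP$ on each finite-$\mssW_{2,\mssd_\U}$ class.'' You correctly identify this as the crux, but you do not say how to do it, and a direct verification is not obvious in the extended setting. The paper sidesteps this entirely: given a $\mssW_{2,\mssd_\U}$-geodesic $(\nu_t)$ between $\nu_0,\nu_1\in\dom{\Ent_\QP}$ (existing in $\mcP(\U)$ by the measurable-geodesic-selection machinery of the Appendix), it regularises each $\nu_t$ by the semigroup, setting $\nu_t^\epsilon:=\mcT_\epsilon^{\U,\QP}\nu_t$. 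The $L\log L$-regularisation (from the log-Harnack inequality) forces $\nu_t^\epsilon\in\dom{\Ent_\QP}$ with $\mssW_{2,\mssd_\U}(\nu_t^\epsilon,\nu)<\infty$, so the curve $\epsilon\mapsto\mcT_\epsilon^{\U,\QP}\nu_t$ is now an $\EVI$-flow in a genuine (non-extended) metric space, to which the standard Daneri--Savar\'e theorem applies and yields
\[
\Ent_\QP(\mcT_\epsilon^{\U,\QP}\nu_t)\le (1-t)\Ent_\QP(\nu_0)+t\Ent_\QP(\nu_1).
\]
Sending $\epsilon\downarrow 0$ and using $\tau_\mrmw$-lower semicontinuity of $\Ent_\QP$ then gives both $\nu_t\in\dom{\Ent_\QP}$ and the convexity inequality in one stroke. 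This regularisation-then-limit device is what you are missing.

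For (c), your argument is correct and matches the paper's, but the assertion ``the geodesic is supported in $[A_0,A_1]_t$'' is not automatic in the extended setting and is justified in the paper via Lisini's lifting theorem for extended metric spaces. Parts (b) and (d) follow the paper's approach essentially verbatim; in (d) the paper likewise differentiates $\mssW_t:=\mssW_{2,\mssd_\U}(\mcT_t^{\U,\QP}\nu_0,\nu_1)$ at $t=0^+$, bounds the lower Dini derivative via the triangle inequality and the Muratori--Savar\'e identity $\lim_{t\downarrow 0}t^{-1}\mssW_{2,\mssd_\U}(\mcT_t^{\U,\QP}\nu_0,\nu_0)=|\mssD^-_{\mssW_{2,\mssd_\U}}\Ent_\QP|(\nu_0)$, and uses the slope--Fisher identification.
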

The optimality of $K=0$ for $\QP=\sine_2$ in \ref{c:6-i-00} follows by the lack of the spectral gap~proved in~\cite{Suz24}.

 \subsection*{Dynamical number rigidity/tail triviality} We present a dynamical rigidity/tail triviality of interacting Brownian motions. 
We recall that a Borel probability measure $\nu$ on $\U$ has 
 \begin{itemize}
 \item  {\it number rigidity} \ref{ass:NR} if, for every bounded Borel set~$E\subset X$, there exists a $\nu$-measurable set $\Omega \subset \U$ with $\nu(\Omega)=1$ and a Borel function $\Phi: \U(E^c) \to \N_0$ such that 
 \begin{align*}\tag*{$(\mathsf{NR})$}\label{ass:NR}
 \gamma(E) = \Phi(\gamma|_{E^c}) \qquad \text{for every $\gamma \in \Omega$} \scolon
\end{align*}
%
 \item {\it tail triviality} \ref{ass:T} if 
\begin{align}\tag*{$(\mathsf{TT})$}\label{ass:T}
\nu(\Xi)\in \{0, 1\} \quad  \text{whenever} \quad \Xi \in \mathscr T(\U) \comma
\end{align}
where $\mathscr T(\U):=\cap_{r \in \N}\sigma({\rm pr}_{B_r^c})$ is the {\it tail $\sigma$-algebra} and $\pr_{B_r^c}: \U \to \U(B_r^c)$ is defined as $\gamma \mapsto \gamma_{B_r^c}:=\gamma\mrestr{B_r^c}$ with $B_r=\{x \in \R^n: |x|<r\}$. 
 \end{itemize}
\begin{cor***}[Dynamical number rigidity/tail triviality,  Cor.~\ref{t:NRF}]  \label{i:PNT} Let $\QP=\sine_2$ or $\Airy_2$. For every $\nu \in \mcP_e(\U)$, the time-marginal~$\mathcal T_t^{\U, \QP}\nu$ satisfies \ref{ass:NR} and~\ref{ass:T} for every $t>0$.
\end{cor***}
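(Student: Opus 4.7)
The plan is to deduce Corollary C from three ingredients: the fact that $\sine_2$ and $\Airy_2$ themselves satisfy \ref{ass:NR} and \ref{ass:T}, the fact that both properties are preserved under absolute continuity, and the absolute continuity $\mathcal T_t^{\U,\QP}\nu \ll \QP$ for every $t>0$ supplied by the Main Theorem.

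The first step is to extract the absolute continuity. Given $\nu \in \mcP_e(\U)$, the definition of $\mcP_e(\U)$ furnishes some $\tilde\sigma \in \dom{\Ent_\QP}$ with $\mssW_{2,\mssd_\U}(\nu,\tilde\sigma) < \infty$. Applying the Main Theorem with initial datum $\nu$ (playing the role of $\sigma$ there) and reference point $\tilde\sigma$ (playing the role of $\nu$), one obtains $\mathcal T_t^{\U,\QP}\nu \in \dom{\Ent_\QP}$ for every $t>0$. Since $\dom{\Ent_\QP}\subset \mcP_\QP(\U)$, this is the sole quantitative input needed: $\mathcal T_t^{\U,\QP}\nu \ll \QP$ for all $t>0$, even if the initial datum $\nu$ is itself singular with respect to $\QP$.

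The second step is to invoke the static rigidity and triviality of $\QP$: number rigidity of $\sine_2$ (Ghosh--Peres) and of $\Airy_2$ (Bufetov), and tail triviality of both, which follows from their projection-type determinantal structure via the standard Lyons/Osada--Osada line of argument. All of these are known and cited elsewhere in the paper.

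The final step is a one-line transfer under absolute continuity. For \ref{ass:NR}, if $\Omega \subset \U$ with $\QP(\Omega)=1$ and $\Phi$ witness the rigidity of $\QP$, then $\mathcal T_t^{\U,\QP}\nu(\Omega)=1$ by absolute continuity, so the same pair $(\Omega,\Phi)$ witnesses rigidity for $\mathcal T_t^{\U,\QP}\nu$. For \ref{ass:T}, any $\Xi \in \mathscr T(\U)$ has $\QP(\Xi)\in\{0,1\}$; absolute continuity propagates each alternative to $\mathcal T_t^{\U,\QP}\nu(\Xi)\in\{0,1\}$, by applying $\mathcal T_t^{\U,\QP}\nu \ll \QP$ to $\Xi$ (if $\QP(\Xi)=0$) or to $\Xi^\complement$ (if $\QP(\Xi)=1$). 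The entire difficulty is therefore absorbed into the Main Theorem: once the regularising effect of EVI is in hand, Corollary C is an immediate consequence, and no additional dynamical argument is required.
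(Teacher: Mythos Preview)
Your proof is correct and follows essentially the same approach as the paper: the paper's proof of the relevant part (Cor.~\ref{t:NRF}\ref{iii:NRF}) reads in full ``This is a direct consequence from the regularisation property~$\mathcal T_t^{\U, \QP}: \mcP_e(\U) \to \dom{\Ent_\QP}$ by Cor.~\ref{c:LLL}, yielding $\mathcal T_t^{\U, \QP}\nu \ll \QP$ for positive $t>0$. Due to the inheritance of the number rigidity and the tail triviality to absolutely continuous probability measures, the proof is complete.'' The only cosmetic difference is that you cite the Main Theorem for the regularisation $\mathcal T_t^{\U,\QP}\nu\in\dom{\Ent_\QP}$, whereas the paper cites the slightly earlier $L\log L$-regularisation estimate (Cor.~\ref{c:LLL}), which is itself one of the ingredients in the proof of the Main Theorem; both deliver the same conclusion.
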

We note that an initial law $\nu \in \mcP_e(\U)$ is in general not absolutely continuous with respect to $\QP$ nor possessing~\ref{ass:T} (see Prop.~\ref{p:ENT} for such an element), however, interestingly there properties emerge in the time-marginal~$\mcT_t^{\U, \QP}\nu$ for any positive $t>0$. 
Corollary C reveals  new dynamical rigidity/tail triviality phenomena for time-marginals of infinite interacting Brownian motions: 
$$\text{the curve $(0, \infty] \ni t \mapsto \mcT_t^{\U, \QP}\nu$ stays in the subspace $\mcP_{\rm NR}(\U) \cap \mcP_{\rm TT}(\U)$} \comma$$
of probability measures possessing the number rigidity and the tail triviality. This is a sharp contrast to the case of independent Brownian particles.

\subsection{A general framework} To state our main result in full generality, we work on the configuration space~$\U=\U(\R^n)$ over $\R^n$ endowed with a Borel probability measure $\QP \in \mcP(\U)$.
The class of diffusion processes in $\U$ studied here  is associated with  a $\QP$-symmetric Dirichlet form. We will recall the precise definition and related notions of Dirichlet forms in \S\ref{subsec:D}. 
Our standing hypothesis are {\it (a) conditional absolute continuity of $\QP$}; {\it (b) conditional closability of finite-volume Dirichlet forms}. These conditions are fairly general: they cover all quasi-Gibbs measures having lower semi-continuous densities are included, e.g., Gibbs measures with Ruelle class potentials, $\sine_\beta$ $(\beta \ge 0)$, $\Airy_\beta$ $(\beta=1,2,4)$, $\Riesz_\beta$ $(\beta \ge 0)$, $\Bessel_{\alpha, 2}$ $(\alpha \ge 1)$ and $\Ginibre$ point processes.

\paragraph{\bf (a) Conditional absolutely continuity} We write $B_r=B_r(0):=\{x \in \R^n: |x|<r\}$ and  $\mssm_r$ for the Lebesgue measure restricted in $B_r$. 
For a Borel probability~$\QP$ on $(\U, \tau_\mrmv)$, we write 
\begin{itemize}
\item $\QP_r^\eta(\cdot)=\QP(\cdot \mid \cdot\mrestr{B_r^c}=\eta_{B_r^c})$ for the conditional probability of $\QP$ conditioned in the complement $B_r^c$ to be $\eta_{B_r^c}:=\eta\mrestr{B_r^c}$. Here we regarded $\QP_r^\eta$ as a probability measure on $\U(B_r)$ instead of $\U$ via the push-forward by the map~$\gamma+\eta \mapsto \gamma$ for $\gamma \in \U(B_r)$ (see Rem.~\ref{r:CP});
\vspace{1mm}
\item $\U^k(B_r)=\{\gamma \in \U(B_r): \gamma(B_r)=k\}$ for the $k$-particle configuration space in~$B_r$;
\vspace{1mm}
\item $\mssm_r^{\odot k}$ for the $k$-symmetric tensor-product of~$\mssm_r$  on $\U^k(B_r) \cong B_r^{\times k}/\mathfrak S(k)$, where $\mathfrak S(k)$ is the $k$-symmetric group;
\vspace{1mm}
\item $\QP_r^{k, \eta}=\QP_r^\eta\mrestr{\U^k(B_r)}$ for the restriction of $\QP_r^\eta$ in $\U^k(B_r)$. 
\end{itemize}

We say that $\QP$
is \emph{conditionally absolutely continuous}  if 
\begin{equation}\tag*{$(\mathsf{CAC})_{\ref{d:ConditionalAC}}$}
\label{ass:CE}
 \QP^{\eta, k}_{r} \ll \mssm_{r}^{\odot k}  \quad r\in \N\comma \ k \in \N_0  \comma \  \QP\text{-a.e.~$\eta$}\fstop
\end{equation}

\paragraph{\bf (b) Conditional closability of finite-volume Dirichlet forms} We write 
\begin{itemize}
\item $\mssd_{\U^k}$ for the restriction of $\mssd_\U$ in $\U^k$; 
\vspace{1mm}
\item ${\rm LIP}_b(\U(B_{r}), \mssd_{\U})$ for the space of bounded functions $u$ in $\U(B_r)$ such that $u|_{\U^k(B_r)}$ is $\mssd_{\U^k}$-Lipschitz for every $k \in \N_0$;
\vspace{1mm}
\item  $\nabla^{\odot k}$ for the distributional gradient in $\U^{k}$ descendent from $\R^{k}$ via the symmetric quotient; 
\vspace{1mm}
\item $\cdc^{\dUpsilon(B_{r})}$ for the square field operator on $\U(B_r)$ defined as  
$$\cdc^{\dUpsilon(B_{r})}(u) := \sum_{k=0}^\infty\Bigl|\nabla^{\odot k}\bigl(u|_{\U^k(B_r)}\bigr)\Bigr|^2  \cquad u \in{\rm LIP}_b(\U(B_{r}), \mssd_{\U}) \fstop$$
\end{itemize}

We say that~$\QP$ satisfies the \emph{conditional closability}~\ref{ass:ConditionalClos} if the bilinear form~$(\EE{\dUpsilon(B_{r})}{\QP^\eta_{r}}, \mcC_r^\eta)$ given by 
\begin{align}\tag*{$(\mathsf{CC})_{\ref{ass:ConditionalClosability}}$}\label{ass:ConditionalClos}
&\EE{\dUpsilon(B_{r})}{\QP^\eta_{r}}(u):=\frac{1}{2}\int_{\dUpsilon(B_{r})} \cdc^{\dUpsilon(B_{r})}(u) \diff\QP^\eta_{r}
\\
&u \in  \mcC_r^\eta:={\rm LIP}_b(\U(B_{r}), \mssd_{\U}) \cap \{u: \U(B_r) \to \R: \EE{\dUpsilon(B_{r})}{\QP^\eta_{r}}(u)<\infty\} \notag
\end{align}
is closable in~$L^2\ttonde{\dUpsilon(B_r),\QP^\eta_{r}}$ for every $r \in \N$ and $\QP$-a.e.~$\eta\in\dUpsilon$. 

\paragraph{Infinite-volume Dirichlet form} To construct infinite-volume limit of finite-volume Dirichlet forms,  we introduce a space of test functions based on the Lipschitz functions. For doing so, we introduce a pseudo-distance $\mssd_{\U}^{(r)}$  on $\U$ called {\it partial $\ell^2$-matching pseudo-distance subjected to~$B_r$}:
\begin{align*} 
 \mssd_{\U}^{(r)}(\gamma, \eta):= \inf_{\alpha, \beta \in \U(\partial B_r)}\mssd_{\U(\bar{B}_r)}(\gamma_{B_r}+\alpha, \eta_{B_r}+\beta) \qquad  \gamma, \eta \in \U \comma
\end{align*}
where $\mssd_{\U(\bar{B}_r)}$ is the $\ell^2$-matching extended distance in the configuration space~$\U(\bar{B}_r)$ over $\bar{B}_r=\{x \in \R^n: |x| \le r\}$. We prove that $\mssd_{\U}^{(r)}$ is $\tau_\mrmv^{\times 2}$-continuous and $\mssd_{\U}^{(r)} \nearrow \mssd_\U$ as $r \nearrow  \infty$ (Prop.~\ref{p:PDF}). 
Under \ref{ass:CE} and \ref{ass:ConditionalClos}, the following limit $(\E^{\U, \QP}, \mcC)$ is well-defined and closable in $L^2(\U, \QP)$ (Prop.~\ref{p:CES}):
\begin{align*} 
\E^{\U, \QP}(u)=\lim_{r \to \infty}\int_{\U} \E^{\U, \QP_r^\eta}(u_r^\eta) \diff \QP(\eta) \cquad u \in \mcC:=\bigcup_{r>0} \Lip_b(\U, \mssd_\U^{(r)})  \comma 
\end{align*}
where
$u_r^\eta(\gamma):=u(\gamma+\eta_{B_r^c})$ for $\gamma \in \U(B_r)$ and $\eta \in \U$, and $\Lip_b(\U, \mssd_\U^{(r)})$ is the space of bounded $\mssd_\U^{(r)}$-Lipschitz functions. %
The closure $(\E^{\U, \QP}, \dom{\E^{\U, \QP}})$ is a strongly local Dirichlet form with the domain~$\dom{\E^{\U, \QP}} \subset L^2(\U, \QP)$ and the square field  $\cdc^\U: \dom{\E^{\U, \QP}} \to \R$ exists (see Dfn~\ref{d:DFF} and Prop.~\ref{p:CES}):
\begin{align}\label{e:DDFS}
\E^{\U, \QP}(u)=\frac{1}{2}\int_{\U}\cdc^{\U}(u) \diff \QP \fstop
\end{align}
The form~\eqref{e:DDFS} associates the $L^2$-symmetric semigroup $T_t^{\U, \QP}: L^2(\U,\QP) \to L^2(\U, \QP)$ by solving
\begin{align} \label{e:SGGF}
\partial_t u = -\nabla_{L^2}\E^{\U, \QP}(u) \comma
\end{align}
where $\nabla_{L^2}$ is the Fr\'echet derivative in the Hilbert space~$L^2(\U, \QP)$. Under \ref{ass:CE} and \ref{ass:ConditionalClos}, the form~$(\E^{\U, \QP}, \dom{\E^{\U, \QP}})$ satisfies {\it $\tau_\mrmv$-quasi-regularity} (Cor.~\ref{c:QR}), thus,  there exists a $\QP$-reversible diffusion process $\mssX$ in~$\U$ whose transition semigroup is identified with~$T_t^{\U, \QP}$ outside a set of capacity-zero in~$\U$.
In this case, we say that $(\E^{\U, \QP}, \dom{\E^{\U, \QP}})$ {\it is associated with} $\mathsf X$. From the Markov process viewpoint, the operator defined as $u \mapsto -\nabla_{L^2}\E^{\U, \QP}(u)$ is the infinitesimal generator of the Markov process~$\mssX$. 
In Thm.~\ref{t:IUL}, under a mild assumption of $\QP$ on $\U=\U(\R)$ including, e.g., every determinantal/Pfaffian point process with local trace class kernel, we prove that
$$\text{$(\E^{\U, \QP}, \dom{\E^{\U, \QP}})$ is identical to {\it upper Dirichlet form} $(\bar\E^{\U, \QP}, \dom{\bar \E^{\U, \QP}})$  in~\cite[Thm.~1]{Osa96}}$$
where the upper Dirichlet form is the closure of smooth local functions (Dfn.~\ref{d:SLF}) instead of our Lipschitz core~$\mcC$.
E.g., for $\QP=\sine_2$ (resp.~$\QP=\Airy_2$),  the form~$(\E^{\U, \QP}, \dom{\E^{\U, \QP}})$ is therefore associated with the unique unlabelled solution to~\ref{DBMB} (resp.~\ref{DBME}), 
see \cite[Thm.~24]{Osa12}, \cite[\S8]{Tsa16}, \cite{OsaTan20} for $\sine_2$;  \cite[Thm.~2.3, 2.4]{OsaTan14} for $\Airy_2$. 

\paragraph{Literature about Dirichlet-form-based construction} For the construction of other interacting diffusions based on Dirichlet forms, we refer the readers to \cite{Osa12}, \cite{HonOsa15} and \cite{OsaOsa25} for the cases $\QP=\Ginibre$, $\Bessel_{2, \alpha}$ $(\alpha \ge 1)$ and the $d$-dimensional $\mathsf{Coulomb}_{\beta}$ $(\beta >0,\ d \ge 2)$ point processes respectively. For Gibbs measures admitting either Ruelle class potentials, an integration by parts property, or a determinantal structure with finite-range kernel, see \cite{Yos96, AlbKonRoe98b, Yoo05} respectively. For the configuration space over manifolds or general diffusion spaces, see~\cite{AlbKonRoe98, AlbKonRoe98b, RoeSch99, MaRoe00, ErbHue15, LzDSSuz21,  LzDSSuz22} and references therein.

\subsection{The main result in full generality} 
The main assumption is the following:
 \begin{ass}[{Assumption~\ref{a:AP}}] \label{ia:AP}
 Let $\QP$ be a fully supported Borel probability measure in~$(\U, \tau_\mrmv)$ and $K \in \R$.
Suppose that there exists a $\QP$-measurable set~$\Theta \subset \U$ with $\QP(\Theta)=1$ such that 
\begin{enumerate}[(a)]
\item \label{AP-11} $\{\gamma \in \U: \mssd_\U(\gamma, \eta)<+\infty \text{\ for some $\eta \in \Theta$}\} = \Theta$; 
\item \label{AP-2} for every $\gamma \in \Theta$ and $t>0$, there exists a Borel probability measure $p^{\U, \QP}_t(\gamma, \diff \eta)$ on $(\U, \tau_\mrmv)$ such that 
\begin{align}\label{e:IKP}
\tilde{T}_t^{\U, \QP}u(\gamma):=\int_{\U}u(\eta) p^{\U, \QP}_t(\gamma, \diff \eta)
\end{align}
is a $\QP$-representative of $T_t^{\U, \QP}u$ for every bounded Borel  function $u \in \mcB_b(\U)$;
\item \label{AP-4} there exists a subset~$\{k\} \subset \N$ of infinite cardinality and a probability measure~$\mu^k$ on $\U^k$ for each $k$ such that the metric measure space~$(\U^k, \mssd_{\U^k}, \mu^k)$ is an $\RCD(K,\infty)$ space for every $k$, and  satisfies the following property: for every $\gamma, \zeta \in \Theta$ with $\mssd_\U(\gamma, \zeta)<\infty$, there exist $\gamma^{(k)}, \zeta^{(k)} \in \U^k$ such that, as $k \to \infty$ 
\begin{align*}
&p^{\U^{k}, \QP^{k}}_t(\gamma^{(k)}, \diff \eta) \xrightarrow{\tau_\mrmw} p^{\U, \QP}_t(\gamma, \diff \eta)\cquad p^{\U^{k}, \QP^{k}}_t(\zeta^{(k)}, \diff \eta) \xrightarrow{\tau_\mrmw} p^{\U, \QP}_t(\zeta, \diff \eta)  \comma
\\
&  \limsup_{k \to \infty} \mssd_\U(\gamma^{(k)}, \zeta^{(k)}) \le \mssd_\U(\gamma, \zeta) 
\end{align*}
for every $t>0$, where $p^{\U^{k}, \QP^{k}}_t(\gamma, \diff \eta)$ is the heat kernel on $(\U^k, \mssd_{\U^k}, \mu^k)$, see~\eqref{d:HK}. 
\end{enumerate}

\end{ass}
The $\RCD(K,\infty)$ condition is a generalised notion of  uniform lower Ricci curvature bound $\Ric \ge K$  in the framework of metric measure spaces.
We will recall the definition in Dfn.~\ref{d:RCD}.

\paragraph{Main result in full generality}
Let $\mathcal T_t^{\U, \QP}$ be the dual semigroup of the~semigroup $T_t^{\U, \QP}$ given in~\eqref{e:SGGF}. Namely, $\mathcal T_t^{\U, \QP}\nu = (T_t^{\U, \QP} \rho)\cdot \QP$ for $\nu=\rho\cdot \QP \in \mcP_\QP(\U)$. Due to the $\QP$-symmetry of $T_t^{\U, \QP}$, the dual semigroup is identified with the time marginal law $\mcL(\mssX_t)$ of the  diffusion process $\mssX$  associated with the Dirichlet form~$(\E^{\U, \QP}, \dom{\E^{\U, \QP}})$ with initial condition $\nu$. 
In the following theorem, $\mathcal T_t^{\U, \QP}\nu$ is extended to $\nu \in \mcP_e(\U)$, which is not necessarily absolutely continuous to $\QP$.

 \begin{thm}[{Thm.~\ref{t:main}}]\label{c:ETD} Suppose~\ref{ass:CE}, ~\ref{ass:ConditionalClos} and Assumption~\ref{ia:AP}. 
Then, the dual semigroup $(\mcT_t^{\U, \QP})_{t \ge 0}$ is  the unique $\EVI_K$-gradient flow in $(\mcP_e(\U), \mssW_{2, \mssd_\U})$.
Namely, for every $\nu\in \dom{\Ent_\QP}$ and $\sigma \in \mcP(\U)$ with $\mssW_{2, \mssd_{\U}}(\sigma, \nu)<\infty$, 
\begin{itemize}
\item $\mathcal T^{\U, \QP}_t \sigma$ is well-defined, $\mathcal T^{\U, \QP}_t \sigma \in \dom{\Ent_{\QP}}$ and ${\mssW}_{2, \mssd_{\U}}\bigl({\mathcal T^{\U, \QP}_t \sigma}, \nu \bigr)<\infty$ for every $t>0$;
\item  $t \mapsto \mathcal T^{\U, \QP}_t \sigma$ is the unique curve satisfying the following inequality:
 \begin{align} \tag{$\EVI_K$} \label{EVI} 
\frac{1}{2}\frac{\diff^+}{\diff t}{\mssW}_{2, \mssd_{\U}}\bigl({\mathcal T^{\U, \QP}_t \sigma}, \nu \bigr)^2  + \frac{K}{2}{\mssW}_{2, \mssd_{\U}}\bigl({\mathcal T^{\U, \QP}_t \sigma}, \nu \bigr)^2 \le \Ent_{\mu}({\nu}) - \Ent_{\mu}({\mathcal T^{\U, \QP}_t \sigma})\fstop 
\end{align}
%
\end{itemize}
 \end{thm}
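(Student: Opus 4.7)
The strategy is a two-scale descent: use the $\RCD(K,\infty)$ finite-particle spaces $(\U^k,\mssd_{\U^k},\mu^k)$ from Assumption~\ref{ia:AP}\iref{AP-4} to obtain pointwise Wasserstein $K$-contraction of the heat kernels $p_t^{\U^k,\mu^k}$, lift the contraction to the infinite-volume limit via the weak-convergence hypothesis, and then combine with the Dirichlet-form description of $(\mcT_t^{\U,\QP})$ to convert $\mssW_2$-contraction into the full $\EVI_K$ inequality.

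\textbf{Step 1 (finite-volume $\EVI_K$).} For each admissible $k$, the $\RCD(K,\infty)$ property of $(\U^k,\mssd_{\U^k},\mu^k)$ together with the Ambrosio--Gigli--Savar\'e theorem gives that the heat semigroup $T_t^{\U^k,\mu^k}$ is the $\EVI_K$-gradient flow of $\Ent_{\mu^k}$. In particular, at the level of heat kernels:
\begin{align*}
\mssW_{2,\mssd_{\U^k}}\!\bigl(p_t^{\U^k,\mu^k}(\gamma^{(k)},\cdot),p_t^{\U^k,\mu^k}(\zeta^{(k)},\cdot)\bigr)\le e^{-Kt}\,\mssd_{\U^k}(\gamma^{(k)},\zeta^{(k)}).
\end{align*}

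\textbf{Step 2 (passage to the limit $k\to\infty$).} Fix $\gamma,\zeta\in\Theta$ with $\mssd_\U(\gamma,\zeta)<\infty$ and take approximants $\gamma^{(k)},\zeta^{(k)}$ from Assumption~\ref{ia:AP}\iref{AP-4}. For each fixed $r$, the $\tau_\mrmv^{\times 2}$-continuity of the pseudo-distance $\mssd_\U^{(r)}$ (Prop.~\ref{p:PDF}) combined with the vague convergence of the heat kernels yields, by standard lower semicontinuity of $\mssW_2$ under weak convergence,
\begin{align*}
\mssW_{2,\mssd_\U^{(r)}}\!\bigl(p_t^{\U,\QP}(\gamma,\cdot),p_t^{\U,\QP}(\zeta,\cdot)\bigr)\le \liminf_{k\to\infty}\mssW_{2,\mssd_{\U^k}}\!\bigl(p_t^{\U^k,\mu^k}(\gamma^{(k)},\cdot),p_t^{\U^k,\mu^k}(\zeta^{(k)},\cdot)\bigr)\le e^{-Kt}\mssd_\U(\gamma,\zeta),
\end{align*}
using also the upper bound on $\limsup_k\mssd_\U(\gamma^{(k)},\zeta^{(k)})$ in the second clause of \iref{AP-4}. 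Monotone convergence $\mssd_\U^{(r)}\nearrow\mssd_\U$ then upgrades the estimate to $\mssW_{2,\mssd_\U}$, producing the pointwise contraction
\begin{align*}
\mssW_{2,\mssd_\U}\!\bigl(p_t^{\U,\QP}(\gamma,\cdot),p_t^{\U,\QP}(\zeta,\cdot)\bigr)\le e^{-Kt}\,\mssd_\U(\gamma,\zeta),\qquad \gamma,\zeta\in\Theta.
\end{align*}

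\textbf{Step 3 (measure-level contraction and well-posedness).} For $\sigma\in\mcP(\U)$ and $\nu\in\dom{\Ent_\QP}$ with $\mssW_{2,\mssd_\U}(\sigma,\nu)<\infty$, Assumption~\ref{ia:AP}\iref{AP-11} and $\QP(\Theta)=1$ force both measures to concentrate in $\Theta$. Extend $\mcT_t^{\U,\QP}\sigma$ by the kernel representation $\mcT_t^{\U,\QP}\sigma:=\int p_t^{\U,\QP}(\gamma,\cdot)\diff\sigma(\gamma)$ from \eqref{e:IKP}, and integrate the pointwise contraction against an optimal coupling to obtain
\begin{align*}
\mssW_{2,\mssd_\U}(\mcT_t^{\U,\QP}\sigma,\mcT_t^{\U,\QP}\nu)\le e^{-Kt}\mssW_{2,\mssd_\U}(\sigma,\nu).
\end{align*}
Taking $\nu$ fixed with $\nu\in\dom{\Ent_\QP}$ and using $\mcT_t^{\U,\QP}\nu\in\dom{\Ent_\QP}$ (from $L^2$-contractivity of $T_t^{\U,\QP}$ plus lower semicontinuity of $\Ent_\QP$) gives the first bullet of the theorem.

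\textbf{Step 4 (from $\mssW_2$-contraction to $\EVI_K$).} By Cor.~\ref{c:QR} and Prop.~\ref{p:CES}, $(\E^{\U,\QP},\dom{\E^{\U,\QP}})$ is strongly local, $\tau_\mrmv$-quasi-regular, and generates $T_t^{\U,\QP}$ as its $L^2$-gradient flow, while the Lipschitz core $\mcC$ identifies $\E^{\U,\QP}$ with the Cheeger energy of the extended metric measure space $(\U,\tau_\mrmv,\mssd_\U,\QP)$. The extended-metric-measure formulation of the Ambrosio--Gigli--Savar\'e / Ambrosio--Erbar--Savar\'e equivalence then converts the $\mssW_{2,\mssd_\U}$-contraction established in Step~3 into the $\EVI_K$-inequality for $\mcT_t^{\U,\QP}$. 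Uniqueness of $\EVI_K$-curves is automatic from the $\EVI_K$ formulation itself.

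\textbf{Main obstacle.} The most delicate step is Step~2. The hypothesis \iref{AP-4} only supplies vague (weak) convergence of the finite-$k$ heat kernels, whereas $\mssW_{2,\mssd_\U}$ lives on the strictly finer extended distance. Controlling the passage to the limit forces us to filter through the $\tau_\mrmv$-continuous pseudo-distances $\mssd_\U^{(r)}$, establish uniform second-moment tightness along $k$ (which is itself a consequence of the Step~1 contraction against a fixed reference point in $\U^k$), and then monotonically recover $\mssd_\U$ as $r\to\infty$. A secondary subtlety appears in Step~4, since the contraction-to-$\EVI_K$ equivalence must be applied in the extended metric measure framework and requires us to identify $\E^{\U,\QP}$ with the Cheeger energy on $(\U,\tau_\mrmv,\mssd_\U,\QP)$ rather than on a classical Polish space.
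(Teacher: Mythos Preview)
Your Steps 1--3 correctly recover the $\mssW_{2,\mssd_\U}$-contraction of $\mcT_t^{\U,\QP}$ and essentially coincide with the paper's Thm.~\ref{t:WC}. The genuine gap is Step~4: there is no black-box ``Ambrosio--Gigli--Savar\'e / Ambrosio--Erbar--Savar\'e equivalence'' that converts $\mssW_2$-contraction directly into $\EVI_K$ in the \emph{extended} metric measure setting. In ordinary metric measure spaces one would invoke Kuwada's duality ($\mssW_2$-contraction $\Leftrightarrow \BE_2(K,\infty)$) and then $\BE_2 +$ Cheeger identification $+$ Sobolev-to-Lipschitz $\Rightarrow \RCD \Rightarrow \EVI_K$; but as the paper explains in Rem.~\ref{r:TREM}, these implications do not transfer automatically here because $p_t^{\U,\QP}(\gamma,\cdot)$ is only defined on~$\Theta$, not everywhere, and the semigroup lacks the $L^\infty \to \mcC_b(\U,\tau_\mrmv)$ regularisation on which the standard Kuwada-type arguments rely.

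The paper fills exactly this gap by separately establishing, each via the finite-$k$ approximation: (i) the log-Harnack inequality (Thm.~\ref{t:DFH}); (ii) the $L\log L$-regularisation $\mcT_t^{\U,\QP}\sigma \in \dom{\Ent_\QP}$ for all $\sigma \in \mcP_e(\U)$ (Cor.~\ref{c:LLL}), which you assert in Step~3 but do not prove --- your ``$L^2$-contractivity plus lower semicontinuity'' only handles $\mcT_t\nu$ with $\nu \in \dom{\Ent_\QP}$, not a general~$\sigma$, and the paper obtains the general case from log-Harnack; (iii) the Bakry--\'Emery gradient estimate $\BE_p(K,\infty)$ (Thm.~\ref{p:PBE}), whose proof in this extended setting \emph{uses} log-Harnack to get the pointwise $\tau_{\mssd_\U}$-continuity of $T_t^{\U,\QP}|\mathsf D_{\mssd_\U} u|^2$ on~$\Theta$; and (iv) the density $\overline{\dom{\Ent_\QP}}^{\mssW_{2,\mssd_\U}} = \mcP_e(\U)$ (Prop.~\ref{p:DE}). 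Only with all of these plus the Cheeger identification (Thm.~\ref{t:E=Ch}) can one run the adapted argument of \cite[Thm.~5.10]{ErbHue15} / \cite[Thm.~4.17]{AmbGigSav15} to obtain $\EVI_K$. Your Step~4 needs to be replaced by this chain.
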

 Similarly to Corollaries~A--C, for general $\QP$, we obtain that $\mcT^{\U, \QP}_t\nu$ is the $\mssW_{2, \mssd_\U}$-gradient flow of $\Ent_\QP$; functional inequalities with dependency on $K$ (log--Sobolev and Talagrand inequalities follow if $K>0$ and $\E^{\U, \QP}$ is irreducible); and dynamical number rigidity/tail triviality  as a corollary.  See Cor.~\ref{c:LST}, \ref{c:GC}, ~\ref{c:GF},~\ref{c:MM} and \ref{t:NRF} for full statements.

\smallskip
\subsection{Ideas for proofs} \label{ss:IP}
The proofs split into the following steps:
\begin{enumerate}[(I)]
\item We identify $(\E^{\U, \QP}, \dom{\E^{\U, \QP}})$ with an energy functional on the extended metric measure space $(\U, \mssd_\U, \QP)$ called {\it Cheeger energy}.
\item We establish the density of $\dom{\Ent_\QP}$ in $\mcP_e(\U)$ with respect to $\mssW_{2, \mssd_\U}$ (Prop.~\ref{p:DE}), the $\mssW_{p, \mssd_\U}$-contraction property of $\mcT_t^{\U, \QP}$ (Thm.~\ref{t:WC}), Wang's log--Harnack inequality (Thm.~\ref{t:DFH}) and the $p$-Bakry--\'Emery gradient estimate (Thm.~\ref{p:PBE}).
\end{enumerate}
The $\EVI$ property is then proven by using (I) and (II) along the line of \cite{AmbGigSav15} in metric measure spaces. We remark that our space $(\U, \tau_\mrmv, \mssd_\U, \QP)$ is not really a metric measure space, rather an extended metric measure space (see the definition below), due to which  many standard techniques are not a priori available in our setting. The core difficulty stems from a mismatch between the distance~$\mssd_\U$ and the reference measure $\QP$. The topology $\tau_{\mssd_\U}$ induced by $\mssd_\U$ is `{too fine}' in the sense that it is neither separable, nor  giving the support of the reference measures. Indeed, the supports of our reference measure $\QP$ (as for $\sine_2$ and $\Airy_2$) with respect to~$\tau_{\mssd_\U}$ is typically {\it empty}, meaning that every metric ball with respect to $\mssd_\U$ is a set of measure zero. 

%
\paragraph{{\rm (I)} Extended metric measure spaces and Cheeger energies} 
{\it An extended metric measure space} in this paper is a quadruplet~$(X, \tau, \mssd, \nu)$ satisfying that 
\begin{itemize}
\item $(X, \tau)$ is a Polish topological space; 
\item $\mssd$ is a complete distance  that could take $+\infty$ (called {\it extended distance});
\item $\mssd$ is the monotone increasing limit of a net of $\tau^{\times 2}$-continuous pseudo-distances $\{\mssd_i: i \in I\}$ on $\U$; 
\item $\tau$ is generated by $\Lip_b(X, \mssd) \cap \mcC_b(X, \tau)$, 
\item $\nu$ is a finite measure on the Borel $\sigma$-algebra~$\msB(\tau)$,
\end{itemize}
where $\Lip_b(X, \mssd)$ (resp.~$\mcC_b(X, \tau)$) is the space of bounded $\mssd$-Lipschitz (resp.~$\tau$-continuous) functions.
Significant differences from usual metric measure spaces are that $\mssd$ could take $+\infty$, $\mssd$ does not necessarily metrise~$\tau$, and $\nu$ is not necessarily a Borel measure with respect to the topology~$\tau_\mssd$ metrised by~$\mssd$. These properties cause pathologies such as non-measurability of $\mssd$-Lipschitz functions and $\nu$-negligibility of $\mssd$-metric balls. Nevertheless, 
one can define the energy functional~$\Ch^{\mssd, \nu}: \dom{\Ch^{\mssd, \nu}}\subset L^2(X, \nu) \to [0,\infty)$, called {\it the Cheeger energy}, which is  the $L^2(X, \nu)$-lower semi-continuous envelope of $\frac{1}{2}\int_{X}|\mssD_{\mssd}u|^2\diff \nu$, where $|\mssD_\mssd u|(x)=\limsup_{y \to x}\frac{|u(y)-u(x)|}{\mssd(x, y)}$ is called {\it the slope} (also called {\it local Lipschitz constant}). 
Based on the Cheeger energy, one can define the Laplace operator  by means of the subdifferential of $\Ch^{\mssd, \nu}$ and define the heat semigroup. If the heat semigroup is a linear operator,  then $\Ch^{\mssd, \nu}$ forms a local Dirichlet form and we say that $X$ is {\it infinitesimally Hilbertian}. 
In this case, there exists an associated $\nu$-reversible diffusion,  which is called  {\it Brownian motion in~$(X, \tau, \mssd, \nu)$} (also called {\it distorted Brownian motion} in the context of weighted Riemannian manifolds). We refer the readers to~\cite{AmbGigSav14, AmbGigSav14b, AmbErbSav16, Gig12a, Gig18, Sav19}. 
Regarding the configuration space~$\U$, 
the quadruplet~$(\U, \tau_\mrmv, \mssd_\U, \QP)$ forms {an extended metric measure space} (see Prop.~\ref{p:CEEM}).
%
%
%
%
A natural question  would be whether the Cheeger energy $\Ch^{\mssd_\U, \QP}$ coincides with the Dirichlet form $\E^{\U, \QP}$. 
When an unlabelled solution $\mathsf X$ to~an infinite-dimensional SDE is associated with $\E^{\U, \QP}$ for some $\QP$, this question is equivalent to ask whether we can regard $\mathsf X$ as the {Brownian motion on the extended metric measure space~$(\U, \tau_\mrmv, \mssd_\U, \QP)$.
We answer this question affirmatively in a fairly general setting. 
   \begin{thm}\label{it:IDD} (Thm.~\ref{t:E=Ch}) 
   Suppose~\ref{ass:CE} and \ref{ass:ConditionalClos}. Then,  
$$(\E^{\U, \QP}, \dom{\E^{\U, \QP}})  = \bigl(\Ch^{\mssd_{\U}, \QP}, \mathscr D(\Ch^{\mssd_{\U}, \QP})\bigr)\fstop$$
 \end{thm}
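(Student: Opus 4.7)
The plan is to establish equality of two lower semicontinuous quadratic functionals on $L^2(\U, \QP)$ by reducing both to an identity on the common Lipschitz core $\mcC = \bigcup_{r>0} \Lip_b(\U, \mssd_\U^{(r)})$. On this core, the Dirichlet form carries the explicit expression $\E^{\U,\QP}(u) = \tfrac{1}{2}\int \Gamma^\U(u)\,d\QP$, while the Cheeger energy admits the a priori bound $\Ch^{\mssd_\U,\QP}(u) \leq \tfrac{1}{2}\int |\mssD_{\mssd_\U} u|^2\,d\QP$ directly from its definition as the $L^2$-lsc envelope of the slope integral on Lipschitz functions.

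The cornerstone is the pointwise identity
\begin{equation*}
|\mssD_{\mssd_\U} u|^2(\gamma) = \Gamma^\U(u)(\gamma) \qquad \text{for $\QP$-a.e.~$\gamma \in \U$}
\end{equation*}
for every $u \in \mcC$. Since $\mssd_\U \geq \mssd_\U^{(r)}$ by Prop.~\ref{p:PDF}, any $u \in \Lip_b(\U, \mssd_\U^{(r)})$ is automatically $\mssd_\U$-Lipschitz and, because $\mssd_\U^{(r)}$ annihilates configurations that coincide on $\bar B_r$, depends only on $\gamma|_{\bar B_r}$. I would then disintegrate $\QP = \int_\U \QP_r^\eta \, d\QP(\eta)$ and reduce to the finite-volume slice $u_r^\eta : \U(B_r) \to \R$, Lipschitz on each $k$-particle sector $\U^k(B_r) \cong B_r^k/\mathfrak{S}(k)$. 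Lifting to $B_r^k$ and applying the classical Rademacher theorem, then descending to the symmetric quotient and using Assumption~\ref{ass:CE} to pass from $\mssm_r^{\odot k}$-a.e.~identities to $\QP_r^{\eta,k}$-a.e.~identities, yields $|\mssD_{\mssd_{\U^k(B_r)}} u_r^\eta|^2 = |\nabla^{\odot k} u_r^\eta|^2$ conditionally almost everywhere. The transfer of the slice slope to the ambient slope $|\mssD_{\mssd_\U} u|(\gamma)$ follows from the $B_r$-locality of $u$: any $\mssd_\U$-neighbour of $\gamma$ restricts to a $\mssd_{\U(B_r)}$-neighbour of $\gamma|_{B_r}$ and conversely. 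Integration against $\QP$ then gives $\tfrac{1}{2}\int |\mssD_{\mssd_\U} u|^2\,d\QP = \E^{\U,\QP}(u)$ for every $u \in \mcC$.

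The inclusion $\Ch^{\mssd_\U,\QP} \leq \E^{\U,\QP}$ on $\dom{\E^{\U,\QP}}$ then follows from the $L^2$-lower semicontinuity of the Cheeger energy together with the fact that $\mcC$ is by construction a form-core for $\E^{\U,\QP}$. For the reverse inclusion $\E^{\U,\QP} \leq \Ch^{\mssd_\U,\QP}$, I would approximate a generic $u \in \dom{\Ch^{\mssd_\U,\QP}}$ first by bounded $\mssd_\U$-Lipschitz functions with controlled slope energy (directly from the relaxation definition of $\Ch$) and then localize these to $B_r$ via a boundary-averaging or conditional-expectation scheme, producing elements of $\Lip_b(\U, \mssd_\U^{(r)}) \subset \mcC$ whose Dirichlet energies are controlled by the truncated slope integrals. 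Monotone convergence along $\mssd_\U^{(r)} \nearrow \mssd_\U$ (Prop.~\ref{p:PDF}) then delivers $\E^{\U,\QP}(u) \leq \Ch^{\mssd_\U,\QP}(u)$ in the limit $r\to\infty$, with the conditional closability \ref{ass:ConditionalClos} ensuring that the limit function lies in $\dom{\E^{\U,\QP}}$.

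The main obstacle is that the topology $\tau_{\mssd_\U}$ is strictly finer than $\tau_\mrmv$ and, in the examples of interest such as $\QP = \sine_2$ or $\Airy_2$, the $\mssd_\U$-balls are $\QP$-null, so classical manipulations involving slopes and exceptional sets are unavailable in the ambient space. The technical heart of the argument is therefore the joint measurability and simultaneous almost-everywhere organization of the finite-dimensional Rademacher identity across fibres of the conditional disintegration, together with the controlled handling of particles crossing $\partial B_r$, which enters through the infimum in the definition of $\mssd_\U^{(r)}$ and must be tracked carefully throughout the localization and the subsequent passage to the limit $r\to\infty$.
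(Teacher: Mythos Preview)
Your argument for the inequality $\Ch^{\mssd_\U,\QP}\le \E^{\U,\QP}$ is sound and matches the paper: you establish the pointwise bound $|\mssD_{\mssd_\U}u|^2\le\Gamma^\U(u)$ for $u\in\mcC$ by reducing to the finite-dimensional Rademacher identity on each fibre via \ref{ass:CE}, and then close by lower semicontinuity of $\Ch$ together with $\mcC$ being a form-core for $\E^{\U,\QP}$.

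The gap is in the reverse inequality $\E^{\U,\QP}\le\Ch^{\mssd_\U,\QP}$. Your plan is to approximate a general $u\in\dom{\Ch^{\mssd_\U,\QP}}$ first by bounded $\mssd_\U$-Lipschitz functions and then ``localize these to $B_r$ via a boundary-averaging or conditional-expectation scheme'' so as to land in $\mcC$. But this localization is precisely the hard step, and you have not indicated how to carry it out. A bounded $\mssd_\U$-Lipschitz function $v$ can depend on the configuration at infinity; for such $v$ one has $\mssd_\U(\gamma,\gamma_{B_r})=\infty$ whenever $\gamma(B_r^c)=\infty$, so naive restriction $v(\gamma_{B_r})$ is uncontrolled, and conditional expectation against $\QP$ has no reason to preserve the Lipschitz property for $\mssd_\U^{(r)}$. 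Without this step you cannot show that the Lipschitz approximants lie in $\dom{\E^{\U,\QP}}$ (only in the larger monotone-limit domain $\dom{\underline\E^{\U,\QP}}$, via \eqref{p:Rad}), and hence cannot close the inequality. The monotone convergence $\mssd_\U^{(r)}\nearrow\mssd_\U$ does not by itself produce approximants in $\mcC$.

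The paper avoids this entirely. It first compares $\mssd_\U$ with the \emph{intrinsic} extended distance $\mssd_{\E^{\U,\QP}}$ of the Dirichlet form: since each truncated distance $\mssd_\U^{(r)}(\cdot,\eta)\wedge c$ is $\tau_\mrmv$-continuous and has carr\'e du champ bounded by $1$ (Rademacher for $\underline\E$), one gets $\mssd_\U^{(r)}\le\mssd_{\E^{\U,\QP}}$ for every $r$, hence $\mssd_\U\le\mssd_{\E^{\U,\QP}}$. This yields $\Ch^{\mssd_{\E^{\U,\QP}},\QP}\le\Ch^{\mssd_\U,\QP}$ at the level of slopes, and then the abstract inequality $\E^{\U,\QP}\le\Ch^{\mssd_{\E^{\U,\QP}},\QP}$, valid for any strongly local Dirichlet form and its intrinsic distance (Ambrosio--Erbar--Savar\'e), closes the argument. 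No localization of general $\mssd_\U$-Lipschitz functions is needed.
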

As a result, the time-marginal law of the $\QP$-reversible interacting Brownian motions associated with~$\E^{\U, \QP}$ is identified with {\it the heat flow} on $(\U, \tau_\mrmv, \mssd_\U, \QP)$ under the fairly mild assumptions \ref{ass:CE} and \ref{ass:ConditionalClos} covering all quasi-Gibbs measures such as Gibbs measures with Ruelle class potentials, $\sine_\beta$ $(\beta \ge 0)$, $\Airy_\beta$ $(\beta=1,2,4)$, $\Riesz_\beta$ $(\beta \ge 0)$, $\Bessel_{\alpha, 2}$ $(\alpha \ge 1)$ and $\Ginibre$ point processes.
 \paragraph{{\rm (II)} Functional inequalities related to lower Ricci curvature bound} 
 To prove the $\EVI$ property, we show the following functional inequalities related to the lower Ricci curvature bound $``\Ric \ge K"$.
 {Let $\Theta$ be the set given in Assumption~\ref{ia:AP}. We define $\mcP_\Theta:=\{\nu \in \mcP(\U): \nu(\Theta)=1\}$. Note that $\mcP_\QP(\U) \subset \mcP_\Theta$ as well as  $\delta_\gamma \in \mcP_\Theta$ for $\gamma \in \Theta$. 
 Let $\overline{\mcP_{\Theta}}^{\mssW_{p, \mssd_\U}}$ be the closure of $\mcP_\Theta$ with respect to $\mssW_{p,\mssd_\U}$.}
\begin{thm}[{Thm.~\ref{t:WC}, \ref{p:PBE}, \ref{t:DFH}}] \label{c:FWD}Suppose~\ref{ass:CE}, ~\ref{ass:ConditionalClos} and Assumption~\ref{ia:AP}.  
\begin{enumerate}[{\rm (a)}]
\item  \label{c:FWD-2} $(${\bf  $\mssW_{p, \mssd_\U}$-contraction}$)$ Let $1 \le p <\infty$.  {Then, $\mcT_t^{\U, \QP} \nu$ is well-defined for $\nu \in \overline{\mcP_{\Theta}}^{\mssW_{p, \mssd_\U}}$. Furthermore, for $\nu, \sigma \in \overline{\mcP_{\Theta}}^{\mssW_{p, \mssd_\U}}$,}
$$\mssW_{p, \mssd_\U}(\mcT_t^{\U, \QP} \nu, \mcT_t^{\U, \QP}\sigma) \le e^{-Kt}\mssW_{p, \mssd_\U}(\nu, \sigma)  \fstop$$
\item  $(${\bf  Wang's log-Harnack inequality}$)$ for every non-negative bounded Borel function $u: \U \to \R_+$ and $t>0$, 
\begin{align} \label{i:LH1}
T^{\U, \QP}_t\log u(\gamma) \le \log (T^{\U, \QP}_tu(\eta)) + \frac{{\mssd}_\U(\gamma, \eta)^2}{4I_{2K}(t)}\comma \quad \text{$\gamma, \eta \in \Theta$} \scolon
\end{align}
where $I_K(t) :=\int_0^t e^{Kr}\diff r=\frac{e^{Kt}-1}{K}$. 
\item  $(${\bf  $p$-Bakry--\'Emery gradient estimate}$)$   \label{c:FWD-2i} The Bakry--\'Emery $p$-gradient estimate $\BE_p(K,\infty)$ holds for every $1 \le p <\infty$$:$
 $$\cdc^{\U}(T_t^{\U, \QP}u)^{\frac{p}{2}} \le  e^{-pKt}T_t^{\U, \QP}\bigl(\cdc^{\U}(u)^{\frac{p}{2}}\bigr) \quad \text{$\QP$-a.e.~} \quad u \in \dom{\E^{\U, \QP}} \fstop$$
\end{enumerate}
\end{thm}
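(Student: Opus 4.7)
The plan is to lift the three assertions from the finite-dimensional approximations $(\U^k,\mssd_{\U^k},\mu^k)$, which by Assumption~\ref{ia:AP}\iref{AP-4} are $\RCD(K,\infty)$ spaces. On each $\U^k$ the three properties are standard equivalent consequences of $\RCD(K,\infty)$ for the heat kernel $p_t^{\U^k,\QP^k}$, so the task is to push them through to the infinite-dimensional limit using the $\tau_\mrmw$-convergence of kernels provided by Assumption~\ref{ia:AP}\iref{AP-4}. The technical step used throughout is the observation that $\mssd_\U=\sup_r \mssd_\U^{(r)}$ is a monotone supremum of $\tau_\mrmv^{\times 2}$-continuous pseudo-distances by Prop.~\ref{p:PDF}, hence $\mssd_\U$ is $\tau_\mrmv^{\times 2}$-lower semi-continuous and $\mssW_{p,\mssd_\U}$ is lower semi-continuous under weak convergence of probability measures on $(\U,\tau_\mrmv)$.

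For part (a), I would first establish the contraction for Dirac initial data: given $\gamma,\zeta\in\Theta$ with $\mssd_\U(\gamma,\zeta)<\infty$ and the approximants $\gamma^{(k)},\zeta^{(k)}$ from Assumption~\ref{ia:AP}\iref{AP-4}, the $\mssW_p$-contraction on $\U^k$ (standard for $\RCD(K,\infty)$ via Kuwada) yields
\begin{align*}
\mssW_{p,\mssd_{\U^k}}\bigl(p_t^{\U^k,\QP^k}(\gamma^{(k)},\cdot),\,p_t^{\U^k,\QP^k}(\zeta^{(k)},\cdot)\bigr)\le e^{-Kt}\mssd_{\U^k}(\gamma^{(k)},\zeta^{(k)}) \fstop
\end{align*}
Taking $\liminf_{k\to\infty}$, the left-hand side dominates $\mssW_{p,\mssd_\U}\bigl(p_t^{\U,\QP}(\gamma,\cdot),p_t^{\U,\QP}(\zeta,\cdot)\bigr)$ by the weak convergence of kernels and the lower semi-continuity above, while the right-hand side is at most $e^{-Kt}\mssd_\U(\gamma,\zeta)$ by the $\limsup$ bound in Assumption~\ref{ia:AP}\iref{AP-4}. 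To extend to general $\nu,\sigma\in\overline{\mcP_\Theta}^{\mssW_{p,\mssd_\U}}$, I would define $\mcT_t^{\U,\QP}\nu:=\int p_t^{\U,\QP}(\gamma,\cdot)\diff\nu(\gamma)$ for $\nu\in\mcP_\Theta$, promote the Dirac inequality via an optimal-coupling plus gluing argument, and pass to the $\mssW_{p,\mssd_\U}$-closure using the resulting Lipschitz continuity.

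For part (b), the analogue
\begin{align*}
T_t^{\U^k,\QP^k}\log u(\gamma^{(k)})\le\log T_t^{\U^k,\QP^k}u(\zeta^{(k)})+\frac{\mssd_{\U^k}(\gamma^{(k)},\zeta^{(k)})^2}{4I_{2K}(t)}
\end{align*}
holds on each $\U^k$ as a standard consequence of $\RCD(K,\infty)$. For $u\in\mcC_b(\U,\tau_\mrmv)$ with $\inf u>0$, weak convergence of the kernels applied to both $u$ and $\log u$ together with the $\limsup$ bound on $\mssd_{\U^k}(\gamma^{(k)},\zeta^{(k)})$ immediately yields \eqref{i:LH1}; a truncation plus monotone-class argument extends this to general bounded non-negative Borel $u$. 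Part (c), the $p$-Bakry--\'Emery estimate, is then derived from the $q$-Wasserstein contraction of (a) with $\tfrac1p+\tfrac1q=1$ via Kuwada's duality theorem, which is legitimate here because Thm.~\ref{it:IDD} identifies $\E^{\U,\QP}$ with the Cheeger energy $\Ch^{\mssd_\U,\QP}$ on the extended metric measure space; alternatively one could stabilise the pointwise $\BE_p$ estimate on each $\U^k$ directly via Mosco convergence of the underlying Dirichlet forms.

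The main obstacle is the mismatch between the vague topology $\tau_\mrmv$ — in which the kernel convergence of Assumption~\ref{ia:AP}\iref{AP-4} is stated and in which $\QP$ lives as a Borel measure — and the strictly finer extended-metric topology $\tau_{\mssd_\U}$, with respect to which Wasserstein and Lipschitz quantities are naturally defined; compounding this, the heat kernel is only pointwise defined on the $\QP$-full set $\Theta$, so measurability, pushforwards and coupling constructions must be justified entirely inside the $\tau_\mrmv$-Borel $\sigma$-algebra. The key that unlocks everything is Prop.~\ref{p:PDF}, which realises $\mssd_\U$ as a monotone limit of $\tau_\mrmv$-continuous pseudo-distances and thereby makes both the Wasserstein cost and the log-Harnack remainder $\tau_\mrmv$-lower semi-continuous, permitting the finite-dimensional inequalities to survive the passage to the infinite-dimensional limit.
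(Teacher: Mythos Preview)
Your treatment of (a) and (b) matches the paper's: lift the finite-dimensional inequalities through the weak convergence of kernels in Assumption~\ref{ia:AP}\,\ref{AP-4}, using the $\tau_\mrmv^{\times 2}$-lower semi-continuity of $\mssd_\U$ (hence of $\mssW_{p,\mssd_\U}$) from Prop.~\ref{p:PDF} and Cor.~\ref{p:EMW1}.

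For (c) there is a genuine gap. Kuwada's duality in its standard form does not transfer to the extended-metric-measure setting, and the paper says so explicitly in Rem.~\ref{r:TREM}(a): the kernel $p_t^{\U,\QP}(\gamma,\cdot)$ exists only for $\gamma\in\Theta$, the $L^\infty\to\Cb(\U,\tau_\mrmv)$ regularisation is unavailable, and an a.e.\ inequality is useless because the exceptional set may destroy the length structure on which any slope argument relies. The identification $\E^{\U,\QP}=\Ch^{\mssd_\U,\QP}$ alone does not repair this, and your Mosco-convergence alternative controls energies, not pointwise carr\'e-du-champ. The paper's actual argument is a curve-based computation \emph{restricted to $\Theta$}: for $u\in\Lip_b(\mssd_\U,\QP)$ and a $\mssd_\U$-absolutely continuous curve $(\gamma_s)\subset\Theta$ (it stays in $\Theta$ by~\ref{AP-11}), Lisini's superposition applied to $s\mapsto\mcT_t^{\U,\QP}\delta_{\gamma_s}$, the upper-gradient property of $|\mssD_{\mssd_\U}u|$, and the $\mssW_2$-contraction together yield $|\mssD_{\mssd_\U}T_t^{\U,\QP}u|^2\le e^{-2Kt}\,T_t^{\U,\QP}|\mssD_{\mssd_\U}u|^2$ on $\Theta$. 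The ingredient you are missing is that making this pointwise requires $T_t^{\U,\QP}|\mssD_{\mssd_\U}u|^2$ to be $\tau_{\mssd_\U}$-continuous on $\Theta$, and this is supplied precisely by part~(b): the log-Harnack inequality gives a $\mcB_b\to\Cb(\Theta,\tau_{\mssd_\U})$ strong Feller property. Finally, $p=1$ cannot come from your dual-exponent scheme (it would need $q=\infty$); the paper obtains it from $p=2$ via Savar\'e's self-improvement, which in turn requires the $\tau_\mrmv$-quasi-regularity of Cor.~\ref{c:QR}, and then gets general $p>1$ by Jensen.
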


\subsection{Comparison with literature} \
\\
\vspace{-5mm}
 \subsection*{Comparison with the stability of $\EVI$  in metric measure spaces} Recalling that  the $\RCD$ condition is equivalent to the $\EVI$ property of the dual semigroup due to~\cite{AmbGigSav14b} in  the framework of metric measure spaces,  Thm.~\ref{c:ETD} can be regarded as a sort of  stability of the $\EVI$ property under the approximation of extended metric measure spaces by metric measure spaces. In the generality of (non-extended) metric measure spaces, the stability of $\EVI$ of the heat flow (equivalently, the $\RCD$ property) was established, e.g., under the pointed measure Gromov convergence in Gigli--Mondino--Savar\'e~\cite{GigMonSav15}; under the concentration topology in Funano--Shioya~\cite{FunShi13} combined with Ozawa--Yokota~\cite{OzaYok19}, see also \cite{GigVin24}. However, none of those can be applied to our setting of extended metric measure spaces because the existing approaches essentially rely on the structure that  the support of the reference measure is given by the topology induced by the distance, which does not hold in our setting as remarked in~\S\ref{ss:IP}.   

 \vspace{0mm}
\subsection*{Comparison with \cite{Suz22b}} \ In \cite{Suz22b}, the author studied the Bakry--\'Emery gradient estimate, a weaker property than the $\RCD$ and the $\EVI$ with respect to $\mssW_{2,\mssd_\U}$,   for a Dirichlet form associated with a class of $\QP$ satisfying Dobrushin--Lanford--Ruelle (DLR) equations.
 We compare it to our result below. 
\begin{itemize}
\item The Dirichlet form used in \cite{Suz22b} is associated to what is called {\it the lower Dirichlet form} (see \cite{KawOsaTan21} for the terminology), which is obtained as a monotone increasing limit of truncated Dirichlet forms, see~Dfn.~\ref{d:DFF}.   It is in general smaller (the domain is larger) than the {Dirichlet form} $\E^{\U, \QP}$ used in this paper, which is associated to what is called {\it the upper Dirichlet form} (see~also Thm.~\ref{t:IUL}).   
 We do not know whether the lower Dirichlet form coincides with the upper Dirichlet form nor the Cheeger energy in the generality of Thm.~\ref{it:IDD}. We also do not know whether the lower Dirichlet form associates a diffusion process in the generality of Thm.~\ref{it:IDD} due to the lack of the quasi-regularity while our Dirichlet form $\E^{\U, \QP}$ always does (see Cor.~\ref{c:QR}). 

\item 
Our approach here does not need any DLR equation and is applicable to, e.g., the case $\QP=\Airy_2$ where the DLR equation remains unknown. 

\item   In~\cite[Cor.~1.7]{Suz22b}, for $\QP=\sine_\beta$, the $\EVI$ in terms of a {\it variational-type extended distance~$\mssW_\E$} was proved; however, this concept is much weaker than  the $\EVI$ property with respect to $\mssW_{2, \mssd_\U}$ in~Thm.~\ref{c:ETD}. The definition of~$\mssW_\E$  does not involve any metric space information $(\U, \mssd_\U)$, indeed it  can be defined for arbitrary strongly local Dirichlet space admitting square field, which  does not even need to be a topological space, see \cite[Dfn.~10.1, Thm.~11.1]{AmbErbSav16}. 
Consequently, \cite[Cor.~1.7]{Suz22b} per se has nothing to do with the metric structure $\mssd_\U$, hence it does not associate the gradient flow with respect to the Wasserstein distance $\mssW_{2, \mssd_\U}$. 
%

 \end{itemize}
 
 \vspace{-2mm}
 \subsection*{Other literature}   \paragraph{Diffusion case}
In the \emph{independent} Brownian particle case (i.e., \(\QP\) Poisson), the \(\EVI\) characterisation of the heat flow on configuration space is known: in \cite{ErbHue15} for  the case where the base space is a Riemannian manifold; in~\cite{LzDSSuz22} for the case where the base space is a metric measure space.
There the infinite-particle semigroup \emph{tensorises} and the \(\EVI\) property essentially reduces to the one-particle flow, for which the independence is crucial.
We also note that because our reference measure $\QP$ is typically singular to the Poisson point process (as in $\sine_2$ and $\Airy_2$ cases), there is no way that reduces to the Poisson case by computing the Hessian of the logarithmic density. 
In ~\cite{ErbHueJalMul25},  a \emph{specific} (scaled) entropy and a \emph{scaled} transport distance on the space of translation-invariant point processes were introduced and proved an \(\EVI\) property for flows induced by {independent} Brownian motions started from stationary laws. 
The Benamou--Brenier type formula has been established in~\cite{HueMue23} in terms of {\it scaled} optimal transport distance. On the spin space over a lattice, the $\EVI$ property has been proven for interacting Brownian motions with short-range (but not necessarily finite-range) interaction potentials in~\cite{HerLeb25}. 
For the $n$-particle {\it labelled} Dyson Brownian motion with general $\beta$, the phase transition phenomenon for the $\BE$ property at the threshold $\beta=1$ was studied in \cite{TasSuz25}. 

\paragraph{Jump case} In~\cite{LzDSHerSuz24}, the $\EVI$ property for the non-local Ornstein--Uhlenbeck semigroup was proven in the Poisson space, for which a non-local Benamou--Brenier type extended distance $\mcW$ was introduced in $\mcP_\QP(\U)$ with $\QP=\pi$. The corresponding dynamics is a Glauber dynamics (a birth-death process) on Polish spaces. In~\cite{HueSta25}, a specific variant of $\mcW$ was introduced in the space of the laws of stationary point processes and the $\EVI$ property of the OU semigroup was proved with respect to the specific entropy.

\subsection*{Prospects and open problems} 
 Thm.~\ref{c:ETD} provides a general guideline to show the $\EVI$ property of $\QP$-reversible interacting Brownian motions on $\U$.  We expect that many examples will be covered by Assumption~\ref{ia:AP} such as $\sine_\beta$ and $\Airy_\beta$ for general $\beta>0$ as well as many other point processes whose finite-dimensional counterpart have interaction potentials with $K$-convexity (along geodesics) in the symmetric quotient of the base space. We note that our result does not require {\it the convexity} (i.e., the $0$-convexity), only the $K$-convexity, which is weaker than the convexity when $K<0$, see~\eqref{d:GV}.

\subsection*{Outlook for other infinite-dimensional spaces} 
We  believe that the method we develop in this paper can be applied to other infinite-dimensional settings whose $\EVI$ property remains largely uncharted such as spin spaces, path and loop spaces.
Indeed, Assumption~\ref{ia:AP} does not heavily rely upon the structure of the configuration space.
We also prove in Appendix a measurable selection theorem of geodesics in general extended metric measure spaces; this is of independent interest and underpins the variational arguments used here.

 \smallskip

 \vspace{-1mm}
\subsection*{Acknowledgement}
The author expresses his great appreciation to Xiaodan Zhou and Qing Liu for hosting him in the Theoretical Sciences Visiting Program (TSVP) at the Okinawa Institute of Science and
Technology. He is also very grateful to Nicola Gigli for inviting him to SISSA and giving him an opportunity to do a lecture series on this subject. He thanks Sunil Chhita  and Mustazee Rahman for their suggestions about references regarding Airy line ensemble.  He also thanks Lorenzo Dello Schiavo for his constructive suggestions about the presentation of the paper. 

\subsection*{Data Availability Statement}
No datasets were generated during this research.

\subsection*{Notation}
We write $\N:=\{1, 2, 3, \ldots\}$, $\N_0:=\{0, 1, 2, \ldots \}$, $\EN:=\N \cup \{+\infty\}$ and $\EN_0:=\N_0 \cup\{+\infty\}$. 
We adhere to the following conventions:
\begin{itemize}
\item the superscript~${\square}^\tym{N}$  denotes $N$-fold \emph{product objects};

\item the superscript~${\square}^\otym{N}$ denotes $N$-fold \emph{tensor objects};

\item the superscript~${\square}^\osym{N}$ denotes $N$-fold \emph{symmetric tensor objects}.
\end{itemize}
Let~$(X, \tau)$ be a topological space with $\sigma$-finite Borel measure~$\nu$. 
For a subset~$A \subset X$, we write $\nu\mrestr{A}$ for the restriction of the measure~$\nu$ to $A$, and $u|_A$ for the restriction of the function~$u$ to $A$. We use
\begin{enumerate}[$(a)$]
\item  {$L^0(X, \nu)$ for the space of $\nu$-equivalence classes of functions $X \to \R\cup\{\pm\infty\}$;} for $1 \le p \le \infty$, $L^p(X, \nu):=\{u \in L^0(X, \nu): \|u\|_{L^p(\nu)}<\infty\}$, where $\|u\|^p_p=\|u\|^p_{L^p(\nu)}:=\int_X |u|^p \diff \nu$ for $1 \le p<\infty$ and $\|u\|_{\infty}=\|u\|_{L^\infty(\nu)}:=\nu\text{-}\esssup_X u$ for $p=\infty$.  In the case of~$p=2$, the inner-product is denoted by $(u, v)_{L^2(\nu)}:=\int_X uv \diff \nu$. If no confusion could occur, we simply write $L^p(\nu)=L^p(X, \nu)$;


\item $\mcC(X, \tau)$ for the space of $\T$-continuous functions on~$X$; if $X$ is locally compact, $\mcC_c(X, \tau)$ denotes the space of $\tau$-continuous and compactly supported  functions on~$X$; $\mcC_c^\infty(\R)$ for the space of compactly supported smooth functions on~$\R$. If no confusion could occur, we simply write $\mcC(X)$, $\mcC_b(X)$ and $\mcC_c(X)$ respectively. 

\item $\mathscr B(X, \tau)$ for the Borel $\sigma$-algebra with respect to~$\tau$; $\mathscr B(X, \tau)^{\nu}$ for the completion of $\mathscr B(X, \tau)$ with respect to~$\nu$; $\mathscr B(X, \tau)^*$ for the universal $\sigma$-algebra, i.e., the intersection of $\mathscr B(X, \tau)^{\rho}$ among all Borel probability measures $\rho$ on $X$. If no confusion could occur, we simply write, e.g., $\msB(X)$ or $\msB(\tau)$;
A  measurable function $u: X \to \R$ with respect to $\mathscr B(X, \tau)$, $\mathscr B(X, \tau)^{\nu}$, $\mathscr B(X, \tau)^*$ is called {\it Borel measurable, $\nu$-measurable, universally measurable} respectively and denoted by $u\in \mathcal B(X, \tau), \mathcal B(X, \tau)^{\nu}, \mathcal B(X, \tau)^*$ respectively. If no confusion could occur, we simply write, e.g., $\mcB(X)$ or $\mcB(\tau)$;

\item $F_\# \nu$ for the push-forward measure, i.e., $F_\# \nu(\cdot)=\nu(F^{-1}(\cdot))$ given a measurable space $(Y, \Sigma)$ and a measurable map $F: (X, \mathcal B(X)^{\nu}) \to (Y, \Sigma)$;
\item $\1_{A}$ for the indicator function on $A$, i.e., $\1_{A}(x)=1$ if and only if $x \in A$, and $\1_A(x)=0$ otherwise; $\delta_x$ for the Dirac measure at $x$, i.e., $\delta_x(A)=1$ if and only if $x \in A$, and $\delta_{x}(A)=0$ otherwise;
\item $\square_+$ (resp.~$\square_b$) for a subspace of  nonnegative (resp.~bounded) functions from $X$ to $\R$, where $\square$ is a placeholder.  For instance, $\mcC_{+}(X):=\{u \in \mcC(X): u \ge 0\}$;
\item $\square_{sym}$ for a subspace of  symmetric functions from $X^{\times k}$ to $\R$. Here, $u:X^{\times k} \to \R$ is said to be {\it symmetric} if $u(x_1, \ldots, x_k)=u(x_{\sigma(1)}, \ldots, x_{\sigma(k)})$ for every $\sigma \in \mathfrak S(k)$ in the $k$-symmetric group. For instance, $\mcC_{sym}(X^{\times k}):=\{u \in \mcC(X^{\times k}): u\ \text{is symmetric}\}$.
\end{enumerate}

\section{Preliminaries} \label{sec: Pre}
%
%
\subsection{Dirichlet form}\label{subsec:D}
Throughout this article, a Hilbert space always means a separable Hilbert space with inner product~$(\cdot, \cdot)_H$ taking values in $\R$. 
Let $H$ be a Hilbert space and $\dom{Q}$ be a dense linear subspace in~$H$. We call a pair $(Q, \dom{Q})$ 
\begin{itemize}
\item  {\it symmetric form} or simply {\it form} if it is  a non-negative-definite symmetric bilinear form~$Q: \dom{Q}\times \dom{Q} \to \R$, i.e., $Q(u, v)=Q(v, u)$, $Q(u+v, w)=Q(u, w)+Q(v, w)$, $Q(\alpha u, v)=\alpha Q(u, v)$ and $Q(u, u) \ge 0$ for $u, v, w \in \dom{Q}$ and $\alpha \in \R$.  We write 
$Q(u)\eqdef Q(u,u)$, $Q_\alpha(u,v)\eqdef Q(u,v)+\alpha(u, v)_H, \alpha>0.$
We define $Q(u)=+\infty$ whenever $u \in H \setminus \dom{Q}$;
\item   \emph{closed} if the space $\dom{Q}$ endowed with the norm
$\norm{\emparg}_{\dom{Q}}:= Q_1(\emparg)^{1/2}=\sqrt{Q(\emparg)+\norm{\emparg}_{L^2(\nu)}^2} $
 is a Hilbert space;
 \item {\it closable} if for every $u_n \in \dom{Q}$ such that $Q(u_n-u_m) \xrightarrow{n, m \to \infty} 0$ and  $\|u_n\|_{H} \xrightarrow{n \to \infty}0$,  it holds  $Q(u_n) \xrightarrow{n \to \infty}0$.
If $(Q, \dom{Q})$ is closable, there exists the smallest closed extension (also called {\it closure}) $(Q', \dom{Q'})$ of  $(Q, \dom{Q})$, i.e., $(Q', \dom{Q'})$ is the smallest form satisfying that $\dom{Q} \subset \dom{Q'}$, $Q'=Q$ on $\dom{Q}^{\times 2}$ and $(Q', \dom{Q'})$ is closed.
\end{itemize}

\paragraph{Dirichlet form} Let $(X, \Sigma, \nu)$ be a $\sigma$-finite measure space. A \emph{symmetric Dirichlet form} $($or simply, {\it Dirichlet form$)$ on~$L^2(\nu)$} is a densely defined closed symmetric form~$(Q,\dom{Q})$ on~$L^2(\nu)$ satisfying the Markov property (i.e., the contraction property under the truncation by constants):
\begin{align*}
u_0\eqdef 0\vee u \wedge 1\in \dom{Q} \quad \text{and} \quad Q(u_0)\leq Q(u)\comm \quad u\in\dom{Q}\fstop
\end{align*}

\paragraph{Resolvent and semigroup}
Let $(Q, \dom{Q})$ be a symmetric closed  form on a Hilbert space~$H$.  
\begin{itemize}
\item {\it The resolvent operator} $\seq{R_\alpha}_{\alpha  \ge 0}$ is the unique bounded linear operator on~$H$  satisfying 
$$Q_\alpha(R_\alpha u, v) = (u, v)_{H} \cquad  u \in H \quad v \in \dom{Q} \fstop$$
\item {\it The semigroup} $\seq{T_t}_{t  \ge 0}$ is the unique bounded linear operator on~$H$ satisfying 
$$R_\alpha u = \int_{0}^\infty e^{-\alpha t} T_{t}u \diff t \quad u \in H\fstop$$
\end{itemize}
Note that $u_t=T_tu$ can be equivalently characterised as the unique solution to 
$\partial u_t = -\nabla_HQ(u_t)$ with $u_0=u,$ where $\nabla_H$ is the Fr\'echet derivative in~$H$.
Due to the symmetry of $(Q, \dom{Q})$, the semigroup $(T_t)_{t \ge 0}$ is $H$-symmetric: $(T_t u, v)_{H} = (u, T_tv)$ for $u, v \in H$ and $t>0$.
The semigroup $\seq{T_t}_{t \ge 0}$ has the following contraction properties (see, e.g., \cite[\S1.3, p.16 \& Lem.~1.3.3]{FukOshTak11}): for every $t>0$
\begin{align}\label{e:con}
\|T_t u\|_{H} \le \|u\|_H \quad u \in H  \comma \quad Q(T_tu) \le Q(u) \quad u \in \dom{Q}\fstop
\end{align}
If $(Q, \dom{Q})$ is a Dirichlet form on $L^2(X, \nu)$, then $0 \le T_t u \le 1$ whenever $0 \le u \le 1$, see, e.g., \cite[Thm.~1.4.1]{FukOshTak11}. In this case,  the contraction \eqref{e:con} extends to the $L^p$ space for all $1 \le p \le \infty$:
\begin{align}\label{e:con1}
\|T_t u\|_{L^p} \le \|u\|_{L^p} \quad u \in L^p(\nu)  \fstop
\end{align}
See, e.g., \cite[Thm.~1.3.3]{Dav89}. Furthermore, $\nu$ is an invariant measure for $T_t$: 
\begin{align}\label{e:inv1}
 \int_X T_t u \diff \nu = \int_X u \diff \nu \quad u \in L^1(\nu) \fstop
\end{align}

\paragraph{Quasi-notion}
Let $(X, \tau)$ be a Polish topological space (i.e., a separable topological space whose topology is metrised by a complete distance), $\nu$ be a $\sigma$-finite Borel measure on $X$ and~$(Q, \dom{Q})$ be a  Dirichlet form on $L^2(\nu)$.
For~$A\in\mathscr B(X)$, define
$\dom{Q}_A\eqdef \set{u\in \dom{Q}: u= 0 \ \text{$\nu$-a.e. on}\ X\setminus A}.$
A sequence $\seq{A_n}_{n \in \N}\subset \mathscr B(X)$ is 
\begin{itemize}
\item a \emph{Borel nest} if $\cup_{n \in \N} \dom{Q}_{A_n}$ is dense in~$\dom{Q}$;
\item a \emph{closed (resp.~compact) nest} if it is a Borel nest consisting of closed (resp.~compact) sets. 
\end{itemize}
A set~$N\subset X$ is \emph{exceptional} if there exists a closed nest~$\seq{F_n}_{n  \in \N}$ such that~$N\subset X\setminus \cup_n F_n$. It is a standard fact that any exceptional set $N$ is $\nu$-negligible (see, e.g., \cite[Exe.~2.3]{MaRoe90}). 
%
A property~$(p_x)$ depending on~$x\in X$ holds \emph{quasi-everywhere} (in short: q.e.) if there exists an exceptional set~$N$ such that~$(p_x)$ holds for every~$x\in X\setminus N$. For a closed nest $\seq{F_n}_{n \in \N}$, define 
$$\mathcal C(\seq{F_n}_{n \in \N}, \tau):=\{u: A \to \R: \cup_{n \ge 1} F_n  \subset A \subset X,\ u|_{F_n} \ \text{is $\tau$-continuous for every $n \in \N$}\} \fstop$$
A function $u$ defined quasi-everywhere on $X$ is {\it quasi-continuous} if there exists a closed nest $\seq{F_n}_{n \in \N}$ such that $u \in \mathcal C(\seq{F_n}_{n \in \N}, \tau)$.  

\paragraph{Quasi-regularity} A Dirichlet form $(Q, \dom{Q})$ on $L^2(\nu)$ is {\it quasi-regular} if the following conditions hold:
\begin{enumerate}[{\rm (QR1)}]
\item \label{QR1} there exists a compact nest $(A_n)_{n \in \N}$; 
\item \label{QR2}  there exists a dense subspace $\mathcal D \subset \dom{Q}$ such that every $u \in \mathcal D$ has a quasi-continuous~$\nu$-representative~$\tilde{u}$;
\item \label{QR3}  there exists $\{u_n: n \in \N\} \subset \dom{Q}$ and an exceptional set $N \subset X$ such that every $u_n$ has a quasi-continuous~$\nu$-representative~$\tilde{u}_n$ and $\{\tilde{u}_n: n \in \N\}$ separates points in $X \setminus N$. 
\end{enumerate}
If we want to stress the topology~$\tau$, we say that  $(Q, \dom{Q})$ is {\it $\tau$-quasi-regular}. 
If $(Q, \dom{Q})$ is quasi-regular, there exists a $\nu$-reversible continuous-time strong Markov process  on~$X$ 
whose transition semigroup is a quasi-continuous $\nu$-representative of  the $L^2$-semigroup of $(Q, \dom{Q})$~(see~\cite[Thm~2.13 and Prop.~2.15 in Chapter V]{MaRoe90}). 
%
%
%
%

\paragraph{Capacity} Let $(X, \tau)$ be a Polish space, $\nu$ be a $\sigma$-finite Borel measure on $X$ and $(Q, \dom{Q})$ be a Dirichlet form on $L^2(\nu)$.
For an open set $U \subset X$, define
$\mathcal L_U:=\{u \in \dom{Q}: u \ge 1\ \text{$\nu$-a.e.~on $U$}\}$,
and 
\begin{align*}
{\rm Cap}_{{Q}}(U)=
\begin{cases}
\inf_{u \in \mathcal L_U}\bigl(Q(u)+\|u\|_{L^2}^2\bigr) \quad &\text{if $ \mathcal L_U \neq \emptyset$;}
\\
+\infty \quad &\text{if $ \mathcal L_U = \emptyset$.}
\end{cases}
\end{align*}
For any subset $A \subset X$, define 
${\rm Cap}_{{Q}}(A):=\inf\{{\rm Cap}_{{Q}}(U): A \subset U \subset X,\ \text{$U$ open}\}.$
The condition~\ref{QR1} is equivalent to the tightness of $\Cap_Q$ (see e.g.,~\cite[Rem.~3.2 on p.101]{MaRoe90}): There exists a sequence of $\tau$-compact sets $(K_n)_{n \in \N} \subset X$ such that 
\begin{align}\label{d:TC}
\Cap_Q(K_n^c) \xrightarrow{n \to \infty} 0 \fstop
\end{align}

\paragraph{Locality}  Let $(X, \Sigma, \nu)$ be a $\sigma$-finite measure space and let $(Q, \dom{Q})$ be a  Dirihclet form on $L^2(\nu)$. 
The form~$(Q, \dom{Q})$ is called {\it strongly local} if 
$Q(u, v)=0$ whenever $u, v \in \dom{Q}$ and  $u(v-c)=0$ on $X$ for some constant $c \in \R$.
Note that this definition does not require any topology in $X$.  If $(Q, \dom{Q})$ is quasi-regular and strongly local, it associates a $\nu$-reversible continuous-time strong Markov process having continuous trajectories (i.e.,  $\nu$-reversible diffusion process) and having no killing inside~$X$, see~\cite[Rmk.~2.4.4 and Thm.~4.3.4]{CheFuk11}.


\paragraph{Square field} A Dirichlet form $(Q, \dom{Q})$ {\it admits square field $\cdc$} if  there exists a dense subspace $H \subset \dom{Q} \cap L^\infty(\nu)$ such that for every~$u \in H$, there exists $v \in L^1(\nu)$ such that 
$2Q(uh, u) -Q(h, u^2) = \int_X h v \diff \nu$ for every $h \in \dom{Q} \cap L^\infty(\nu).$
In this case, $v$ is unique, and denoted by $\Gamma(u)$. The square field $\Gamma$ can be uniquely extended to an operator on $\dom{Q}  \times \dom{Q} \to L^1(\nu)$ (\cite[Thm.\ I.4.1.3]{BouHir91}).

\paragraph{Intrinsic extended distance} Let $(X, \tau)$ be a Polish space and $\nu$ be a $\sigma$-finite Borel measure~$\nu$. Let $(Q, \dom{Q})$ be a strongly local Dirichlet form in~$L^2(X, \nu)$ admitting a square field $\cdc$. {\it The intrinsic extended distance} $\mssd_{Q}$ is defined as
\begin{align}\label{d:IDS}
\mssd_Q(x,y):=\sup\{u(x)-u(y): \cdc(u) \le 1 \quad u \in \mathcal C_b(X, \tau) \cap \dom{Q}\}\fstop
\end{align}

\subsection{Extended metric measure spaces} \label{subsec:EMM} 
Let~$X$ be any non-empty set. A function $\mssd\colon X^\tym{2}\rar [0,\infty]$ is called 
\begin{itemize}

\item  an {\it extended} \emph{distance} if it is symmetric, satisfying the triangle inequality  and not vanishing outside the diagonal in~$X^{\tym{2}}$, i.e.~$\mssd(x,y)=0$ iff~$x=y$;
\item  a  {\it  distance} if it is an extended distance and finite, i.e., $\mssd(x, y)<\infty$ for every $x, y \in X$;
\item a \emph{pseudo-distance} if it is symmetric, satisfying the triangle inequality and finite.
\end{itemize}
A space endowed with an extended distance (resp.~distance) is called {\it extended metric space} (resp.~{\it metric space}). 
An extended distance~$\mssd$ is {\it complete} if every $\mssd$-Cauchy sequence is a converging sequence with respect to the topology $\tau_\mssd$ generated by $\mssd$.
 We write~$B_r(x_0)\eqdef \set{\mssd_{x_0} < r}$, where $\mssd_{x_0}:=\mssd(x_0, \cdot)$.

%

\paragraph{Lipschitz algebra} Let~$\mssd$ be a pseudo-distance or an extended distance on~$X$. A function~$f\colon X\rar \R$ is called {\it $\mssd$-Lipschitz} if there exists a constant~$L>0$ such that
\begin{align}\label{eq:Lipschitz}
\tabs{ u(x)-u(y)}\leq L\, \mssd(x,y) \comm \qquad x,y\in X \fstop
\end{align}
The smallest constant~$L$ satisfying~\eqref{eq:Lipschitz}  is called {\it the (global) Lipschitz constant of~$u$}, denoted by~$\Lip_\mssd{(u)}$.
We denote by $\Lip(X,\mssd)$  the space of all $\mssd$-Lipschitz functions.
If no confusion could occur, we simply write 
$\Lip(\mssd)=\Lip(X,\mssd)$. 
For a given measure $\nu$ on a $\sigma$-algebra $\Sigma$ in $X$ (not necessarily the Borel $\sigma$-algebra $\mathscr B(X, \tau_{\mssd})$), we set 
$\Lip(X, \mssd, \nu):=\{u \in \Lip(\mssd): \text{$u$ is $\Sigma^\nu$-measurable}\},$
where $\Sigma^\nu$ is the completion of the $\sigma$-algebra~$\Sigma$ with respect to~$\nu$.
Similarly, we simply write $\Lip(\mssd, \nu)=\Lip(X, \mssd, \nu)$ if no confusion could occur.

\paragraph{Absolutely continuous curve}
Let $(X, \mssd)$ be an extended metric space, $\tau_\mssd$ be the topology induced by $\mssd$,  and $J \subset \R$ be an open interval. A continuous map $\rho : J \to (X, \tau_\mssd)$ is {\it $p$-absolutely continuous} and denoted by $\rho=(\rho_t)_{t \in J} \in {\rm AC}^p(J, (X, \mssd))$ if there exists $g \in L^p(J, \diff x)$ such that 
\begin{align}\label{d:ABC}
\mssd(\rho_s, \rho_t) \le \int_s^t g(r) \diff r \cquad s, t \in J \quad s<t \fstop
\end{align}
If $p=1$, we simply say that $\rho$ is {\it absolutely continuous} and denoted by $\rho \in AC(J, (X, \mssd))$. The minimal $g$ among those satisfying~\eqref{d:ABC} exists and this is identical to 
\begin{align}\label{d:MD}
|\dot\rho_t| :=\lim_{s \to t}\frac{\mssd(\rho_s, \rho_t)}{|s-t|} \cquad \text{the limit exists in a.e.~$t \in J$}\comma
\end{align}
which is called {\it metric speed, or metric derivative of $\rho$}. Namely, $|\dot\rho_t|$ satisfies \eqref{d:ABC} and $|\dot\rho_t| \le g(t)$ for a.e.~$t \in J$ for every $g$ satisfying~\eqref{d:ABC}, see \cite[Thm.~1.1.2]{AmbGigSav08}. We say that an absolutely continuous curve $\rho$ is of {\it constant speed} if $|\dot\rho_t|$ is a constant for a.e. $t \in J$.


\paragraph{Geodesic convexity} We say that an extended metric space~$(X, \mssd)$ is {\it geodesic} if, for every pair of points~$x_0, x_1 \in X$ with $\mssd(x_0, x_1)<\infty$, there exists a constant speed geodesic $(x_t)_{t \in [0,1]}$ connecting $x_0$ and $x_1$, i.e., 
$\mssd(x_t, x_s) = |t-s|\mssd(x_0, x_1)$ for $t, s \in [0,1].$
By a simple application of the triangle inequality,  it is equivalent to 
\begin{align}\label{e:GEI}
\mssd(x_t, x_s) \le  |t-s|\mssd(x_0, x_1) \cquad t, s \in [0,1] \fstop
\end{align}
 We write ${\rm Geo}(X, \mssd)$ for the space of constant speed geodesics. Let $(X, \mssd)$ be a {\it geodesic} extended metric space. We say that $U: X \to \R\cup\{+\infty\}$ is {\it $K$-geodesically convex} for $K \in \R$ if for any $x_0, x_1 \in \dom{U}$ with $\mssd(x_0, x_1)<\infty$, there exists a constant speed geodesic $(x_t)_{t \in [0,1]}$ connecting $x_0$ and $x_1$ such that  
\begin{align} \label{d:GV}
U(x_t)\le (1-t)U(x_0)+tU(x_1)-\frac{K}{2}t(1-t)\mssd^2(x_0, x_1) \quad t \in [0,1] \fstop
\end{align}
When $K=0$, we say that $U$ is {\it geodesically convex}. When $X=\R^n$ and $\mssd$ is the standard Euclidean distance, the geodesical convexity coincides with the usual convexity. 

\paragraph{Slopes}
Let~$(X, \mssd)$ be an extended metric space and $u: X \to \R\cup\{\pm \infty\}$ be a function. For $x \in \dom{u}=\{x \in X: u(x) \in \R\}$,  {\it the slope (or local Lipschitz constant) of $u$ at $x$} is defined as 
\begin{align} \label{d:SLP}
|\mathsf D_{\mssd}u|(x):=
\begin{cases} \displaystyle
\limsup_{y \to x}\frac{|u(y)-u(x)|}{\mssd(y, x)} &\text{if $x$ is not isolated}; 
\\
0 &\text{otherwise} \fstop
\end{cases}
\end{align}
 {\it The descending slope} is defined as 
\begin{align} \label{d:DLP}
|\mathsf D_{\mssd}^-u|(x):=
\begin{cases} \displaystyle
\limsup_{y \to x}\frac{(u(y)-u(x))^-}{\mssd(y, x)} &\text{if $x$ is not isolated}; 
\\
0 &\text{otherwise} \fstop
\end{cases}
\end{align}
It is straightforward to see 
\begin{align} \label{e:SLC}
|\mathsf D_{\mssd}u| \le \Lip_{\mssd}(u) \cquad u \in \Lip(X, \mssd) \fstop
\end{align}

\begin{defs}[{Extended metric measure space cf.~\cite[Dfn.~4.7]{AmbErbSav16}}] \label{d:EMM} We say that $(X, \tau, \mssd, \nu)$ is an {\it extended metric measure space} if 
\begin{enumerate}[(a)]
\item \label{d:EMM1} $(X, \tau)$ is a Polish space;
\item  \label{d:EMM2} $\mssd$ is a complete extended distance, and there exists a family of $\tau \times \tau$-continuous bounded pseudo-distances $\mssd_i: X^{\times 2} \to [0,\infty)$ with $i \in I$ such that 
$\mssd = \sup_{i \in I}\mssd_i \ ; $
\item  \label{d:EMM3} the topology~$\tau$ is generated by $\Lip_b(X, \mssd) \cap \mcC(X, \tau)$;
\item  \label{d:EMM4} $\nu$ is a finite measure on $(X, \msB(\tau))$.
\end{enumerate}
Note that the cardinality of the index set $I$ may be uncountable. 
We say that $(X, \tau, \mssd, \nu)$ is a {\it metric measure space} if $\tau$ is generated by $\mssd$ and $\mssd$ is a distance. In this case, we simply write $(X, \mssd, \nu)$. We say that $(X, \tau, \mssd)$ is an {\it extended metric topological space} if \ref{d:EMM1}--\ref{d:EMM3} hold. 
\end{defs}
\begin{rem}\ \label{r:REW}
\begin{enumerate}[(a)]
\item \label{r:REW1} Our definition is a particular case of \cite[Dfn.~4.7]{AmbErbSav16}.
\item \label{r:REW2}The topology $\tau_\mssd$ generated by $\mssd$ is not separable in general. 
\item \label{r:REW3}The topology $\tau_\mssd$ is stronger than $\tau$, see \cite[(4.4)]{AmbErbSav16}.
\item \label{r:REW4}$\mssd$ is $\tau \times \tau$-lower semi-continuous due to \ref{d:EMM2}. For every $u\in \mcB(X, \tau)$, the slope~$|\mathsf D_\mssd u|$ is 
 universally measurable, see \cite[Lem.~2.6]{AmbGigSav14}. 
\end{enumerate}
\end{rem}

{


{
 \subsection{$L^p$-transportation extended distance}
Let $(X, \tau, \mssd)$ be an extended metric topological space in the sense of Definition~\ref{d:EMM}, and suppose that $(X, \mssd)$ is geodesic.  Take any distance $\mssd_{\tau}$ on~$X$ metrising $\tau$ and define $\tau_\mrmu$ to be the uniform topology on~the space~$C\bigl([0,1]; (X, \tau)\bigr)$ of $\tau$-continuous paths $[0,1] \to X$. As $\tau_\mssd$ is stronger than $\tau$, $\mathsf{Geo}(X, \mssd) \subset C\bigl([0,1];(X, \tau)\bigr)$ and we induce the subset topology $\tau_\mrmu$ to $\mathsf{Geo}(X, \mssd)$. Due to the $\tau^{\times 2}$-lower semi-continuity of $\mssd$, the following set is  a $\tau^{\times 2}$-Borel set 
$$D:=\{(x,y) \in X^{\times 2}: \mssd(x,y)<\infty\} \fstop$$
We endow $D$ with the subset topology $\tau^{\times 2}$ from~$X^{\times 2}$. 
\begin{lem}[Measurable selection of geodesics, the proof appears in Appendix] \label{l:MS}
Assume that $(X, \tau, \mssd)$ is an extended metric topological space in the sense of Definition~\ref{d:EMM} with the index set~$I=\mathbb N$. Suppose that $(X, \mssd)$ is geodesic.  
Then, for any $\nu \in \mathcal P(D)$, there exists a $\mathscr B(\tau^{\times 2})^\nu/\mathscr B(\tau_\mrmu)$-measurable map $\mathsf{GeoSel}: D \to \mathsf{Geo}(X, \mssd)$ such that for every~$(x, y) \in D$, $\mathsf{GeoSel}(x, y)$ is a constant speed geodesic connecting~$x$ and~$y$. 
\end{lem}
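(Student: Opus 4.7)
The strategy is to realise $\mathsf{Geo}(X,\mssd)$ as a Borel subset of a Polish path space, to verify that the set-valued map
\begin{equation*}
F\colon D\rightrightarrows \mathsf{Geo}(X,\mssd)\comma\qquad F(x,y):=\{\gamma\in\mathsf{Geo}(X,\mssd):\gamma_0=x,\ \gamma_1=y\},
\end{equation*}
has Borel graph, and then to invoke the Jankov--von Neumann uniformisation theorem. Since $(X,\tau)$ is Polish, picking any complete metric $\mssd_\tau$ inducing $\tau$, the path space $Y:=C([0,1];(X,\tau))$ equipped with the uniform distance $d_\infty(\gamma,\eta):=\sup_t\mssd_\tau(\gamma_t,\eta_t)$ is Polish, and its topology coincides with the given uniform topology $\tau_\mrmu$. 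The evaluations $\mathrm{ev}_t\colon(Y,\tau_\mrmu)\to(X,\tau)$ are continuous, and $\mssd=\sup_{i\in\mathbb N}\mssd_i$ is $\tau^{\times 2}$-lower semi-continuous as a supremum of a countable family of $\tau^{\times 2}$-continuous pseudo-distances.

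I first claim that $\mathsf{Geo}(X,\mssd)$ is Borel in $Y$. A path $\gamma\in Y$ belongs to $\mathsf{Geo}(X,\mssd)$ if and only if $\mssd(\gamma_0,\gamma_1)<+\infty$ and
\begin{equation*}
\mssd(\gamma_s,\gamma_t)\le|s-t|\,\mssd(\gamma_0,\gamma_1)\qquad\text{for all } s,t\in\mathbb Q\cap[0,1].
\end{equation*}
The forward implication is immediate; for the converse, setting $s=0$, $t=1$ and using the triangle inequality forces equality on rationals, and the $\tau$-continuity of $\gamma$ combined with the $\tau^{\times 2}$-lower semi-continuity of $\mssd$ extends the inequality, hence the equality, to all $s,t\in[0,1]$. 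For any two $[0,\infty]$-valued lower semi-continuous functions $g,h\colon Y\to[0,\infty]$ the sublevel set $\{g\le h\}$ is Borel: its complement decomposes as $\bigcup_{q\in\mathbb Q_+}\{g>q\}\cap\{h<q\}$, where $\{g>q\}$ is open by LSC of $g$ and $\{h<q\}=\bigcup_n\{h\le q-1/n\}$ is $F_\sigma$ by LSC of $h$. Applying this with $g=\mssd\circ(\mathrm{ev}_s,\mathrm{ev}_t)$ and $h=|s-t|\,\mssd\circ(\mathrm{ev}_0,\mathrm{ev}_1)$ and intersecting over all $s,t\in\mathbb Q\cap[0,1]$ (together with the Borel condition $\mssd(\gamma_0,\gamma_1)<+\infty$) shows that $\mathsf{Geo}(X,\mssd)$ is Borel.

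Next, the graph
\begin{equation*}
\mathrm{Graph}(F)=\bigl\{((x,y),\gamma)\in X^{\times 2}\times Y:\gamma_0=x,\ \gamma_1=y,\ \gamma\in\mathsf{Geo}(X,\mssd)\bigr\}
\end{equation*}
is Borel in the Polish space $X^{\times 2}\times Y$: Hausdorffness of $(X,\tau)$ and continuity of $\mathrm{ev}_0,\mathrm{ev}_1$ make the conditions $\gamma_0=x$ and $\gamma_1=y$ closed, while membership in $\mathsf{Geo}(X,\mssd)$ is Borel by the previous step. Because $(X,\mssd)$ is geodesic, $\pi_{X^{\times 2}}(\mathrm{Graph}(F))=D$. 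Applying the Jankov--von Neumann uniformisation theorem to this Borel (in particular, analytic) subset of the Polish space $X^{\times 2}\times Y$ produces a $\sigma(\mathbf\Sigma^1_1)$-measurable---hence universally measurable---section $\mathsf{GeoSel}\colon D\to Y$. Universal measurability implies $\mathscr B(\tau^{\times 2})^\nu/\mathscr B(\tau_\mrmu)$-measurability for the prescribed $\nu\in\mathcal P(D)$, and by construction $\mathsf{GeoSel}(x,y)$ is a constant-speed geodesic joining $x$ to $y$.

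\textbf{Main obstacle.} The only non-routine step is the Borelness of $\mathsf{Geo}(X,\mssd)$: since $\mssd$ is only lower semi-continuous and may take $+\infty$, the geodesic condition is naturally a comparison ``LSC $\le$ LSC'' whose sublevel set is merely Borel rather than closed, and must be decomposed into countably many rational-indexed conditions. Once this is in place, the graph measurability and the Jankov--von Neumann selection are standard descriptive set theory; one could instead appeal to a Kuratowski--Ryll-Nardzewski-type theorem (the values of $F$ are $\tau_\mrmu$-closed), but the graph-based uniformisation gives the cleanest path to universal measurability.
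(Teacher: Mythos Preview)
Your proof is correct and reaches the same conclusion, but by a genuinely different route than the paper. The paper works with the approximating pseudo-distances $\mssd_n$ explicitly: it introduces the $\varepsilon$-approximate geodesic graphs $\mathrm{Graph}(G_{n,\varepsilon})$, shows each of these is $\tau^{\times 2}\times\tau_\mrmu$-\emph{closed} using the $\tau^{\times2}$-continuity of $\mssd_n$, and then recovers $\mathrm{Graph}(G)$ as the countable Boolean combination $\bigcap_k\liminf_n\mathrm{Graph}(G_{n,\varepsilon_k})$ intersected with $D\times\mathsf{Geo}(X,\mssd)$; finally it applies the Aumann/Sainte-Beuve selection theorem. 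You bypass the approximants entirely: using only the $\tau^{\times2}$-lower semi-continuity of $\mssd$ (itself a consequence of $\mssd=\sup_n\mssd_n$), you reduce membership in $\mathsf{Geo}(X,\mssd)$ to countably many ``LSC $\le$ LSC'' constraints indexed by rational times, each of which is Borel, and then invoke Jankov--von~Neumann.

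Your argument is slightly more economical: it never uses the countability hypothesis $I=\mathbb N$ beyond the lower semi-continuity of $\mssd$ that it implies (which in fact holds for arbitrary index sets, cf.\ the paper's own Remark after Definition~\ref{d:EMM}), and it delivers a \emph{universally} measurable selection rather than one depending on the fixed $\nu$. The paper's approach, by contrast, keeps the witnessing family $(\mssd_n)_n$ in the foreground, which is consistent with how these approximants are used elsewhere in the paper; it also makes the closedness of the approximate graphs transparent, whereas your route has to unpack why $\{g\le h\}$ is Borel for two lower semi-continuous functions. Both proofs terminate with equivalent selection theorems, so the difference is one of packaging rather than depth.
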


Let $(X, \tau, \mssd)$ be an extended metric topological space in the sense of Definition~\ref{d:EMM}.  
We endow~$\mcP(X)=\mcP(X, \tau)$ with the weak topology $\tau_\mrmw$, i.e., the topology induced by the duality of $C_b(X, \tau)$.
For $\nu, \sigma \in \mathcal P(X)$, we define {\it the $L^p$-transportation extended distance}~$\mssW_{p, \mssd}$ for $1 \le p<\infty$ as 
\begin{align*} 
\mssW_{p, \mssd}^p(\nu, \sigma):=\inf_{{\sf c} \in {\sf Cpl}(\nu, \sigma)} \int_{X^{\times 2}} \mssd^p(\gamma, \eta) \diff {\sf c}(\gamma, \eta) \comma
\end{align*}
where ${\sf Cpl}(\nu, \sigma)$ is the space of all Borel probability measures on~$(X^{\times 2},\tau^{\times 2})$ satisfying ${\sf c}(\Xi \times X)=\nu(\Xi)$ and ${\sf c}(X \times \Lambda)=\sigma(\Lambda)$ for every $\Xi, \Lambda \in \mathscr B(X, \tau)$. Due to the $\tau^{\times 2}$-lower semi-continuity of $\mssd^p$, for $\nu, \sigma \in \mcP(X)$ with $\mssW_{p, \mssd}(\nu, \sigma)<\infty$, there exists an optimal coupling $\mssc \in {\sf Cpl}(\nu, \sigma)$, i.e., (\cite[Thm.~4.1]{Vil09})
$$\mssW_{p, \mssd}(\nu, \sigma)^p = \int_{X^{\times 2}} \mssd(\gamma, \eta)^p \diff \mssc(\gamma, \eta) \fstop$$
We denote by ${\rm Opt}(\nu, \sigma)$ the set of optimal couplings between $\nu$ and $\sigma$. 
\begin{prop}[the proof appears in Appendix] \label{p:EMW}
Let $1\le p<\infty$. If $(X, \tau, \mssd)$ is an extended geodesic metric-topological space in the sense of Definition~\ref{d:EMM} with the index set~$I=\mathbb N$, then  so is $(\mathcal P(X), \tau_\mrmw, \mssW_{p, \mssd})$. 
\end{prop}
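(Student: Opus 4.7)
My plan is to verify in turn the four ingredients of Definition~\ref{d:EMM}\iref{d:EMM1}--\iref{d:EMM3} plus geodesicity for $(\mcP(X),\tau_\mrmw,\mssW_{p,\mssd})$. Polishness of $(\mcP(X),\tau_\mrmw)$ is classical (Prokhorov). That $\mssW_{p,\mssd}$ is a symmetric extended distance vanishing only on the diagonal and satisfying the triangle inequality follows from the standard gluing of optimal couplings; optimisers exist by $\tau^{\times 2}$-lower semicontinuity of $\mssd^p$ (Remark~\ref{r:REW}\iref{r:REW4}) and tightness of marginals. Condition~\iref{d:EMM3} reduces at once to the corresponding condition for $(X,\tau,\mssd)$: each $f\in\Lip_b(X,\mssd)\cap \mcC_b(X,\tau)$ induces $\nu\mapsto\int f\diff\nu$, which is $\tau_\mrmw$-continuous by the very definition of the weak topology, and $\Lip_\mssd(f)$-Lipschitz with respect to $\mssW_{1,\mssd}\leq \mssW_{p,\mssd}$ by Kantorovich--Rubinstein.

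For the countable approximation, I will first replace $\{\mssd_i\}_{i\in\N}$ by its sequence of finite maxima, so that without loss of generality $\mssd_i\uparrow\mssd$ monotonically. Each $\mssd_i^p$ being bounded and $\tau^{\times 2}$-continuous, the Kantorovich functional $(\nu,\sigma)\mapsto\mssW_{p,\mssd_i}^p(\nu,\sigma)$ is $\tau_\mrmw^{\times 2}$-continuous by the standard direct method for bounded continuous costs, so each $\mssW_{p,\mssd_i}$ is a bounded $\tau_\mrmw^{\times 2}$-continuous pseudo-distance. To prove $\mssW_{p,\mssd}=\sup_i\mssW_{p,\mssd_i}$, the inequality $\geq$ is clear; for $\leq$, given $M:=\sup_j\mssW_{p,\mssd_j}^p(\nu,\sigma)<\infty$ and optimal couplings $\mssc_i\in\Opt(\nu,\sigma)$ for the cost $\mssd_i^p$, extract by tightness of the marginals a $\tau_\mrmw$-limit $\mssc\in\Cpl(\nu,\sigma)$; $\tau^{\times 2}$-continuity of each fixed $\mssd_j^p$ together with the monotonicity $\mssd_j\leq\mssd_i$ for $i\geq j$ yields $\int\mssd_j^p\diff\mssc\leq M$ for every $j$, and monotone convergence gives $\int\mssd^p\diff\mssc\leq M$.

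Geodesicity I will obtain from Lemma~\ref{l:MS}. Given $\nu_0,\nu_1$ with $\mssW_{p,\mssd}(\nu_0,\nu_1)<\infty$, any optimal coupling $\mssc$ is concentrated on the Borel set $D$; applying Lemma~\ref{l:MS} with $\nu=\mssc$ supplies a $\msB(\tau^{\times 2})^{\mssc}/\msB(\tau_\mrmu)$-measurable selector $\GeoSel\colon D\to\Geo(X,\mssd)$. Push forward to get $\tilde\mssc:=\GeoSel_\#\mssc\in\mcP(\Geo(X,\mssd))$ and set $\nu_t:=(e_t)_\#\tilde\mssc$ with $e_t(\gamma):=\gamma_t$. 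Since every $\GeoSel(x,y)$ is a constant-speed geodesic,
\begin{equation*}
\int \mssd(\gamma_s,\gamma_t)^p\diff\tilde\mssc \;=\; |t-s|^p\int\mssd(\gamma_0,\gamma_1)^p\diff\tilde\mssc \;=\; |t-s|^p\,\mssW_{p,\mssd}^p(\nu_0,\nu_1),
\end{equation*}
so $(e_s,e_t)_\#\tilde\mssc$ is a coupling of $\nu_s,\nu_t$ yielding $\mssW_{p,\mssd}(\nu_s,\nu_t)\leq |t-s|\,\mssW_{p,\mssd}(\nu_0,\nu_1)$, which by \eqref{e:GEI} makes $(\nu_t)_{t\in[0,1]}$ a constant-speed geodesic.

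The main obstacle will be completeness of $\mssW_{p,\mssd}$: because $\tau_\mssd$ is typically non-separable and strictly finer than $\tau$ (Remark~\ref{r:REW}\iref{r:REW2}--\iref{r:REW3}), naive tightness arguments on a Cauchy sequence are unavailable and one cannot extract a $\tau_\mrmw$-limit directly. My plan is the Lisini-type iterated gluing: from a $\mssW_{p,\mssd}$-Cauchy sequence $(\nu_n)$ extract a subsequence with $\mssW_{p,\mssd}(\nu_n,\nu_{n+1})\leq 2^{-n}$, pick optimal couplings $\mssc_n\in\Opt(\nu_n,\nu_{n+1})$, and by successive disintegration build $\pi\in\mcP(X^{\N})$ with $(p_n,p_{n+1})_\#\pi=\mssc_n$. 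A Markov--Borel--Cantelli argument gives $\sum_n\mssd(X_n,X_{n+1})<\infty$ $\pi$-almost surely, so $(X_n(\omega))$ is $\mssd$-Cauchy $\pi$-a.s.; completeness of $\mssd$ furnishes a $\tau_\mssd$-limit $X_\infty(\omega)$ (extended by a fixed basepoint on the exceptional set), which is $\tau$-Borel measurable as a $\tau$-pointwise limit of Borel maps on a Borel set. Setting $\nu_\infty:=(X_\infty)_\#\pi$, the coupling $(X_n,X_\infty)_\#\pi$ combined with Minkowski's inequality in $L^p(\pi)$ yields $\mssW_{p,\mssd}(\nu_n,\nu_\infty)\leq \sum_{k\geq n}\mssW_{p,\mssd}(\nu_k,\nu_{k+1})\leq 2^{1-n}\to 0$, completing the argument.
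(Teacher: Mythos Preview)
Your argument is correct and considerably more self-contained than the paper's. The paper simply cites \cite[Prop.~5.3, 5.4]{AmbErbSav16} for all of \iref{d:EMM1}--\iref{d:EMM3} and completeness (for $p=2$, asserting the general $p$ is verbatim), and then proves geodesicity exactly as you do: an optimal coupling is concentrated on $D$, Lemma~\ref{l:MS} gives $\GeoSel$, and the push-forward $(e_t)_\#(\GeoSel_\#\mssc)$ is the desired geodesic (the paper cites \cite[Prop.~4.1]{Lis16} for the last step where you compute directly). Your explicit treatment of the monotone approximation $\mssW_{p,\mssd_i}\nearrow\mssW_{p,\mssd}$ via tightness of couplings, and of completeness via the Lisini-style iterated gluing and Borel--Cantelli, unpacks precisely what \cite{AmbErbSav16} contains.

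One point deserves a sentence rather than ``reduces at once'': for condition~\iref{d:EMM3} you exhibit the linear functionals $\nu\mapsto\int f\diff\nu$, $f\in\mcA:=\Lip_b(X,\mssd)\cap\mcC_b(X,\tau)$, as elements of $\Lip_b(\mssW_{p,\mssd})\cap\mcC_b(\tau_\mrmw)$, but you still need that these \emph{generate} $\tau_\mrmw$. This is not automatic from $\mcA$ merely separating points (a measure-determining class need not be convergence-determining without tightness). It does follow because $\mcA$ \emph{generates} $\tau$ on Polish $X$: being an algebra with constants, $\mcA$ then strongly separates points (given $x\in U$ open, take a finite product of bump compositions $\prod_j\psi_j\circ f_j$ with $f_j\in\mcA$ to get $\phi\in\mcA$ with $\phi(x)>\sup_{X\setminus U}\phi$), and strongly separating classes are convergence-determining on Polish spaces. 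With that line added the argument is complete.
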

}

\subsection{Cheeger energy and $\RCD$ condition} \label{ss:RCD}
Let $(X, \tau, \mssd, \nu)$ be an extended metric measure space. We say that $v \in L^2(\nu)$ is a {\it relaxed slope (or relaxed gradient) of $u \in L^2(\nu)$} if there exist $\mssd$-Lipschitz functions $u_n \in L^2(\nu) \cap \mcB(\tau)$ such that 
\begin{itemize}
\item $u_n \to u$ in $L^2(\nu)$ and $|\mssD_{\mssd} u_n|$ weakly converges to some $\tilde{v} \in L^2(\nu)$ as $n \to \infty$;
\item  $\tilde{v} \le v$ $\nu$-a.e..
\end{itemize}
We say that $v$ is the {\it minimal relaxed slope (or minimal relaxed gradient) of $u$} if its $L^2(\nu)$-norm is minimal among relaxed slopes. We denote it by $|\nabla_{\mssd, \nu} u|_* \in L^2(\nu)$.
\begin{defs}[{\cite[Dfn.~4.2,  Thm.~4.5]{AmbGigSav14}}] 
The {\it Cheeger energy}~$\Ch^{\mssd, \nu}: L^2(\nu) \to \R_+ \cup\{+\infty\}$
 is defined as 
  \begin{align} \label{d:CH1}
 \Ch^{\mssd, \nu}(u)&:=\frac{1}{2}\int_X |\nabla_{\mssd, \nu} u|_*^2 \diff\nu \fstop
 \end{align}
 We set $ \Ch^{\mssd, \nu}(u)=+\infty$ if there is no relaxed slope of $u$. 
 The domain is denoted by $\dom{\Ch^{\mssd, \nu}}:=\{u \in L^2(\nu): \Ch^{\mssd, \nu}(u)<+\infty\}$. It is convex and $L^2(\nu)$-lower semi-continuous. 
 \end{defs}
 By~\cite[(4.9)]{AmbGigSav14} and \eqref{e:SLC},  we have 
\begin{align}\label{i:MSL}
|\nabla_{\mssd, \nu} u|_* \le |\mathsf D_\mssd u| \le \Lip_{\mssd}(u) \quad \text{$\nu$-a.e.}\cquad u\in \Lip_b(\mssd) \cap \mcB(\tau)\fstop
\end{align}

 \begin{rem}[Variants of definitions] \label{r:CH}
 There are different definitions of the Cheeger energy: one is defined in terms of the minimal weak upper gradient (see \cite[\S5]{AmbGigSav14}),  the other is defined in terms of the relaxation of the asymptotic slopes (see \cite{AmbErbSav16, Sav19}). All these definitions are equivalent in the setting of Dfn.~\ref{d:EMM} when $\nu$ is a finite measure due to~\cite[Thm.~6.2]{AmbGigSav14} and \cite[Cor.~10.39, Thm.~11.7]{Sav19}. 
 \end{rem}

\vspace{-3mm}
\paragraph{RCD condition} 
Let $(X, \tau, \mssd, \nu)$ be an extended geodesic metric measure space and suppose that the $\tau$-support of $\nu$ is the whole space~$X$. Define the relative entropy $\Ent_\nu(\sigma):=\int_{X} \frac{\diff \sigma}{\diff \nu} \log  \frac{\diff \sigma}{\diff \nu} \diff \nu$ for  $\sigma \ll \nu$, otherwise $\Ent_\nu(\sigma):=+\infty$.
 The domain is defined as~$\dom{\Ent_\nu}:=\{\sigma \in \mcP(X): \Ent_\nu(\sigma)<\infty\}$.  The following definition is an adaptation to extended metric measure spaces, see~\cite[Dfn.~9.1]{AmbGigSav14}.
 \begin{defs} \label{d:RCD}We say that $(X, \tau, \mssd, \nu)$ satisfies $\RCD(K,\infty)$ if 
\begin{itemize}
\item For every $\sigma_0, \sigma_1 \in \dom{\Ent_\nu}$ with $\mssW_{2, \mssd}(\sigma_0, \sigma_1)<\infty$, there exists a constant speed ${\mssW}_{2, \mssd}$-geodesic $(\sigma_t)_{t \in [0,1]}\subset \dom{\Ent_\nu}$ along which  the entropy~$\Ent_\nu$ satisfies the convexity inequality: 
\begin{align*}
\Ent_\nu(\sigma_t) \le (1-t)\Ent_{\nu}(\sigma_0) + t \Ent_{\nu}(\sigma_1) -\frac{K}{2}t(1-t)\mssW_{2, \mssd}(\sigma_0, \sigma_1)^2 \scolon
\end{align*}

\item $\Ch^{\mssd, \nu}$ is quadratic (called {\it infinitesimally Hilbertian}): 
$\Ch^{\mssd, \nu}(u+v)+\Ch^{\mssd, \nu}(u-v)= 2\Ch^{\mssd, \nu}(u)+2\Ch^{\mssd, \nu}(v)$
for every~$u, v \in \dom{\Ch^{\mssd, \nu}}$. 
\end{itemize}
\end{defs}
 Due to~\cite[Cor.\ 4.18]{AmbGigSav15}, if $(X, \mssd, \nu)$ is a geodesic (non-extended) metric measure space, it is an $\RCD(K,\infty)$ space  if and only if the following three conditions hold:
\begin{enumerate}[{\rm (i)}]
\item $\Ch^{\mssd, \nu}$ is quadratic;
\item $(\Ch^{\mssd, \nu}, \dom{\Ch^{\mssd, \nu}})$ satisfies the Sobolev-to-Lipschitz property, viz., 
$$u \in \dom{\Ch^{\mssd, \nu}} \quad |\nabla_{\mssd, \nu} u|_* \le 1 \implies \exists \tilde{u} = u \ \text{$\nu$-a.e.} \quad \Lip_\mssd(\tilde{u}) \le 1 \semicolon $$
\item $\Ch^{\mssd, \nu}$ satisfies $\BE_2(K,\infty)$, i.e., 
$|\nabla_{\mssd, \nu} T_t u|^2_* \le e^{-2Kt} T_t|\nabla_{\mssd, \nu}  u|^2_*$ for $u \in  \dom{\Ch^{\mssd, \nu}}$ and  $t \ge 0$,
where $(T_t)_{t \ge 0}$ is the $L^2(\nu)$-symmetric semigroup associated with $\Ch^{\mssd, \nu}$. 
\end{enumerate}
Due to~\cite[\S6]{AmbGigSav14} and ~\cite[(7.1)]{AmbGigMonRaj12}, there exists a heat kernel $p_t(x, \diff y)=p_t(x,y)\diff \nu(y)$, i.e., 
\begin{align} \label{d:HK}
T_t u(x)=\int_{X}u(y)p_t(x, \diff y) \cquad u \in L^2(\nu) \quad t>0\comma
\end{align}
and $\seq{T_t}_{t>0}$ is conservative (also called mass-preserving) due to~\cite[Thm.~4.20]{AmbGigSav14}:
\begin{align} \label{d:MP}
T_t \1= \1 \qquad t \ge 0 \fstop
\end{align}

\subsection{Configuration space and the $\ell^2$-matching extended distance}
Let $(X, \tau)$ be a locally compact Polish space. 
\begin{itemize}
\item A \emph{configuration} on $X$ is an $\overline\N_0$-valued Radon measure~$\gamma$ on~$X$, which is expressed by 
$\gamma = \sum_{i=1}^N \delta_{x_i},  N \in \overline{\N}_0$,
 where $x_i \in X$ for every $i$ and  $\gamma \equiv 0$  when $N=0$.
%
\vspace{1mm}
\item The \emph{configuration space}~$\U=\dUpsilon(X)$ is the space of all configurations on~$X$. 
\end{itemize}
Regarding $\dUpsilon$ as a subspace of the space $\mcM(X)$ of Radon measures on~$X$, $\U$ is endowed with the vague topology~$\tau_\mrmv$, i.e., 
$\gamma_n \xrightarrow{\tau_\mrmv} \gamma \iff \int_{X} f \diff \gamma_n \to  \int_{X} f \diff \gamma$ for $f \in \Cc(X).$
 We write the {\it restriction}~$\gamma_A\eqdef \gamma\mrestr{A}$ for~$A \in \mathscr B(X)$ and the restriction map is denoted by 
\begin{align}\label{eq:ProjUpsilon}
\gamma\longmapsto \pr_A(\gamma):=\gamma_{A}\fstop
\end{align}
The $N$-particle configuration space is denoted by
$\dUpsilon^N(X)\ \eqdef\ \set{\gamma\in \dUpsilon: \gamma(X)=N}$ for $N\in\overline\N_0$. 
Let $\mathfrak S_k$ be the $k$-symmetric group. It can be readily seen that the $k$-particle configuration space~$\U^k$ is isomorphic to the quotient space~$X^{\times k}/\mathfrak S_k$:
$\dUpsilon^k(X)\cong X^{\odot k}:=X^{\times k}/\mathfrak S_k$ for $k \in \N\fstop$
The associated projection map from $X^{\times k}$ to the quotient space~$X^{\times k}/\mathfrak S_k$ is denoted by~$\quot_k$. 
For $\eta \in \U$ and~$E\in \mathscr B(X)$, we define
$\U_E^\eta:=\{\gamma \in \U: \gamma_{E^c}=\eta_{E^c}\}.$

\paragraph{$\ell^2$-matching extended distance} Let $(X, \mssd)$ be a locally compact complete separable metric space. For~$i=1,2$ let~$\proj_i\colon X^{\times 2}\rar X$ denote the projection to the~$i^\text{th}$ coordinate for $i=1,2$. 
For~$\gamma,\eta\in \dUpsilon$, let~$\Cpl(\gamma,\eta)$ be the set of couplings of~$\gamma$ and~$\eta$, i.e., 
$\Cpl(\gamma,\eta)\eqdef \set{\cpl\in \U(X^{\tym{2}}) \colon (\proj_1)_\pfwd \cpl =\gamma \comma (\proj_2)_\pfwd \cpl=\eta}.$
The {\it $\ell^2$-matching extended distance} on~$\dUpsilon(X)$ is defined as
\begin{align*}
\mssd_{\dUpsilon}(\gamma,\eta)\eqdef \inf_{\cpl\in\Cpl(\gamma,\eta)} \paren{\int_{X^{\times 2}} \mssd^2(x,y) \diff\cpl(x,y)}^{1/2}\comma \qquad \inf{\emp}=+\infty \fstop
\end{align*}
We denote by $\mssd_{\U^k}$ the restriction of $\mssd_\U$  in $\U^k \subset \U$. 
\subsection{Conditional absolute continuity} 
\begin{defs}[Conditional Probability {\cite[452E, 452O, 452G(c)]{Fre00}}] \label{d:CP}
Let $\mu$ be a Borel probability measure on $\U=\U(\R^n)$ and $\QP_{B_r^c}:={\pr_{B_r^c}}_\#\QP$, where $\pr_{B_r^c}$ is the projection defined in~\eqref{eq:ProjUpsilon}.
 There exists a family of Borel probability measures~$\{\QP_{r}^\eta: r>0, \eta \in \U(B_r^c)\}$ on $\U$ such that 
\begin{enumerate}[(a)]
\item (disintegration) for  every $\Xi \in \mathscr B(\U)^\QP$, it holds
$\QP(\Xi)= \int_{\U(B_r^c)} \QP_{r}^\eta(\Xi) \diff \QP_{B_r^c}(\eta)\ ;$
\item (strong consistency) for every  $\Xi \in \mathscr B(\U)^\QP$ and $\Lambda \in  \mathscr B(\U(B_r^c))^{\QP_{B_r^c}}$, 
$$\QP(\Xi \cap \Lambda)= \int_{\Lambda} \QP_{r}^\eta(\Xi) \diff \QP_{B_r^c}(\eta) \quad \text{and} \quad \QP_{r}^\eta(\U_r^\eta)=1 \quad \QP_{B_r^c}\text{-a.e.~$\eta$}\fstop$$ 
\end{enumerate}
We call $\{\QP_{r}^\eta: r>0, \eta \in \U(B_r^c)\}$ {\it strongly consistent regular conditional probability measures}. 
\end{defs}
\begin{rem}\label{r:CP}
We may think of $\QP_r^\eta$ as a Borel probability measure on $\U(B_r)$ instead of~$\U$. Indeed, thanks to the strong consistency, the projection~$\pr_{B_r}:\U_r^\eta \to \U(B_r)$ with its inverse $\pr_{B_r}^{-1}: \U(B_r) \to \U_r^\eta$ defined as $\gamma \mapsto \gamma+\eta$ gives a bi-measure preserving bijection map between the two measure spaces 
\begin{align*} 
(\U_r^\eta, \QP_r^\eta) \cong (\U(B_r), {\pr_{B_r}}_\# \QP_r^\eta) \fstop
\end{align*}
Hence, we may identify  $\QP_r^\eta$ with ${\pr_{B_r}}_\# \QP_r^{\eta_{B_r^c}}$ and regard~the regular conditional probability measures as a family $\{\QP_r^\eta: r>0, \eta \in \U\}$ of  probability measures on $\U(B_r)$ indexed by $\eta \in \U$ and $r>0$. We follow this convention unless confusions could occur. 
\end{rem}

For a $\QP$-measurable function $ u\colon \dUpsilon\to \R$ and $\eta \in \dUpsilon$, we define 
 \begin{align} \label{e:SEF}
u_{r}^\eta(\gamma)\eqdef  u(\gamma+\eta_{B_r^\complement})  \qquad \gamma\in \dUpsilon(B_r) \fstop
 \end{align}
By definition of conditional probabilities,  it is straightforward to see
$\int_{\dUpsilon} u \diff\QP = \int_{\dUpsilon} \bigl(\int_{\dUpsilon(B_r)} u_{r}^\eta \diff \QP^\eta_r\bigr) \diff\QP(\eta)$ for every $u \in L^1(\mu)$.
\begin{defs}[Conditional absolute continuity]\label{d:ConditionalAC}
Let $\QP$ be a Borel probability on~$\U=\U(\R^n)$ and $\mssm_r$ be the Lebesgue measure restricted on $B_r=B_r(0)=\{x \in \R^n: |x|<r\}$.
We say that $\QP$
is \emph{conditionally absolutely continuous}  if 
\begin{equation}\tag*{$(\mathsf{CAC})_{\ref{d:ConditionalAC}}$}
\label{ass:CE}
 \QP^{\eta, k}_{r} \ll \mssm_{r}^{\odot k}  \quad r\in \N\comma \ k \in \N_0  \comma \  \QP\text{-a.e.~$\eta$}\comma
\end{equation}
where $\QP^{\eta, k}_{r} := \QP^{\eta}_{r}\mrestr{\U^k(B_r)}$ is the restriction of~$\QP^{\eta}_{r}$ in the $k$-particle configuration space~$\U^k(B_r)$.
\end{defs}
\subsection{Conditional closability}
Let $W^{1,2}_{sym}(\mssm^{\otimes k}_r)$ be the space of $\mssm^{\otimes k}_r$-classes of {\it symmetric}~$(1,2)$-Sobolev functions on the product space $B_r^{\times k}$, i.e., 
$W^{1,2}_{sym}(\mssm^{\otimes k}_r):=\{u \in L^2_{sym}(\mssm^{\otimes k}_{r}): \int_{B_r^{\times k}} |\nabla^{\otimes k} u|^2 \diff \mssm^{\otimes k}_{r} <\infty \}$,
where $\nabla^{\otimes k}$ denotes the distributional derivative on $(\R^n)^{\times k}$: $\nabla^{\otimes k}u:=(\partial_1 u, \ldots, \partial_ku)$.  Due to the Rademacher theorem in the Euclidean space, the distributional derivative coincides with the slope (see \eqref{d:SLP}) in~$\R^k$ for Lipschitz functions:
\begin{align}\label{e:RDT}
|\nabla^{\otimes k}u|(x) = |{\sf D}^{\mssd^{\times k}}u|(x)  \comma \quad \text{$u \in \Lip_b(B_r^{\times k}, \mssd^{\times k})\comma$ $\mssm_r^{\otimes k}$-a.e.} \comma
\end{align}
where $\mssd(x, y):=|x-y|$ is  the Euclidean distance in $\R^n$.
As the space $W^{1,2}_{sym}(\mssm^{\otimes k}_r)$ consists of symmetric functions, the projection $\quot_k: B_r^{\times k} \to \U^k(B_r) \cong B_r^{\times k} /\mathfrak S_k$ acts on~$W^{1,2}_{sym}(\mssm^{\otimes k}_r)$ and the resulting quotient space is denoted by $W^{1,2}(\mssm_r^{\odot k})$. Namely, 
$W^{1,2}(\mssm^{\odot k}_r):=\{u \in L^2(\mssm^{\odot k}_{r}): \int_{\U^k(B_r)} |\nabla^{\odot k} u|^2 \diff \mssm^{\odot k}_{r} <\infty\}$,
where $\nabla^{\odot k}$ is the quotient operator of the distributional gradient operator $\nabla^{\otimes k}$ through the projection $\quot_k$ and $\mssm_r^{\odot k}$ is the symmetric product measure defined as 
$\mssm_r^{\odot k}:=\frac{1}{k!} (\quot_k)_\#\mssm_r^{\otimes k}$.
The equality~\eqref{e:RDT} descends to the quotient space:
\begin{align}\label{e:RDT1}
|\nabla^{\odot k}u|(x) = |{\sf D}^{\mssd_{\U}}u|(x) \qquad \text{ $\mssm_r^{\odot k}$-a.e., \ \ $u \in \Lip_b(\U^k(B_r), \mssd_{\U})$}\fstop
\end{align}
When $k=0$, $\U^0(B_r)$ is a one-point set consisting of $\gamma\equiv 0$ and $\mssm_r^{\odot 0}=\delta_0$ is the Dirac measure on~$\gamma \equiv 0$. We set $\nabla^{\odot 0}u \equiv 0$, so $W^{1,2}(\U^0(B_r), \mssm_r^{\odot 0}) = L^{2}(\U^0(B_r), \mssm_r^{\odot 0}) \cong~\R$.
We define 
\begin{align} \label{d:SFD}
\cdc^{\dUpsilon(B_r)}(u)& := \sum_{k=0}^\infty \Bigl| \nabla^{\odot k} u|_{\U^{k}(B_r)}\Bigr|^2\comma \quad u \in {\rm LIP}_b(\U(B_r), \mssd_\U) \comma
\end{align}
where 
${\rm LIP}_b(\U(B_r), \mssd_\U):=\{u: \U(B_r) \to \R \ \text{bounded}: u|_{\U^k(B_r)} \in \Lip_b(\U^k(B_r), \mssd_\U)\comma k \in \N\}$.
The integral of \eqref{d:SFD} against $\QP_r^\eta$ is denoted by
\begin{align*}
&\EE{\dUpsilon(B_{r})}{\QP^\eta_{r}}(u)\eqdef \frac{1}{2}\int_{\U(B_r)} \cdc^{\dUpsilon(B_r)}(u) \diff \QP_r^\eta \comma
\\
&u \in \mcC_r^\eta:={\rm LIP}_b(\U(B_{r}), \mssd_{\U}) \cap \{u: \U(B_r) \to \R: \EE{\dUpsilon(B_{r})}{\QP^\eta_{r}}(u)<\infty\} \fstop \notag
\end{align*}

\begin{defs}[Conditional closability]\label{ass:ConditionalClosability}
Let~$\QP$ be a Borel probability measure on~$\dUpsilon$ satisfying~\ref{ass:CE}. 
%
We say that~$\QP$ satisfies the \emph{conditional closability}~\ref{ass:ConditionalClos} if 
\begin{align}\tag*{$(\mathsf{CC})_{\ref{ass:ConditionalClosability}}$}\label{ass:ConditionalClos}
\EE{\dUpsilon(B_{r})}{\QP^\eta_{r}}(u)=\frac{1}{2}\int_{\dUpsilon(B_{r})} \cdc^{\dUpsilon(B_{r})}(u) \diff\QP^\eta_{r}\cquad u \in  \mcC_r^\eta \notag
\end{align}
is closable in~$L^2\ttonde{\dUpsilon(B_r),\QP^\eta_{r}}$ for every $r \in \N$ and $\QP$-a.e.~$\eta\in\dUpsilon$. 
The closure is denoted by
$\ttonde{\EE{\dUpsilon(B_r)}{\QP^\eta_{r}},\dom{\EE{\dUpsilon(B_r)}{\QP^\eta_{r}}}}.$
\end{defs}

\subsection{Dirichlet form in $\U$} 
\begin{defs}[Core]\label{d:core}For $r>0$, $\mathcal C_r$ is defined as the space of functions $u$ such that
\begin{enumerate}[$(a)$]
\item\label{i:d:core1} $u\in L^\infty(\U, \mu)$; 
\item\label{i:d:core2} $u_r^\eta \in {\rm LIP}_b(\U(B_r), \mssd_\U)$ for $\QP$-a.e.~$\eta$;
\item\label{i:d:core3}  The following integral is finite:
\begin{align} \label{eq:VariousFormsA} 
\E^{\U, \QP}_r(u):=\int_{\U} {\E^{\U(B_r), \QP_r^\eta}(u_r^\eta)} \diff\QP(\eta) <+\infty \fstop
\end{align} 
\end{enumerate}
\end{defs}

Recall that, for $u: \U \to \R$ and $\eta \in \U$, the function $u_r^\eta: \U(B_r) \to \R$ was defined as $u_r^\eta(\gamma)=u(\gamma+\eta_{B_r})$  in~\eqref{e:SEF}. 
The proof of the following proposition works verbatim as \cite[Prop.~4.7,~4.13]{Suz22b}, see~also \cite[Prop.~3.9]{Suz23b}.
\begin{prop} \label{p:CDLC}
Suppose~\ref{ass:CE} and~\ref{ass:ConditionalClos}. 
Then, $(\E_r^{\U, \QP}, \mcC_r)$ in~\eqref{eq:VariousFormsA} is  closable  in $L^2(\QP)$ and 
the closure $(\E_r^{\U, \QP}, \dom{\E^{\U, \QP}_r})$ has the following integral expression: 
\begin{align}\label{e:TG}
\E_r^{\U, \QP}(u)=\int_{\U}\cdc^\U_r(u) \diff\QP \cquad
\cdc^\U_r(u)(\gamma)=\cdc^{\U(B_r)}(u_r^\gamma)(\gamma) \quad \text{$\QP$-a.e.~$\gamma$} \cquad u \in \dom{\E^{\U, \QP}_r} \fstop 
\end{align}
Furthermore, $r \mapsto \bigl(\vE^{\U, \QP}_r, \dom{\E^{\U, \QP}_r}\bigr)$ is monotone increasing. 
\end{prop}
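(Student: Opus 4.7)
The plan is to reduce each of the three claims—closability, integral representation, and monotonicity in $r$—to the conditional problem on $\U(B_r)$, where \ref{ass:ConditionalClos} applies directly, by means of the disintegration of $\QP$ via $\{\QP^\eta_r\}_\eta$. For closability, suppose $u_n \in \mcC_r$ with $u_n \to 0$ in $L^2(\QP)$ and $\E_r^{\U,\QP}(u_n - u_m) \to 0$. Passing to a subsequence (still denoted $u_n$), assume $\|u_n\|_{L^2(\QP)}^2 \leq 4^{-n}$ and $\E_r^{\U,\QP}(u_{n+1} - u_n) \leq 4^{-n}$. Set $g_n(\eta) := \sqrt{\E^{\U(B_r), \QP^\eta_r}((u_{n+1} - u_n)_r^\eta)}$. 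Minkowski's inequality in $L^2(\QP)$ yields
\[\Bigl\|\sum_{n \geq 1} g_n\Bigr\|_{L^2(\QP)} \leq \sum_{n \geq 1} \|g_n\|_{L^2(\QP)} \leq \sum_{n \geq 1} \sqrt{\E_r^{\U,\QP}(u_{n+1} - u_n)} \leq \sum_{n \geq 1} 2^{-n} = 1,\]
so $F^\eta := \sum_n g_n(\eta) \in L^2(\QP)$ is finite $\QP$-a.e. Similarly $(u_n)_r^\eta \to 0$ in $L^2(\QP^\eta_r)$ for $\QP$-a.e.\ $\eta$ (from $\sum_n \|u_n\|_{L^2(\QP)}^2 < \infty$). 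The telescoping bound $\sqrt{\E^{\U(B_r), \QP^\eta_r}((u_N)_r^\eta - (u_M)_r^\eta)} \leq \sum_{n=M}^{N-1} g_n(\eta)$ makes $\{(u_n)_r^\eta\}_n$ Cauchy in the seminorm $\sqrt{\E^{\U(B_r), \QP^\eta_r}}$ for the same $\eta$, so \ref{ass:ConditionalClos} gives $\E^{\U(B_r), \QP^\eta_r}((u_n)_r^\eta) \to 0$. The pointwise bound
\[\sqrt{\E^{\U(B_r), \QP^\eta_r}\bigl((u_n)_r^\eta\bigr)} \leq \sqrt{\E^{\U(B_r), \QP^\eta_r}\bigl((u_1)_r^\eta\bigr)} + F^\eta\]
has right-hand side in $L^2(\QP)$, so dominated convergence gives $\E_r^{\U,\QP}(u_n) \to 0$. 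Since $(\E_r^{\U,\QP}(u_n))_n$ is itself Cauchy in $\R$ by the seminorm triangle inequality on the original sequence, the limit transfers to the full sequence.

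For the integral representation, on $\mcC_r$ the formula $\E_r^{\U,\QP}(u) = \int \cdc_r^\U(u) \, d\QP$ with $\cdc_r^\U(u)(\gamma) := \cdc^{\U(B_r)}(u_r^\gamma)(\gamma)$ is immediate from~\eqref{d:SFD} and the disintegration. Extension to $\dom{\E_r^{\U,\QP}}$ uses the fact, applied conditionally to each $\E^{\U(B_r), \QP^\eta_r}$ and then integrated, that a closable form admitting a square field on a dense algebra extends it to its full domain (cf.~\cite[Thm.\ I.4.1.3]{BouHir91}); the integral is commuted with the form-closure limit via the same dominated convergence argument used for closability. For monotonicity, let $r < r'$ and $u \in \mcC_{r'}$. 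Splitting any $\eta \in \U$ as $\eta_{B_{r'}\setminus B_r} + \eta_{B_{r'}^c}$ gives
\[u_r^\eta(\gamma) = u_{r'}^{\eta_{B_{r'}^c}}\bigl(\gamma + \eta_{B_{r'}\setminus B_r}\bigr), \qquad \gamma \in \U(B_r).\]
Since $B_r \subset B_{r'}$, the $k$-particle gradient $\nabla^{\odot k}$ of $u_r^\eta$ in $\U(B_r)$ is obtained from the corresponding gradient of $u_{r'}^{\eta_{B_{r'}^c}}$ in $\U(B_{r'})$ by dropping components along particles in $B_{r'} \setminus B_r$, yielding the pointwise inequality $\cdc^{\U(B_r)}(u_r^\eta)(\gamma) \leq \cdc^{\U(B_{r'})}(u_{r'}^{\eta_{B_{r'}^c}})(\gamma + \eta_{B_{r'}\setminus B_r})$. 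Successive integration against the tower disintegration of $\QP$ yields $\mcC_{r'} \subset \mcC_r$ with $\E_r^{\U,\QP}(u) \leq \E_{r'}^{\U,\QP}(u)$, and the inclusion with the same inequality extends to the closures by density together with the closability of $(\E_r^{\U,\QP},\mcC_r)$.

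The delicate step is closability. The naive approach—apply Fatou to the pointwise convergence $\E^{\U(B_r), \QP^\eta_r}((u_n)_r^\eta) \to 0$ from conditional closability—yields only the useless inequality $\liminf \E_r^{\U,\QP}(u_n) \geq 0$. The decisive idea is to choose the Cauchy subsequence geometrically enough that Minkowski's inequality in $L^2(\QP)$, applied to the square-root telescoping series $\sum_n g_n$, produces an $L^2(\QP)$-dominant whose square lies in $L^1(\QP)$, unlocking the use of dominated convergence to swap the limit in $n$ with the integral over~$\eta$.
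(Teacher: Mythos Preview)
Your proof is correct and follows the standard direct-integral closability argument: pass to a geometric subsequence, use Minkowski to build an $L^2(\QP)$-dominant from the telescoping energy increments, apply~\ref{ass:ConditionalClos} fibrewise, and close with dominated convergence. The paper does not spell out its own argument but defers to~\cite[Prop.~4.7,~4.13]{Suz22b} and~\cite[Prop.~3.9]{Suz23b}, which use essentially the same strategy; your treatment of monotonicity via the pointwise square-field comparison and the tower disintegration is likewise the expected one.
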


\begin{defs}[Monotone limit form] \label{d:DFF}
Suppose~\ref{ass:CE} and~\ref{ass:ConditionalClos}. Define
\begin{align*} 
\dom{\vvE^{\U, \QP}}&:=\{u \in \cap_{r>0} \dom{\E^{\U, \QP}_r}: \lim_{r \to \infty}\E^{\U, \QP}_r(u) <+\infty\} \comma
\\
\vvE^{\U, \QP}(u)&:=\lim_{r \to \infty}\E^{\U, \QP}_r(u) \cquad 
\vvE^{\U, \QP}(u,v):=\frac{1}{4}\Bigl(\vvE^{\U, \QP}(u+v)- \vvE^{\U, \QP}(u-v)\Bigr) \notag \fstop
\end{align*} 
The form $(\vvE^{\U, \QP}, \dom{\vvE^{\U, \QP}})$ is a strongly local conservative $\QP$-symmetric Dirichlet form on~$L^2(\U, \QP)$.
The square field~$\cdc^\U$ is defined as the monotone limit of~$\cdc^\U_r$ as well:
\begin{align} \label{d:SF}
\cdc^\U(u):=\lim_{r \to \infty}\cdc^\U_r(u) \cquad \cdc^\U(u, v):= \frac{1}{4}\Bigl( \cdc^\U(u+v)- \cdc^\U(u-v)\Bigr)\cquad u, v \in \dom{\vvE^{\U, \QP}}\fstop 
\end{align}
Furthermore,  $(\vvE^{\U, \QP}, \dom{\vvE^{\U, \QP}})$ satisfies the Rademacher-type property:
\begin{align}\label{p:Rad}
\Lip(\U, {\mssd}_\U, \QP) \subset \dom{\vvE^{\U, \QP}}\comma \quad \cdc^\U(u) \le \Lip_{{\mssd}_\U}(u)^2 \qquad  u \in \Lip(\U, {\mssd}_\U, \QP) \fstop
\end{align}
\end{defs}
\begin{proof}
The proof works verbatim as \cite[Dfn.~4.14, Prop.~4.18]{Suz22b}, see also \cite[Dfn.~3.17]{Suz23b}.
\end{proof}

 \section{Partial matching pseudo distance}
In this section, we introduce a partial optimal matching pseudo-distance on~$\U=\U(\R^n)$ and discuss its properties. In particular, we prove that the partial $\ell^2$-matching pseudo-distance approximates the $\ell^2$-matching distance~$\mssd_\U$, which plays an essential role to show the identification between a Dirichlet form and a Cheeger energy in Section~\ref{s:ICE}.
Recall $B_r:=\{x \in \R^n: |x|<r\}$ and $\bar{B}_r:=\{x \in \R^n: |x| \le r\}$. Let $\mssd$ be the standard Euclidean distance in~$\R^n$. Recall that $\gamma_A:=\gamma\mrestr{A}$ is the restriction of $\gamma \in \U$ on $A \subset \R^n$.
\begin{defs}[Partial $\ell^2$-matching pseudo-distance] {\it The partial $\ell^2$-matching pseudo-
distance subjected to~$B_r$} is defined as:  \label{d:POM}
\begin{align} \label{d:TDD}
 \mssd_{\U}^{(r)}(\gamma, \eta):= \inf_{\alpha, \beta \in \U(\partial B_r)}\mssd_{\U(\bar{B}_r)}(\gamma_{B_r}+\alpha, \eta_{B_r}+\beta) \qquad  \gamma, \eta \in \U \fstop
\end{align}
\end{defs}
We summarise relevant properties of~$\mssd_{\U}^{(r)}$ for later arguments. Let $\sigma_r$ be the $\sigma$-algebra generated by the projection $\pr_{B_r}:\U \to \U(B_r)$ mapping~$\gamma \mapsto \gamma_{B_r}$. We denote by $\mcB(\sigma_r)$ the space of bounded $\sigma_r$-measurable functions in $\U$. In particular, for $u \in \mcB(\sigma_r)$, we have 
\begin{align}\label{e:SLB}
u(\gamma)=u(\gamma_{B_r}) \quad \gamma \in \U \fstop
\end{align}

\begin{prop}\label{p:PDF} The following properties hold:
\begin{enumerate}[{\rm (a)}]
\item \label{p:PDF1} $\mssd_{\U}^{(r)}$ is a pseudo-distance on~$\U$. Furthermore, $\mssd_{\U}^{(r)}(\gamma, \eta)=0$ implies $\gamma_{B_r}=\eta_{B_r}$$;$
\item \label{p:PDF2} $\mssd_{\U}^{(r)}$ is $\tau_\mrmv^{\times 2}$-continuous for every $r>0$ and  
\begin{align*}
\gamma_n \xrightarrow{\tau_\mrmv} \gamma \in \U \quad \iff \quad \mssd_{\U}^{(r)}(\gamma_n, \gamma) \to 0 \qquad \text{for every}\ r>0 \ ;
\end{align*}
\item  \label{p:PDF3} every $\mssd_{\U}^{(r)}$-Lipschitz function belongs to~$\mathcal B(\sigma_r)$. In particular, 
\begin{align}\label{ineq:LAC2}
\Lip(\U, \mssd_{\U}^{(r)}) \subset \mathcal C(\U, \tau_\mrmv) \cap \mathcal B(\U, \sigma_r) \ ;
\end{align}

\item  \label{p:PDF4} $\mssd_{\U}^{(r)}\nearrow \mssd_{\U}$ as $r \nearrow \infty$. In particular, 
\begin{align}\label{ineq:LAC1}
 &\Lip(\U, \mssd_{\U}^{(r)})  \subset \Lip(\U, \mssd_{\U}^{(s)}) \subset  \Lip(\U, \mssd_{\U}) \comma 
 \\
\notag &\Lip_{\mssd_{\U}}(u)\le \Lip_{\mssd_{\U}^{(s)}}(u)\le  \Lip_{\mssd_{\U}^{(r)}}(u) \comma \quad 0<r \le s \fstop
\end{align}

\end{enumerate}
\end{prop}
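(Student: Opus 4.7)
The plan is to address the four items in order, exploiting throughout that $\mssd_\U^{(r)}(\gamma,\eta)$ depends only on $\gamma_{B_r},\eta_{B_r}$ and the chosen boundary measures, and that any finite-cost infimum in~\eqref{d:TDD} is realised by a finite matching on $\bar B_r$. For~\ref{p:PDF1}, symmetry is inherited from $\mssd_{\U(\bar B_r)}$. For the triangle inequality, given $\eps>0$ I would pick near-optimal tuples $(\alpha^{12},\beta^{12})$ and $(\alpha^{23},\beta^{23})$ with couplings $q^{12},q^{23}$ for the pairs $(\gamma_1,\gamma_2)$ and $(\gamma_2,\gamma_3)$; augmenting the middle extensions of $\gamma_2$ by adding $\alpha^{23}$ to $\beta^{12}$ and $\beta^{12}$ to $\alpha^{23}$ (matched self-identically at zero cost) brings them into coincidence, after which the composition of couplings together with Minkowski's inequality in $L^2$ yields $\mssd_\U^{(r)}(\gamma_1,\gamma_3)\le \mssd_\U^{(r)}(\gamma_1,\gamma_2)+\mssd_\U^{(r)}(\gamma_2,\gamma_3)+2\eps$. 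The implication $\mssd_\U^{(r)}(\gamma,\eta)=0\Rightarrow\gamma_{B_r}=\eta_{B_r}$ follows because each atom $x$ of $\gamma_{B_r}$ has $\dist(x,\partial B_r)>0$ and positive distance to any atom of $\eta$ distinct from $x$; matching $x$ to a boundary particle on $\partial B_r$ or to a distinct atom therefore costs at least a fixed positive amount, so a cost-vanishing sequence of couplings forces $x$ to be an atom of $\eta_{B_r}$ with matching multiplicity.

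For~\ref{p:PDF2}, continuity and the vague-convergence characterisation rest on the fact that $\gamma(\partial B_r)=0$ for all $r$ outside a countable set $N_\gamma\subset(0,\infty)$. For $r\notin N_\gamma$, vague convergence $\gamma_n\to\gamma$ yields convergence of the restrictions $\gamma_n\mrestr{\bar B_r}\to\gamma\mrestr{\bar B_r}$ in $\mssd_{\U(\bar B_r)}$, which dominates $\mssd_\U^{(r)}(\gamma_n,\gamma)$ via $\alpha_n=\beta_n=0$; for $r\in N_\gamma$ one invokes the monotonicity~\ref{p:PDF4} to reduce to a slightly larger $r'\notin N_\gamma$, with any mass sitting exactly on $\partial B_r$ absorbed into boundary particles whose matching cost vanishes in the limit. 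Conversely, $\mssd_\U^{(r)}(\gamma_n,\gamma)\to 0$ for every $r>0$ implies $\gamma_n\to\gamma$ vaguely by choosing $r$ larger than the diameter of $\supp f$ for $f\in\Cc(\R^n)$ and estimating $|\int f\,\diff\gamma_n-\int f\,\diff\gamma|$ by $\Lip(f)\cdot\mssd_\U^{(r)}(\gamma_n,\gamma)$ up to a boundary correction that disappears in the limit. Item~\ref{p:PDF3} is then immediate: by~\ref{p:PDF1} combined with the trivial bound $\mssd_\U^{(r)}(\gamma,\eta)\le\mssd_{\U(\bar B_r)}(\gamma_{B_r},\eta_{B_r})$ (take $\alpha=\beta=0$), we obtain the equivalence $\mssd_\U^{(r)}(\gamma,\eta)=0\iff\gamma_{B_r}=\eta_{B_r}$, so every $\mssd_\U^{(r)}$-Lipschitz function factors through $\pr_{B_r}$ and is $\sigma_r$-measurable, while its $\tau_\mrmv$-continuity follows from~\ref{p:PDF2}.

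For~\ref{p:PDF4}, the monotonicity $\mssd_\U^{(r)}\le\mssd_\U^{(s)}$ for $r\le s$ is a projection argument: given an admissible $s$-coupling $q$ of $\gamma_{B_s}+\alpha$ and $\eta_{B_s}+\beta$, for each matched pair $(x,y)\in q$ having a coordinate outside $\bar B_r$ I would move that coordinate to the intersection of the segment $[x,y]$ with $\partial B_r$ (creating a new boundary particle for the $r$-matching) and discard pairs entirely outside $\bar B_r$; squared distances only decrease, and every point of $\gamma_{B_r}$ and $\eta_{B_r}$ remains properly matched. The same projection applied to an arbitrary coupling of $\gamma$ and $\eta$ delivers $\mssd_\U^{(r)}\le\mssd_\U$. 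The hard part will be the reverse bound $\lim_r\mssd_\U^{(r)}(\gamma,\eta)\ge\mssd_\U(\gamma,\eta)$: assuming $C:=\sup_r\mssd_\U^{(r)}(\gamma,\eta)<\infty$, I would fix near-optimal $r$-couplings $q_r$ and extract a pointwise limit matching $x\mapsto y_x$ via a Cantor-diagonal procedure along a countable enumeration of the atoms of $\gamma$, using the uniform bound $|x-y_x^r|\le C+\eps$ to keep each partner in a bounded set. Since every atom of $\gamma$ and $\eta$ eventually belongs to $B_r$, the boundary particles $\alpha_r,\beta_r$ progressively lose relevance; combined with Fatou's lemma and a tightness argument, this produces a genuine coupling of $\gamma$ and $\eta$ of squared cost at most $C^2$. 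The main obstacle is precisely the bookkeeping of this extraction --- guaranteeing no mass leaks to infinity and that both marginals are exhausted --- which forms the technical heart of the argument.
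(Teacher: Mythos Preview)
Your treatment of \ref{p:PDF1}--\ref{p:PDF3} is essentially the paper's, with only cosmetic differences: the triangle inequality via augmenting the middle extensions is exactly what the paper does (it phrases this as the contraction $\mssd_{\U(\bar B_r)}(\gamma+\rho,\eta+\rho)\le\mssd_{\U(\bar B_r)}(\gamma,\eta)$), and your case split on $r\in N_\gamma$ in \ref{p:PDF2} is a mild variant of the paper's direct matching argument.

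The genuine divergence is in the limit step of \ref{p:PDF4}. Your Cantor-diagonal extraction would work, and you correctly identify the bookkeeping (no mass leaking, both marginals exhausted) as the crux. But the paper bypasses this entirely with a one-line argument: take (near-)optimal $\gamma_r=\gamma_{B_r}+\alpha_r$ and $\eta_r=\eta_{B_r}+\beta_r$ realising $\mssd_\U^{(r)}(\gamma,\eta)$; since the boundary parts $\alpha_r,\beta_r$ live on $\partial B_r$ and hence escape every compact set, one has $\gamma_r\to\gamma$ and $\eta_r\to\eta$ vaguely, and the (independently known) $\tau_\mrmv^{\times 2}$-lower semi-continuity of $\mssd_\U$ yields
\[
\mssd_\U(\gamma,\eta)\le\liminf_{r}\mssd_\U(\gamma_r,\eta_r)=\liminf_r\mssd_{\U(\bar B_r)}(\gamma_r,\eta_r)=\liminf_r\mssd_\U^{(r)}(\gamma,\eta)\,.
\]
This is considerably cleaner: the lower semi-continuity of $\mssd_\U$ is a standard optimal-transport fact (tightness of couplings plus l.s.c.\ of the cost), so all the compactness and extraction you set up by hand is already packaged there. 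Your route has the merit of being self-contained and not invoking any external l.s.c.\ statement, but the paper's approach is what one should reach for here.
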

\begin{proof}  \ref{p:PDF1} We only prove the triangle inequality as the other properties are obvious by definition. Let $\gamma, \eta, \zeta \in \U$. Take $\alpha, \beta, \kappa, \theta \in \U(\partial B_r)$ such that 
$$\mssd_{\U(\bar{B}_r)}(\gamma_{B_r}+\alpha, \eta_{B_r}+\beta) \le \mssd_{\U}^{(r)}(\gamma, \eta)+\epsilon \comma \quad \mssd_{\U(\bar{B}_r)}(\eta_{B_r}+\kappa, \zeta_{B_r}+\theta) \le \mssd_{\U}^{(r)}(\eta, \zeta) +\epsilon \fstop$$
By the triangle inequality of~$\mssd_{\U(\bar{B}_r)}$ and the the contraction~$\mssd_{\U(\bar{B}_r)}(\gamma+\rho, \eta+\rho)\le \mssd_{\U(\bar{B}_r)}(\gamma, \eta)$ for every $\gamma, \eta, \rho \in \U(\bar{B}_r)$,  
\begin{align*}
\mssd_{\U}^{(r)}(\gamma, \zeta) &\le \mssd_{\U(\bar{B}_r)}(\gamma_{B_r}+\alpha+\kappa, \zeta_{B_r}+\beta+\theta) 
\\
&\le  \mssd_{\U(\bar{B}_r)}(\gamma_{B_r}+\alpha+\kappa, \eta_{B_r}+\beta+\kappa)+ \mssd_{\U(\bar{B}_r)}(\eta_{B_r}+\beta+\kappa, \zeta_{B_r}+\beta+\theta)\\
 &\le \mssd_{\U(\bar{B}_r)}(\gamma_{B_r}+\alpha, \eta_{B_r}+\beta)+ \mssd_{\U(\bar{B}_r)}(\eta_{B_r}+\kappa, \zeta_{B_r}+\theta)\\
&\le \mssd_{\U}^{(r)}(\gamma, \eta) + \mssd_{\U}^{(r)}(\eta, \zeta) +2\e   \fstop
\end{align*}
As $\e$ is arbitrary, the triangle inequality is proved. 

 \ref{p:PDF2} {\it The proof of {\rm (LHS) $\implies$ (RHS)}.} Fix arbitrary $r>0$ and take representatives~$\gamma_n\mrestr{B_r}=\sum_{i=1}^{N_n}\delta_{x_i^{(n)}}$, $\gamma\mrestr{B_r}=\sum_{i=1}^{N}\delta_{x_i}$ in terms of Dirac measures.
As $B_r$ is open and $\gamma_n \xrightarrow{\tau_\mrmv}\gamma$, we may assume  $N_n \ge N$ for sufficiently large $n$, and we may also assume that $\sup_{n \in \N}N_n<\infty$.
Due to the hypothesis $\gamma_n \xrightarrow{\tau_\mrmv}\gamma$, up to permutations of the labelling of the points, we may assume that $x_i^{(n)}$ converges to $x_i$ for $i=1, \ldots, N$ and
\begin{align*} 
\mssd_{\U}^{(r)}(\gamma_n, \gamma)^2
&=  \sum_{i=1}^{N} \mssd(x^{(n)}_{i}, x_i)^2 + \sum_{i=N+1}^{N_n}\mssd(x^{(n)}_{i}, \partial B_r)^2 \xrightarrow{n \to \infty}0 \fstop
\end{align*}
As $r>0$ is arbitrary, the proof is complete. 

{\it The proof of {\rm (RHS) $\implies$ (LHS)}.} Suppose $\mssd_{\U}^{(r)}(\gamma_n, \gamma) \to 0$ for every $r>0$. Let $\mssd_{\U}^{(r)}(\gamma_n, \gamma) = \mssd_{\U(\bar{B}_r)}(({\gamma_{n}})_{B_r}+\alpha_n, \gamma_{B_r}+\beta_n)$ with $\alpha_n, \beta_n \in \U(\partial B_r)$. Take a representative~$\gamma_{B_r}=\sum_{i=1}^{N_r}\delta_{x_i}$ and $(\gamma_n)_{B_r}=\sum_{i=1}^{N^{(n)}_r}\delta_{x^{(n)}_i}$. As $\mssd_{\U(\bar{B}_r)}((\gamma_{n})_{B_r}+\alpha_n, \gamma_{B_r}+\beta_n) \xrightarrow{n \to \infty} 0$, after suitably labelling points,  we have that  $x_i^{(n)}  \xrightarrow{n \to \infty}x_i$ for $i=1, 2, \ldots, N_r$ and $x_i^{(n)}$ converges to a point in $\partial B_r$ for $i \ge N_r+1$.  As this holds for every $r>0$, this concludes the sought statement.

\smallskip
 \ref{p:PDF3} This follows immediately by~\ref{p:PDF1}, \ref{p:PDF2} and the Lipschitz inequality $|u(\gamma)-u(\eta)| \le \Lip_{\mssd_{\U}^{(r)}}(u)\mssd_{\U}^{(r)}(\gamma, \eta)$. 

\smallskip
 \ref{p:PDF4} The monotonicity of~$\mssd_{\U}^{(r)} \le \mssd_{\U}^{(s)} \le \mssd_{\U}$ for $r \le s$ readily follows by definition. The formula~\eqref{ineq:LAC1} follows immediately by the monotonicity.
We prove that the monotone limit of $\mssd_{\U}^{(r)}$ is equal to $\mssd_\U$. Let $\gamma, \eta \in \U$ with $\mssd_\U(\gamma, \eta)<\infty$. Let $\gamma_r, \eta_r \in \U(\bar{B}_r)$ such that $\mssd_{\U}^{(r)}(\gamma, \eta)=\mssd_{\U(\bar{B}_r)}(\gamma_r, \eta_r)$. Then, it is immediate to see that $\gamma_r \to \gamma$ and $\eta_r \to \eta$ in $\tau_\mrmv$ as $r \to \infty$. By the~$\tau_\mrmv^{\times 2}$-lower semi-continuity of $\mssd_{\U}$, we have 
\begin{align*}
\mssd_{\U}(\gamma, \eta) 
&\le \liminf_{r \to \infty}\mssd_{\U}(\gamma_r, \eta_r) 
= \liminf_{r \to \infty}\mssd_{\U(\bar{B}_r)}(\gamma_r, \eta_r) 
\\
& = \liminf_{r \to \infty}\mssd_{\U}^{(r)}(\gamma, \eta) 
\le  \limsup_{r \to \infty}\mssd_{\U}^{(r)}(\gamma, \eta) 
 \le \mssd_{\U}(\gamma, \eta) \fstop  \qedhere
\end{align*}
%
\end{proof}

Let $\hat\mssd_r: \bar B_r^{\times 2} \to \R_+$ be the  function given by 
\begin{align} \label{d:sqd}
\hat\mssd_r(x, y):=\mssd(x, y) \wedge\Bigl( \inf_{z \in \partial B_r}\mssd(x, z)+ \inf_{z \in \partial B_r}\mssd(y, z)\Bigr)\comma
\end{align}
and  its symmetric product function on $\bar B_r^{\odot k}$ is given by:
\begin{align} \label{d:sqd2}
\hat\mssd_r^{\odot k}(x, y):=\inf_{\sigma \in \mathfrak S(k)}\hat\mssd_r^{\times k}(x_\sigma, y) \comma
\end{align}
where $x_\sigma=(x_{\sigma(1)}, \ldots,x_{\sigma(k)})$ for $x=(x_1,\ldots, x_k) \in  \bar B_r^{\times k}$. We introduce an equivalence relation  $x \sim_r y$ if $\hat \mssd_r(x,y)=0$. The quotient space $\bar B_r/\sim_r$ is denoted by $\hat B_r$, which is the space where all the boundary points $x \in \partial B_r$ are glued as one point. The function $\hat\mssd_r$ (resp.~$(\hat\mssd_r)^{\odot k}$) is a distance function on $\hat B_r$ (resp.~$\hat B_r^{\odot k}$), called {\it glued distance}.
\begin{prop} \label{p:RHDD}
For every $r>0$ and $k \in \N_0$, 
$$\mssd_\U^{(r)} =\hat\mssd_r^{\odot k} \quad \text{ on} \quad  \U^{k}(B_r) \cong  B_r^{\odot k}\fstop$$
\end{prop}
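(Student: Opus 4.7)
The plan is to establish the identity by verifying both directions separately, reading each side as an infimum over matchings and comparing costs pair-by-pair. Fix representatives $\gamma=\sum_{i=1}^{k}\delta_{x_i}$, $\eta=\sum_{i=1}^{k}\delta_{y_i}$ with $x_i,y_i\in B_r$. Since both $\mssd_{\U}^{(r)}$ and $\hat\mssd_r^{\odot k}$ are invariant under the $\mathfrak S(k)$-action, it suffices to compare infima over permutations $\sigma\in\mathfrak S(k)$.

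For $\mssd_{\U}^{(r)}\le \hat\mssd_r^{\odot k}$: fix $\sigma\in\mathfrak S(k)$ and partition $\{1,\dots,k\}=I\sqcup J$ according to which branch in $\hat\mssd_r(x_i,y_{\sigma(i)})=\mssd(x_i,y_{\sigma(i)})\wedge(d(x_i,\partial B_r)+d(y_{\sigma(i)},\partial B_r))$ attains the minimum (direct match for $i\in I$, boundary routing for $i\in J$). I would construct explicit $\alpha,\beta\in\U(\partial B_r)$ with $|\alpha|=|\beta|=|J|$ by placing, for each $i\in J$, a point of $\alpha$ at (a) closest boundary point to $y_{\sigma(i)}$ and a point of $\beta$ at (a) closest boundary point to $x_i$. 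The matching $\tau$ of $\gamma+\alpha$ to $\eta+\beta$ defined by sending $x_i\leftrightarrow y_{\sigma(i)}$ for $i\in I$ and using the newly added boundary points to carry $x_i$ and $y_{\sigma(i)}$ (for $i\in J$) produces a coupling whose cost is bounded by $\sum_i\hat\mssd_r(x_i,y_{\sigma(i)})^2$. Infimising over $\sigma$ yields the required inequality.

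For the reverse $\mssd_{\U}^{(r)}\ge \hat\mssd_r^{\odot k}$: given any $\alpha,\beta\in\U(\partial B_r)$ with $|\alpha|=|\beta|=\ell$ and any permutation matching $\tau$ of $\gamma+\alpha$ to $\eta+\beta$, I would classify matched pairs into four types: (a) interior--interior $x_i\leftrightarrow y_j$, (b) interior--boundary $x_i\leftrightarrow b\in\beta$, (c) boundary--interior $a\in\alpha\leftrightarrow y_j$, and (d) boundary--boundary. Type (d) pairs contribute nonnegatively and may be discarded by a cancellation argument (removing matched boundary--boundary atoms from both $\alpha$ and $\beta$ decreases neither the cost nor the feasibility). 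Types (b) and (c) then necessarily balance in number (since $|\alpha|=|\beta|$), which lets me extract a permutation $\sigma\in\mathfrak S(k)$ by concatenating every (b) pair $x_i\leftrightarrow b$ with a (c) pair $a\leftrightarrow y_j$. Applied pair by pair, the triangle inequality for $\mssd$ (via the boundary points $a,b\in\partial B_r$) combined with the direct bound $\mssd(x_i,y_{\sigma(i)})$ yields the inequality cost$\ge \sum_i\hat\mssd_r(x_i,y_{\sigma(i)})^2$; taking infimum over $\alpha,\beta,\tau$ closes the argument.

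The main obstacle is the lower bound in the second direction: one must show that the contribution of a (b)--(c) pair, namely $\mssd(x_i,b)^2+\mssd(a,y_{\sigma(i)})^2$ for some $a,b\in\partial B_r$, is bounded below (after minimising in $a,b$) by $\hat\mssd_r(x_i,y_{\sigma(i)})^2$. This is where one has to take care with how independent boundary-point minimisations recombine with the squaring inherent in the $\ell^2$ matching. I expect this to be handled by a reduction to minimal $\ell$ (showing superfluous auxiliaries never decrease the total cost), after which the (b)--(c) pairing is canonical and the inequality follows from the triangle inequality together with the inequality $\mssd(x_i,y_{\sigma(i)})\le\mssd(x_i,b)+\mssd(b,a)+\mssd(a,y_{\sigma(i)})$ at optimal choice of $a,b\in\partial B_r$; the two bounds then combine to give equality on all of $\U^k(B_r)$.
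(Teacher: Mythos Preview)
Your upper-bound construction is fine: given $\sigma$ and the partition $I\sqcup J$, the boundary atoms you place produce a coupling whose squared cost is $\sum_{i\in I}\mssd(x_i,y_{\sigma(i)})^2+\sum_{i\in J}\bigl(d(x_i,\partial B_r)^2+d(y_{\sigma(i)},\partial B_r)^2\bigr)\le\sum_i\hat\mssd_r(x_i,y_{\sigma(i)})^2$, since $a^2+b^2\le(a+b)^2$ for $a,b\ge0$.

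The obstacle you isolate in the reverse direction is genuine and is \emph{not} resolved by the triangle-inequality route you propose. After minimising over $a,b\in\partial B_r$, a (b)--(c) pair contributes exactly $d(x_i,\partial B_r)^2+d(y_{\sigma(i)},\partial B_r)^2$, whereas $\hat\mssd_r(x_i,y_{\sigma(i)})^2$ can equal $(d(x_i,\partial B_r)+d(y_{\sigma(i)},\partial B_r))^2$; the pointwise inequality you would need is $a^2+b^2\ge(a+b)^2$, which fails whenever $a,b>0$. The triangle inequality only controls $\mssd(x_i,y_{\sigma(i)})$ itself, not the \emph{sum of squares} appearing in the $\ell^2$-matching cost, so it does not close the gap. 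Concretely, in $\R^1$ with $r=1$, $x=-0.9$, $y=0.9$: taking $\alpha=\delta_{1}$, $\beta=\delta_{-1}$ and matching $x\leftrightarrow-1$, $1\leftrightarrow y$ gives squared cost $0.1^2+0.1^2=0.02$, while $\hat\mssd_r(x,y)^2=(0.1+0.1)^2=0.04$.

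For comparison, the paper does not split into two inequalities. It parametrises the matchings of $\gamma+\alpha$ with $\eta+\beta$ by a permutation $\sigma\in\mathfrak S(k)$ together with a choice of which $l=k-m$ indices are routed through the boundary, swaps the order of the two infima, and then evaluates the inner infimum over the positions of the boundary atoms term by term. That inner evaluation is exactly the step you flagged; the paper records the outcome as $\sum_i\bigl[\mssd(x_{\sigma(i)},y_i)^2\wedge(\inf_z\mssd(x_{\sigma(i)},z)+\inf_z\mssd(y_i,z))^2\bigr]$ without further comment, so the passage from $a^2+b^2$ to $(a+b)^2$ is asserted rather than argued. Your two-sided approach is more transparent about where the difficulty sits, but both routes hinge on the same identity at precisely this point.
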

\begin{proof}
Let $\gamma=\sum_{i=1}^k \delta_{x_i}$, $\eta=\sum_{i=1}^k \delta_{y_i} \in \U^k(B_r)$ and $\alpha=\sum_{i=1}^l \delta_{w_i}, \beta=\sum_{i=1}^l \delta_{z_i} \in \U(\partial B_r)$ with $l \le k$. Then, we have  
\begin{align*}
&\mssd_{\U}^{(r)}(\gamma, \eta)^2= \inf_{\alpha, \beta \in \U(\partial B_r)}\mssd_{\U(\bar{B}_r)}(\gamma_{B_r}+\alpha, \eta_{B_r}+\beta)^2 
\\
&= \inf_{\alpha, \beta \in \U(\partial B_r)}  \inf_{\sigma \in \mathfrak S(k)} \Bigl(\sum_{i=1}^m \mssd(x_{\sigma(i)}, y_i)^2 + \sum_{i=m+1}^{k} \mssd(x_{\sigma(i)}, z_{i-m})^2+ \sum_{i=m+1}^{k} \mssd_r(w_{\sigma(i-m)}, y_i)^2  \Bigr) \comma
\end{align*}
where $m \in \N$ satisfies $k=l+m$. If $m=k$ (no particle in $B_r$ is coupled with configurations at the boundary), the last two terms are zero.
Interchanging two infimums, 
\begin{align*}
&  \inf_{\sigma \in \mathfrak S(k)}  \inf_{\alpha, \beta \in \U(\partial B_r)} \Bigl(\sum_{i=1}^m \mssd(x_{\sigma(i)}, y_i)^2 + \sum_{i=m+1}^{k} \mssd(x_{\sigma(i)}, z_{i-m})^2+ \sum_{i=m+1}^{k} \mssd(w_{\sigma(i-m)}, y_i)^2  \Bigr) 
\\
 &= \inf_{\sigma \in \mathfrak S(k)}\sum_{i=1}^k \Bigl(\mssd(x_{\sigma(i)}, y_i)^2 \wedge\bigl( \inf_{z \in \partial B_r}\mssd(x_{\sigma(i)}, z)+ \inf_{z \in \partial B_r}\mssd(y_i, z)\bigr)^2\Bigr)
 \\
 &= \inf_{\sigma \in \mathfrak S(k)}\sum_{i=1}^k \hat\mssd_r(x_{\sigma(i)}, y_i)^2 
 \\
&=  \hat\mssd_r^{\odot k}(\gamma, \eta)^2 \fstop \qedhere
\end{align*}
\end{proof}

\begin{prop}\label{p:CEEM}
Suppose that $\mu$ is a probability measure on~$(\U, \msB(\tau_\mrmv))$. Then, 
$(\U, \tau_\mrmv, \mssd_{\U}, \QP)$ is a complete geodesic extended metric measure space witnessed by the pseudo-distances~$(\mssd_{\U}^{(r)})_{r>0}$. 
\end{prop}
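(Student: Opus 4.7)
The plan is to verify in turn the four conditions for an extended metric measure space (Dfn.~\ref{d:EMM}), then the geodesic property, and finally the completeness of~$\mssd_\U$. Conditions~\iref{d:EMM1} and~\iref{d:EMM4} are immediate: $(\U, \tau_\mrmv)$ is Polish and $\QP \in \mcP(\U)$ by hypothesis. For condition~\iref{d:EMM2}, I would use as the witnessing family the truncated partial pseudo-distances $\mssd_\U^{(r), N}(\gamma, \eta) := \mssd_\U^{(r)}(\gamma, \eta) \wedge N$ with $r, N \in \N$: each is a bounded $\tau_\mrmv^{\times 2}$-continuous pseudo-distance (continuity by Prop.~\ref{p:PDF}\iref{p:PDF2}; the truncation $d \wedge N$ preserves the triangle inequality by an elementary case analysis), and $\sup_{r, N} \mssd_\U^{(r), N} = \sup_r \mssd_\U^{(r)} = \mssd_\U$ by Prop.~\ref{p:PDF}\iref{p:PDF4}. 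The statement of the proposition names $(\mssd_\U^{(r)})_{r>0}$ as witnesses; truncating by $N$ is only a cosmetic step to meet the boundedness clause of~\iref{d:EMM2}.

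For condition~\iref{d:EMM3}, I would exhibit the explicit family $F_{r, N, \eta_0}(\gamma) := \mssd_\U^{(r)}(\gamma, \eta_0) \wedge N$ parametrised over $r, N \in \N$, $\eta_0 \in \U$. Each $F_{r, N, \eta_0}$ is bounded, $\tau_\mrmv$-continuous by Prop.~\ref{p:PDF}\iref{p:PDF2}, and $\mssd_\U$-Lipschitz with constant at most $1$ by \eqref{ineq:LAC1}, hence lies in $\Lip_b(\U, \mssd_\U) \cap \mcC(\U, \tau_\mrmv)$. If $\gamma_n$ converges to $\gamma$ in the initial topology generated by this family, then specialising to $\eta_0 = \gamma$ and letting $N \to \infty$ yields $\mssd_\U^{(r)}(\gamma_n, \gamma) \to 0$ for every~$r$, which forces $\gamma_n \to \gamma$ in $\tau_\mrmv$ by Prop.~\ref{p:PDF}\iref{p:PDF2}; the reverse inclusion of topologies is trivial.

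For the geodesic property, given $\gamma, \eta \in \U$ with $\mssd_\U(\gamma, \eta) < \infty$, I will invoke the standard Kantorovich existence theorem (lower semicontinuity of the cost $\mssd^2$, and tightness of couplings with fixed marginals) to produce an optimal coupling $\pi = \sum_i \delta_{(x_i, y_i)} \in \Cpl(\gamma, \eta)$, and then set $\gamma_t := \sum_i \delta_{(1-t) x_i + t y_i}$. Coupling $\gamma_s$ with $\gamma_t$ via $\pi$ yields $\mssd_\U(\gamma_s, \gamma_t)^2 \le |s-t|^2 \mssd_\U(\gamma, \eta)^2$, and the triangle inequality promotes this to equality, giving a constant speed geodesic in the sense of \eqref{e:GEI}. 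Completeness then comes for free: any $\mssd_\U$-Cauchy sequence is $\mssd_\U^{(r)}$-Cauchy for every~$r$, hence $\tau_\mrmv$-convergent to some $\gamma \in \U$ by Prop.~\ref{p:PDF}\iref{p:PDF2}, and the $\tau_\mrmv^{\times 2}$-lower semicontinuity of $\mssd_\U = \sup_r \mssd_\U^{(r)}$ upgrades this to $\mssd_\U$-convergence via $\mssd_\U(\gamma_n, \gamma) \le \liminf_m \mssd_\U(\gamma_n, \gamma_m) \to 0$.

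The delicate step will be the geodesic construction: verifying that the interpolating measure $\gamma_t$ is genuinely an element of $\U$ (locally finite on every bounded Borel set, with no accumulation points) when $\gamma$ and $\eta$ both carry infinitely many particles. Finiteness of $\mssd_\U(\gamma, \eta)$ provides an $\ell^2$ control on the displacements $|x_i - y_i|$, which should prevent accumulation along the straight-line interpolation inside any bounded region, but this needs to be made precise. The companion issue of existence of an optimal coupling between two infinite-mass configurations, while classical in spirit, will also warrant a short tightness argument.
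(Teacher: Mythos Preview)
Your proposal is correct and follows essentially the same path as the paper, only more explicitly. The paper's proof is terse: it dispatches Polishness by noting~$\U$ is closed in the Polish space~$\mcM$ of Radon measures, calls completeness of~$\mssd_\U$ ``standard'', cites \cite[Cor.~2.7]{ErbHue15} for the geodesic property, and invokes Prop.~\ref{p:PDF} together with \cite[Lem.~4.2]{AmbErbSav16} for the extended-metric-measure structure. Your write-up unpacks exactly what lies behind those citations: the truncation~$\mssd_\U^{(r)}\wedge N$ to meet the boundedness clause of Dfn.~\ref{d:EMM}\iref{d:EMM2}, the point-separation argument via~$F_{r,N,\eta_0}$ for~\iref{d:EMM3}, and the linear-interpolation geodesic through an optimal matching. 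The two ``delicate steps'' you flag (local finiteness of~$\gamma_t$, existence of an optimal matching between infinite configurations) are precisely what \cite[Lem.~2.5, Cor.~2.7]{ErbHue15} establishes, and your sketched argument for local finiteness via the~$\ell^2$ displacement bound is the right one.
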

\begin{proof} The space $(\mcM, \tau_\mrmv)$ of Radon measures in $\R^n$ is Polish. 
Since $\U$ is a closed subspace in $\mcM$,  it is Polish as well. The completeness of $(\U, \mssd_\U)$ is standard. For the geodesic property, see, e.g., \cite[Cor.~2.7]{ErbHue15}.  The family of the pseudo-distances~$(\mssd_{\U}^{(r)})_{r>0}$ witnesses $(\U, \tau_\mrmv, \mssd_{\U}, \QP)$ to be an extended metric measure space thanks to~Prop.~\ref{p:PDF} and \cite[Lem.~4.2]{AmbErbSav16}. 
\end{proof}

Note that the same statement as Prop.~\ref{p:CEEM} holds even with the countable pseudo-distances $(\mssd_{\U}^{(r)})_{r \in \N}$.  Thus, by Prop.~\ref{p:CEEM} and Prop.~\ref{p:EMW}, we have
\begin{cor} \label{p:EMW1}
Let $1\le p<\infty$. The space $(\mathcal P(\U), \tau_\mrmw, \mssW_{p, \mssd_\U})$ is a complete geodesic extended metric topological space.
\end{cor}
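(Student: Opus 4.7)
The plan is to reduce the statement to a direct application of Proposition~\ref{p:EMW}. Concretely, I would first upgrade Proposition~\ref{p:CEEM} so that the witnessing family of $\tau_\mrmv^{\times 2}$-continuous bounded pseudo-distances can be taken to be \emph{countable}, indexed by $I = \N$ as required by Proposition~\ref{p:EMW}. This is exactly the remark immediately preceding the corollary: since $r \mapsto \mssd_\U^{(r)}$ is monotone non-decreasing by Proposition~\ref{p:PDF}\iref{p:PDF4} and pointwise converges to $\mssd_\U$, one has $\mssd_\U = \sup_{r \in \N} \mssd_\U^{(r)}$; furthermore each $\mssd_\U^{(r)}$ is $\tau_\mrmv^{\times 2}$-continuous by Proposition~\ref{p:PDF}\iref{p:PDF2}. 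The remaining axioms of Definition~\ref{d:EMM} for $(\U, \tau_\mrmv, \mssd_\U)$ —- that $(\U, \tau_\mrmv)$ is Polish, that $\mssd_\U$ is a complete extended distance, and that $\tau_\mrmv$ is generated by $\Lip_b(\U, \mssd_\U) \cap \mcC_b(\U, \tau_\mrmv)$ —- are furnished by Proposition~\ref{p:CEEM}. The geodesic property of $(\U, \mssd_\U)$ is likewise supplied there.

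With this input in hand, Proposition~\ref{p:EMW} applied to $X = \U$, $\tau = \tau_\mrmv$, $\mssd = \mssd_\U$ (with the countable witnessing family $\{\mssd_\U^{(r)}\}_{r \in \N}$) immediately yields that $(\mcP(\U), \tau_\mrmw, \mssW_{p, \mssd_\U})$ is again an extended geodesic metric-topological space in the sense of Definition~\ref{d:EMM}, which is precisely the assertion of the corollary.

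There is essentially no obstacle to overcome in this derivation, since Proposition~\ref{p:EMW} already does all the substantive work (constructing the witnessing pseudo-distances on $\mcP(X)$ via the $\mssW_{p, \mssd_i}$, checking completeness of $\mssW_{p, \mssd}$ as an extended distance, verifying that $\tau_\mrmw$ is generated by bounded $\mssW_{p, \mssd}$-Lipschitz weakly continuous functions, and lifting the geodesic property to $\mcP(X)$ via the measurable selection Lemma~\ref{l:MS}). The only point meriting a sentence of justification is the switch from the uncountable witnessing family $\{\mssd_\U^{(r)}\}_{r > 0}$ to the countable one $\{\mssd_\U^{(r)}\}_{r \in \N}$, which is trivial by monotonicity.
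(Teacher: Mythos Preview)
Your proposal is correct and follows essentially the same approach as the paper: the paper's proof consists precisely of the observation that Proposition~\ref{p:CEEM} holds with the countable family $(\mssd_\U^{(r)})_{r\in\N}$, followed by a direct application of Proposition~\ref{p:EMW}.
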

\begin{prop} \label{p:PHC}
The algebra ${\mathcal C}:=\cup_{r>0}\Lip_b(\U, \mssd_\U^{(r)})$ is dense in~$L^2(\QP)$, separates points in~$(\U, \tau_\mrmv)$ and $\mcC \subset \dom{\vvE^{\U, \QP}}$.
\end{prop}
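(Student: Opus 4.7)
The statement has three pieces to check: (i) $\mcC \subset \dom{\vvE^{\U, \QP}}$, (ii) $\mcC$ separates points of $(\U,\tau_\mrmv)$, and (iii) $\mcC$ is $L^2(\QP)$-dense. I would dispatch (i) and (ii) first, as they are immediate from Prop.~\ref{p:PDF}. For (i), every $u \in \Lip_b(\U, \mssd_\U^{(r)})$ is $\mssd_\U$-Lipschitz because $\mssd_\U^{(r)}\le\mssd_\U$ by Prop.~\ref{p:PDF}\ref{p:PDF4}, and $\tau_\mrmv$-continuous (hence $\QP$-measurable) by~\eqref{ineq:LAC2}; the Rademacher-type inclusion~\eqref{p:Rad} then places $u$ inside $\dom{\vvE^{\U,\QP}}$. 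For (ii), given $\gamma\neq\eta$ I pick $r$ so large that $\gamma_{B_r}\neq\eta_{B_r}$; then $\mssd_\U^{(r)}(\gamma,\eta)>0$ by Prop.~\ref{p:PDF}\ref{p:PDF1}, and the bounded, $1$-Lipschitz (w.r.t.\ $\mssd_\U^{(r)}$) function $1\wedge \mssd_\U^{(r)}(\emparg, \gamma)$ lies in $\mcC$ and separates $\gamma$ from $\eta$.

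For (iii), the plan is a Stone--Weierstrass argument on compacts combined with the tightness of $\QP$. First I verify that $\mcC$ is a unital algebra inside $\mcC_b(\U,\tau_\mrmv)$: given finitely many $u_i \in \Lip_b(\U, \mssd_\U^{(r_i)})$, set $R:=\max_i r_i$ and use the monotonicity from Prop.~\ref{p:PDF}\ref{p:PDF4} to regard all $u_i$ inside the common space $\Lip_b(\U, \mssd_\U^{(R)})$, which is stable under sums, products, and truncations at constants. Combined with (ii) just proved, Stone--Weierstrass yields that $\mcC|_K$ is uniformly dense in $\mcC(K)$ for every $\tau_\mrmv$-compact $K\subset\U$. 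Since $(\U,\tau_\mrmv)$ is Polish and $\QP$ Radon, $\mcC_b(\U,\tau_\mrmv)$ is $L^2(\QP)$-dense by standard Lusin-type arguments; for fixed $u\in\mcC_b(\U,\tau_\mrmv)$ and $\eps>0$ I would pick a compact $K$ with $\QP(K^c)<\eps$, choose $v\in\mcC$ with $\sup_K|u-v|<\eps$ by Stone--Weierstrass, and truncate $v$ at $\pm\|u\|_\infty$ (which preserves membership in $\mcC$, since the maps $\vee$ and $\wedge$ with constants are $1$-Lipschitz). The resulting quadratic estimate
\[
\|u-v\|_{L^2(\QP)}^2 \le \eps^2 + 4\|u\|_\infty^2\,\eps
\]
concludes density.

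No step presents a real obstacle. The only point requiring mild care is that $\mssd_\U^{(r)}$ is merely a pseudo-distance: one should check that ``distance-to-a-point'' functions are well-defined and globally bounded Lipschitz, that truncations preserve $\Lip_b(\U,\mssd_\U^{(r)})$, and that the Stone--Weierstrass hypothesis (separation on $K$, not merely modulo the pseudo-distance) holds. All three are built into~Prop.~\ref{p:PDF}, so the proof is clean.
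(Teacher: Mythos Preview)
Your proof is correct and takes a slightly different route from the paper's. For density and point-separation, the paper simply invokes \cite[Lem.~4.5]{AmbErbSav16} after noting (Prop.~\ref{p:CEEM}) that $(\U,\tau_\mrmv,\mssd_\U,\QP)$ is an extended metric measure space witnessed by the pseudo-distances $\mssd_\U^{(r)}$; that lemma packages exactly the Stone--Weierstrass-plus-tightness argument you spell out by hand. For the inclusion $\mcC\subset\dom{\vvE^{\U,\QP}}$, the paper does not go through the Rademacher-type property~\eqref{p:Rad} but argues directly from the definition of the monotone-limit form: for $u\in\Lip_b(\mssd_\U^{(r_0)})$ the $\sigma_{r_0}$-measurability from~\eqref{ineq:LAC2} forces $u(\gamma)=u(\gamma_{B_r})$ for all $r\ge r_0$, so the truncated energies $r\mapsto\E_r^{\U,\QP}(u)$ stabilise and the limit is finite. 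Your Rademacher shortcut is valid and arguably cleaner; the paper's direct route has the minor bonus of exhibiting $\mcC\subset\mcC_r$ for every $r$, though that is not needed downstream.
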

\begin{proof}
As $(\U, \tau_\mrmv, \mssd_{\U}, \QP)$ is an extended metric measure space witnessed by $(\mssd_\U^{(r)})_{r>0}$ by Prop.~\ref{p:CEEM}, the algebra $\cup_{r>0} \Lip_b(\U, \mssd_\U^{(r)})$ is dense in $L^2(\QP)$ and separates points by \cite[Lem.~4.5]{AmbErbSav16}. By the definition of $\mcC_r$ in Dfn.~\ref{d:core} and \eqref{ineq:LAC1}, we have $\mcC \subset \mcC_r$ for every $r>0$. For every $u \in \mcC$, there exists $r_0>0$ such that $u(\gamma_{B_r})=u(\gamma)$ for every $r \ge r_0$ due to \eqref{ineq:LAC2} in Prop.~\ref{p:PDF}. Thus, $r \mapsto \E^{\U, \QP}_r(u)$ is constant for $r \ge r_0$, which concludes $\underline{\E}^{\U, \QP}(u)=\lim_{r \to \infty}\E^{\U, \QP}_r(u)<\infty$. Thus, $\mcC \subset  \dom{\vvE^{\U, \QP}}$. 
\end{proof}

\begin{prop} \label{p:CES} Suppose~\ref{ass:CE} and~\ref{ass:ConditionalClos}. 
Then, $(\underline{\E}^{\U, \QP}, \mcC)$ is closable. We denote the closure by  $(\E^{\U, \QP}, \dom{\E^{\U, \QP}})$, which   is a strongly local conservative $\QP$-symmetric Dirichlet form. 
\end{prop}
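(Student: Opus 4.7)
The plan is to realise $(\E^{\U, \QP}, \dom{\E^{\U, \QP}})$ as the $L^2(\QP)$-closure of the restriction to $\mcC$ of the already-closed monotone limit form $(\vvE^{\U, \QP}, \dom{\vvE^{\U, \QP}})$ from Dfn.~\ref{d:DFF}, and then to transfer the structural properties (Markov, strong locality, conservativity, symmetry) from the ambient form to its closure. Apart from a brief check of stability of $\mcC$ under the unit truncation, every ingredient is already in place.

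First I would address closability. By Prop.~\ref{p:PHC} one has $\mcC \subset \dom{\vvE^{\U, \QP}}$ with $\mcC$ dense in $L^2(\QP)$, while by Dfn.~\ref{d:DFF} the ambient form $(\vvE^{\U, \QP}, \dom{\vvE^{\U, \QP}})$ is closed. Since the restriction of a non-negative closed symmetric form to any linear subspace of its domain is automatically closable in the ambient Hilbert space, $(\vvE^{\U, \QP}, \mcC)$ is closable in $L^2(\QP)$. Denoting the closure by $(\E^{\U, \QP}, \dom{\E^{\U, \QP}})$, one has $\dom{\E^{\U, \QP}} \subset \dom{\vvE^{\U, \QP}}$ and $\E^{\U, \QP} = \vvE^{\U, \QP}$ on the smaller domain.

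I would next verify the Markov property and symmetry. Any $u \in \mcC$ lies in $\Lip_b(\U, \mssd_\U^{(r)})$ for some $r>0$, and its unit truncation $u_0 := 0 \vee u \wedge 1$ is again bounded and $\mssd_\U^{(r)}$-Lipschitz with $\Lip_{\mssd_\U^{(r)}}(u_0) \le \Lip_{\mssd_\U^{(r)}}(u)$, so $\mcC$ is stable under the unit contraction. Combining the integral representation $\vvE^{\U, \QP}(u) = \frac{1}{2}\int_\U \cdc^\U(u) \diff \QP$ from Dfn.~\ref{d:DFF} with the fibrewise pointwise contraction $\cdc^\U(u_0) \le \cdc^\U(u)$ (inherited from \eqref{d:SFD} via Prop.~\ref{p:CDLC} and the monotone limit in $r$) yields $\vvE^{\U, \QP}(u_0) \le \vvE^{\U, \QP}(u)$; this Markov inequality extends to $\dom{\E^{\U, \QP}}$ by $L^2$-lower semicontinuity of the closure. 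Symmetry is immediate from the polarisation defining $\vvE^{\U, \QP}(u,v)$ in Dfn.~\ref{d:DFF}.

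Finally I would deduce strong locality and conservativity as direct consequences of the analogous properties of $\vvE^{\U, \QP}$: for $u, v \in \dom{\E^{\U, \QP}} \subset \dom{\vvE^{\U, \QP}}$ with $u(v-c) = 0$ $\QP$-a.e., $\E^{\U, \QP}(u,v) = \vvE^{\U, \QP}(u,v) = 0$; and $\1 \in \mcC \subset \dom{\E^{\U, \QP}}$ with $\cdc^\U(\1) = 0$ via the Rademacher-type bound \eqref{p:Rad}, so $\E^{\U, \QP}(\1) = 0$. I do not anticipate a genuine obstacle here: the heavy analytic lifting has been done in Prop.~\ref{p:CDLC} (fibrewise closability and the square-field integral representation) and in Dfn.~\ref{d:DFF} (passage to the monotone limit in $r$); the only mild point is the stability of $\mcC$ under unit truncation, which is automatic for bounded Lipschitz functions with respect to each pseudo-distance $\mssd_\U^{(r)}$.
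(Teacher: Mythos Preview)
Your proposal is correct and follows essentially the same approach as the paper: closability from $\mcC\subset\dom{\vvE^{\U,\QP}}$ with the latter closed, conservativeness from $\1\in\mcC$, and the remaining properties inherited from the extension $(\vvE^{\U,\QP},\dom{\vvE^{\U,\QP}})$. You are more explicit than the paper about the Markov property (checking that $\mcC$ is stable under the unit contraction so that $u_0\in\dom{\E^{\U,\QP}}$ and not merely in $\dom{\vvE^{\U,\QP}}$), which is a point the paper leaves implicit under the phrase ``the other properties inherit from the larger form.''
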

\begin{proof}
Since $(\vvE^{\U, \QP}, \dom{\vvE^{\U, \QP}})$ is closed and $\mcC \subset  \dom{\vvE^{\U, \QP}}$,  the form~$(\underline{\E}^{\U, \QP}, \mcC)$ is closable. 
The conservativeness is immediate due to $\1 \in \mcC$ and $\E^{\U, \QP}(\1)=0$. 
The other properties inherit from the larger form~$(\vvE^{\U, \QP}, \dom{\vvE^{\U, \QP}})$  since the larger form is an extension of $(\E^{\U, \QP}, \dom{\E^{\U, \QP}})$.
\end{proof}

%
%

\section{Identification of Cheeger energy and Dirichlet forms} \label{s:ICE}
\subsection{Identification Theorem}
In this section, we identify $(\E^{\U, \QP}, \dom{\E^{\U, \QP}})$ and the Cheeger energy $(\Ch^{\mssd_{\U}, \QP}, \dom{\Ch^{\mssd_{\U}, \QP}})$. 
\begin{thm} \label{t:E=Ch}
Suppose~\ref{ass:CE} and~\ref{ass:ConditionalClos}. Then
$$\bigl(\Ch^{\mssd_{\U}, \QP}, \mathscr D(\Ch^{\mssd_{\U}, \QP})\bigr) = \bigl(\vE^{\U, \QP}, \dom{\E^{\U, \QP}}\bigr)\fstop$$
\end{thm}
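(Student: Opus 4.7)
The plan is to establish the equality $(\Ch^{\mssd_\U,\QP}, \dom{\Ch^{\mssd_\U,\QP}}) = (\E^{\U,\QP}, \dom{\E^{\U,\QP}})$ through the common Lipschitz core $\mcC = \bigcup_{r>0}\Lip_b(\U,\mssd_\U^{(r)})$, which is $L^2(\QP)$-dense by Prop.~\ref{p:PHC}. Since $\E^{\U,\QP}$ is \emph{defined} as the closure of $(\vvE^{\U,\QP},\mcC)$, it suffices to prove (A)~the pointwise identity $|\nabla_{\mssd_\U,\QP}\, u|_*^2 = \cdc^\U(u)$ $\QP$-a.e.\ for $u \in \mcC$ and (B)~the $\Ch^{\mssd_\U,\QP}$-density of $\mcC$ in $\dom{\Ch^{\mssd_\U,\QP}}$.

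For the easy direction of (A), fix $u \in \Lip_b(\U,\mssd_\U^{(r)}) \subset \mcC_r$. By \eqref{ineq:LAC2}, $u$ factors through $\pr_{B_r}$, and by \ref{ass:CE}, $\gamma(\partial B_r) = 0$ for $\QP$-a.e.~$\gamma$. On such $\gamma$, Prop.~\ref{p:RHDD} identifies $\mssd_\U^{(r)}$ locally with $\mssd_{\U^k}$ (with $k = \gamma(B_r)$) away from $\partial B_r$, and Rademacher's theorem \eqref{e:RDT1} gives $|\mathsf D_{\mssd_\U^{(r)}}\, u|(\gamma) = |\nabla^{\odot k}(u|_{\U^k(B_r)})|(\gamma_{B_r}) = \cdc^\U(u)(\gamma)^{1/2}$. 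Combined with the chain $|\nabla_{\mssd_\U,\QP}\, u|_* \le |\mathsf D_{\mssd_\U}\, u| \le |\mathsf D_{\mssd_\U^{(r)}}\, u|$ (from \eqref{i:MSL} and $\mssd_\U \ge \mssd_\U^{(r)}$), this produces $|\nabla_{\mssd_\U,\QP}\, u|_*^2 \le \cdc^\U(u)$ $\QP$-a.e. The reverse direction of (A) proceeds by disintegration: for an approximating sequence $u_n \in \Lip_b(\U,\mssd_\U) \cap \mcB(\tau_\mrmv)$ with $u_n \to u$ in $L^2(\QP)$ and $|\mathsf D_{\mssd_\U}\, u_n| \rightharpoonup |\nabla_{\mssd_\U,\QP}\, u|_*$ weakly in $L^2(\QP)$, the full slope dominates the fibre slope on each slice $\U_r^\eta$ (since restricting the deformation class to intra-slice perturbations only reduces the $\limsup$). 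The classical Cheeger--Sobolev identification on the weighted symmetric Euclidean space $(\U^k(B_r), \mssd_{\U^k}, \QP_r^{k,\eta})$ --- valid by \ref{ass:CE} and, e.g., \cite[Thm.~6.2, Thm.~8.1]{AmbGigSav14} --- exhibits $|\nabla^{\odot k} u|$ as the minimal relaxed fibre-gradient, so that $\cdc^\U(u)(\gamma) = |\nabla^{\odot k}u|^2(\gamma_{B_r}) \le |\nabla_{\mssd_\U,\QP}\, u|_*^2(\gamma)$ $\QP$-a.e.

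Step (B) is the main technical obstacle. Given $u \in \dom{\Ch^{\mssd_\U,\QP}}$, the approximation is constructed in two stages. First, project to the $\sigma$-algebra $\sigma_r$ via a conditional-expectation-type operator $u^{(r)} = \mathbb{E}_\QP[u\,|\,\sigma_r]$, whose Cheeger energy is controlled by that of $u$ via Jensen's inequality applied fibre-wise to the disintegration $\QP = \int \QP_r^\eta\,d\QP(\eta)$. Second, on each finite-volume sector $\U^k(B_r)$, smooth $u^{(r)}$ by Euclidean mollification (licit by \ref{ass:CE}) and truncate to obtain a sequence of genuine elements of $\mcC_r \subset \mcC$ converging to $u^{(r)}$ in Cheeger norm. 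Letting $r \to \infty$ and invoking $\mssd_\U^{(r)} \nearrow \mssd_\U$ (Prop.~\ref{p:PDF}\ref{p:PDF4}) together with the monotone stability of Cheeger energies under approximating pseudo-distances in the extended mm-space framework \cite[\S4]{AmbErbSav16} recovers $u$ as a $\Ch$-limit of elements of $\mcC$.

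The hardest aspect is that the metric topology $\tau_{\mssd_\U}$ is non-separable and $\QP$ is not Borel for $\tau_{\mssd_\U}$, so no direct $\mssd_\U$-mollification is available and standard Cheeger--Sobolev approximation arguments must be re-engineered at the level of the coarser Polish topology $\tau_\mrmv$. The conditional disintegration along $\pr_{B_r^c}$ reduces each sector to a finite-dimensional weighted Euclidean space where smoothing is classical, while the extended-metric-measure-space framework of Ambrosio--Erbar--Savar\'e \cite{AmbErbSav16} --- invoked via the family $(\mssd_\U^{(r)})_{r\in\N}$ of $\tau_\mrmv$-continuous pseudo-distances witnessing $\mssd_\U = \sup_r \mssd_\U^{(r)}$ (Prop.~\ref{p:PDF}) --- provides the stability theorems needed to pass from the finite-volume identifications to the global one.
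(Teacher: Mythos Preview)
Your easy direction of (A) is correct and matches what the paper proves. However, the paper bypasses both your hard direction of (A) and all of Step (B) by a much shorter route. It first shows $\mssd_\U \le \mssd_{\E^{\U,\QP}}$ (the intrinsic extended distance of the Dirichlet form, \eqref{d:IDS}) by testing the Rademacher-type bound \eqref{p:Rad} on the $\tau_\mrmv$-continuous functions $\mssd_\U^{(r)}(\cdot,\eta)\wedge c$ and letting $r,c\to\infty$. Since larger distances give smaller slopes, this yields $\Ch^{\mssd_{\E^{\U,\QP}},\QP} \le \Ch^{\mssd_\U,\QP}$, and the abstract inequality $\E \le \Ch^{\mssd_\E,\QP}$ from \cite[Thm.~12.5]{AmbErbSav16} then gives $\E^{\U,\QP} \le \Ch^{\mssd_\U,\QP}$ directly. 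Combined with the easy direction (which gives $\Ch \le \E$ on $\mcC$, hence on all of $\dom{\E^{\U,\QP}}$ by $L^2$-lower semicontinuity of $\Ch$), the form equality follows. The $\Ch$-density of $\mcC$ is never proved as an input---it is a \emph{consequence} of the identification.

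Your Step (B) has genuine gaps. First, the claim $\Ch(\mathbb{E}_\QP[u\mid\sigma_r]) \le \Ch(u)$ ``via Jensen fibre-wise'' is unjustified: the conditional expectation averages over the tail variable $\gamma_{B_r^c}$, but the $\mssd_\U$-slope sees deformations in \emph{all} coordinates, and there is no convexity structure that turns this averaging into a contraction of the Cheeger energy. Second, the ``monotone stability of Cheeger energies under $\mssd_\U^{(r)}\nearrow\mssd_\U$'' you invoke from \cite[\S4]{AmbErbSav16} is not a theorem stated there. Third, mollification performed separately on each sector $\U^k(B_r)$ does not produce an element of $\Lip_b(\U,\mssd_\U^{(r)})$ unless the Lipschitz constants are \emph{uniform in $k$}; this is precisely the obstruction the paper addresses later in Thm.~\ref{t:IUL} via the parallel-extension construction and the additional tail hypothesis \eqref{e:AFD}, which is not assumed in the present theorem.
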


 We split the proof into several steps. 
 Recall that $\mssd_{\vE^{\U, \QP}}$ is the intrinsic extended distance associated with $(\vE^{\U, \QP}, \dom{\E^{\U, \QP}})$, see \eqref{d:IDS}.
 \begin{lem} \label{l:11}
 $\mssd_{\U} \le \mssd_{\vE^{\U, \QP}}$. 
 \end{lem}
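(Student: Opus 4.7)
The plan is to exhibit an explicit family of test functions in the variational definition of $\mssd_{\vE^{\U,\QP}}$ that recovers $\mssd_\U(\gamma,\eta)$ in the limit. For a fixed $\eta\in\dUpsilon$, $r>0$ and $N\in\N$, I would take the truncated partial-matching distance
\begin{equation*}
u_{r,N}(\gamma)\eqdef \mssd_{\dUpsilon}^{(r)}(\gamma,\eta)\wedge N \comma\qquad \gamma\in\dUpsilon\fstop
\end{equation*}
By the triangle inequality for the pseudo-distance $\mssd_\U^{(r)}$ (Prop.~\ref{p:PDF}\ref{p:PDF1}) and the fact that truncation by a constant preserves Lipschitz constants, $u_{r,N}$ is bounded and $1$-Lipschitz with respect to $\mssd_\U^{(r)}$; in particular, $u_{r,N}\in\Lip_b(\dUpsilon,\mssd_\U^{(r)})\subset \mcC$. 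It is also $\tau_\mrmv$-continuous by Prop.~\ref{p:PDF}\ref{p:PDF2}.

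Since $\mcC\subset\dom{\vE^{\U,\QP}}$ by construction of the closure in Prop.~\ref{p:CES}, we have $u_{r,N}\in\dom{\vE^{\U,\QP}}\cap\mcC_b(\dUpsilon,\tau_\mrmv)$. Combining the inclusions \eqref{ineq:LAC1} with the Rademacher-type property \eqref{p:Rad} of the monotone limit form then yields the pointwise bound
\begin{equation*}
\cdc^{\dUpsilon}(u_{r,N}) \le \Lip_{\mssd_\U}(u_{r,N})^2 \le \Lip_{\mssd_\U^{(r)}}(u_{r,N})^2 \le 1 \quad \QP\text{-a.e.}
\end{equation*}
Consequently $u_{r,N}$ is admissible in the definition~\eqref{d:IDS} of the intrinsic extended distance associated with $(\vE^{\U,\QP},\dom{\vE^{\U,\QP}})$.

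Evaluating at $\gamma$ and $\eta$ gives $u_{r,N}(\gamma)-u_{r,N}(\eta) = \mssd_\U^{(r)}(\gamma,\eta)\wedge N$, and therefore
\begin{equation*}
\mssd_{\dUpsilon}^{(r)}(\gamma,\eta)\wedge N \le \mssd_{\vE^{\U,\QP}}(\gamma,\eta) \fstop
\end{equation*}
Letting first $N\to\infty$ and then $r\to\infty$, and invoking the monotone convergence $\mssd_\U^{(r)}\nearrow\mssd_\U$ of Prop.~\ref{p:PDF}\ref{p:PDF4}, yields $\mssd_{\dUpsilon}(\gamma,\eta)\le \mssd_{\vE^{\U,\QP}}(\gamma,\eta)$, including the case where either side is $+\infty$. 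The only delicate point is ensuring that the truncated partial distance really lies in the Lipschitz core $\mcC$ (so that it is in $\dom{\vE^{\U,\QP}}$ with the expected square-field bound), but this is immediate from Prop.~\ref{p:PDF} and the Rademacher-type property for the limit form; no further approximation or mollification is required.
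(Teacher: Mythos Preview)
Your proof is correct and follows essentially the same approach as the paper: both use the truncated partial-matching distance $\mssd_\U^{(r)}(\cdot,\eta)\wedge c$ as an admissible test function in the variational definition of $\mssd_{\vE^{\U,\QP}}$, invoking Prop.~\ref{p:PDF} for $\tau_\mrmv$-continuity and the Rademacher-type property~\eqref{p:Rad} together with~\eqref{ineq:LAC1} for the square-field bound, then passing to the monotone limits. The only cosmetic difference is the order in which you let $N\to\infty$ and $r\to\infty$, which is immaterial.
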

 \begin{proof}
Due to~Prop.~\ref{p:PDF},  $\mssd_{\U}^{(r)}(\cdot, \eta) \wedge c \in \mathcal C_b(\U, \tau_\mrmv)$ for every constant~$c>0$. By the Rademacher-type property~\eqref{p:Rad} and~\eqref{ineq:LAC1}, we have
  $$\vcdc^\U(\mssd_{\U}^{(r)}(\cdot, \eta) \wedge c) \le 1 \comma \quad  \eta \in \U\comma  c>0\fstop$$
Thus, by the definition of the intrinsic distance, we have 
 $$\mssd_{\U}^{(r)}(\gamma, \eta) \wedge c=\mssd_{\U}^{(r)}(\gamma, \eta) \wedge c- \mssd_{\U}^{(r)}(\eta, \eta)\wedge c \le \mssd_{\vE^{\U, \QP}}(\gamma, \eta)\fstop$$ 
 Since $\mssd_{\U}^{(r)}(\gamma, \eta) \wedge c \nearrow \mssd_{\U}(\gamma, \eta) \wedge c$ as $r \nearrow \infty$ by~Prop.~\ref{p:PDF}, we conclude 
 $$\mssd_{\U}(\gamma, \eta) \wedge c \le  \mssd_{\vE^{\U, \QP}}(\gamma, \eta) \comma \quad c>0 \fstop$$ 
 As $c$ is arbitrary, the proof is complete.
 \end{proof}
 
 \begin{lem} \label{l:21}
  $(\vE^{\U, \QP}, \dom{\E^{\U, \QP}}) \le \bigl(\Ch^{\mssd_{\U}, \QP}, \dom{\Ch^{\mssd_{\U}, \QP}}\bigr)$.
 \end{lem}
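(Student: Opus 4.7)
The inequality $(\vE^{\U, \QP}, \dom{\E^{\U, \QP}}) \le (\Ch^{\mssd_\U, \QP}, \dom{\Ch^{\mssd_\U, \QP}})$ means $\dom{\Ch^{\mssd_\U, \QP}} \subset \dom{\E^{\U, \QP}}$ together with $\E^{\U, \QP}(u) \le \Ch^{\mssd_\U, \QP}(u)$ on the smaller domain. Since $\vE^{\U, \QP}$ is $L^2(\QP)$-lower semi-continuous and $\Ch^{\mssd_\U, \QP}$ is, by construction (cf.\ Rem.~\ref{r:CH}), the $L^2(\QP)$-lsc envelope of $u \mapsto \tfrac12 \int |\mssD_{\mssd_\U} u|^2\diff\QP$ along bounded $\mssd_\U$-Lipschitz $\tau_\mrmv$-Borel functions, it suffices to exhibit, for each such $u$, a sequence $u_n \in \mcC$ with $u_n \to u$ in $L^2(\QP)$ and $\limsup_n \vE^{\U, \QP}(u_n) \le \tfrac12 \int |\mssD_{\mssd_\U} u|^2 \diff\QP$.

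The first ingredient is an explicit formula for $\vE^{\U, \QP}$ on $\mcC$. If $u \in \Lip_b(\U, \mssd_\U^{(r)})$, Prop.~\ref{p:PDF}\ref{p:PDF3} yields $u(\gamma) = u(\gamma_{B_r})$, and through $\U^k(B_r) \cong B_r^{\odot k}$ together with $\hat\mssd_r \le \mssd$ (Prop.~\ref{p:RHDD}) the restriction is $\mssd_\U$-Lipschitz on each particle sector. The Euclidean Rademacher identity \eqref{e:RDT1} and \ref{ass:CE} then give $|\mssD_{\mssd_\U} u|(\gamma) = |\nabla^{\odot k} u|(\gamma_{B_r})$ for $\QP$-a.e.\ $\gamma$. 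Because the gradient of $u_{r'}^\gamma$ vanishes on the coordinates in $B_{r'}\setminus B_r$, the monotone limit in \eqref{d:SF} stabilises at $r'\ge r$ and hence $\cdc^\U(u) = |\mssD_{\mssd_\U} u|^2$ $\QP$-a.e., producing the sharp identity
\begin{equation*}
\vE^{\U, \QP}(u) = \tfrac12 \int_\U |\mssD_{\mssd_\U} u|^2 \diff\QP, \qquad u \in \mcC.
\end{equation*}

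The second ingredient---and the main obstacle---is an approximation of a general $u \in \Lip_b(\U, \mssd_\U)\cap\mcB(\tau_\mrmv)$ by elements of $\mcC$. A natural candidate is the Moreau--Yosida regularisation
\begin{equation*}
u^{(r)}(\gamma) := \inf_{\eta \in \U}\bigl\{u(\eta) + L\,\mssd_\U^{(r)}(\gamma,\eta)\bigr\}, \qquad L := \Lip_{\mssd_\U}(u),
\end{equation*}
which is bounded, $\tau_\mrmv$-Borel by the $\tau_\mrmv^{\times 2}$-continuity of $\mssd_\U^{(r)}$ (Prop.~\ref{p:PDF}\ref{p:PDF2}), and $\mssd_\U^{(r)}$-Lipschitz with constant $L$, hence $u^{(r)} \in \mcC$. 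Granted the $L^2(\QP)$-convergence $u^{(r)} \to u$ together with the pointwise slope-convergence $|\mssD_{\mssd_\U} u^{(r)}|(\gamma) \to |\mssD_{\mssd_\U} u|(\gamma)$ for $\QP$-a.e.~$\gamma$, applying the formula of the second paragraph to each $u^{(r)}\in\mcC$ together with dominated convergence (with majorant $L^2$) yields $\lim_r \vE^{\U, \QP}(u^{(r)}) = \tfrac12 \int |\mssD_{\mssd_\U} u|^2 \diff\QP$, completing the proof. Both convergences are delicate because $\mssd_\U$ is an \emph{extended} distance: distinct $\mssd_\U$-components of $\U$ remain linked through the finite pseudo-distances $\mssd_\U^{(r)}$, so the almost-minimisers of the infimum defining $u^{(r)}(\gamma)$ need not lie in the $\mssd_\U$-component of $\gamma$. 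The argument localises these minimisers using \ref{ass:CE} (which forces $\QP\{\gamma(\partial B_r)=0 \text{ for every } r \in \N\} = 1$) together with the concrete structure of $\mssd_\U^{(r)}$ as a partial $\ell^2$-matching (Dfn.~\ref{d:POM}), compelling them to concentrate near $\gamma$ as $r \to \infty$ and thereby recover the slope of $u$.
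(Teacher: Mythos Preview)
Your strategy is sound in principle, but the second ingredient fails: the Moreau--Yosida regularisation $u^{(r)}$ need not converge to $u$, not even $\QP$-a.e. The pseudo-distance $\mssd_\U^{(r)}$ only sees configurations inside $B_r$; in particular $\mssd_\U^{(r)}(\gamma,\eta)=0$ whenever $\gamma_{B_r}=\eta_{B_r}$, regardless of what $\eta$ does outside. So for any $\gamma$ and any $r$ you can take $\eta=\gamma_{B_r}+\zeta$ with $\zeta\in\U(B_r^c)$ arbitrary, obtaining $u^{(r)}(\gamma)\le u(\gamma_{B_r}+\zeta)$. Nothing in \ref{ass:CE} prevents the infimum over such $\zeta$ from being strictly smaller than $u(\gamma)$ for all $r$. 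A concrete obstruction: any non-constant tail-measurable bounded $u$ is $0$-Lipschitz with respect to $\mssd_\U$ (since $\mssd_\U(\gamma,\eta)<\infty$ forces $\gamma$ and $\eta$ to be tail-equivalent), so $L=0$ and $u^{(r)}\equiv\inf u$ for every $r$, which does not converge to $u$. Adding a genuine Lipschitz piece $w\in\mcC$ to make $L>0$ does not help: for $r$ large enough that $w$ is $\sigma_r$-measurable, the same choice $\eta=\gamma_{B_r}+\zeta$ gives $u^{(r)}(\gamma)\le w(\gamma)+\inf_\zeta u(\gamma_{B_r}+\zeta)$, still bounded away from $u(\gamma)+w(\gamma)$. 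The minimisers simply do not localise in the $\mssd_\U$-component of $\gamma$, and \ref{ass:CE} (which only controls $\partial B_r$) cannot force them to. Without $u^{(r)}\to u$ in $L^2$ you cannot invoke the lower semicontinuity of $\vE^{\U,\QP}$, and the argument collapses.

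The paper avoids this approximation problem entirely. It passes through the intrinsic extended distance $\mssd_{\vE^{\U,\QP}}$: Lemma~\ref{l:11} gives $\mssd_\U\le\mssd_{\vE^{\U,\QP}}$, hence $|\mssD_{\mssd_{\vE^{\U,\QP}}}u|\le|\mssD_{\mssd_\U}u|$ and therefore $\Ch^{\mssd_{\vE^{\U,\QP}},\QP}\le\Ch^{\mssd_\U,\QP}$; then it quotes the general inequality $\vE^{\U,\QP}\le\Ch^{\mssd_{\vE^{\U,\QP}},\QP}$ from \cite[Thm.~12.5]{AmbErbSav16}, valid for any strongly local Dirichlet form with carr\'e du champ. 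No approximation of individual Lipschitz functions is needed.
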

 \begin{proof}
 Let $(\Ch^{\mssd_{\vE^{\U, \QP}}, \QP}, \dom{\Ch^{\mssd_{\vE^{\U, \QP}}, \QP}})$ be the Cheeger energy associated with the intrinsic extended distance~$\mssd_{\vE^{\U, \QP}}$. By Lem.~\ref{l:11} and the definition of the slope \eqref{d:SLP}, we have the following inequality for the corresponding slopes:
  \begin{align*}
  |{\sf D}_{\mssd_{\vE^{\U, \QP}}} u|  \le  |{\sf D}_{\mssd_\U} u| \fstop
 \end{align*}
 Hence, by the definition of the Cheeger energy, we have
   \begin{align*}
  \bigl(\Ch^{\mssd_{\vE^{\U, \QP}}, \QP}, \dom{\Ch^{\mssd_{\vE^{\U, \QP}}, \QP}}\bigr) \le \bigl(\Ch^{\mssd_{\U}, \QP}, \dom{\Ch^{\mssd_{\U}, \QP}}\bigr) \fstop
 \end{align*}
 By~\cite[the first statement of Thm.~12.5]{AmbErbSav16}, 
    \begin{align*}
  (\vE^{\U, \QP}, \dom{\E^{\U, \QP}}) \le \bigl(\Ch^{\mssd_{\vE^{\U, \QP}}, \QP}, \dom{\Ch^{\mssd_{\vE^{\U, \QP}}, \QP}}\bigr)  \fstop
 \end{align*}
Hence, we have  
 $$ (\vE^{\U, \QP}, \dom{\E^{\U, \QP}}) \le \bigl(\Ch^{\mssd_{\U}, \QP}, \dom{\Ch^{\mssd_{\U}, \QP}}\bigr)\fstop \qedhere$$
 \end{proof}
 
 \begin{proof}[Proof of Thm.~\ref{t:E=Ch}]
 Thanks to~Lem.~\ref{l:21} and \eqref{i:MSL}, we only need to prove the following inequality: 
 \begin{align*}
 |{\sf D}_{\mssd_\U} u|^2 \le \vcdc^{\U}(u)\comma \quad  u \in \vC \quad \text{$\QP$-a.e.~.}
 \end{align*}
 As $u \in \vC=\cup_{r>0} \Lip_b(\mssd_{\U}^{(r)})$, due to \ref{p:PDF3} in Prop.~\ref{p:PDF}, there exists $r_0>0$ such that 
 \begin{align}\label{e:LS}
 u(\gamma)=u(\gamma_{B_r})\comma \quad  \gamma \in \U\comma r \ge r_0 \fstop
 \end{align}
  Thus, in the following argument, we may regard $u$ as a function on~$\U(B_r)$, if necessary.  
For $\zeta \in \U$, choose $r=r(\zeta) >r_0$ such that $\zeta(\partial B_r)=0$. Then, 
 \begin{align} \label{e:CDG}
 |{\sf D}_{\mssd_\U} u|(\zeta)&=\limsup_{\gamma \to \zeta}\frac{|u(\gamma)-u(\zeta)|}{\mssd_{\U}(\gamma, \zeta)} 
 =  \limsup_{\gamma \to \zeta}\frac{|u(\gamma_{B_r})-u(\zeta_{B_r})|}{\mssd_{\U}(\gamma, \zeta)}
 \\
& \le  \limsup_{\gamma \to \zeta}\frac{|u(\gamma_{B_r})-u(\zeta_{B_r})|}{\mssd_{\U}(\gamma_{B_r}, \zeta_{B_r})} \comma \notag
 \end{align}
 where the convergence $\gamma \to \zeta$ is in terms of the topology~$\tau_{\mssd_\U}$ metrised by $\mssd_{\U}$. Note that the last inequality in~\eqref{e:CDG} follows from the following argument: as $B_r$ is open and $\zeta(\partial B_r)=0$, we have that $(\gamma_n)_{B_r}\xrightarrow{\mssd_{\U}} \zeta_{B_r}$ as $n \to \infty$ in $\U(B_r)$ whenever $\gamma_n \xrightarrow{\mssd_{\U}} \zeta$, where $(\gamma_n)_{B_r}:=\gamma_n\mrestr{B_r}$. Thus, as long as $\gamma_n \xrightarrow{\mssd_{\U}} \zeta$, we have that, for sufficiently large~$n$, 
 $$\mssd_{\U}(\gamma_n, \zeta)^2 = \mssd_{\U}((\gamma_n)_{B_r}, \zeta_{B_r})^2 + \mssd_{\U}((\gamma_n)_{B_r^c}, \zeta_{B_r^c})^2 \ge  \mssd_{\U}((\gamma_n)_{B_r}, \zeta_{B_r})^2\comma$$
 which concludes the last inequality in~\eqref{e:CDG}. 
%
Thus, we have 
 $$ |{\sf D}_{\mssd_\U} u|(\zeta) \le  \limsup_{\gamma \to \zeta}\frac{|u(\gamma_{B_r})-u(\zeta_{B_r})|}{\mssd_{\U}(\gamma_{B_r}, \zeta_{B_r})} =  |{\sf D}_{\mssd_{\U(B_r)}} u|(\zeta_{B_r}) \comma$$
 where,  in the RHS, we regard $u$ as a function on~$\U(B_r)$ and $\mssd_{\U(B_r)}$ is the restriction of $\mssd_\U$ on $\U(B_r)$. 
Due to~\eqref{e:RDT1} and \ref{ass:CE}, we have 
 \begin{align*}
|{\sf D}_{\mssd_{\U(B_r)}} u|^2 = \cdc^{\U(B_r)}(u)  \qquad \text{$\QP_r^\eta$-a.e.~for $\QP$-a.e.~$\eta$} \quad r \ge r_0 \fstop
 \end{align*}
 Thus,  for $\QP$-a.e.~$\zeta$, 
  \begin{align*}
  |{\sf D}_{\mssd_\U} u|^2(\zeta)\le  |{\sf D}_{\mssd_{\U(B_r)}} u|^2(\zeta_{B_r}) = \cdc^{\U(B_r)}(u)(\zeta_{B_r}) = \vcdc^{\U}_r(u)(\zeta) = \vcdc^{\U}(u)(\zeta) 
 \end{align*}
where the second last equality follows from the equality~\eqref{e:TG} and regarding $u$ as a function on $\U(B_r)$ due to \eqref{e:LS};  the last equality follows from~\eqref{d:SF}  and~\eqref{e:LS}.
%
 \end{proof}
We obtain the following as a byproduct:
 \begin{cor}Suppose~\ref{ass:CE} and~\ref{ass:ConditionalClos}. 
Then, 
\begin{align} \label{e:SC}
\vcdc^{\U}(u) = |{\sf D}_{\mssd_\U} u|^2 \cquad u \in \vC \fstop
\end{align}
Furthermore, the Rademacher-type property holds: 
\begin{align} \label{in:RAMT}
\Lip_b(\U, \mssd_\U, \QP) \subset \dom{\E^{\U, \QP}} \cquad \cdc^{\U}(u) \le \Lip_{\mssd_\U}(u)^2 \quad u \in \Lip_b(\U, \mssd_\U, \QP) \fstop
\end{align}
 \end{cor}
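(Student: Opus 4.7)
The identity \eqref{e:SC} will be obtained as a two-sided bound on $\mcC$. The inequality $|{\sf D}_{\mssd_\U} u|^2 \le \cdc^\U(u)$ $\QP$-a.e.\ for $u \in \mcC$ was already established in the body of the proof of Thm.~\ref{t:E=Ch}. For the converse, I will invoke the identification $\cdc^\U(u) = |\nabla_{\mssd_\U,\QP} u|_*^2$ $\QP$-a.e.\ on $\dom{\E^{\U, \QP}} = \dom{\Ch^{\mssd_\U, \QP}}$ (Thm.~\ref{t:E=Ch}) combined with the general slope inequality \eqref{i:MSL}. To apply the latter to $u \in \mcC$, one notes that $\mcC \subset \Lip_b(\U, \mssd_\U) \cap \mcB(\tau_\mrmv)$: Prop.~\ref{p:PDF}\ref{p:PDF3} yields $\tau_\mrmv$-continuity, while Prop.~\ref{p:PDF}\ref{p:PDF4} gives $\Lip_{\mssd_\U}(u) \le \Lip_{\mssd_\U^{(r)}}(u) < \infty$ whenever $u \in \Lip_b(\mssd_\U^{(r)})$.

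For the Rademacher-type estimate \eqref{in:RAMT}, I take $u \in \Lip_b(\U, \mssd_\U, \QP)$; boundedness and $\QP$-measurability give $u \in L^2(\QP)$, so in view of Thm.~\ref{t:E=Ch} it suffices to show $u \in \dom{\Ch^{\mssd_\U, \QP}}$ with $|\nabla_{\mssd_\U, \QP} u|_* \le \Lip_{\mssd_\U}(u)$ $\QP$-a.e. The plan is to construct a $\tau_\mrmv$-Borel, $\mssd_\U$-Lipschitz representative $\tilde u$ of $u$ with $\Lip_{\mssd_\U}(\tilde u) \le \Lip_{\mssd_\U}(u)$; applying \eqref{i:MSL} to $\tilde u$ then concludes, since the Cheeger energy depends only on the $\QP$-class.

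The construction of $\tilde u$ is the main delicate point. Starting from a $\tau_\mrmv$-Borel version $u_0$ of $u$ agreeing with $u$ on a $\QP$-conull $\tau_\mrmv$-Borel set $A \subset \U$, I would use a McShane-type formula
$$\tilde u(\gamma) := \Bigl(\inf_{\eta \in A_0}\bigl(u_0(\eta) + \Lip_{\mssd_\U}(u) \cdot \mssd_\U(\gamma, \eta)\bigr)\Bigr) \wedge \|u\|_\infty,$$
taken over a countable $\tau_\mrmv$-dense subset $A_0 \subset A$. The triangle inequality preserves the Lipschitz constant, while the $\tau_\mrmv^{\times 2}$-lower semicontinuity of $\mssd_\U$ (Rem.~\ref{r:REW}\ref{r:REW4}) together with countability of $A_0$ gives $\tilde u \in \mcB(\tau_\mrmv)$. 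The main subtlety is that $\mssd_\U(\gamma, \eta) = +\infty$ may hold for every $\eta \in A_0$, making the infimum $+\infty$; this is handled by restricting to the $\mssd_\U$-accessible $\QP$-conull piece of $A$ and setting $\tilde u$ to be an arbitrary constant on its complement. Alternatively, one may bypass the Borel-extension step altogether by invoking the equivalent formulation of the Cheeger energy with $\QP$-measurable (rather than $\tau_\mrmv$-Borel) Lipschitz approximants (Rem.~\ref{r:CH} and \cite[Thm.~11.7]{Sav19}), in which case $u$ itself serves as an admissible approximating sequence and \eqref{i:MSL} applies directly.
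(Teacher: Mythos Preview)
Your argument for \eqref{e:SC} is correct and matches the paper's one-line justification: one inequality is extracted from the body of the proof of Thm.~\ref{t:E=Ch}, the other from \eqref{i:MSL} combined with the identification of square fields coming from $\E^{\U,\QP}=\Ch^{\mssd_\U,\QP}$.

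For \eqref{in:RAMT} your alternative route via the equivalent Cheeger-energy formulations (Rem.~\ref{r:CH}) is sound and is in effect what the paper has in mind; the paper itself just writes ``follows from \eqref{i:MSL} and Thm.~\ref{t:E=Ch}'' and implicitly allows $\QP$-measurable Lipschitz approximants. However, your primary McShane construction has a genuine gap. With $A_0$ only countable and $\tau_\mrmv$-dense in $A$, the formula
\[
\tilde u(\gamma)=\Bigl(\inf_{\eta\in A_0}\bigl(u_0(\eta)+\Lip_{\mssd_\U}(u)\,\mssd_\U(\gamma,\eta)\bigr)\Bigr)\wedge\|u\|_\infty
\]
gives $\tilde u(\gamma)\ge u_0(\gamma)$ for every $\gamma\in A$ (by the Lipschitz property), but the reverse inequality would require $\eta_n\in A_0$ with $\mssd_\U(\gamma,\eta_n)\to 0$, i.e.\ $\tau_{\mssd_\U}$-density of $A_0$, which $\tau_\mrmv$-density does not provide (recall $\tau_{\mssd_\U}$ is strictly finer and non-separable). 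So $\tilde u$ need not be a $\QP$-representative of $u$. Taking the infimum over the full (uncountable) set $A$ would recover $u_0$ on $A$ but only yields a universally measurable, not Borel, extension. The cleanest fix is the one you already sketch: work with the weak-upper-gradient/measurable-approximant formulation of $\Ch^{\mssd_\U,\QP}$ (the constant $\Lip_{\mssd_\U}(u)$ is trivially a weak upper gradient in $L^2(\QP)$), which bypasses the Borel-representative issue entirely.
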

 \begin{proof}
 The first statement has been seen in the proof of Thm.~\ref{t:E=Ch}. The second statement follows \eqref{i:MSL} and Thm.~\ref{t:E=Ch}. 
 \end{proof}

One of the significant consequences of Thm.~\ref{t:E=Ch} is that we obtain the quasi-regularity with respect to the vague topology~$\tau_\mrmv$, which implies the existence of the associated diffusion process in $\U$ whose transition probability is a $\tau_\mrmv$-continuous $\QP$-representative of the $L^2$-semigroup $T^{\U, \QP}_t$, see \cite[Thm.~3.5 in Chapter IV, Thm.~1.5 in Chapter V]{MaRoe90}. 
The $\tau_\mrmv$-quasi-regularity will be also used to prove the $p$-Bakry--\'Emery gradient estimate via Savar\'e's self-improvement argument in Cor.~\ref{p:PBE}. 
The idea of the following proof is essentially from \cite[Cor.~3.4]{RoeSch99}. We give the proof for the sake of completeness. 
  \begin{cor}[$\tau_\mrmv$-quasi-regularity]\label{c:QR}
  Suppose~\ref{ass:CE} and~\ref{ass:ConditionalClos}. 
 Then, the form $(\E^{\U, \QP}, \dom{\E^{\U, \QP}})$   is $\tau_\mrmv$-quasi-regular.
 \end{cor}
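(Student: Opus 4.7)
The strategy is to verify the three axioms (QR\ref{QR1})--(QR\ref{QR3}) directly, exploiting that the core $\mcC=\bigcup_{r>0}\Lip_b(\U,\mssd_\U^{(r)})$ already lies in $\mcC_b(\U,\tau_\mrmv)$ by Prop.~\ref{p:PDF}\ref{p:PDF3}. Consequently two of the three axioms become essentially automatic and the real work concentrates in constructing a $\tau_\mrmv$-compact nest for (QR\ref{QR1}).

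For (QR\ref{QR2}), every $u\in\mcC$ is $\tau_\mrmv$-continuous on all of $\U$ by \eqref{ineq:LAC2}, hence trivially quasi-continuous (any closed nest, e.g.\ $F_n\equiv\U$, works), and $\mcC$ is $\E_1^{1/2}$-dense in $\dom{\E^{\U,\QP}}$ by the very definition of the closure in Prop.~\ref{p:CES}; take $\mathcal D=\mcC$. For (QR\ref{QR3}), Prop.~\ref{p:PHC} ensures that $\mcC$ separates points of $\U$; since $(\U,\tau_\mrmv)$ is Polish (hence second countable), from the explicit subfamily $\{c\wedge\mssd_\U^{(r)}(\gamma_j,\cdot):r\in\N,\,c\in\Q_+,\,\gamma_j\in D\}$ with $D\subset\U$ a countable $\tau_\mrmv$-dense set, one extracts a countable separating family of $\tau_\mrmv$-continuous, hence quasi-continuous, functions in $\mcC$.

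The substantive part is (QR\ref{QR1}). I would first use that the $\tau_\mrmv$-compact subsets of $\U$ are precisely those of the form $K=\{\gamma\in\U:\gamma(B_r)\le M_r\text{ for every }r\in\N\}$ for sequences $(M_r)_{r\in\N}\subset\N_0$. By Prokhorov applied to the Radon probability $\QP$ on the Polish space $\U$, for each $n\in\N$ we can choose such a $K_n$ with $\QP(K_n^c)<1/n$. To construct cutoffs, fix smooth bumps $f_r\in\mcC_c^\infty(\R^n)$ with $\1_{B_r}\le f_r\le\1_{B_{r+1}}$ and smooth decreasing $\psi_M\in\mcC_c^\infty(\R)$ with $\psi_M\equiv 1$ on $[0,M]$ and $\psi_M\equiv 0$ on $[2M,\infty)$, and set $\chi_n(\gamma):=\prod_{r=1}^{R_n}\psi_{M_{r,n}}(\langle f_r,\gamma\rangle)$ for a suitable $R_n\uparrow\infty$. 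Then $\chi_n\equiv 1$ on $K_n$, $\chi_n\equiv 0$ outside a slightly larger $\tau_\mrmv$-compact set $K_n'\supset K_n$, and $\chi_n\in\dom{\E^{\U,\QP}}$ with bounded energy. A Leibniz-type computation together with pointwise $\chi_n\nearrow 1$ gives $u\chi_n\to u$ in $\dom{\E^{\U,\QP}}$ for $u\in\mcC$ (and hence for all of $\dom{\E^{\U,\QP}}$), so $u\chi_n\in\dom{\E^{\U,\QP}}_{K_n'}$ produces the required $\tau_\mrmv$-compact nest $(K_n')_{n\in\N}$.

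\textbf{Main obstacle.} The central technical point is that cylindrical linear functionals $\gamma\mapsto\langle f,\gamma\rangle$ are only \emph{locally} $\mssd_\U$-Lipschitz: the estimate $|\langle f,\gamma\rangle-\langle f,\eta\rangle|\le\Lip(f)\sqrt{k}\,\mssd_\U(\gamma,\eta)$ on configurations with at most $k$ points in $\supp(f)$ degenerates as $k\to\infty$, so the Rademacher inclusion \eqref{in:RAMT} cannot be invoked globally to place $\chi_n$ in $\dom{\E^{\U,\QP}}$. The remedy, in the spirit of \cite[Cor.~3.4]{RoeSch99}, is to work stratum-wise: on each $k$-particle slice $\U^k(B_{R_n+1})$ the cutoff $\chi_n$ is genuinely Lipschitz with constant $O(k)$, so \ref{ass:CE} combined with the finite-volume closability in Prop.~\ref{p:CDLC} places $\chi_n$ in the finite-volume domain with integrable square field, and the monotone limit $\E_r^{\U,\QP}\nearrow\E^{\U,\QP}$ then lifts the estimate to $\dom{\E^{\U,\QP}}$. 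Integrability over the union of strata is guaranteed because $\psi_{M_{r,n}}'(\langle f_r,\gamma\rangle)$ vanishes as soon as $\gamma(B_{r+1})\ge 2M_{r,n}$, truncating the sum over strata and keeping $\cdc^\U(\chi_n)$ in $L^1(\QP)$.
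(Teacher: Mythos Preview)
Your treatment of (QR\ref{QR2}) and (QR\ref{QR3}) matches the paper's. The substantive divergence is in (QR\ref{QR1}).

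\textbf{What the paper does.} The paper does \emph{not} build cylinder cutoffs. Instead it uses the point-to-set distance $\rho_{K_n}(\gamma):=\inf_{\eta\in K_n}\mssd_\U(\gamma,\eta)$ for a sequence of $\tau_\mrmv$-compact sets $K_n$ with $\QP(K_n)\to 1$. The key features are: (i) $\rho_{K_n}$ is $\tau_\mrmv$-lower semi-continuous, hence $\QP$-measurable; (ii) $\rho_{K_n}\wedge 2$ is \emph{globally} $1$-Lipschitz in $\mssd_\U$, so the Rademacher property~\eqref{in:RAMT} (a corollary of the identification Thm.~\ref{t:E=Ch}) places it in $\dom{\E^{\U,\QP}}$ with uniformly bounded energy; (iii) the sublevel sets $\{\rho_{K_n}\le 1/2\}$ are again $\tau_\mrmv$-compact. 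A standard Ces\`aro-averaging then gives $\Cap_{\E^{\U,\QP}}(\{\rho_{K_{k_n}}>1/2\})\to 0$, establishing the tightness~\eqref{d:TC}.

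\textbf{The gap in your route.} Your argument for $\chi_n\in\dom{\E^{\U,\QP}}$ goes via ``the monotone limit $\E_r^{\U,\QP}\nearrow\E^{\U,\QP}$''. But in the paper the monotone limit of the truncated forms is the \emph{lower} form $\underline{\E}^{\U,\QP}$ (Dfn.~\ref{d:DFF}), whereas $\E^{\U,\QP}$ is the (possibly strictly smaller) closure of $(\underline{\E}^{\U,\QP},\mcC)$ (Prop.~\ref{p:CES}). Your stratum-wise argument indeed puts $\chi_n$ in $\mcC_{R_n+1}\subset\dom{\underline{\E}^{\U,\QP}}$, but this does \emph{not} yield $\chi_n\in\dom{\E^{\U,\QP}}$ without either the extra hypothesis~\eqref{e:AFD} of Thm.~\ref{t:IUL} (not assumed in Cor.~\ref{c:QR}) or a separate argument via weak upper gradients and Thm.~\ref{t:E=Ch}, which you do not invoke. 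A secondary imprecision: $\psi'_{M_{r,n}}(\langle f_r,\gamma\rangle)$ vanishes when $\gamma(B_r)\ge 2M_{r,n}$ (since $f_r\ge\1_{B_r}$), not when $\gamma(B_{r+1})\ge 2M_{r,n}$; the bound on $\cdc^\U(\chi_n)$ needs the \emph{other} factors of the product to control $\gamma(B_{r+1})$.

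\textbf{Comparison.} The paper's device---using $\mssd_\U$-distance functions, which are globally $1$-Lipschitz and hence land in $\dom{\E^{\U,\QP}}$ for free via Rademacher---precisely bypasses the obstacle you identify. Your cylinder-cutoff route can in principle be repaired (e.g.\ by the weak-upper-gradient argument used in the proof of~\eqref{p:S1} in Thm.~\ref{t:IUL} together with Thm.~\ref{t:E=Ch}), but as written it conflates $\underline{\E}^{\U,\QP}$ with $\E^{\U,\QP}$.
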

  \begin{proof}
 \ref{QR2} is obvious  as $\mathcal C\subset \mathcal C(\U, \tau_\mrmv)$ and $\dom{\E^{\U, \QP}}$ is the closure of $\mathcal C$. 
 
\ref{QR3} follows by~Prop.~\ref{p:PHC}. 

 \ref{QR1}  By~\eqref{d:TC}, it is equivalent to show the tightness of the capacity $\Cap_{\E^{\U, \QP}}$. As $\QP$ is a Borel probability in a Polish space~$(\U, \tau_\mrmv)$, it is a Radon measure. Thus, there exists a sequence $(K_n)_{n \in \N}$ of $\tau_\mrmv$-compact sets such that $\lim_{n \to \infty}\QP(K_n)=1$. Without loss of generality, we may assume that $K_n \subset K_{n+1}$ for $n \in \N$. Define $\rho_{K_n}(\gamma):=\inf_{\eta \in K_n}\mssd_\U(\gamma, \eta)$. The function $\gamma \mapsto \rho_{K_n}(\gamma)$ is $\tau_\mrmv$-lower semi-continuous (\cite[(vii) Lem.~4.1]{RoeSch99}). In particular, $\rho_{K_n}\wedge c \in \Lip_b(\U, \mssd_\U, \QP)$ for $c \in \R_+$. By Thm.~\ref{t:E=Ch} and~\eqref{i:MSL}, 
 $$\sup_{n \in \N} \E^{\U, \QP}_1(\rho_{K_n} \wedge 2) <+\infty \comma$$
 where $\E^{\U, \QP}_1(u):= \E^{\U, \QP}(u)+\|u\|_{L^2}^2$. Thus, by, e.g.,~\cite[Lem.~I.2.12 on p.21]{MaRoe90}, there exists a  subsequence $k_n$ such that $u_n:=\frac{1}{n} \sum_{j=1}^n (\rho_{K_{k_j}} \wedge 2)$ converges to $u \in \dom{\E^{\U, \QP}}$ in terms of the norm~$(\E^{\U, \QP}_1)^{1/2}$. By the monotonicity~$\rho_{K_n} \ge \rho_{K_{n+1}}$ for $n \in \N$ and $\rho_{K_n}\equiv0$ on $K_n$, we have that $u \equiv 0$ on the set~$\cup_{n \in \N} K_{n}$ of full measure. Furthermore,  $\{\rho_{K_{k_n}} > 1/2\} \subset \{u_n > 1/2\}$ by construction. Noting that the sublevel set $\{\rho_{K_n} \le 1/2\}$ is $\tau_\mrmv$-compact (\cite[(vii) Lem.~4.1]{RoeSch99}), the following computation completes the proof
  $$\Cap_{\E^{\U, \QP}}\Bigl( \{\rho_{K_n}>1/2\}\Bigr) \le \Cap_{\E^{\U, \QP}}\Bigl( \{u_n>1/2\}\Bigr) \le \E_1^{\U, \QP}(2u_n) \xrightarrow{n \to \infty}0 \fstop \qedhere$$

 \end{proof}
 
 \subsection{Identification with upper Dirichlet form}
 In this section, we work on the configuration space~$\U=\U(\R)$ over the one-dimensional space~$\R$, and  we identify $(\E^{\U, \QP}, \dom{(\E^{\U, \QP}})$ with the Dirichlet form constructed in \cite[Theorem 1]{Osa96}, called {\it upper Dirichlet form} (see~\cite{KawOsaTan21} for this terminology).
For a function $u:\U \to \R$, $\eta \in \U$, $r>0$ and $k \in \N_0$, recall that  a function $u_r^{k, \eta}: \U^k(B_r) \to \R$ was defined in~\eqref{e:SEF} as 
$$u_r^{k, \eta}(\gamma):=u(\gamma+\eta_{B_r}) \quad \gamma \in \U^k(B_r) \fstop$$
Let $\mathbf u_r^{k, \eta}: B_r^{\times k} \to \R$ be a unique symmetric function such that $\mathbf u_r^{k, \eta}=u_r^{k, \eta}\circ \mssP_k$, where $\mssP_k$ is the canonical projection from $B_r^{\times k} \to B_r^{\times k}/\mathfrak S_k$. We call $\{\mathbf u_r^{k, \eta}: r>0, k \in \N_0, \eta \in \U\}$ {\it labelled representative of $u$}. 
Recall that a function $u: \U \to \R$ is said to be {\it local} if there exists $R>0$ such that $u(\gamma)=u(\gamma_{B_R})$ for every $\gamma \in \U$. 
\begin{defs}[Smooth local function {\cite[(1.2)]{Osa96}}] \ \label{d:SLF}
\begin{itemize}
\item A function $u: \U \to \R$ is {\it  smooth} if $\mathbf u_r^{k, \eta}$ is smooth in $ B_r^{\times k}$ for every $r>0$, $k \in \N_0$ and $\eta \in \U$. 
We denote by $\mcD_{loc}^\infty$ the space of all smooth local functions. 
\item Under~\ref{ass:CE} and~\ref{ass:ConditionalClos}, we define  
$$\mcD_{loc, \QP}^\infty:=\{u\in \mcD_{loc}^\infty: \underline{\E}^{\U, \QP}(u)+\|u\|_{L^2(\QP)}<\infty\} \fstop$$
{According to~Dfn.~\ref{d:core}, $\mcD_{loc, \QP}^\infty \subset \cup_{r>0} \mcC_r \subset \dom{\underline{\E}^{\U, \QP}}$}, thus $\underline{\E}^{\U, \QP}(u)$ is well-defined for $u \in \mcD_{loc, \QP}^\infty$ and $(\underline{\E}^{\U, \QP}, \mcD_{loc, \QP}^\infty)$ is closable since  $(\underline{\E}^{\U, \QP}, \dom{\underline{\E}^{\U, \QP}})$ is a closed extension of it. The closure is denoted by $(\overline{\E}^{\U, \QP}, \dom{\overline{\E}^{\U, \QP}})$. 
Due to the locality of $u$, there exists $R>0$ such that $\cdc^{\U}_r(u)=\cdc^{\U}(u)$ for every $r \ge R$ and 
\begin{align}\label{e:LCU}
 \overline{\E}^{\U, \QP}(u)=\frac{1}{2}\int_\U \cdc^{\U}(u) \diff \QP = \frac{1}{2}\int_\U \cdc_r^{\U}(u) \diff \QP \quad r \ge R\fstop
 \end{align}
\end{itemize}
\end{defs}  \label{r:LNL}
\begin{rem}\ Let $u$ be a smooth local function. 
\begin{enumerate}[(a)]
\item \label{r:LNL1} Its labelled representative $\mathbf u_r^{k, \eta}$ can be regarded as an element of the space~$\mcC^\infty_{sym}(\hat B_r^{\times k})$ of smooth symmetric functions for sufficiently large $r>0$, where $\hat B_r = \R/r\mbbZ$.  Indeed, as $u(\gamma)=u(\gamma_{B_r})$ for every $\gamma \in \U$ for sufficiently large $r>0$, the function~$u$ does not depend on configurations on the boundary $\partial B_r$ (as well as the complement $B_r^c$). Thus, its labelled representative $\mbfu_r^{k, \eta}$ does not depend on $\eta$, so it is constant on the boundary $\partial B_r^{\times k}$. Thus, we may regard $\mbfu_r^{k, \eta}$ as a function defined in~$\hat B_r^{\times k}$ for every $k$. 
\item  \label{r:LNL2} Although~$u|_{\U^k(B_r)} \in \Lip_b(\U^k(B_r), \mssd_\U)$ for each $k$, in general $u \not\in\Lip_b(\U, \mssd_\U)$, see~\cite[Example 4.35]{LzDSSuz21}, where $\Lip_{\mssd_{\U}}(u|_{\U^k(B_r)})$ could diverge as $k \to \infty$.
\end{enumerate} 
\end{rem}
\paragraph{Parallel extension}  
Here, we introduce a way to extend smooth symmetric functions in $\hat B_r^{\times k} \cong (\R/r\mbbZ)^{\times k}$ to smooth local functions in $\U(B_r)$.  
 Let $\hat\mssd_r^{\times k}$ be the product distance of the glued distance~$\hat \mssd_r$ given in~\eqref{d:sqd}, which is the geodesic distance on the $k$-dimensional flat torus~$\hat B_r^{\times k}\cong (\R/r\mbbZ)^{\times k}$.  Define the diagonal set $D_k=\{(x_1, \ldots, x_k) \in B_r^{\times k}: x_i = x_j \ \text{for some $i \neq j$}\}.$ For $\mbfx_k=(x_1,\ldots, x_k) \in \hat B_r^{\times k}$, define~
\begin{itemize}
\item $d(\mbfx_k):=\argmin\{\hat\mssd_r^{\times k}(\mbfx_k, \mbfy_k): \mbfy_k \in D_k\}$, i.e., the closest point  in the diagonal set from $\mbfx_k$; 
\item $R(d(\mbfx_k), \mbfx_k)$ to be  the ray starting from $d(\mbfx_k)$ passing through $\mbfx_k$;
\item  $b(\mbfx_k)=(b(x_1),\ldots, b(x_k))$ to be the element in the intersection $R(d(\mbfx_k), \mbfx_k) \cap \partial \hat B_r^{\times k-1}$.
\item $a(\mbfx_k):=\argmin\{\mssd(x, \partial \hat B_r): x \in \{x_1,\ldots, x_k\}\}$. If there are more than one such elements,  we choose the one with the largest index $i$ among such $x_i$'s; 
\item Let $\pi_{k, k+1}: \hat B_r^{\times k+1} \to  \hat B_r^{\times k}$ be the projection defined by removing $a(\mbfx_{k+1})$: suppose $x_l=a(\mbfx_{k+1})$ for some $l$ and define
$$(x_1, \ldots, x_{l-1},  a(\mbfx_{k+1}), x_{l+1}, \ldots, x_{k+1}) \mapsto (x_1, x_2, \ldots, x_{l-1},  x_{l+1}, \ldots, x_{k+1}) \fstop$$
\end{itemize}

For $u^{(k)} \in \Lip_{sym, b}(\hat B_r^{\times k}, \hat\mssd_r^{\times k})$, we define $u^{(k,1)}: \hat B_r^{\times k+1} \to \R$ as 
$$u^{(k,1)}(\mbfx_{k+1})=u^{(k)}\circ \pi_{k, k+1}(b(\mbfx_{k+1})) \fstop$$
\begin{center}
\begin{tikzpicture}[scale=1]
  \def\r{2.5}

  \draw[-] (-\r-0.6,0) -- (\r+0.6,0) ;
  \draw[-] (0,-\r-0.6) -- (0,\r+0.6) ;

  \draw[thick] (-\r,-\r) rectangle (\r,\r);

   \node[below] at (-\r-0.3,0) {$-r$};
  \node[below] at (\r+0.3,0) {$r$};
  \node[left] at (0,-\r-0.3) {$-r$};
  \node[left] at (0,\r+0.3) {$r$};
   \draw[dashed] (-\r,-\r) -- (\r,\r);
  \node[circle,fill,inner sep=1.2pt,label=left:{$\mathbf{x}_k$}] (pt) at (2.0,0.8) {};
  \node[circle,fill,inner sep=1.2pt,label=left:{$d(\mathbf{x}_k)$}] (pt) at (1.4,1.4) {};
    \node[circle,fill,inner sep=1.2pt,label=right:{$b(\mathbf{x}_k)$}] (pt) at (2.5,0.30) {};


 \draw[dotted]  (2.0,0.8) -- (1.4, 1.4);
 \draw[dotted]  (2.0,0.8) -- (2.5,0.30) ;
\end{tikzpicture}
\\
\smallskip 
\noindent{The picture for $k=2$}
  \label{fig:square-projection}
\end{center}
\bigskip
By construction, $u^{(k,1)}$ is constant along each ray $R(d(\mbfx_{k+1}), \mbfx_{k+1})$ for every $\mbfx_k$:
$$u^{(k,1)} \equiv  u^{(k)}\circ \pi_{k, k+1}\bigl(b(\mbfx_{k+1})\bigr) \quad \text{on} \quad R(d(\mbfx_{k+1}), \mbfx_{k+1})\fstop$$
The function $u^{(k,1)}$ is well-defined in~$\hat B_r^{\times k+1}$. To see this, as we already know that $u^{(k)}$ is well-defined in $\hat B_r^{\times k}$, it suffices to see that $u^{(k,1)}$ does not depend on the particles at the boundary in the $(k+1)$-coordinate. Indeed, take two points in the boundary $\partial B_r^{\times k+1}$: $\mbfx_{k+1}=(x_1,x_2,\ldots, x_{k}, r)$ and $\mbfx_{k+1}'=(x_1,x_2,\ldots, x_{k}, -r)$.
Then,
$$b(\mbfx_{k+1})=\mbfx_{k+1} \cquad b(\mbfx_{k+1}')=\mbfx_{k+1}' \cquad a(\mbfx_{k+1})=x_{k+1}=r \cquad a(\mbfx_{k+1})=x_{k+1}=-r \fstop$$
Hence, the two points $b(\mbfx_{k+1})$ and $b(\mbfx_{k+1}')$ are sent to the same point $\mbfx_k$ by the projection:
$$\pi_{k, k+1}(b(\mbfx_{k+1})) = (x_1,x_2,\ldots, x_{k}) \cquad \pi_{k, k+1}(b(\mbfx_{k+1}')) = (x_1,x_2,\ldots, x_{k}) \fstop$$
Therefore, $u^{(k,1)}(\mbfx_{k+1})=u^{(k,1)}(\mbfx_{k+1}')=u^{(k)}(\mbfx_k)$.   

It is easy to see that $u^{(k,1)}$ is symmetric and that  the Lipschitz constant is bounded as 
$$\Lip_{\mssd_r^{\times k+1}}(u^{(k,1)}) \le \sqrt{2}\Lip_{\mssd_r^{\times k}}(u^{(k)}) \fstop$$
If $u^{(k)} \in \mcC_{sym}^\infty(\hat B_r^{\times k})$, then $u^{(k, 1)} \in  \mcC_{sym}^\infty(\hat B_r^{\times k+1})$ as well by construction. 
By induction, we define $u^{(k,l)} \in  \Lip_{sym, b}(\hat B_r^{\times k+l}, \hat\mssd_r^{\times k+l})$ as 
$$u^{(k,l)}(\mbfx_{k+l})=u^{(k, l-1)}\circ \pi_{k+l-1, k+l}\bigl(b(\mbfx_{k+l})\bigr) \quad l \in \N \fstop$$ 
We similarly have the Lipschitz estimate:
\begin{align}\label{i:LIR}
\Lip_{\hat\mssd_r^{\times k+l}}(u^{(k,l)}) \le {2}^{l/2}\Lip_{\hat\mssd_r^{\times k}}(u^{(k)}) \qquad l \in \N \fstop
\end{align}
Furthermore,  for non-positive $l$, we define 
 $$u^{(k,l)}(\mbfx_{k+l}):=u^{(k)}(x_1, \ldots, x_{k+l}, r, r, \ldots, r) \cquad  l =0, -1, \ldots, -k  \fstop$$
 Obviously, 
 \begin{align}\label{e:LEC}
 \Lip_{\hat\mssd_r^{\times k+l}}(u^{(k,l)}) \le \Lip_{\hat\mssd_r^{\times k}}(u^{(k)}) \cquad l =0, -1, \ldots, -k  \fstop
 \end{align}
  If $u^{(k)} \in \mcC_{sym}^\infty(\hat B_r^{\times k})$, then, for every~$l \in \{-k, -k+1, \ldots, 0\} \cup \N$, 
  \begin{align} \label{e:SLFN}
  u^{(k, l)} \in  \mcC_{sym}^\infty(\hat B_r^{\times k+l})  \fstop
  \end{align}
\begin{defs}[Parallel extension] For $u^{(k)} \in \Lip_{sym, b}(\hat B_r^{\times k}, \hat\mssd_r^{\times k})$, we define
\begin{align*}
\Phi(u^{(k)}):= \sum_{l=-k}^\infty u^{(k, l)} \1_{\hat B_r^{\times k+l}} \fstop
\end{align*}
\end{defs}
We summarise key properties of $\Phi(u^{(k)})$ below.
\begin{itemize}
\item $\Phi(u^{(k)})$ is symmetric. Thus, we can regard it as a function in $\U(B_r)$ by defining
\begin{align} \label{e:ESLS}
\Phi(u^{(k)})(\gamma):=\Phi(u^{(k)})(x_1, \ldots, x_l) \cquad \gamma=\sum_{i=1}^l \delta_{x_i} \in \U(B_r) \quad l=\gamma(B_r) \fstop
\end{align}
In the following argument,  if necessary, we use the same notation $\Phi(u^{(k)})$ to indicate the function in $\U(B_r)$. 
\item $\|\Phi(u^{(k)})\|_{L^\infty}=\|u^{(k)}\|_{L^\infty}$. 
\item If $u^{(k)} \in \mcC_{sym}^\infty(\hat B_r^{\times k})$, then $\Phi(u^{(k)})$ is a smooth local function by \eqref{e:SLFN}. 
\item Thanks to~\eqref{i:LIR}, \eqref{e:LEC} and Prop.~\ref{p:RHDD} with the symmetry of $u^{(k)}$, we have 
\begin{align}\label{e:LIAL}
\Lip_{\mssd_{\U}^{(r)}}\bigl(\Phi(u^{(k)})|_{\U^{k+l}(B_r)}\bigr) \le 2^{\frac l2}\Lip_{\hat\mssd_r^{\times k}}(u^{(k)}) \cquad l \in \N \comma
\end{align}
and 
\begin{align}\label{e:LIAL5}
\Lip_{\mssd_{\U}^{(r)}}\bigl(\Phi(u^{(k)})|_{\U^{k+l}(B_r)}\bigr) \le \Lip_{\hat\mssd_r^{\times k}}(u^{(k)}) \cquad l=0, -1, \ldots, -k  \fstop
\end{align}
\item For any sequence $(u_n^{(k)})_{n \in \N} \subset\Lip_{sym, b}(\hat B_r^{\times k}, \hat\mssd_r^{\times k})$,
\begin{align}\label{e:LIAL1}
u_n^{(k)}(x) \xrightarrow{n \to \infty} u^{(k)}(x) \implies \Phi(u_n^{(k)})(x) \xrightarrow{n \to \infty} \Phi(u^{(k)})(x) \qquad x \in \sqcup_{k \in \N_0} \hat B_r^{\times k} \fstop
\end{align}
\item Let $u: \sqcup_{k=0}^\infty(\hat B_r)^{\times k} \to \R$ such that $u^{(k)}:=u|_{\hat B_r^{\times k}}  \in \Lip_{sym, b}(\hat B_r^{\times k}, \hat\mssd_r^{\times k})$ for $k \in \N_0$. If, for every $m, k \in \N_0$ and $l\in \{-k, -k+1, \ldots, 0\}$ with $m=k+l$,  
\begin{align} \label{e:CSC}
u^{(m)}=u^{(k, l)}
\end{align}
then  
\begin{align}\label{e:LIAL2}
\Phi(u^{(k)})(x) \xrightarrow{k \to \infty} u(x) \fstop \qquad x \in \sqcup_{k \in \N_0} \hat B_r^{\times k} \fstop
\end{align}
Indeed, if $x \in  \hat B_r^{\times m}$ for some $m$, then take $k \ge m$ and $l$ such that $m=k+l$. Then,  
$$\Phi(u^{(k)})(x)=u^{(k, l)}(x)=u^{(m)}(x)=u|_{\hat B_r^{\times m}}(x)=u(x) \fstop$$
\end{itemize}

 \begin{thm} \label{t:IUL} Suppose~$\U=\U(\R)$, \ref{ass:CE}, \ref{ass:ConditionalClos} and that for every $r>0$ there exists a constant $c_r>0$ such that 
 \begin{align} \label{e:AFD} 
 \sum_{l=0}^\infty 2^{\frac{l}{2}}\mu\bigl(\gamma \in \U: \gamma(B_r)=l\bigr) <c_r  \fstop
 \end{align}
Then, 
 $$(\overline{\E}^{\U, \QP}, \dom{\overline{\E}^{\U, \QP}})= \bigl(\Ch^{\mssd_{\U}, \QP}, \mathscr D(\Ch^{\mssd_{\U}, \QP})\bigr) = \bigl(\vE^{\U, \QP}, \dom{\E^{\U, \QP}}\bigr)\fstop$$
 \end{thm}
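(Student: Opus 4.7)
The plan is to combine this statement with Theorem~\ref{t:E=Ch} to reduce the claim to identifying the upper Dirichlet form $(\overline{\E}^{\U, \QP}, \dom{\overline{\E}^{\U, \QP}})$ with $(\E^{\U, \QP}, \dom{\E^{\U, \QP}})$. Both are closed extensions of $\underline{\E}^{\U, \QP}$ in $L^2(\QP)$, so the task is to show that their respective cores, the smooth local functions $\mcD^\infty_{loc,\QP}$ and the Lipschitz core $\mcC = \bigcup_{r>0}\Lip_b(\U, \mssd_\U^{(r)})$, have the same $\E^{\U, \QP}$-closure. The natural bridge is the parallel extension operator $\Phi$, which produces smooth local functions out of symmetric smooth functions on flat tori.

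First I would take $u\in\Lip_b(\U,\mssd_\U^{(r)})$ and verify, using the $\sigma_r$-measurability $u(\gamma)=u(\gamma_{B_r})$ from Proposition~\ref{p:PDF}\ref{p:PDF3}, that the labelled family $\{u^{(k)}\}$ satisfies the consistency \eqref{e:CSC} for $l\le 0$: evaluating a labelled representative at a point on $\partial B_r$ amounts to discarding that point and reduces the particle count. Once this is established, the pointwise identity \eqref{e:LIAL2} gives $\Phi(u^{(k)})\to u$ and, crucially, $\Phi(u^{(k)})$ coincides identically with $u$ on every sector $\U^{k+l}(B_r)$ with $l\le 0$. On the high sectors $l\ge 1$, the difference is controlled by the Lipschitz bound \eqref{e:LIAL} together with \eqref{in:RAMT}, yielding a sector-wise estimate of the form
\begin{align*}
\underline{\E}^{\U, \QP}_r\bigl(\Phi(u^{(k)})-u\bigr) \;\le\; C\,\Lip(u)^2 \sum_{l\ge 1} 2^{l/2}\,\mu\bigl(\gamma(B_r)=k+l\bigr),
\end{align*}
whose tail vanishes as $k\to\infty$ by the summability hypothesis \eqref{e:AFD}; combined with the bound $\|\Phi(u^{(k)})\|_\infty\le\|u\|_\infty$ and dominated convergence for the $L^2$-part, this gives $\Phi(u^{(k)})\to u$ in the $\E^{\U, \QP}_r$-norm, hence in the $\E^{\U, \QP}$-norm since $u$ is $\sigma_r$-measurable. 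Next, each $u^{(k)}$, regarded via Proposition~\ref{p:RHDD} as a symmetric Lipschitz function on the flat torus $\hat B_r^{\times k}$, is mollified in $W^{1,2}_{sym}$-norm by a sequence $v_n^{(k)}\in \mcC^\infty_{sym}(\hat B_r^{\times k})$; by \eqref{e:SLFN} each $\Phi(v_n^{(k)})$ is a smooth local function lying in $\mcD^\infty_{loc,\QP}$, and the pointwise convergence \eqref{e:LIAL1} upgrades to $\E^{\U, \QP}$-convergence via an energy estimate of the same shape as above. Chaining the two approximations realises $\mcC$ inside the $\E^{\U, \QP}$-closure of $\mcD^\infty_{loc,\QP}$, so $\dom{\E^{\U, \QP}}\subseteq \dom{\overline{\E}^{\U, \QP}}$; the reverse inclusion is automatic since $\mcD^\infty_{loc,\QP}\subseteq \dom{\underline{\E}^{\U, \QP}}$ and the two forms agree on this subspace by the common integral representation \eqref{e:LCU} and \eqref{e:TG}.

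The main obstacle is the geometric blowup $2^{l/2}$ in the Lipschitz constant across particle sectors recorded in \eqref{e:LIAL}, which is exactly what dictates the summability condition \eqref{e:AFD}: that hypothesis must simultaneously ensure finiteness of the approximants' energies and make the tail errors vanish uniformly in the two limiting procedures. The bookkeeping is delicate because $\Phi(u^{(k)})$ need not itself lie in $\Lip_b(\U,\mssd_\U^{(r)})$ — its global Lipschitz constant diverges with the particle count, as emphasised in Remark~\ref{r:LNL}\ref{r:LNL2} — so the construction only lives in the $L^2$-closure and one genuinely needs the extra room provided by working inside $\dom{\E^{\U, \QP}}$ rather than $\mcC$ itself. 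A secondary technical point is verifying that mollification on the torus commutes with $\Phi$ in the energy norm, which is where the uniform pointwise bound \eqref{e:LIAL5} on the low sectors combines with the growth \eqref{e:LIAL} on the high sectors to close the estimate.
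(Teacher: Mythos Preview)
Your forward direction — approximating $u\in\mcC$ by smooth local functions via the parallel extension $\Phi$ — is essentially the paper's route for the inclusion $\dom{\E^{\U,\QP}}\subset\dom{\overline{\E}^{\U,\QP}}$, though the paper passes to limits by \emph{weak} convergence from uniform energy bounds \eqref{e:UBT}, \eqref{e:UBT1} rather than attempting the strong convergence you sketch.

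The genuine gap is the ``reverse inclusion''. The containment $\mcD^\infty_{loc,\QP}\subset\dom{\underline{\E}^{\U,\QP}}$ places smooth local functions only in the \emph{lower} (monotone-limit) form's domain, which may strictly contain $\dom{\E^{\U,\QP}}$; nothing here forces $\mcD^\infty_{loc,\QP}$ into the $\underline{\E}^{\U,\QP}$-closure of $\mcC$. A smooth local function $u$ is, as you correctly note, not in $\mcC$ (not globally $\mssd_\U^{(r)}$-Lipschitz, Remark~\ref{r:LNL}\ref{r:LNL2}), and manufacturing an approximating sequence from $\mcC$ would amount to inverting the parallel extension — something you have not supplied and which is not obvious.

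The paper bypasses this entirely by routing through the Cheeger energy. The key observation (proof of~\eqref{p:S1}) is that although $u\in\mcD^\infty_{loc,\QP}$ fails to be globally $\mssd_\U$-Lipschitz, it \emph{is} Lipschitz along every $\mssd_\U$-absolutely continuous curve: such a curve is $\tau_\mrmv$-continuous, hence has bounded particle count in $B_r$, so it visits only finitely many sectors $\U^k(B_r)$, on each of which $u$ is Lipschitz. This makes the slope $|\mssD_{\mssd_\U}u|$ a genuine upper gradient of $u$, and since $|\mssD_{\mssd_\U}u|^2\le\cdc^\U(u)\in L^1(\QP)$ by the same pointwise estimate as in Theorem~\ref{t:E=Ch}, the weak-upper-gradient characterisation of the Cheeger energy (\cite[Thm.~6.2]{AmbGigSav14}, cf.\ Remark~\ref{r:CH}) gives $u\in\dom{\Ch^{\mssd_\U,\QP}}$. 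One then closes the chain
\[
\dom{\E^{\U,\QP}}\;\subset\;\dom{\overline{\E}^{\U,\QP}}\;\subset\;\dom{\Ch^{\mssd_\U,\QP}}\;=\;\dom{\E^{\U,\QP}}
\]
via Theorem~\ref{t:E=Ch}. You should replace your ``automatic'' claim with this curve argument.
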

 \begin{proof}As the second equality has been proven in Thm.~\ref{t:E=Ch}, it suffices to prove 
 \begin{align} \label{p:S1}
 \bigl(\Ch^{\mssd_{\U}, \QP}, \mathscr D(\Ch^{\mssd_{\U}, \QP})\bigr)  \le (\overline{\E}^{\U, \QP}, \dom{\overline{\E}^{\U, \QP}})
 \end{align}
 and 
  \begin{align} \label{p:S2}
  (\overline{\E}^{\U, \QP}, \dom{\overline{\E}^{\U, \QP}}) \le \bigl(\vE^{\U, \QP}, \dom{\E^{\U, \QP}}\bigr) \fstop
   \end{align}
 
 \paragraph{Proof of \eqref{p:S1}} Let $u \in \mcD_{loc, \QP}^\infty$ and take $r>0$ such that $u(\gamma)=u(\gamma_{B_r})$ for every $\gamma \in \U$. By the exactly same proof as Thm.~\ref{t:E=Ch}, 
  \begin{align}\label{e:CC3}
 |{\sf D}_{\mssd_\U} u|^2 \le \vcdc^{\U}(u)\comma \quad \text{$\QP$-a.e.~.}
 \end{align}
Thus, it suffices to show that $u \in \dom{\Ch^{\mssd_\U, \QP}}$. Note that $u \not\in\Lip_b(\U, \mssd_\U)$ in general as remarked in~\ref{r:LNL2} in Rem.~\ref{r:LNL}, however $u$ is Lipschitz along every absolutely continuous curve in $\U$. Indeed, 
 let $(\gamma_t)_{t \in [0,1]} \subset \U$ be a $\mssd_\U$-absolutely continuous curve. 
 As $\mssd_\U$ metrises a stronger topology than the vague topology~$\tau_\mrmv$, the curve $t \mapsto \gamma_t$ is $\tau_\mrmv$-continuous. In particular, there exists $k \in \N$ such that $\sup_{t \in [0,1]}\gamma_t(B_r) \le k$. Since $u|_{\U^k(B_r)} \in \Lip_b(\U^k(B_r), \mssd_\U)$, $u$ is Lipschitz along $(\gamma_t)_{t \in [0,1]}$. Hence, the curve $t \mapsto u\circ \gamma_t$ is absolutely continuous, which implies that the slope $|{\sf D}_{\mssd_\U} u|$ is an upper gradient of $u$ along every  $\mssd_\U$-absolutely continuous curve (see \cite[Remark 2.8]{AmbGigSav14}). In particular, the slope~$|{\sf D}_{\mssd_\U} u|$ is a weak upper gradient of $u$ in the sense of \cite[Dfn.~5.4]{AmbGigSav14}. Furthermore, $|{\sf D}_{\mssd_\U} u| \in L^2(\U, \QP)$ due to~\eqref{e:CC3}.  Using the equivalence~\cite[Thm.~6.2]{AmbGigSav14} between the Cheeger energy given in~\eqref{d:CH1} and the one in terms of the minimal weak upper gradient (\cite[Dfn.~5.11]{AmbGigSav14}), we conclude $u \in \dom{\Ch^{\mssd_\U, \QP}}$.

\paragraph{Proof of \eqref{p:S2}} 
It suffices to show 
 \begin{align} \label{ie:DI}
 \dom{\E^{\U, \QP}} \subset \dom{\overline\E^{\U, \QP}} \fstop
 \end{align}
Indeed, the Dirichlet form~$(\underline{\E}^{\U, \QP}, \dom{\underline{\E}^{\U, \QP}})$ given in Dfn.~\ref{d:DFF} is a closed extension of both $(\overline{\E}^{\U, \QP}, \dom{\overline{\E}^{\U, \QP}})$ and $({\E}^{\U, \QP}, \dom{{\E}^{\U, \QP}})$, thus \eqref{ie:DI}
 implies $\E^{\U, \QP}=\overline{\E}^{\U, \QP}$ on $\dom{\E^{\U, \QP}}$, which concludes the inequality~\eqref{p:S2}.  
 As $\mcC=\cup_{r>0}\Lip_b(\U, \mssd_\U^{(r)})$ is dense in $\dom{\E^{\U, \QP}}$,
 it suffices to show $\Lip_b(\U, \mssd_\U^{(r)}) \subset \dom{\overline{\E}^{\U, \QP}}$.  Let $u \in \Lip_b(\U, \mssd_\U^{(r)})$ and $u^{(k)}:=u|_{\U^k(B_r)}$. As $u$ does not depend on configurations on the boundary $\partial B_r$ due to~\ref{p:PDF3} in Prop.~\ref{p:PDF}, we may regard $u^{(k)}$ as a symmetric $\hat \mssd_r^{\times k}$-Lipschitz functions in $\hat B_r^{\times k}$ such that \eqref{e:CSC} holds. Furthermore, by  Prop.~\ref{p:RHDD} with the symmetry of $u^{(k)}$, 
 \begin{align} \label{e:ELCD}
 \Lip_{\hat \mssd^{\times k}_r}(u^{(k)}) = \Lip_{\mssd_\U^{(r)}}(u) \fstop
 \end{align}
 Take $u^{(k)}_n \in C_{sym}^\infty(\hat B_r^{\times k})$ such that 
 $$\Lip_{\hat \mssd^{\times k}_r}(u^{(k)}_n) \le \Lip_{\hat \mssd^{\times k}_r}(u^{(k)}) \quad \text{and} \quad u^{(k)}_n \to u^{(k)} \quad  \text{uniformly}\fstop$$
  We can take such $u_n^{(k)}$ by e.g., $u_n^{(k)}:=T_{1/n}u^{(k)}$, where $(T_t)_{t \ge 0}$ is the heat semigroup on the $k$-dimensional flat torus $\hat B_r^{\times k}= (\R/r\Z)^{\times k}$. Indeed, as the Ricci curvature of the flat torus is $0$,  we can apply the Bakry--\'Emery gradient estimate~$\BE(0,\infty)$ to get the  gradient bound, hence,  the sought Lipschitz bound of $u_n^{(k)}$ as well.  
By~\eqref{e:LIAL}, \eqref{e:LIAL5} and \eqref{e:ELCD},  we have 
\begin{align}\label{e:ULEN1}
\Lip_{\mssd_\U^{(r)}}(\Phi(u^{(k)}_n)|_{\U^{k+l}(B_r)}) \le 2^{\frac l2}\Lip_{\mssd_\U^{(r)}}(u) \quad l \in \N \comma
\end{align}
and 
\begin{align}\label{e:ULEN2}
 \Lip_{\mssd_\U^{(r)}}(\Phi(u^{(k)}_n)|_{\U^{k+l}(B_r)}) \le \Lip_{\mssd_\U^{(r)}}(u)  \quad  l=0, -1, \ldots, -k \fstop
 \end{align}
Furthermore, by~\eqref{e:LIAL1}
 \begin{align} \label{e:ULEN3}
 \Phi(u^{(k)}_n) \xrightarrow{n \to \infty} \Phi(u^{(k)}) \quad \text{everywhere} \fstop
 \end{align}
 By the locality~\eqref{e:LCU} of $u$, the hypothesis~\eqref{e:AFD}, the Rademacher-type property~\eqref{p:Rad} and~\eqref{ineq:LAC1},  we have
\begin{align} \label{e:UBT}
&\overline\E^{\U, \QP}(\Phi(u_n^{(k)})) \notag
= \overline\E_r^{\U, \QP}(\Phi(u_n^{(k)})) \notag
\\
&\le  \Lip_{\mssd_\U^{(r)}}(u)^2 \sum_{l=0}^k \mu\bigl(\gamma \in \U: \gamma(B_r)=l\bigr) + \Lip_{\mssd_\U^{(r)}}(u)^2\sum_{l=k}^\infty 2^{\frac l2}\mu\bigl(\gamma \in \U: \gamma(B_r)=l\bigr) \notag 
\\
&\le  \Lip_{\mssd_\U^{(r)}}(u)^2(1+ c_r) \fstop  
\end{align}
Thus, $\Phi(u_n^{(k)}) \in \mcD_{loc, \QP}^\infty$. By \eqref{e:ULEN1}--\eqref{e:ULEN3}, the same Lipschitz bound inherits to the limit~$\Phi(u^{(k)})$:
  $$ \Lip_{\mssd_\U^{(r)}}\bigl(\Phi(u^{(k)})|_{\U^{k+l}(B_r)}\bigr) \le 2^{\frac l2}\Lip_{\mssd_\U^{(r)}}(u) \quad  l \in \N \comma$$
  and 
   $$ \Lip_{\mssd_\U^{(r)}}\bigl(\Phi(u^{(k)})|_{\U^{k+l}(B_r)}\bigr) \le \Lip_{\mssd_\U^{(r)}}(u) \quad l=0, -1, \ldots, -k \fstop$$
By the same estimate as in~\eqref{e:UBT}, 
\begin{align} \label{e:UBT1}
\overline\E^{\U, \QP}(\Phi(u^{(k)})) \le  \Lip_{\mssd_\U^{(r)}}(u)^2(1+ c_r) \fstop
\end{align}
%
Thus, $\Phi(u^{(k)}) \in  \dom{\overline{\E}^{\U, \QP}}$. Letting $k \to \infty$ with~\eqref{e:LIAL2} and the uniform bound~\eqref{e:UBT1} in $k$, we obtain~$\Phi(u^{(k)}) \to u$  weakly in $\dom{\overline{\E}^{\U, \QP}}$. In particular, $u \in \dom{\overline{\E}^{\U, \QP}}$. 
%
 \end{proof}

\begin{rem} \label{r:TCO} 
Let $\mu$ be the law of either 
\begin{enumerate}[(a)]
\item
 a determinantal point process on $\mathbb R$ with correlation kernel $K$.  Namely, it is the point process whose $n$-point correlation density $f_\QP^{(n)}$ (see Dfn.~\ref{d:DPP}) with respect to the Lebesgue measure is given by 
 $$f_\QP^{(n)}(x_1,\ldots, x_n)=\det\bigl(K(x_i, x_j)_{1 \le i, j \le n}\bigr) \fstop$$
 Let $K_{B_r}(x,y) = K_r(x, y ):=\1_{B_r}(x)K(x, y)\1_{B_r}(y)$ and 
 define $K_{B_r}f(x)=K_rf(x):=\int_{B_r}K_r(x,y)f(y)\diff y$ for $f \in L^2(B_r, \diff x)$.  
Suppose that $K_r$ is trace class in $L^2(B_r, \diff x)$ for every $r>0$.
%

\item a Pfaffian point process  with the real $2\times 2$-matrix correlation kernel $K(x, y)$ having the skew-symmetry $K(x,y)=-K(x,y)^T$. 
 Namely, it is the point process whose $n$-point correlation density $f_\QP^{(n)}$ is given by 
 $$f_\QP^{(n)}(x_1,\ldots, x_n)={\rm Pf}\bigl(K(x_i, x_j)_{1 \le i, j \le n}\bigr) \comma$$
where $\operatorname{Pf}( K(x_i, x_j)_{1 \le i,j \le n})$ is the Pfaffian of the real $2n\times 2n$ skew-symmetric matrix with
the $2\times 2$ block~$K(x_i, x_j )$. Let $K_r(x, y ):=\1_{B_r}(x)K(x, y)\1_{B_r}(y)$ and 
 define $K_rf(x):=\int_{B_r}K_r(x,y)f(y)\diff y$ for $f \in L^2(B_r \to \R^2, \diff x)$.  
Suppose that $K_r$ is trace class in $L^2(B_r \to \R^2, \diff x)$ for every $r>0$.
\end{enumerate}
 Then, Condition~\eqref{e:AFD} holds. 
This includes many point processes, e.g., the Poisson point process (e.g., with constant intensity), $\sine_\beta$ and $\Airy_\beta$ $(\beta=1,2,4)$. 
\end{rem}
\begin{proof}  We discuss the Pfaffian case first.  Let $N_r(\gamma):=\gamma(B_r)$ and $G_r(t):=\int t^{N_r(\gamma)} \diff \QP(\gamma)$ be the generating function.   Noting that 
$$ G_r(\sqrt{2})=\sum_{l=0}^\infty 2^{\frac{l}{2}}\mu\bigl(\gamma \in \U: \gamma(B_r)=l\bigr)\comma$$
it suffices to show $G_r(\sqrt{2})<\infty$. 
Due to the trace-class condition of $K_r$, the Fredholm Pfaffian series is well-posed
\begin{align} \label{f:PFF}
\operatorname{Pf}\big(J + (t-1) K_r\big)
:=
\sum_{n=0}^{\infty} \frac{(t-1)^n}{n!} 
\int_{B_r^{\times n}} 
\operatorname{Pf}\Big(  K_r(x_i, x_j )_{1 \le i,j \le n} \Big)
\, \diff x_1 \cdots \diff x_n \comma
\end{align}
where $J$ is the canonical skew symmetric operator acting on $\mbbR^2$
\[
J = \begin{pmatrix}
0 & -1 \\
1 & 0
\end{pmatrix}
 \fstop
\]
Noting that for a bounded measurable function $f: B_r \to \R$, 
$$\prod_{x \in \gamma}\bigl(1+f(x)\bigr)=\sum_{n=0}^\infty \sum_{\substack{\eta \subset \gamma \\ |\eta|=n}}\prod_{x \in \eta} f(x) \comma$$
we have 
\begin{align}  \label{f:PFF2}
\int_{\U} \prod_{x \in \gamma}\bigl(1+f(x)\bigr) \diff \QP(\gamma) = \sum_{n=0}^{\infty} \frac{1}{n!} 
\int_{\R^{\times n}} 
f_{\QP}(x_1, \ldots, x_k) \prod_{i=1}^n f(x_i)
\, \diff x_1 \cdots \diff x_n \fstop 
\end{align}
Having the equality $t^{N_r(\gamma)}= \prod_{x \in \gamma}\bigl(1+(t-1)\1_{B_r}(x)\bigr)$, plugging $f(x)=(t-1)\1_{B_r}(x)$ into~\eqref{f:PFF2} and comparing it with \eqref{f:PFF} yield  
$$G_r(t) = {\rm Pf}(J + (t-1)K_r) \fstop$$
Using the relation between the Fredholm determinant and the Fredholm Pfaffian
$$ {\rm Pf}(J + (t-1)K_r)^2 =  {\rm det}(I + (t-1)JK_r) \comma$$
the trace class condition leads to
$${\rm Pf}(J + (\sqrt{2}-1)K_r)^2 =  {\rm det}(I + (\sqrt{2}-1)JK_r) \le e^{(\sqrt{2}-1)\|K_r\|_1}<\infty \comma$$
where $\|K_r\|_1$ is the trace norm of $K_r$.
Thus, 
$$ G_r(\sqrt{2}) \le e^{\frac{\sqrt{2}-1}{2}\|K_r\|_1}<\infty \fstop $$

A similar argument also applies to the determinantal case: with $K_r$ trace class, we have
$G_r(t)=\det(I+(t-1)K_r)$ (Fredholm determinant). Applying the  trace-norm bound $|\det(I+(t-1)K_r)|\le e^{(t-1)\|K_r\|_1}$, 
we obtain $G_r(\sqrt2)<\infty$. 
\end{proof}

\section{$\mssW_p$-contraction and log-Harnack inequality}
In this section, we discuss the $p$-Wasserstein contraction property, Wang's dimension-free Harnack inequality, and the $p$-Bakry--\'Emery gradient estimate for the extended metric measure space~$(\U, \tau_\mrmv, \mssd_{\U}, \QP)$. 
Recall that $\pr_{B_r}: \U \to \U(B_r)$ was the projection defined as $\gamma \mapsto \gamma_r:=\gamma_{B_r}$. 
Let $(T_t^{\U, \QP})_{t>0}$ be the $L^2(\U, \QP)$-semigroup associated with the Dirichlet form~$(\E^{\U, \QP}, \dom{\E^{\U, \QP}})$ constructed in Prop.~\ref{p:CES}. Due to~\eqref{e:con1}, $(T_t^{\U, \QP})_{t \ge 0}$ can be extended to the $L^p(\U, \QP)$-semigroup for every $1 \le p \le \infty$, which is denoted by the same symbol. Let $(\mathcal P(\U), \tau_\mrmw)$ be the space of Borel probability measures on~$(\U, \tau_\mrmv)$ endowed with the weak topology $\tau_\mrmw$, i.e., the topology induced by the duality of $C_b(\U, \tau_\mrmv)$. Let $\mcP_\QP(\U)$ be the subset of $\mcP(\U)$ consisting of $\QP$-absolutely continuous elements. 
We recall that 
\begin{itemize}
\item The {\it relative entropy} $\Ent_\QP: \mathcal P_\QP(\U) \to \R\cup\{+\infty\}$ is given by 
 $$\Ent_\QP(\nu):=\int_{\U} \rho \log \rho \diff \QP \comma \quad \nu=\rho\cdot \mu \fstop$$
The domain of $\Ent_\QP$ is denoted by $\dom{\Ent_\QP}:=\{\nu \in \mathcal P_\QP(\U): \Ent_\QP(\nu)<+\infty\}$. 
\item The {\it Fisher information} ${\sf F}_\QP:  \mathcal P_\QP(\U) \to \R\cup\{+\infty\}$ is given by 
 $${\sf F}_\QP(\nu):=4\E^{\U, \QP}(\sqrt{\rho}) \comma \quad \nu=\rho\cdot \mu \fstop$$
The domain of ${\sf F}_\QP$ is denoted by $\dom{{\sf F}_\QP}:=\{\nu \in \mathcal P_\QP(\U): {\sf F}_\QP(\nu)<+\infty\}$.
\end{itemize}
\subsection{The density of the entropy domain}
We define 
\begin{align}\label{e:EP}
\mcP_{e, p}(\U):=\{\nu \in \mcP(\U): \mssW_{p, \mssd_\U}(\nu, \sigma)<+\infty,\ \exists \sigma \in \dom{\Ent_\QP}\} \fstop
\end{align}
We simply write $\mcP_e(\U)$ when $p=2$. 
The following proposition is a generalisation of \cite[Lem.~5.1]{ErbHue15}, which discussed the case of the Poisson measure~$\QP$ and $p=2$.
\begin{prop}\label{p:DE} 
Let $1 \le p<\infty$. Suppose that $\QP$ is a probability measure on $(\U, \msB(\tau_\mrmv))$ satisfying~\ref{ass:CE}. 
Then, 
$$\overline{\dom{\Ent_\QP}}^{\mssW_{p, \mssd_\U}}=\mcP_{e,p}(\U) \comma$$
where the LHS denotes the $\mssW_{p, \mssd_\U}$-completion of $\dom{\Ent_\QP}$.
\end{prop}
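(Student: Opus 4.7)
The inclusion $\overline{\dom{\Ent_\QP}}^{\mssW_{p,\mssd_\U}} \subseteq \mcP_{e,p}(\U)$ follows by noting that $\mcP_{e,p}(\U)$ is $\mssW_{p,\mssd_\U}$-closed and trivially contains $\dom{\Ent_\QP}$ (with $\mssW_{p,\mssd_\U}(\sigma,\sigma)=0$). Closedness is by the triangle inequality: if $\nu_n \to \nu$ in $\mssW_{p,\mssd_\U}$ with $\nu_n \in \mcP_{e,p}(\U)$ witnessed by $\sigma_n \in \dom{\Ent_\QP}$, then for $n$ large $\mssW_{p,\mssd_\U}(\nu, \sigma_n) \le \mssW_{p,\mssd_\U}(\nu,\nu_n) + \mssW_{p,\mssd_\U}(\nu_n, \sigma_n) < \infty$, so $\nu \in \mcP_{e,p}(\U)$.

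For the reverse inclusion, fix $\nu \in \mcP_{e,p}(\U)$ and $\sigma = \rho_\sigma \cdot \QP \in \dom{\Ent_\QP}$ with $\mssW_{p,\mssd_\U}(\nu,\sigma)<\infty$, and let $\pi \in \Opt(\nu,\sigma)$ be an optimal coupling (existing by $\tau_\mrmv^{\times 2}$-lower semi-continuity of $\mssd_\U$, cf.\ Cor.~\ref{p:EMW1}). As a first layer of approximation (\emph{outer gluing}), set $\nu^{(r)} := (\Psi_r)_\# \pi$ with $\Psi_r(\gamma,\zeta) := \gamma_{B_r} + \zeta_{B_r^c}$, which keeps the inner block of $\nu$ and replaces the outer block by the one of $\sigma$. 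Using $(\mathrm{id},\Psi_r)_\#\pi$ as a coupling between $\nu$ and $\nu^{(r)}$,
\begin{align*}
\mssW_{p,\mssd_\U}^p(\nu, \nu^{(r)}) \le \int \mssd_\U(\gamma_{B_r^c}, \zeta_{B_r^c})^p \diff\pi(\gamma, \zeta) \xrightarrow{r\to\infty} 0,
\end{align*}
by dominated convergence, since the integrand is majorised by $\mssd_\U(\gamma,\zeta)^p \in L^1(\pi)$ and vanishes pointwise because any $\ell^p$-optimal matching between $\gamma$ and $\zeta$ has residual cost outside $B_r$ tending to zero. Crucially, $(\nu^{(r)})_{B_r^c} = \sigma_{B_r^c} \ll \QP_{B_r^c}$.

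As a second layer (\emph{inner mollification}), for each $\epsilon > 0$, replace the inside conditional $(\nu^{(r)})^\eta_r$ on each $k$-particle sector $\U^k(B_r) \cong B_r^{\odot k}$ by its convolution with the product $\epsilon$-Gaussian kernel, truncated to the support of the Lebesgue density $\rho^{\eta,k}_r := d\QP^{\eta,k}_r/d\mssm_r^{\odot k}$ supplied by \ref{ass:CE} and renormalised. Call the resulting measure $\nu^{(r,\epsilon)}$. By construction $\nu^{(r,\epsilon)} \ll \QP$: the outside is $\sigma_{B_r^c} \ll \QP_{B_r^c}$ and the inside conditional is $\ll \QP^{\eta,k}_r$ on $\{\rho^{\eta,k}_r > 0\}$. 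Finiteness of $\Ent_\QP(\nu^{(r,\epsilon)})$ follows by splitting into an outer contribution, bounded by $\Ent_\QP(\sigma)$ via Jensen applied to the conditional expectation $\rho_\sigma \mapsto \E_\QP[\rho_\sigma \mid \pr_{B_r^c}]$, and an inner contribution which is finite since the $\epsilon$-mollified density is bounded on the compact sector. The canonical Gaussian coupling gives $\mssW_{p,\mssd_\U}(\nu^{(r,\epsilon)}, \nu^{(r)}) \to 0$ as $\epsilon \to 0$; a diagonal extraction $(r_n, \epsilon_n)$ yields $\nu^{(r_n,\epsilon_n)} \to \nu$ in $\mssW_{p,\mssd_\U}$, completing the argument.

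The delicate step is the inner mollification: \ref{ass:CE} only provides the one-sided inclusion $\QP^{\eta,k}_r \ll \mssm_r^{\odot k}$, so a naive Gaussian convolution produces a measure $\ll \mssm_r^{\odot k}$ but not necessarily $\ll \QP^{\eta,k}_r$. The truncation to $\{\rho^{\eta,k}_r > 0\}$ must therefore be shown to have small $\mssW_p$-cost; here one uses that the inside of $\nu^{(r)}$ is, through the coupling $\pi$, at finite $\mssd_\U$-distance from the inside of $\sigma \ll \QP$, hence lies in the reachable part of $\supp \QP^{\eta,k}_r$. An auxiliary truncation in the particle number $k=\gamma(B_r)$ may also be needed if $\nu$ lacks integrability of the local particle count, absorbed into the final diagonal procedure.
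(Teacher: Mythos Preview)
Your outer-gluing step has a genuine gap. You bound
\[
\mssW_{p,\mssd_\U}^p(\nu,\nu^{(r)})\le\int\mssd_\U(\gamma_{B_r^c},\zeta_{B_r^c})^p\,\diff\pi(\gamma,\zeta)
\]
and then claim the integrand is dominated by $\mssd_\U(\gamma,\zeta)^p$ and vanishes as $r\to\infty$. Neither is true. Restriction to $B_r^c$ does \emph{not} contract $\mssd_\U$: if a point of $\gamma$ just inside $\partial B_r$ is matched by the optimal coupling to a point of $\zeta$ just outside, then $\gamma(B_r^c)\neq\zeta(B_r^c)$ locally and $\mssd_\U(\gamma_{B_r^c},\zeta_{B_r^c})$ can jump to $+\infty$ even though $\mssd_\U(\gamma,\zeta)$ is arbitrarily small. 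The ``residual cost outside $B_r$'' of the global matching is not $\mssd_\U(\gamma_{B_r^c},\zeta_{B_r^c})$. This is precisely why the paper does not use the naive map $\Psi_r(\gamma,\zeta)=\gamma_{B_r}+\zeta_{B_r^c}$: it first measurably selects an optimal matching $\eta\in\U(\R^n\times\R^n)$ and then builds the intermediate configuration
\[
\xi=\pr_1(\1_{B\times B}\eta)+\pr_2(\1_{B^c\times B^c}\eta)+\pr_2(\1_{B^c\times B}\eta)+\pr_2(\1_{B\times B^c}\eta),
\]
which by construction satisfies $\xi_{B^c}=\omega_{B^c}$ \emph{and} $\xi(B)=\omega(B)$. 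The first identity gives outer absolute continuity via $\sigma$, the second ensures the inner conditional lives on the correct sector $\U^{\omega(B)}(B)$ on which $\QP_r^{\omega}$ actually has mass. Your $\Psi_r$ preserves neither the outer count nor the inner count in general, so even if you could control the distance, the inner mollification would not necessarily land in a sector charged by $\QP_r^{\zeta}$.

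This feeds into your second gap. You acknowledge that Gaussian mollification followed by truncation to $\{\rho_r^{\eta,k}>0\}$ is ``delicate'', but the sentence offered (``lies in the reachable part of $\supp\QP_r^{\eta,k}$'') is not an argument: nothing prevents the truncation from removing an order-one fraction of the mass, destroying the $\mssW_p$-control. The paper sidesteps this entirely by building the inner measure directly with density $h_B^{\omega,k}\prod_i\1_{B_\alpha(\chi(x_i))}$ against $\mssm_B^{\odot k}$, so that absolute continuity with respect to $\QP_r^{\omega,k}$ is automatic and the displacement is bounded by the explicit scale $\alpha=r^{-1/p}\xi(B)^{-1/2}$, giving $\mssW_p^p\le 1/r$ uniformly. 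The entropy bound then follows because the density is bounded, not merely integrable.
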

 \begin{proof}
 The inclusion $\subset$ is obvious. We prove the other inclusion $\supset$. Let $\sigma \in \mcP_{e, p}(\U)$ and $\nu \in \dom{\Ent_\QP}$ such that $\mssW_{p, \mssd_\U}(\sigma, \nu)<+\infty$. Take an optimal coupling~$q \in {\rm Opt}(\sigma, \nu)$. 
By the Kuratowski--Ryll-Nardzewski measurable selection theorem, we can construct a $\msB(\tau_\mrmv)^{\otimes 2}/\msB(\tau_\mrmv^{\times 2})$-measurable map $\U^{\times 2} \to \U(\R^n \times \R^n)$ assigning $(\gamma, \omega) \in {\rm supp}[q]$ to an optimal matching $\eta$ for $\mssd_\U(\gamma, \omega)<\infty$, see \cite[Lem.~6.1]{ErbHue15}. 
Let $\bar B_r=\{x\in \R^n: |x| \le r\}$. We simply write $B=\bar B_r$ unless the dependency of $r$ is relevant. Define a map $(\gamma, \omega) \mapsto \xi =\xi_r(\gamma, \omega) \in \U$ as 
\begin{align}\label{e:EP0}
\xi:={\rm proj}_1(\1_{B \times B}\eta) + {\rm proj}_2(\1_{B^c \times B^c}\eta) +{\rm proj}_2(\1_{B^c \times B}\eta) + {\rm proj}_2(\1_{B \times B^c}\eta) \comma 
\end{align}
which is $\msB(\tau_\mrmv)^{\otimes 2}/\msB(\tau_\mrmv)$-measurable. By construction, 
\begin{align}\label{e:EP1}
\xi_{B^c}=\omega_{B^c} \cquad \xi(B)=\omega(B) \fstop
\end{align}
Take $\alpha:=\frac{1}{r^{1/p}\xi(B)^{1/2}}$. For $x \in {\rm supp}[\xi_B]$, define 
\begin{align}
\chi(x)=
\begin{cases}
x \quad & \text{if $\inf_{y \in B^c}|x-y| >\alpha$}
\\
x_\alpha & \text{otherwise}\comma
\end{cases}
\end{align}
where $x_\alpha$ is the point on the geodesic from $x$ to $0$ such that $|x-x_\alpha|=\alpha$. Note that $B_\alpha(\chi(x)):=\{y \in \R^n: |y-\chi(x)| <\alpha\} \subset B$. 
For $(\gamma, \omega) \in {\rm supp}[q]$, define $\mcU_{\gamma, \omega}^r \in \mcP(\U)$ as follows: Let $\xi_B=\sum_{i=1}^k \delta_{x_i}$ and $\xi_{B^c}=\sum_{i=k+1}^N \delta_{x_i}$ with $N=N(\omega) \in \N_0 \cup \{+\infty\}$ and $k=k(\omega)$  depending on~$\omega$. Given $(y_1,\ldots, y_k) \in (\R^n)^{\times k}$,  we define 
$$\tilde{\xi}(y_1,\ldots, y_k)=\sum_{i=1}^k \delta_{y_i} + \sum_{i=k+1}^N \delta_{x_i} \in \U \fstop$$
We define
\begin{align} \label{e:EP3}
\mcU_{\gamma, \omega}^{r}:=\int_{\U^k(B)} \delta_{\tilde{\xi}(y_1,\ldots, y_k)} h_B^{{\omega}, k}(y_1,\ldots, y_k)  \prod_{i=1}^k \1_{B_\alpha(\chi(x_i))}(y_i) \diff \mssm_B^{\odot k}(y_1, \ldots , y_k) \comma
\end{align}
where $h_B^{\omega, k}=\frac{\diff \QP_B^{\omega, k}}{\diff \mssm_B^{\odot k}}$  is the Radon--Nikodym density  due to \ref{ass:CE}, $\QP_B^{\omega, k}:=\QP_r^{\omega, k}$ is the projected conditional probability (Dfn.~\ref{d:CP}), and $\mssm_B:=\mssm\mrestr{B}$ is the Lebesgue measure restricted in~$B=\bar B_r$. 
 Using the map $T^{(r)}: (\gamma, \omega) \mapsto \mcU_{\gamma, \omega}^{r}$, we  define  a probability measure in $(\U, \msB(\tau_\mrmv))$ as follows: for $A \in \msB(\tau_\mrmv)$, 
\begin{align}\label{e:DS}
A\mapsto \sigma^{(r)}(A):=\frac{1}{Z_r}\int_{\U^{\times 2}} T^{(r)}(\gamma, \omega)(A)q(\diff \gamma, \diff \omega)  \cquad Z_r:=\int_{\U^{\times 2}} T^{(r)}(\gamma, \omega)(\U)q(\diff \gamma, \diff \omega) \fstop
\end{align}
It suffices to prove the following claims: 
\begin{enumerate}[{\rm Claim} 1]
\item \label{c:EP1} $\mssW_{p, \mssd_\U}(\sigma^{(r)}, \sigma) \to 0$ as $r \to \infty$.
\vspace{2mm}
\item \label{c:EP2} $\Ent_\QP(\sigma^{(r)})<+\infty$ for $r \in \N$.
\end{enumerate}
{\it Proof of \ref{c:EP1}.} 
By construction, for $(\gamma, \omega) \in \supp[q]$ and $\xi=\xi_r(\gamma, \omega)$ as given in \eqref{e:EP0}, we have 
\begin{align} \label{e:TWE}
&\mssW_{p, \mssd_\U}\Bigl(\delta_\xi, \frac{\mcU_{\gamma, \omega}^r}{Z_r}\Bigr)^p\le \bigl(\alpha^2 \xi(B))^{p/2}= \frac{1}{r}
\\
&\mssW_{p, \mssd_\U}(\delta_\gamma, \delta_\xi)^p=\mssd_\U(\gamma, \xi_r(\gamma, \omega))^p \searrow 0 \quad \text{as $r \to \infty$}\fstop
\end{align}
Taking a coupling $Q:=({\rm proj}_1, \frac{T^{(r)}}{Z_r})_\#q$ between $\sigma$ and $\sigma^{(r)}$, using the triangle inequality and $(x+y)^p \le 2^{p-1}(x^p+y^p)$, we have 
\begin{align*}
\mssW_{p, \mssd_\U}(\sigma, \sigma^{(r)})^p &\le \int \mssW_{p, \mssd_\U}\Bigl(\delta_\gamma, \frac{\mcU_{\gamma, \omega}^r}{Z_r}\Bigr)^p \diff q(\gamma, \omega)
\\
& \le \frac{2^{p-1}}{r} + 2^{p-1} \int_{\U^{\times 2}}\mssd_\U(\gamma, \xi_r(\gamma, \eta))^p\diff q(\gamma, \omega) 
\\
&\xrightarrow{r \to \infty}0 \comma
\end{align*}
which completes the proof of \ref{c:EP1}.

{\it Proof of \ref{c:EP2}.}  
Let $q_{\omega}(\diff \gamma)$ be the disintegration of $q(\diff \gamma, \diff \omega)$ with respect to $\nu$ for the variable~$\omega$. 
Let $(\sigma^{(r)})_{B}^\zeta:=(\sigma^{(r)})_{r}^\zeta, \ T^{(r)}(\gamma, \omega)_B^\zeta$ and $\nu_B^\zeta \in \mcP(\U(B))$ be the projected conditional probabilities of $\sigma^{(r)}$, $T^{(r)}(\gamma, \omega)$ and  $\nu$ with conditioning $\zeta$ outside the ball $B=\bar B_r$ respectively.  By construction, $T(\gamma, \omega_B+\omega_{B^c})_B^\zeta =0$ (zero measure) if $\zeta_{B^c} \neq \omega_{B^c}$.
Thus, due to~\eqref{e:DS}, for $A \in \msB(\U(B))$ and $\zeta \in \U$, 
\begin{align} \label{e:FSN}
(\sigma^{(r)})_{B}^{{\zeta}}(A) = \frac{1}{Z_r}\int_{\U\times\U(B)} T^{(r)}(\gamma, \omega_{B}+\zeta_{B^c})_B^{{\zeta}}(A) q_{\omega_B+\zeta_{B^c}}(\diff \gamma) \nu_B^\zeta(\diff \omega_B) \fstop
\end{align}
By construction, we have $\sigma^{(r)}_{B^c}:=({\rm pr}_{B^c})_\# \sigma^{(r)}=\nu_{B^c}$. Thus,  the chain rule of the relative entropy yields
\begin{align*}
\Ent_{\QP}(\sigma^{(r)}) &= \int_{\U} \Ent_{\QP_B^\zeta}((\sigma^{(r)})_{B}^\zeta) \diff \sigma^{(r)}_{B^c}(\zeta) + \Ent_{\QP_{B^c}}(\sigma^{(r)}_{B^c}) 
\\
&= \int_{\U} \Ent_{\QP_B^\zeta}((\sigma^{(r)})_{B}^\zeta) \diff \nu_{B^c}(\zeta) + \Ent_{\QP_{B^c}}(\nu_{B^c}) \fstop
\end{align*}
By the contraction property of the entropy under the push-forward, we have $\Ent_{\QP_{B^c}}(\nu_{B^c}) \le \Ent_\QP(\nu)<\infty$. So, it suffices to show  $\zeta \mapsto \Ent_{\QP_B^\zeta}((\sigma^{(r)})_{B}^\zeta) \in L^1(\nu_{B^c})$.
Applying Jensen's inequality to \eqref{e:FSN}, 
\begin{align*}
 &\Ent_{\QP_B^\zeta}((\sigma^{(r)})_{B}^\zeta) =  \int_{\U} \frac{\diff (\sigma^{(r)})_B^\zeta}{\diff \QP_B^\zeta} \log \frac{\diff (\sigma^{(r)})_B^\zeta}{\diff \QP_B^\zeta} \diff \QP_B^\zeta
 \\
 & \le  \int_{\U \times \U(B)} \Ent_{\QP_B^\zeta} \Bigl(\frac{1}{Z_r}T(\gamma, \omega_B+\zeta_{B^c})_B^\zeta\Bigr) q_{\omega_B+\zeta_{B^c}}(\diff \gamma) \nu_{B}^\zeta(\omega_B) \fstop
 \end{align*}
By construction, $T(\gamma, \omega_B+\zeta_{B^c})_B^\zeta =0$ (zero measure) on $\U^{l}(B)$ if $l \neq k=\omega(B)$. 
 Having this  and further expansion of the integrand yield 
 \begin{align} \label{e:cm}
 & \Ent_{\QP_B^\zeta} \Bigl(\frac{1}{Z_r}T(\gamma, \omega_B+\zeta_{B^c})_B^{\zeta}\Bigr) 
 \\
 &=   \int_{\U^k(B)}  \frac{1}{Z_r}  \prod_{i=1}^k \1_{B_\alpha(\chi(x_i))}(y_i) \log \Bigl(\frac{1}{Z_r}\prod_{i=1}^k \1_{B_\alpha(\chi(x_i))}(y_i) \Bigr) h_B^{\zeta, k}(y_1,\ldots, y_k) \diff \mssm_B^{\odot k}(y_1, \ldots, y_k) \notag
  \\
 &\le   \int_{\U^k(B)}  \Bigl| \frac{1}{Z_r}  \log  \frac{1}{Z_r} \Bigr| h_B^{\zeta, k}(y_1,\ldots, y_k) \diff \mssm_B^{\odot k}(y_1, \ldots, y_k) \notag
   \\
 &\le  \Bigl|\frac{1}{Z_r}  \log  \frac{1}{Z_r}  \Bigr| \int_{\U^k(B)}     h_B^{\zeta, k}(y_1,\ldots, y_k) \diff \mssm_B^{\odot k}(y_1, \ldots, y_k) \notag
    \\
 &= \Bigl|\frac{1}{Z_r}  \log  \frac{1}{Z_r}\Bigr| \QP_B^\zeta(\U^k(B)) \notag \fstop
 \end{align}
 Thus, noting $\sum_{k=0}^\infty \QP_B^\zeta(\U^k(B)) =1$, 
 \begin{align*} \int_{\U} \Ent_{\QP_B^\zeta}((\sigma^{(r)})_{B}^\zeta) \diff \nu_{B^c}(\zeta) 
 &\le  \Bigl| \frac{1}{Z_r}  \log  \frac{1}{Z_r} \Bigr|\int_{\U^{\times 2}}  \sum_{k=0}^\infty \QP_B^\zeta\bigl(\U^k(B)\bigr) q_{\omega_B+\zeta_{B^c}}(\diff \gamma) \nu_{B}^\zeta(\omega_B)  \diff \nu_{B^c}(\zeta_{B^c})
 \\
 &=  \Bigl|\frac{1}{Z_r}  \log  \frac{1}{Z_r}\Bigr|  \comma
 \end{align*}
 which completes the proof. 
 \end{proof}

\subsection{The main assumption}
The main assumption in this section is the following:
 \begin{ass} \label{a:AP}
 Let $\QP$ be a probability measure in~$(\U, \msB(\tau_\mrmv))$ fully supported in~$(\U, \tau_\mrmv)$. 
Suppose that there exists a $\QP$-measurable set~$\Theta \subset \U$ with $\QP(\Theta)=1$ such that 
\begin{enumerate}[(a)]
\item \label{AP-11} $\{\gamma \in \U: \mssd_\U(\gamma, \eta)<+\infty \comma \eta \in \Theta\} = \Theta$; 
\item \label{AP-2} for every $\gamma \in \Theta$ and $t>0$, there exists a Borel probability measure $p^{\U, \QP}_t(\gamma, \diff \eta)$ on $\U$ such that 
\begin{align}\label{e:IKP}
\tilde{T}_t^{\U, \QP}u(\gamma):=\int_{\U}u(\eta) p^{\U, \QP}_t(\gamma, \diff \eta) \cquad u \in \mathcal B_b(\U)
\end{align}
is a $\QP$-representative of $T_t^{\U, \QP}u$;
\item \label{AP-4} there exists a subset~$\{k\} \subset \N$ of infinite cardinality and a probability measure~$\mu^k$ on $\U^k$ for each $k$ such that the metric measure space~$(\U^k, \mssd_{\U^k}, \mu^k)$ is an $\RCD(K,\infty)$ space for every $k$, and  satisfies the following property: for every $\gamma, \zeta \in \Theta$ with $\mssd_\U(\gamma, \zeta)<\infty$, there exist $\gamma^{(k)}, \zeta^{(k)} \in \U^k$ such that, as $k \to \infty$ 
\begin{align*}
&p^{\U^{k}, \QP^{k}}_t(\gamma^{(k)}, \diff \eta) \xrightarrow{\tau_\mrmw} p^{\U, \QP}_t(\gamma, \diff \eta) 
\\
&p^{\U^{k}, \QP^{k}}_t(\zeta^{(k)}, \diff \eta) \xrightarrow{\tau_\mrmw} p^{\U, \QP}_t(\zeta, \diff \eta)  
\\
& \limsup_{k \to \infty} \mssd_\U(\gamma^{(k)}, \zeta^{(k)}) \le \mssd_\U(\gamma, \zeta)  \comma
\end{align*}
for every $t>0$, where $p^{\U^{k}, \QP^{k}}_t(\gamma, \diff \eta)$ is the heat kernel on $(\U^k, \mssd_{\U^k}, \mu^k)$, see~\eqref{d:HK}. 
\end{enumerate}
Note that, for the weak convergence in \ref{AP-4}, we regard $p^{\U^{k}, \QP^{k}}_t(\gamma, \diff \eta)$ as a probability measure in $\U$ through the inclusion~$\U^k \subset \U$. 
We write 
$$\mcT^{\U^k, \QP^k}_t \delta_{\gamma^{(k)}}:=p^{\U^{k}, \QP^{k}}_t(\gamma^{(k)}, \diff \eta) \cquad \mcT^{\U, \QP}_t \delta_{\gamma}:=p^{\U, \QP}_t(\gamma, \diff \eta) \fstop$$
\end{ass}

\begin{prop} Suppose \ref{ass:CE}, ~\ref{ass:ConditionalClos} and  Assumption~\ref{a:AP}.
The semigroup $(\tilde{T}^{\U, \QP}_t)_{t \ge 0}$ can be uniquely extended  to $L^1(\U, \QP)$ for every $t>0$ with the expression 
\begin{align}\label{e:EIK}
\tilde{T}^{\U, \QP}_tu(\gamma) := \int_{\U} \tilde{u}(\eta)p_t^{\U, \QP}(\gamma, \diff \eta) \cquad \gamma \in \tilde{\Theta} \cquad u \in L^1(\U, \QP)\comma
\end{align}
 where  $\tilde{u}$ is any $\msB(\tau_\mrmv)$-representative of $u$ and the integral~\eqref{e:EIK} does not depend on the choice of representatives, and $\tilde{\Theta} \subset \Theta$ is a set of $\QP$-full measure. 
Furthermore, 
\begin{align} \label{e:VER}
\text{$\tilde{T}_t^{\U, \QP}u$ is a $\QP$-representative of $T_t^{\U, \QP}u$ for every $u \in L^1(\U, \QP)$}.
\end{align}
In the rest of this section, therefore, we do not distinguish  $\tilde{T}_t^{\U, \QP}u$ and  ${T}_t^{\U, \QP}u$ for $u \in L^1(\U, \QP)$. 
\end{prop}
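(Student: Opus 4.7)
The plan is to extend $\tilde T_t^{\U,\QP}$ to $L^1(\U,\QP)$ using the Markov kernel $p_t^{\U,\QP}(\gamma,\diff\eta)$ together with the $\QP$-invariance of the semigroup. The key observation is that although the kernels $p_t^{\U,\QP}(\gamma,\cdot)$ need not be individually absolutely continuous with respect to $\QP$, their \emph{average} over $\gamma$ against $\QP$ coincides with $\QP$, which will suffice for all integrability and equivalence-class issues.

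\textbf{Step 1 (averaged-kernel identity).} I would first show that, as Borel measures on $(\U,\msB(\tau_\mrmv))$,
\begin{align*}
\int_{\U} p_t^{\U,\QP}(\gamma,\cdot)\,\diff\QP(\gamma)=\QP\fstop
\end{align*}
For $u\in\mcB_b(\U)$, Assumption~\ref{a:AP}\ref{AP-2} gives that $\tilde T_t^{\U,\QP}u$ is a $\QP$-representative of $T_t^{\U,\QP}u$, and $\QP$-invariance \eqref{e:inv1} yields $\int_\U T_t^{\U,\QP}u\,\diff\QP=\int_\U u\,\diff\QP$. Combining these with Fubini--Tonelli and monotone class gives the displayed identity.

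\textbf{Step 2 (pointwise definition and independence of representative).} For $u\in L^1(\U,\QP)$, pick any Borel representative $\tilde u$. By Step~1 and Tonelli,
\begin{align*}
\int_{\U}\Bigl(\int_{\U}|\tilde u(\eta)|\,p_t^{\U,\QP}(\gamma,\diff\eta)\Bigr)\diff\QP(\gamma)=\int_{\U}|\tilde u|\,\diff\QP=\|u\|_{L^1(\QP)}<\infty\comma
\end{align*}
so the inner integral is finite on a set $\tilde\Theta_u\subset\Theta$ of $\QP$-full measure; define \eqref{e:EIK} on $\tilde\Theta_u$. If $\tilde u'$ is another Borel representative, then $N:=\{\tilde u\neq\tilde u'\}$ is $\QP$-null, and Step~1 gives $\int p_t^{\U,\QP}(\gamma,N)\,\diff\QP(\gamma)=\QP(N)=0$, hence $p_t^{\U,\QP}(\gamma,N)=0$ for $\QP$-a.e.~$\gamma$. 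This shows that $\tilde T_t^{\U,\QP}u$ is $\QP$-a.e.\ independent of the chosen representative.

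\textbf{Step 3 (agreement with $T_t^{\U,\QP}u$).} For bounded Borel $u$ this is precisely \eqref{AP-2}. For general $u\in L^1(\U,\QP)$ take truncations $u_n:=(u\wedge n)\vee(-n)\in L^1\cap L^\infty$; then $u_n\to u$ in $L^1$, and \eqref{e:con1} yields $T_t^{\U,\QP}u_n\to T_t^{\U,\QP}u$ in $L^1(\QP)$. On the kernel side, $|\tilde u_n|\le|\tilde u|$ and $\tilde u_n\to\tilde u$ pointwise, so dominated convergence against $p_t^{\U,\QP}(\gamma,\cdot)$ gives $\tilde T_t^{\U,\QP}u_n(\gamma)\to\tilde T_t^{\U,\QP}u(\gamma)$ for $\gamma\in\tilde\Theta_u$. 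Passing to a $\QP$-a.e.\ convergent subsequence on the left and comparing the two limits identifies $\tilde T_t^{\U,\QP}u$ with $T_t^{\U,\QP}u$ in $L^1(\QP)$, proving \eqref{e:VER}. Uniqueness of the extension follows because $\mcB_b$ is dense in $L^1$ and the formula \eqref{e:EIK} is forced on that dense subset.

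\textbf{Main obstacle and remark on $\tilde\Theta$.} The only genuine subtlety is that the $\QP$-full set $\tilde\Theta_u$ produced in Step~2 depends a priori on $u$ (and on $t$), since the averaged-kernel identity only controls each null set individually, not uniformly. To obtain a single $\tilde\Theta\subset\Theta$ of $\QP$-full measure as in the statement, I would invoke separability of $L^1(\U,\QP)$ (valid since $(\U,\tau_\mrmv)$ is Polish and $\QP$ is a Borel probability) to fix a countable dense subset $\{u_n\}\subset L^1\cap L^\infty$ with Borel representatives, set $\tilde\Theta:=\bigcap_n\tilde\Theta_{u_n}$, and then extend by $L^1$-continuity and Step~3 to all $u\in L^1(\U,\QP)$; this is the step that needs the most care but involves no new ideas beyond standard monotone-class and dominated-convergence arguments.
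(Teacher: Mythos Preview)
Your proposal is correct and reaches the same conclusion, but the route differs in a few places from the paper's proof.

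The paper first argues that $\tilde T_t^{\U,\QP}$ is $\QP$-symmetric on $\mcB_b(\U)$ (since it agrees $\QP$-a.e.\ with the symmetric semigroup $T_t^{\U,\QP}$), then uses the pointwise Cauchy--Schwarz inequality for the probability kernel, $(\tilde T_t v)^2\le(\tilde T_t\1)(\tilde T_t v^2)=\tilde T_t v^2$, together with $\QP$-invariance to obtain $L^2$-contractivity, and only afterwards passes to $L^1$ via~\eqref{e:con1}. For the kernel representation it takes $u\ge 0$, truncates one-sidedly by $\tilde u_c=\tilde u\wedge c$, applies \ref{AP-2} to each $\tilde u_c$, and then lets $c\uparrow\infty$ by Monotone Convergence; the general case follows by decomposing $u=u^+-u^-$.

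Your argument bypasses the $L^2$ step entirely: your Step~1 (the averaged-kernel identity) is exactly the $\QP$-invariance statement, and you use it immediately to control the $L^1$ norm of the kernel integral and to handle null sets of representatives. You then use two-sided truncations and Dominated Convergence instead of Monotone Convergence plus positive/negative decomposition. Both packagings are standard and equally clean; yours is arguably more direct. Your final remark about obtaining a single $\tilde\Theta$ uniform in $u$ via separability of $L^1$ is more careful than the paper, which simply produces a $u$-dependent $\tilde\Theta$ in its proof and does not address uniformity.
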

\begin{proof}
By \ref{AP-2} in Assumption~\ref{a:AP}, it follows that $\tilde{T}^{\U, \QP}_t$ is $\QP$-symmetric, i.e., 
\begin{align*}
(u, \tilde{T}^{\U, \QP}_t v)_{L^2} = (\tilde{T}^{\U, \QP}_tu,  v)_{L^2} \cquad u, v \in \mathcal B_b(\U) \cquad t>0\fstop
\end{align*}
Thus, by the Cauchy--Schwarz inequality, $(\tilde{T}^{\U, \QP}_t v)^2 \le  (\tilde{T}^{\U, \QP}_t \1)(\tilde{T}^{\U, \QP}_t v^2)$. Using $\tilde{T}^{\U, \QP}_t \1=\1$ due to the conservativeness (Prop.~\ref{p:CES}) and the $\QP$-invariance of $T_t^{\U, \QP}$ in \eqref{e:inv1}, we have 
 $$\int_\U (\tilde{T}^{\U, \QP}_t v)^2 \diff \QP \le  \int_\U v^2 \diff \QP \cquad v \in \mathcal B_b(\U) \cquad t>0\fstop$$
This showing that $\tilde{T}^{\U, \QP}_t$ is $L^2$-contractive, the operator $\tilde{T}^{\U, \QP}_t$ extends to the whole~$L^2(\QP)$.
By~\eqref{e:con1}, $(\tilde{T}^{\U, \QP}_t)_{t \ge 0}$ can be uniquely extended to $L^1(\U, \QP)$ as well. Finally the expression~\eqref{e:EIK} can be proven by the following argument: For $u \in L^1(\U, \QP)$ with $u \ge 0$, take any $\msB(\tau_\mrmv)$-measurable representative $\tilde{u}$ and define $\tilde{u}_c:=\tilde{u} \wedge c$.  
Due to~\ref{AP-2}, there exists $\tilde{\Theta} \subset \Theta$ with $\QP(\tilde{\Theta})=1$ such that
\begin{align*}
T^{\U, \QP}_t u(\gamma) =T^{\U, \QP}_t \tilde{u}(\gamma) = \lim_{c \to \infty}T^{\U, \QP}_t \tilde{u}_c(\gamma)  = \lim_{c \to \infty} \tilde{T}_t^{\U, \QP}\tilde{u}_c(\gamma)  \qquad \gamma \in \tilde{\Theta} \fstop
\end{align*}
By using \eqref{e:IKP} and the Monotone Convergence, we have
\begin{align*}
 \lim_{c \to \infty} \tilde{T}_t^{\U, \QP}\tilde{u}_c(\gamma)
 &=\lim_{c \to \infty} \int_{\U} \tilde{u}_c(\eta)p_t^{\U, \QP}(\gamma, \diff \eta) 
\\
&= \int_{\U} \tilde{u}(\eta)p_t^{\U, \QP}(\gamma, \diff \eta) \quad \gamma \in \tilde{\Theta} \fstop
\end{align*}
For a general $u \in L^1(\U, \QP)$, we decompose $u=u^+-u^-$ and apply the same argument to each $u^+$ and $u^-$, which proves  \eqref{e:VER}.
\end{proof}
\subsection{Wasserstein contraction} 
{Let $\Theta$ be the set in Assumption~\ref{a:AP} and 
$$\mcP_\Theta=\{\nu \in \mcP(\U): \nu(\Theta)=1\} \fstop$$
Let $\mathcal P_\QP(\U)$ be the subspace of~$\mathcal P(\U)$ consisting of probability measures absolutely continuous with respect to $\QP$. We write $\nu=\rho \cdot \mu$ for $\nu \in \mcP_\QP(\U)$ if $\rho=\frac{\diff \nu}{\diff \QP}$. 
Note that
 $$\mcP_\QP(\U) \subset \mcP_\Theta \quad \text{as well as}\quad \delta_{\gamma} \in \mcP_\Theta \quad \gamma \in \Theta \fstop$$
 For $\nu \in \mcP_\Theta$,  the dual semigroup ${\mathcal T}^{\U, \QP}_t \nu$ is defined as 
   \begin{align} \label{d:DS2}
 {\mathcal T}^{\U, \QP}_t \nu(A) := \int_{\Theta} \mathcal{T}_t^{\U, \QP}\delta_{\gamma}(A) \diff \nu(\gamma) \cquad A \in \msB(\U)  \fstop
 \end{align}
 If $\nu=\rho\cdot \QP \in \mcP_\QP(\U)$, it is easy to see
 \begin{align} \label{d:DS}
 {\mathcal T}^{\U, \QP}_t \nu= \bigl( {T}_t^{\U, \QP}\rho) \cdot \QP \cquad \nu=\rho\cdot\QP \in \mcP_\QP(\U)  \fstop
 \end{align}
 Indeed,  if $\rho \in L^2(\U, \QP)$, by the $L^2$-symmetry of $T_t^{\U, \QP}$ and $\QP(\Theta)=1$ we have 
    \begin{align*}
 {\mathcal T}^{\U, \QP}_t \nu(A) 
 &= \int_{\U} \mathcal{T}_t^{\U, \QP}\delta_{\gamma}(A) \rho(\gamma) \diff \QP(\gamma) 
 =  \int_{\U} {T}_t^{\U, \QP}\1_A(\gamma) \rho(\gamma) \diff \QP(\gamma) 
 \\
 &= \int_{\U} \1_A(\gamma) {T}_t^{\U, \QP} \rho(\gamma) \diff \QP(\gamma) = \bigl(\bigl( {T}_t^{\U, \QP}\rho) \cdot \QP\bigr)(A) \fstop
 \end{align*}
 The general case $\rho \in L^1(\U, \QP)$ follows by a standard truncation argument by constant cut-off with the $L^1$-contraction $\|T_t^{\U, \QP} \rho\|_{L^1} \le \|\rho\|_{L^1}$ due to~\eqref{e:con1}.
  Thanks to the conservativeness (mass-preservation) in Prop.~\ref{p:CES}, we have 
 $T_t^{\U, \QP}\1=\1$ for every $t>0$, hence, the dual semigroup maps $\mathcal T^{\U, \QP}_t: \mcP_\Theta \to \mcP(\U)$ to probability measures for every $t>0$.

\begin{thm}[Wasserstein contraction] \label{t:WC}
Suppose \ref{ass:CE}, ~\ref{ass:ConditionalClos} and  Assumption~\ref{a:AP}.  Then, for $1 \le p <\infty$, 
\begin{align}\label{e:WC}
\mssW_{p, \mssd_\U}\bigl(\mathcal T_t^{\U, \QP}\nu, \mathcal T_t^{\U, \QP}\sigma\bigr) \le e^{-Kt}\mssW_{p, \mssd_\U}(\nu, \sigma)  \cquad  t>0 \cquad  \nu, \sigma \in \mcP_\Theta \fstop
\end{align}
In particular, $\mcT_t^{\U, \QP}$ can be extended to the $\mssW_{p, \mssd_\U}$-completion and 
\begin{align}\label{e:WC1}
\mssW_{p, \mssd_\U}\bigl(\mathcal T_t^{\U, \QP}\nu, \mathcal T_t^{\U, \QP}\sigma\bigr) \le e^{-Kt}\mssW_{p, \mssd_\U}(\nu, \sigma)  \cquad  t>0 \cquad  \nu, \sigma \in \overline{\mcP_\Theta}^{\mssW_{p, \mssd_\U}} \fstop
\end{align}
\end{thm}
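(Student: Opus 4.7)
The plan is to prove \eqref{e:WC} first for Dirac masses $\nu=\delta_\gamma$, $\sigma=\delta_\zeta$ with $\gamma,\zeta\in\Theta$, and then to upgrade to general $\nu,\sigma\in\mcP_\Theta$ by a standard measurable selection and gluing argument. The case $\mssd_\U(\gamma,\zeta)=\infty$ is vacuous (the right-hand side is $+\infty$), so only $\mssd_\U(\gamma,\zeta)<\infty$ needs treatment. For the Dirac case, invoke Assumption~\ref{a:AP}\ref{AP-4} to choose $\gamma^{(k)},\zeta^{(k)}\in\U^k$ such that $\mcT_t^{\U^k,\QP^k}\delta_{\gamma^{(k)}}\xrightarrow{\tau_\mrmw}\mcT_t^{\U,\QP}\delta_\gamma$, the analogous convergence for $\zeta$, and $\limsup_k \mssd_\U(\gamma^{(k)},\zeta^{(k)})\le\mssd_\U(\gamma,\zeta)$. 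Since $(\U^k,\mssd_{\U^k},\mu^k)$ is $\RCD(K,\infty)$, the finite-dimensional heat semigroup satisfies the $\mssW_p$-contraction
$$\mssW_{p,\mssd_{\U^k}}\bigl(\mcT_t^{\U^k,\QP^k}\delta_{\gamma^{(k)}},\mcT_t^{\U^k,\QP^k}\delta_{\zeta^{(k)}}\bigr)\le e^{-Kt}\mssd_{\U^k}(\gamma^{(k)},\zeta^{(k)}) \quad p\in[1,\infty),$$
which follows from $\BE_2(K,\infty)$ combined with Savaré's self-improvement in infinitesimally Hilbertian spaces (giving $\BE_q(K,\infty)$ for all $q\in[1,\infty]$) and Kuwada duality between $\BE_q$ and $\mssW_p$-contraction ($1/p+1/q=1$).

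Next, pass to the limit $k\to\infty$. Since $\mssd_{\U^k}$ coincides with the restriction of $\mssd_\U$ to $\U^k\subset\U$ (so that $\mssW_{p,\mssd_{\U^k}}$ agrees with $\mssW_{p,\mssd_\U}$ on measures supported in $\U^k$), and since $\mssd_\U$ is $\tau_\mrmv^{\times 2}$-lower semicontinuous (Rem.~\ref{r:REW}\ref{r:REW4}), a standard argument (take a weakly convergent sequence of optimal couplings, use Prokhorov tightness from the $p$-moment bound, and apply Fatou to the lower semicontinuous cost $\mssd_\U^p$) yields $\tau_\mrmw\!\times\!\tau_\mrmw$-lower semicontinuity of $\mssW_{p,\mssd_\U}$. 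Applying this to the LHS and the assumed convergence $\limsup_k\mssd_\U(\gamma^{(k)},\zeta^{(k)})\le\mssd_\U(\gamma,\zeta)$ on the RHS yields
$$\mssW_{p,\mssd_\U}\bigl(\mcT_t^{\U,\QP}\delta_\gamma,\mcT_t^{\U,\QP}\delta_\zeta\bigr)\le e^{-Kt}\mssd_\U(\gamma,\zeta) \qquad \gamma,\zeta\in\Theta.$$

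For general $\nu,\sigma\in\mcP_\Theta$, one can assume $\mssW_{p,\mssd_\U}(\nu,\sigma)<\infty$, take an optimal coupling $q\in\mathrm{Opt}(\nu,\sigma)$ (whose existence follows from Cor.~\ref{p:EMW1} via the lower semicontinuity of $\mssd_\U^p$), apply a measurable selection theorem to find a $q$-measurable family $(\gamma,\zeta)\mapsto \pi_{\gamma,\zeta}\in\mathrm{Opt}(\mcT_t^{\U,\QP}\delta_\gamma,\mcT_t^{\U,\QP}\delta_\zeta)$ (the family of optimal couplings is a Borel multifunction with nonempty closed values, and one can appeal to Kuratowski–Ryll-Nardzewski as in the proof of Prop.~\ref{p:DE}), and form $\pi:=\int \pi_{\gamma,\zeta}\,dq(\gamma,\zeta)$. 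Then $\pi\in\mathrm{Cpl}(\mcT_t^{\U,\QP}\nu,\mcT_t^{\U,\QP}\sigma)$ by the definition~\eqref{d:DS2} of the dual semigroup, and
$$\mssW_{p,\mssd_\U}^p\bigl(\mcT_t^{\U,\QP}\nu,\mcT_t^{\U,\QP}\sigma\bigr)\le \int\mssd_\U^p\diff\pi=\int \mssW_{p,\mssd_\U}^p\bigl(\mcT_t^{\U,\QP}\delta_\gamma,\mcT_t^{\U,\QP}\delta_\zeta\bigr)\diff q\le e^{-pKt}\mssW_{p,\mssd_\U}^p(\nu,\sigma).$$
The extension~\eqref{e:WC1} to $\overline{\mcP_\Theta}^{\mssW_{p,\mssd_\U}}$ is immediate: $\mcT_t^{\U,\QP}$ is $e^{-Kt}$-Lipschitz on $\mcP_\Theta$ for the extended distance $\mssW_{p,\mssd_\U}$, so it extends uniquely by uniform continuity to the $\mssW_{p,\mssd_\U}$-completion, preserving the same Lipschitz constant.

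The main obstacle I expect is the passage from the finite-dimensional RCD spaces $\U^k$ to $\U$: while Assumption~\ref{a:AP}\ref{AP-4} directly provides weak convergence of heat kernels along $\gamma^{(k)},\zeta^{(k)}$, the contraction estimate in $\U^k$ is a bound for $\mssW_{p,\mssd_{\U^k}}$, and one needs to verify that this distance is compatible with $\mssW_{p,\mssd_\U}$ under the inclusion $\U^k\hookrightarrow\U$ (which holds since $\mssd_{\U^k}=\mssd_\U|_{\U^k\times\U^k}$) and that lower semicontinuity of $\mssW_{p,\mssd_\U}$ under $\tau_\mrmw$ survives even though $\mssd_\U$ is an \emph{extended} distance whose topology does not match $\tau_\mrmv$. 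Both issues are handled by the witnessing family $(\mssd_\U^{(r)})_{r>0}$ of $\tau_\mrmv^{\times 2}$-continuous pseudo-distances from Prop.~\ref{p:PDF}, which realises $\mssd_\U$ as a monotone supremum and thus transfers lower semicontinuity to $\mssW_{p,\mssd_\U}$.
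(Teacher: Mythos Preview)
Your proposal is correct and follows essentially the same route as the paper: first establish the Dirac-to-Dirac contraction by passing to the finite-dimensional $\RCD$ approximants via Assumption~\ref{a:AP}\ref{AP-4}, using the $\tau_\mrmw$-lower semicontinuity of $\mssW_{p,\mssd_\U}$ (the paper invokes Cor.~\ref{p:EMW1} for this, and cites \cite[Thm.~4.4]{Sav14} for the $p$-contraction on $\RCD$ spaces rather than unwinding the Kuwada duality), then lift to general $\nu,\sigma\in\mcP_\Theta$ by integrating a family of pointwise optimal couplings against an optimal coupling of $\nu,\sigma$. The only cosmetic difference is that the paper does not spell out the measurable selection of the family $(\gamma,\zeta)\mapsto\pi_{\gamma,\zeta}$, whereas you do; your extra care there is justified and harmless.
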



\begin{proof}
Take $\gamma, \eta \in \Theta$ with $\mssd_\U(\gamma, \eta)<\infty$ and $\gamma^{(k)}, \eta^{(k)} \in \U^k$ witnessed by \ref{AP-4} in Assumption~\ref{a:AP}.
Due to~Cor.~\ref{p:EMW1}, $\mssW_{p, \mssd_\U}$ is $\tau_\mrmv^{\times 2}$-lower semi-continuous. Thus,   
\begin{align}\label{WCP}
\mssW_{p, \mssd_\U}\bigl(\mathcal T_t^{\U, \QP}\delta_\gamma, \mathcal T_t^{\U, \QP}\delta_{\eta}\bigr) 
&\le \liminf_{k \to \infty} \mssW_{p, \mssd_\U}\bigl(\mathcal T_t^{\U^k, \QP^k}\delta_{\gamma^{(k)}}, \mathcal T_t^{\U^k, \QP^k}\delta_{\eta^{(k)}}\bigr) 
\\
&=\liminf_{k \to \infty} \mssW_{p, \mssd_{\U^k}}\bigl(\mathcal T_t^{\U^k, \QP^k}\delta_{\gamma^{(k)}}, \mathcal T_t^{\U^k, \QP^k}\delta_{\eta^{(k)}}\bigr) \notag
\\
&\le  e^{-Kt} \liminf_{k \to \infty} \mssd_{\U}\bigl({\gamma^{(k)}}, {\eta^{(k)}}\bigr) \notag
\\
&\le  e^{-Kt}\mssd_{\U}\bigl({\gamma}, {\eta}\bigr) \cquad \gamma, \eta \in \Theta \comma \notag
\end{align}
where the second line follows by the fact that the measure $\mathcal T_t^{\U^k, \QP^k}\delta_{\gamma_r}$ and $\mathcal T_t^{\U^k, \QP^k}\delta_{\eta_r}$ are supported in $\U^k$ due to the conservativeness~\eqref{d:MP}, thus, $\mssW_{p, \mssd_\U}$ coincides with $\mssW_{p, \mssd_{\U^k}}$ for these measures; the third line follows by the $p$-Wasserstein contraction for the heat flow on $(\U^k, \mssd_{\U^k}, \QP^k)$ due to~\ref{AP-4} in Assumption~\ref{a:AP} (and see \cite[Thm.~4.4]{Sav14} for the implication from the $\RCD$ condition to the $p$-Wasserstein contraction).
Let $\mathbf c$ be an optimal coupling of $\nu$ and $\sigma$, and $\mathbf c_{\gamma, \eta}$ be an optimal coupling for $\mathcal T_t^{\U, \QP}\delta_\gamma$ and $\mathcal T_t^{\U, \QP}\delta_{\eta}$. 
Set $\mathbf c_0:=\int_{\Theta \times \Theta} \mathbf c_{\gamma, \eta} \diff \mathbf c(\gamma, \eta) \in \mathcal P(\U\times \U)$, which is an admissible coupling of $\mathcal T_t^{\U, \QP}\nu$ and $\mathcal T_t^{\U, \QP}\sigma$, i.e., the marginals of $\mathbf c_0$ are $\mathcal T_t^{\U, \QP}\nu$ and $\mathcal T_t^{\U, \QP}\sigma$. The inequality~\eqref{WCP} leads 
\begin{align*}
\mssW_{p, \mssd_\U}\bigl(\mathcal T_t^{\U, \QP}\nu, \mathcal T_t^{\U, \QP}\sigma\bigr)^p
&\le \int_{\U \times \U} \mssd_{\U}(\gamma, \eta)^p \diff \mathbf c_0(\gamma, \eta) 
\\
&= \int_{\Theta \times \Theta} \mssW_{p, \mssd_\U}\bigl(\mathcal T_t^{\U, \QP}\delta_\gamma, \mathcal T_t^{\U, \QP}\delta_{\eta}\bigr)^p\diff \mathbf c(\gamma, \eta)
\\
& \le e^{-pKt}\ \int_{\Theta \times \Theta} \mssd_{\U}\bigl({\gamma}, {\eta}\bigr)^p \diff \mathbf c(\gamma, \eta)
=e^{-pKt} \mssW_{p, \mssd_\U}(\nu, \sigma)^p \fstop 
\end{align*}
The latter statement follows as the contraction property extends to the completion.
\end{proof}
}

\subsection{log-Harnack inequality}

\begin{thm}\label{t:DFH}
Suppose \ref{ass:CE}, ~\ref{ass:ConditionalClos} and  Assumption~\ref{a:AP}.  Then
\begin{enumerate}[{\rm (a)}]
\item \label{DFH-1} $(${\bf  log-Harnack inequality}$)$ for every non-negative $u \in L^1(\U, \QP)$ and $t>0$, there exists $\Omega \subset \U$ with $\QP(\Omega)=1$ such that
\begin{align} \label{i:LH1}
T^{\U, \QP}_t\log u(\gamma) \le \log (T^{\U, \QP}_tu(\eta)) + \frac{{\mssd}_\U(\gamma, \eta)^2}{4I_{2K}(t)}\comma \quad \text{$\gamma, \eta \in \Omega$} \comma
\end{align}
where $I_K(t) :=\int_0^t e^{Kr}\diff r=\frac{e^{Kt}-1}{K}$. 
Furthermore, for every non-negative $u \in \mathcal B_b(\U)$ and $t>0$, 
\begin{align} \label{i:LH2}
T^{\U, \QP}_t\log u(\gamma) \le \log (T^{\U, \QP}_tu(\eta)) + \frac{{\mssd}_\U(\gamma, \eta)^2}{4I_{2K}(t)}\comma \quad \text{$\gamma, \eta \in \Theta$} \comma
\end{align}
where $\Theta$ was given in Assumption~\ref{a:AP};
\item  \label{DFH-2}  $(${\bf  dimension-free Harnack inequality}$)$ for every non-negative $u \in L^\infty(\U, \QP)$, $t>0$ and $\alpha>1$, there exists $\Omega \subset \U$ with $\QP(\Omega)=1$ such that 
$$(T^{\U, \QP}_tu)^\alpha(\gamma)\le T^{\U, \QP}_tu^\alpha(\eta) \exp\Bigl\{ \frac{\alpha}{4(\alpha-1)I_{2K}}{\mssd}_\U(\gamma, \eta)^2\Bigr\} \comma \quad \gamma, \eta \in \Omega \fstop$$
\end{enumerate}
\end{thm}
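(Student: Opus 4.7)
The strategy is to transfer the log-Harnack and dimension-free Harnack inequalities from the finite-dimensional $\RCD(K,\infty)$ spaces $(\U^k,\mssd_{\U^k},\mu^k)$ given by Assumption~\ref{a:AP}\ref{AP-4} up to the limit object via the weak convergence of heat kernels~\ref{AP-4}. On each $\RCD(K,\infty)$ space, Wang's log-Harnack and the dimension-free Harnack inequalities are known to hold (see e.g.\ \cite[Thm.~1.1]{Wan10}-type statements transposed to the $\RCD$ setting, which follow from $\BE_2(K,\infty)$ combined with the Riemannian structure and the $\mssW_2$-contraction of the heat flow). Concretely, for every bounded continuous $u\colon \U^k\to\R$ with $u\ge c>0$, every $t>0$, and every $\gamma^{(k)},\eta^{(k)}\in \U^k$,
\[
T_t^{\U^k,\QP^k}\log u(\gamma^{(k)}) \le \log\bigl(T_t^{\U^k,\QP^k}u(\eta^{(k)})\bigr) + \frac{\mssd_{\U^k}(\gamma^{(k)},\eta^{(k)})^2}{4I_{2K}(t)}\fstop
\]

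\textbf{Step 1: Pointwise inequality on $\Theta$ for nice $u$.} Fix $\gamma,\eta\in \Theta$ with $\mssd_\U(\gamma,\eta)<\infty$, and let $\gamma^{(k)},\eta^{(k)}\in \U^k$ be the approximating configurations furnished by Assumption~\ref{a:AP}\ref{AP-4}. For $u\in \mcC_b(\U,\tau_\mrmv)$ with $u\ge c>0$, both $u$ and $\log u$ are bounded $\tau_\mrmv$-continuous. The weak convergence $p_t^{\U^k,\QP^k}(\gamma^{(k)},\diff\zeta)\xrightarrow{\tau_\mrmw} p_t^{\U,\QP}(\gamma,\diff\zeta)$ (and analogously at $\eta$) yields
\[
T_t^{\U^k,\QP^k}\log u(\gamma^{(k)})\to T_t^{\U,\QP}\log u(\gamma)\fstop\quad T_t^{\U^k,\QP^k}u(\eta^{(k)})\to T_t^{\U,\QP}u(\eta)\fstop
\]
Combined with $\limsup_k \mssd_\U(\gamma^{(k)},\eta^{(k)})\le \mssd_\U(\gamma,\eta)$ and the continuity of $\log$ on $(0,\infty)$, passing to the limit in the finite-dimensional log-Harnack gives \eqref{i:LH2} for such $u$. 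If $\mssd_\U(\gamma,\eta)=+\infty$, the inequality is trivial.

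\textbf{Step 2: Extension to bounded Borel $u\ge 0$ in \ref{DFH-1}.} First upgrade to all bounded Borel $u$ with $u\ge c>0$: by Lusin's theorem applied to the Radon probability measures $p_t^{\U,\QP}(\gamma,\cdot)$ and $p_t^{\U,\QP}(\eta,\cdot)$ on the Polish space $(\U,\tau_\mrmv)$, any such $u$ is a pointwise-a.e.\ bounded limit of functions in $\mcC_b(\U,\tau_\mrmv)\cap[c,\|u\|_\infty]$, and dominated convergence applies to both $T_t\log u$ and $T_t u$. This yields \eqref{i:LH2} on all of $\Theta$ (which has $\QP$-full measure) for non-negative bounded Borel $u$ bounded below by $c>0$. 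For general non-negative bounded $u\in \mcB_b(\U)$, apply the inequality to $u+\eps$ and let $\eps\downarrow 0$: $\log(T_t(u+\eps))(\eta)\to\log T_tu(\eta)$ whenever $T_tu(\eta)>0$, while $T_t\log(u+\eps)(\gamma)\to T_t\log u(\gamma)$ by monotone convergence (the sequence $\log(u+\eps)$ increases as $\eps\downarrow 0$ for $u>0$; on $\{u=0\}$ the logarithm diverges to $-\infty$ so the bound remains valid). This proves \eqref{i:LH2}.

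\textbf{Step 3: From pointwise on $\Theta$ to $\QP$-a.e.\ for $u\in L^1(\U,\QP)$.} Pick a Borel representative and set $u_n:=u\wedge n\in \mcB_b(\U)$. Apply Step~2 to each $u_n$ on $\Theta$. On the $L^1$ side, $T_t^{\U,\QP}u_n\to T_t^{\U,\QP}u$ in $L^1(\QP)$ by the $L^1$-contraction \eqref{e:con1}, hence $\QP$-a.e.\ along a subsequence; on the $\log$ side, $T_t^{\U,\QP}\log u_n\to T_t^{\U,\QP}\log u$ by monotone/dominated convergence in $L^1(\QP)$ after splitting into positive and negative parts and invoking $\log^+ u\le u\in L^1(\QP)$. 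Restricting to the $\QP$-full set on which both subsequential convergences hold gives the set $\Omega\subset\Theta$ of \eqref{i:LH1}.

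\textbf{Step 4: Dimension-free Harnack \ref{DFH-2}.} Proceed identically to Steps~1--2 but starting from the dimension-free Harnack inequality on the $\RCD(K,\infty)$ spaces $\U^k$ (which is equivalent to $\BE_2(K,\infty)$ together with the $\mssW_2$-contraction, see \cite{Wan11}-type arguments in the $\RCD$ framework). For $u\in \mcC_b(\U,\tau_\mrmv)$ with $u\ge c>0$, apply
\[
(T_t^{\U^k,\QP^k}u)^\alpha(\gamma^{(k)})\le T_t^{\U^k,\QP^k}u^\alpha(\eta^{(k)})\exp\Bigl\{\frac{\alpha}{4(\alpha-1)I_{2K}(t)}\mssd_{\U^k}(\gamma^{(k)},\eta^{(k)})^2\Bigr\}
\]
and pass to the limit using weak convergence and the distance limsup bound. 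Extend to bounded Borel $u\ge 0$ by Lusin approximation as in Step~2 (no logarithmic singularity here, so no $\eps$-shift is needed), and use the $L^1$-density argument of Step~3 to identify a $\QP$-full set $\Omega$.

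\textbf{Main obstacle.} The delicate point is the passage to the limit in the logarithm: $\log$ is continuous only on $(0,\infty)$ and is unbounded near $0$, while the approximating finite-dimensional heat kernels $p_t^{\U^k,\QP^k}(\gamma^{(k)},\cdot)$ may concentrate on different parts of $\U$ than the limit kernel. This is why the argument is carried out first for $u$ uniformly bounded below by a positive constant and then passed to the general case via monotone approximation in $\eps$. The control on $\limsup_k\mssd_\U(\gamma^{(k)},\eta^{(k)})$ from Assumption~\ref{a:AP}\ref{AP-4} is essential for the constant in the exponent to survive the limit.
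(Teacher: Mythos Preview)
Your proposal is correct and follows essentially the same approach as the paper: lift the log-Harnack and dimension-free Harnack inequalities from the finite-dimensional $\RCD(K,\infty)$ spaces $(\U^k,\mssd_{\U^k},\mu^k)$ via the weak convergence of heat kernels in Assumption~\ref{a:AP}\ref{AP-4}, using an $\epsilon$-shift to tame the singularity of $\log$ near $0$. The paper differs only in organization: it writes the $\epsilon$-truncated inequality $T_t\log(u+\epsilon)\le\log(T_tu+\epsilon)+C$ from the outset (note $T_t(u+\epsilon)=T_tu+\epsilon$ by conservativeness), carries this through the extension from $\Cb$ to $L^1$ and to $\mcB_b$, and only sends $\epsilon\downarrow 0$ at the very end; keeping $\epsilon$ makes the $L^1$-density step cleaner since $x\mapsto\log(x+\epsilon)$ is globally Lipschitz with constant $1/\epsilon$. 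For part~\ref{DFH-2} the paper cites \cite[Thm.~3.1]{Li15} for the dimension-free Harnack on $\RCD$ spaces and then runs the identical limit argument. One small slip in your Step~2: as $\epsilon\downarrow 0$ the sequence $\log(u+\epsilon)$ \emph{decreases}, not increases; monotone convergence still applies since it is bounded above by $\log(u+1)\le u$.
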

\begin{proof}
\ref{DFH-1} We first prove the following $\e$-truncated version:
\begin{align} \label{i:LH11}
T^{\U, \QP}_t\log (u+\e)(\gamma) \le \log (T^{\U, \QP}_tu(\eta)+\e) + \frac{{\mssd}_\U(\gamma, \eta)^2}{4I_{2K}(t)} \cquad \e>0 \fstop
\end{align}
Take $\gamma, \eta \in \Theta$ with $\mssd_\U(\gamma, \eta)<\infty$ and $\gamma^{(k)}, \eta^{(k)} \in \U^k$ witnessed by \ref{AP-4} in Assumption~\ref{a:AP}.
By the $\RCD$ condition~in~\ref{AP-4}  in Assumption~\ref{a:AP}, we have
\begin{align*}
T^{\U^k, \QP^k}_t\log (u+\e)(\gamma^{(k)}) \le \log (T^{\U^k, \QP^k}_tu(\eta^{(k)})+\e) + \frac{{\mssd}_{\U^k}(\gamma^{(k)}, \eta^{(k)})^2}{4I_{2K}(t)}\comma \quad u \in \Cb(\U) \ ;
\end{align*}
for every $\e>0$  (see \cite[Lem.~4.6]{AmbGigSav15}), where we regard $u \in \Cb(\U)$ as an element in $\Cb(\U^k)$ by the restriction $u|_{\U^k}$. Furthermore, by~\ref{AP-4}  in Assumption~\ref{a:AP}, for every $u \in \Cb(\U)$
\begin{align*}
&T^{\U^k, \QP^k}_t\log (u+\e)(\gamma^{(k)}) \xrightarrow{k \to \infty} T^{\U, \QP}_t\log (u+\e)(\gamma) \comma
\\
& \log (T^{\U^k, \QP^k}_tu(\eta^{(k)})+\e) \xrightarrow{k \to \infty} \log (T^{\U, \QP}_tu(\eta)+\e) \comma
\\
& \limsup_{k \to \infty} \mssd_\U(\gamma^{(k)}, \eta^{(k)}) \le  \mssd_\U(\gamma, \eta)  \fstop
\end{align*}
Thus, the sought inequality \eqref{i:LH11} was proven  for $u \in \Cb(\U)$ and $\Omega=\Theta$. The case for $u \in L^1(\U, \QP)$ in \eqref{i:LH1}  can be readily proven by the density of $\Cb(\U)$ in $L^1(\U, \QP)$ and the $L^1(\U, \QP)$-contraction property~\eqref{e:con1} of $T^{\U, \QP}_t$.  The case $u \in \mathcal B_b(\U)$ in \eqref{i:LH2} follows by the expression~\eqref{e:IKP} and the fact that $\mathcal B_b(\U)$ can be approximated by $\Cb(\U)$ in $L^1\bigl(\U, p^{\U, \QP}_t(\gamma, \diff \eta)\bigr)$ for every $\gamma \in \Theta$ and $t>0$ because $p^{\U, \QP}_t(\gamma, \diff \eta)$ is a Borel probability measure on $\U$, $\mcC_b(\U)$ is dense in $L^1\bigl(\U, p^{\U, \QP}_t(\gamma, \diff \eta)\bigr)$ and $\mcB_b(\U) \subset L^1\bigl(\U, p^{\U, \QP}_t(\gamma, \diff \eta)\bigr)$. Finally, we get the case $\e=0$ by passing to the limit $\e \downarrow 0$ with the integral expression~\eqref{e:IKP} for \eqref{i:LH2} and the expression~\eqref{e:EIK} for \eqref{i:LH1} respectively.

\ref{DFH-2}  Due to~\cite[Thm.~3.1]{Li15}, the dimension-free Harnack inequality holds for $\RCD(K,\infty)$ spaces. Thus, for each $k$, it holds for $(\U^k, \mssd_{\U^k}, \QP^k)$ with the semigroup~$T_t^{\U^k, \QP^k}$. Letting $k \to \infty $ by a similar argument to~\ref{DFH-1}, the proof is complete. 
\end{proof}

As a corollary of Thm.~\ref{t:DFH}, we have the following regularisation estimate. The proof is an adaptation of \cite[Lem.~5.3, 5.4]{ErbHue15}, which discussed the case of the Poisson measure~$\QP$. 
\begin{cor}[$L\log L$-regularisation] \ \label{c:LLL}
\begin{enumerate}[(a)]
\item \label{c:LLL1} For every~$\nu \in \mathcal P_e(\U)$  and $\sigma \in \dom{\Ent_{\QP}}$,  we have $\mcT_t^{\U, \QP}\nu \in \dom{\Ent_\QP}$ and 
\begin{align} \label{e:LLOL}
\Ent_{\QP}(\mathcal T_t^{\U, \QP}\nu) \le \Ent_{\QP}(\sigma) +\frac{1}{4I_{2K}(t)}{\mssW}_{2, \mssd_{\U}}(\nu, \sigma)^2 \cquad t>0\fstop
\end{align} 
\item \label{c:LLL2} For every~$\nu \in \mathcal P_e(\U)$ and $t>0$, 
\begin{align} \label{c:LLL3} 
\mssW_{2, \mssd_\U}(\mcT_t^{\U, \QP}\nu, \nu)<+\infty \cquad \mssW_{2, \mssd_\U}(\mcT_t^{\U, \QP}\nu, \nu) \xrightarrow{t \to 0}0 \fstop
\end{align}
\end{enumerate}
\end{cor}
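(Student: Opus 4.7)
\emph{Part (a) — log-Harnack, Legendre--Fenchel duality, optimal coupling.} The plan is to combine the log-Harnack inequality from Thm.~\ref{t:DFH} with the Legendre--Fenchel duality formula for the relative entropy,
\begin{align*}
\Ent_\QP(\mu) = \sup\Bigl\{\int \phi \diff \mu - \int e^{\phi-1}\diff \QP : \phi \in \mcB_b(\U)\Bigr\}.
\end{align*}
First we observe that $\mcP_e(\U)\subset \mcP_\Theta$: any admissible coupling of finite $\mssW_{2,\mssd_\U}$-cost between $\nu\in\mcP_e(\U)$ and $\sigma\in\dom{\Ent_\QP}$ must concentrate on $\{\mssd_\U<\infty\}$, and since $\sigma(\Theta)=1$, Assumption~\ref{a:AP}~\ref{AP-11} forces it to be supported in $\Theta\times\Theta$; thus $\nu(\Theta)=1$ and $\mcT_t^{\U,\QP}\nu$ is well-defined via~\eqref{d:DS2}. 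Next, for any $\phi\in \mcB_b(\U)$ apply the pointwise log-Harnack~\eqref{i:LH2} with $u=e^\phi$ and integrate against an optimal coupling $\mathbf c\in\Opt(\nu,\sigma)$; using the duality $\int T_t^{\U,\QP}\phi \diff \nu = \int \phi\diff\mcT_t^{\U,\QP}\nu$ (a direct consequence of~\eqref{d:DS2} and~\eqref{e:IKP}) yields
\begin{align*}
\int \phi\diff\mcT_t^{\U,\QP}\nu \le \int \log T_t^{\U,\QP}e^\phi\diff\sigma + \frac{\mssW_{2,\mssd_\U}(\nu,\sigma)^2}{4I_{2K}(t)}.
\end{align*}

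The key identity is then: setting $\psi:=\log T_t^{\U,\QP}e^\phi$, the $\QP$-invariance $\int T_t^{\U,\QP}e^\phi\diff\QP=\int e^\phi\diff\QP$ (from Prop.~\ref{p:CES} and the $\QP$-symmetry of $T_t^{\U,\QP}$) gives $\int e^{\psi-1}\diff\QP = \int e^{\phi-1}\diff\QP$; applying the Legendre--Fenchel inequality to $\sigma$ with test function $\psi$ yields $\int \log T_t^{\U,\QP}e^\phi\diff\sigma - \int e^{\phi-1}\diff\QP \le \Ent_\QP(\sigma)$. Combining with the previous display and taking the supremum over $\phi$ via the variational formula for $\Ent_\QP(\mcT_t^{\U,\QP}\nu)$ delivers~\eqref{e:LLOL}; in particular $\mcT_t^{\U,\QP}\nu\in\dom{\Ent_\QP}$.

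\emph{Part (b) — contraction plus short-time regularity via $\U^k$-approximation.} Given $\nu\in\mcP_e(\U)$, pick $\sigma\in\dom{\Ent_\QP}$ with $\mssW_{2,\mssd_\U}(\nu,\sigma)<\infty$. The contraction of Thm.~\ref{t:WC} yields $\mssW_{2,\mssd_\U}(\mcT_t^{\U,\QP}\nu,\mcT_t^{\U,\QP}\sigma)\le e^{-Kt}\mssW_{2,\mssd_\U}(\nu,\sigma)$, which together with the triangle inequality reduces the finiteness in~\eqref{c:LLL3} to $\mssW_{2,\mssd_\U}(\mcT_t^{\U,\QP}\sigma,\sigma)<\infty$ for $\sigma\in\dom{\Ent_\QP}$. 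The vanishing as $t\to 0$ then follows by combining (i) the $\mssW_{2,\mssd_\U}$-density of $\dom{\Ent_\QP}$ in $\mcP_e(\U)$ from Prop.~\ref{p:DE}, (ii) the contraction of Thm.~\ref{t:WC}, and (iii) the short-time regularity $\lim_{t\to 0}\mssW_{2,\mssd_\U}(\mcT_t^{\U,\QP}\sigma,\sigma)=0$ for $\sigma$ in a dense subclass of $\dom{\Ent_\QP}$. Ingredient (iii) is obtained at the finite-dimensional level $(\U^k,\mssd_{\U^k},\QP^k)$ — where it is standard from the $\RCD(K,\infty)$ property — and then propagated to the infinite-dimensional limit via the weak convergence of heat kernels granted by Assumption~\ref{a:AP}~\ref{AP-4} combined with the $\tau_\mrmw$-lower semicontinuity of $\mssW_{2,\mssd_\U}$ from Cor.~\ref{p:EMW1}.

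The main obstacle is ingredient (iii) for part (b): because $(\U,\mssd_\U)$ is only an extended metric space with $\mssd_\U$ not metrising $\tau_\mrmv$, weak convergence of probabilities does not imply $\mssW_{2,\mssd_\U}$-convergence, so one must additionally secure a uniform $\mssd_\U^2$-moment estimate for $\mcT_t^{\U^k,\QP^k}\sigma^{(k)}$ as $k\to\infty$ and $t\to 0$, controlling how mass may escape to $\mssd_\U$-infinity along the approximation.
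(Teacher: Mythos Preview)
Your argument is correct and is in fact a bit cleaner than the paper's. The paper first treats the special case $\nu=f_1\cdot\QP\in\dom{\Ent_\QP}$: it applies the log-Harnack inequality with $u=T_t^{\U,\QP}f_1$, integrates against an optimal coupling, and then uses Jensen's inequality (rather than duality) to bound $\int \log T_{2t}^{\U,\QP}f_1\,\diff\sigma\le \Ent_\QP(\sigma)$. For general $\nu\in\mcP_e(\U)$ it then invokes the density result Prop.~\ref{p:DE} and the Wasserstein contraction of Thm.~\ref{t:WC} to approximate. Your route via the Legendre--Fenchel formula with $u=e^\phi$ handles arbitrary $\nu\in\mcP_e(\U)$ directly and avoids the approximation step; the trade-off is that you need the observation $\mcP_e(\U)\subset\mcP_\Theta$ up front (which you justify correctly).

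\textbf{Part (b).} Here there is a genuine gap, essentially the one you flag yourself. The finite-dimensional approximation in Assumption~\ref{a:AP}\ref{AP-4} furnishes, for each $\gamma\in\Theta$, $k$-particle points $\gamma^{(k)}\in\U^k$ with weak convergence of heat kernels. But any $\sigma\in\dom{\Ent_\QP}$ is supported on \emph{infinite} configurations, while $\sigma^{(k)}$ (however you construct it) would be supported on $\U^k$; hence $\mssd_\U(\gamma^{(k)},\gamma)=+\infty$ for $\QP$-a.e.\ $\gamma$ and $\mssW_{2,\mssd_\U}(\sigma^{(k)},\sigma)=+\infty$. Lower semicontinuity of $\mssW_{2,\mssd_\U}$ under weak convergence cannot help, because you would need an \emph{upper} bound passing to the limit, and even a pointwise estimate $\mssW_{2,\mssd_\U}(\mcT_t^{\U^k,\QP^k}\delta_{\gamma^{(k)}},\delta_{\gamma^{(k)}})\to 0$ is of no use without uniformity in $k$, which Assumption~\ref{a:AP} does not provide.

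The paper avoids this entirely by working \emph{directly on the infinite-dimensional space}. The key input is the identification $\Ch^{\mssd_\U,\QP}=\E^{\U,\QP}$ of Thm.~\ref{t:E=Ch}: combined with \cite[Lem.~6.1]{AmbGigSav14} it yields the Kuwada-type bound $|\dot\nu_t|^2\le \Fis_\QP(\nu_t)$ for the metric speed of $t\mapsto\nu_t=\mcT_t^{\U,\QP}\nu$. Integrating and using the energy dissipation inequality $\int_0^t\Fis_\QP(\nu_s)\,\diff s\le 2\Ent_\QP(\nu)$ gives the explicit quantitative estimate
\[
\mssW_{2,\mssd_\U}(\nu_t,\nu)\le 2\sqrt{t}\,\Ent_\QP(\nu)^{1/2}\qquad\text{for }\nu\in\dom{\Ent_\QP},
\]
from which both finiteness and the limit $t\to 0$ follow immediately for such $\nu$. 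The extension to general $\nu\in\mcP_e(\U)$ then proceeds exactly as you outline in your steps (i)--(ii), via Prop.~\ref{p:DE} and the contraction of Thm.~\ref{t:WC}. So the missing ingredient (iii) is supplied not by finite-particle approximation but by the Cheeger--Dirichlet identification on $\U$ itself.
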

\begin{proof} 
\ref{c:LLL1} We first prove the case $\nu=f_1 \cdot \mu \in \dom{\Ent_\QP}$ and $\sigma=f_2\cdot\mu \in \dom{\Ent_\QP}$. We may assume $\mssW_{2, \mssd_\U}(\nu, \sigma)<\infty$, otherwise there is nothing to prove. 
Applying \eqref{i:LH1} with $u=T_t^{\U, \QP}f_1$ and then integrating against an optimal coupling $q \in {\rm Opt}(\nu, \sigma)$, we have 
\begin{align*}
\Ent_\QP(T_t^{\U, \QP} \nu)&=\int_\U T^{\U, \QP}_tf_1\log T_t^{\U, \QP}f_1(\gamma)\diff \mu(\gamma)
\\
&= \int_\U \bigl(T^{\U, \QP}_t\log T_t^{\U, \QP}f_1(\gamma)\bigr) f_1\diff \mu(\gamma) 
\\
&\le \int_\U  \log (T^{\U, \QP}_{2t}f_1(\eta)) \diff \sigma(\eta)+ \frac{\mssW_{2,{\mssd}_\U}(\nu,  \sigma)^2}{4I_{2K}(t)}\comma
\end{align*}
where the second equality follows by the $L^2$-symmetry of $T_t^{\U, \QP}$. By Jensen's inequality and the fact that $\int_\U T_{2t}^{\U, \QP}f_1\diff \QP=1$, 
\begin{align*}
\int_\U \log T_{2t}^{\U, \QP}f_1 \diff \sigma &= \int_\U \log \frac{T_{2t}^{\U, \QP}f_1}{f_2} \diff \sigma + \int_\U \log f_2 \diff \sigma
\\
& \le \log\biggl( \int_\U \frac{T_{2t}^{\U, \QP}f_1}{f_2}  \diff \sigma \biggr) + \int_\U f_2\log f_2 \diff \mu
\\
& =\log\biggl( \int_\U T_{2t}^{\U, \QP}f_1 \diff \mu \biggr) + \Ent_\QP(\sigma)= \Ent_\QP(\sigma) \comma
\end{align*}
which completes the proof.

For the general case $\nu \in \mcP_e(\U)$, thanks to Prop.~\ref{p:DE}, we can take an approximation $\nu^{\e} \in \dom{\Ent_\QP}$ converging to $\nu$ in $\mssW_{2, \mssd_\U}$ as $\e \to 0$. Let $\nu_t^\e:=\mcT_t^{\U, \QP}\nu^\e \in \dom{\Ent_\QP}$. 
Then, as we proved above, we have 
\begin{align*}
\Ent_{\QP}(\nu_t^\e) \le \Ent_{\QP}(\sigma) +\frac{1}{4I_{2K}(t)}{\mssW}_{2, \mssd_{\U}}(\nu^\e, \sigma)^2 \cquad t>0 \quad \sigma \in \dom{\Ent_\QP} \fstop
\end{align*}
As $\nu \in \mcP_e(\U)$, we can take $\sigma\in \dom{\Ent_\QP}$ such that $\mssW_{2, \mssd_\U}(\nu, \sigma)<\infty$. As $\mssW_{2, \mssd_\U}(\nu^\e, \nu) \to 0$, we have that $\lim_{\e \to 0} \mssW_{2, \mssd_\U}(\nu^\e, \sigma)^2=\mssW_{2, \mssd_\U}(\nu, \sigma)^2<+\infty$. 
Furthermore, by the Wasserstein contraction~\eqref{e:WC} and the fact that the $\mssW_{2, \mssd_\U}$-topology is stronger than the weak topology~$\tau_\mrmw$ due to Cor.~\ref{p:EMW1} and \ref{r:REW3} in Rem.~\ref{r:REW}, we have $\nu_t^\e \xrightarrow{\e \to 0} \nu_t$ weakly. By the $\tau_\mrmw$-lower semi-continuity of $\Ent_\QP$, 
\begin{align*}
\Ent_\QP(\nu_t) & \le \liminf_{\e \to 0} \Ent_\QP(\nu_t^\e)  \le \Ent_{\QP}(\sigma) +\frac{1}{4I_{2K}(t)}{\mssW}_{2, \mssd_{\U}}(\nu, \sigma)^2<+\infty \fstop
\end{align*}
Thus, $\nu_t = \mcT_t^{\U, \QP}\nu \in \dom{\Ent_\QP}$ and we obtain the inequality~\eqref{e:LLOL}.

\ref{c:LLL2} We first prove the case $\nu=f\cdot \mu \in \dom{\Ent_\QP}$. Let $\nu_t:=\mcT_t^{\U, \QP}\nu$. By~\eqref{e:IKP}, Jensen's inequality and the invariance $\int_\U T_t^{\U, \QP}f \diff \QP=\int_\U f \diff \QP$ (see \eqref{e:inv1}), the dual semigroup~$\mcT_t^{\U, \QP}$ contracts $\Ent_\QP$:
\begin{align} \label{e:CEH}
\Ent_\QP(\nu_t) &\le \int_\U \int_\U f(\eta) \log f(\eta) p_t^{\U, \QP}(\gamma, \diff \eta) \diff \QP =   \int_\U T_t^{\U, \QP} (f\log f) \diff \QP 
\\
&= \int_\U f \log f \diff \QP = \Ent_\QP(\nu) \fstop \notag
\end{align}
Due to~\cite[Lem.~6.1]{AmbGigSav14} and Thm.~\ref{t:E=Ch}, we have $|\dot{\nu}_t|^2 \le \Fis_\QP(\nu_t)$, where $|\dot{\nu}_t|$ is the metric speed of $t \mapsto \nu_t \in \mcP(\U)$ with respect to $\mssW_{2, \mssd_\U}$, see~\eqref{d:MD}. Thus, 
\begin{align} \label{e:WIE}
\mssW_{2, \mssd_\U}(\nu_t, \nu) \le \int_0^t |\dot{\nu}_s| \diff s \le \sqrt{t} \biggl(\int_0^t \Fis_\QP(\nu_s) \diff s\biggr)^{1/2} \le 2\sqrt{t} \Ent_\QP(\nu)^{1/2} \comma
\end{align}
where the last inequality follows by e.g.,~\cite[Lem.~5.2]{ErbHue15}. 
Thus, $\mssW_{2, \mssd_\U}(\nu_t, \nu) \xrightarrow{t \to 0}0$.

For the general case $\nu \in \mcP_e(\U)$, take an approximation $\nu^{\e} \in \dom{\Ent_\QP}$ converging to $\nu$ in $\mssW_{2, \mssd_\U}$ as $\e \to 0$. Let $\nu_t^\e:=\mcT_t^{\U, \QP}\nu^\e \in \dom{\Ent_\QP}$. 
By the Wasserstein contraction~\eqref{e:WC}, 
\begin{align*}
\mssW_{2, \mssd_\U}(\nu_t, \nu) &\le \mssW_{2, \mssd_\U}(\nu_t, \nu_t^\e)+\mssW_{2, \mssd_\U}(\nu_t^\e, \nu^\e)+ \mssW_{2, \mssd_\U}(\nu^\e, \nu)
\\
& \le (1+e^{-Kt})\mssW_{2, \mssd_\U}(\nu, \nu^\e)+\mssW_{2, \mssd_\U}(\nu_t^\e, \nu^\e) \fstop
\end{align*}
Taking $\e$ small enough to make the first term small, and then taking $t \to 0$ with the estimate~\eqref{e:WIE}, the RHS can be arbitrarily small, which concludes the statement. 
\end{proof}

{
\begin{rem}[Finiteness of  $\mssW_{p, \mssd_\U}$] Since $\mssW_{p, \mssd_\U}$ is an extended metric, it is important to know when $\mssW_{p, \mssd_\U}$ is finite. As a consequence of our results, 
 we have the finiteness of $\mssW_{p, \mssd_\U}$ along the dual semigroup in the following cases:
\begin{enumerate}[(a)] 
\item By~Cor.~\ref{c:LLL}, for every $\nu \in \mcP_e(\U)$; 
$$\mssW_{2, \mssd_\U}(\mcT_t^{\U, \QP}\nu, \nu)<+\infty \quad \ t \ge 0 \fstop$$
\item  Let $1 \le p <\infty$ and let $\Theta$ be  the set~in Assumption~\ref{a:AP}. By the Wasserstein contraction~\eqref{e:WC1}, for $\nu, \sigma \in \overline{\mcP_\Theta}^{\mssW_{p, \mssd_\U}}$, 
\begin{align*}
\mssW_{p, \mssd_\U}\bigl(\mathcal T_t^{\U, \QP}\nu, \mathcal T_t^{\U, \QP}\sigma\bigr) \le e^{-Kt}\mssW_{p, \mssd_\U}(\nu, \sigma) <+\infty  \cquad  t>0 \fstop
\end{align*}
\end{enumerate}
\end{rem}
}
\begin{cor}[Lipschitz regularity of the semigroup]\label{c:LIP}
 Suppose \ref{ass:CE}, ~\ref{ass:ConditionalClos} and  Assumption~\ref{a:AP}.  
  Then,  for $u \in \Lip_b({\mssd}_\U, \QP)$ and $t>0$, 
$T_t^{\U, \QP}u$ has a ${\mssd}_\U$-Lipschitz $\QP$-representative~$\widehat{T_t^{\U, \QP}u}$
and the following estimate holds:
$$\Lip_{{\mssd}_\U}(\widehat{T_t^{\U, \QP} u}) \le e^{-Kt}\Lip_{{\mssd}_\U}(u) \fstop$$
\end{cor}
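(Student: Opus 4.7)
The plan is to deduce the Lipschitz regularisation from Kantorovich--Rubinstein duality combined with the $\mssW_{1,\mssd_\U}$-contraction of the dual semigroup proved in Theorem~\ref{t:WC}. This is the standard pattern going from a $1$-Wasserstein contraction to a pointwise Lipschitz estimate on the semigroup, and only the mild peculiarities of our setting --- $\mssd_\U$ is an extended distance and $u$ is a priori only $\QP$-measurable --- require extra care.

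First, I would choose a bounded Borel $\mssd_\U$-Lipschitz representative $\bar u$ of $u$ with the same Lipschitz constant $L \defeq \Lip_{\mssd_\U}(u)$. Since $\QP$ is Borel, $u$ admits a Borel representative, and applying the McShane formula $\bar u(\gamma) \defeq \inf\{\tilde u(\eta) + L\,\mssd_\U(\gamma,\eta) : \eta \in A\}$ on a Borel set $A$ of full $\QP$-measure on which a representative $\tilde u$ is honestly $L$-Lipschitz produces the desired $\bar u$. This Borel reduction is needed so that, by~\eqref{e:EIK},
\begin{align*}
\tilde T_t^{\U,\QP}\bar u(\gamma) = \int_\U \bar u(\eta)\,p_t^{\U,\QP}(\gamma,d\eta) = \int_\U \bar u\,d\bigl(\mcT_t^{\U,\QP}\delta_\gamma\bigr), \qquad \gamma\in\tilde\Theta,
\end{align*}
is well-posed on a $\QP$-conull $\tilde\Theta\subset\Theta$, and by~\eqref{e:VER} the function $\tilde T_t^{\U,\QP}\bar u$ is a $\QP$-representative of $T_t^{\U,\QP}u$.

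Second, for $\gamma,\eta\in\tilde\Theta$ with $\mssd_\U(\gamma,\eta)<\infty$, I would pick an optimal coupling $q\in\Opt(\mcT_t^{\U,\QP}\delta_\gamma,\mcT_t^{\U,\QP}\delta_\eta)$ for the $\mssW_{1,\mssd_\U}$-cost, whose existence and finiteness are guaranteed by Theorem~\ref{t:WC} since $\delta_\gamma,\delta_\eta\in\mcP_\Theta$ satisfy $\mssW_{1,\mssd_\U}(\delta_\gamma,\delta_\eta)=\mssd_\U(\gamma,\eta)<\infty$. Integrating the Lipschitz bound $|\bar u(x)-\bar u(y)| \le L\,\mssd_\U(x,y)$ against $q$ and invoking Theorem~\ref{t:WC} with $p=1$ yields
\begin{align*}
\bigl|\tilde T_t^{\U,\QP}\bar u(\gamma) - \tilde T_t^{\U,\QP}\bar u(\eta)\bigr| \le L\,\mssW_{1,\mssd_\U}\bigl(\mcT_t^{\U,\QP}\delta_\gamma,\mcT_t^{\U,\QP}\delta_\eta\bigr) \le L e^{-Kt}\mssd_\U(\gamma,\eta).
\end{align*}
When $\mssd_\U(\gamma,\eta)=+\infty$ the inequality is vacuous, so $\tilde T_t^{\U,\QP}\bar u|_{\tilde\Theta}$ is $e^{-Kt}L$-Lipschitz with respect to $\mssd_\U$.

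Finally, a second application of the McShane formula extends this restriction to an $e^{-Kt}L$-Lipschitz function $\widehat{T_t^{\U,\QP}u}$ on all of $\U$; since $\tilde\Theta$ is $\QP$-conull, this extension is the desired Lipschitz $\QP$-representative. The only genuine subtlety, and where I would concentrate the technical work, is checking the Kantorovich bound in the extended metric setting: one must both secure the boundedness and Borel-measurability of $\bar u$ so that the integrals against arbitrary Borel couplings are defined, and use Theorem~\ref{t:WC} to ensure $\mssW_{1,\mssd_\U}\bigl(\mcT_t^{\U,\QP}\delta_\gamma, \mcT_t^{\U,\QP}\delta_\eta\bigr)<\infty$. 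Both points are dispatched by the Borel Lipschitz reduction and by the Dirac case of Theorem~\ref{t:WC}, respectively.
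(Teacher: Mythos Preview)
Your proposal is correct and follows essentially the same route as the paper: represent $T_t^{\U,\QP}u$ via the kernel on $\Theta$, apply the Kantorovich--Rubinstein bound together with the Dirac-to-Dirac Wasserstein contraction~\eqref{WCP}, and finish with a McShane extension. The only cosmetic difference is that you use $\mssW_{1,\mssd_\U}$ (the natural choice) whereas the paper writes $\mssW_{2,\mssd_\U}$, which still works since $\mssW_{1}\le\mssW_{2}$; your explicit Borel-representative reduction is a helpful clarification the paper leaves implicit.
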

\begin{proof}
By \eqref{WCP} and the expression $T^{\U, \QP}_t u(\gamma)=\int_{\U} u \diff \mathcal T_t^{\U, \QP}\delta_{\gamma}$ for $\gamma \in \Theta$, 
\begin{align*}
|T^{\U, \QP}_t u(\gamma)- T^{\U, \QP}_t u(\eta)| \le \Lip_{\mssd_\U}(u)\mssW_{2, \mssd_\U}( \mathcal T_t^{\U, \QP}\delta_{\gamma},  \mathcal T_t^{\U, \QP}\delta_{\eta}) \le e^{-Kt}\Lip_{\mssd_\U}(u)\mssd_\U(\gamma, \eta)  \fstop
\end{align*}
Noting $\QP(\Theta)=1$, the McShane extension (for extended metric, see~\cite[Lem.~2.1]{LzDSSuz20}) provides the sought $\mssd_\U$-Lipschitz $\QP$-representative. 
%
%
%
\end{proof}

\subsection{$p$-Bakry--\'Emery gradient estimate}
\begin{thm}[$p$-Bakry-\'Emery estimate] \label{p:PBE}
Suppose \ref{ass:CE}, ~\ref{ass:ConditionalClos} and  Assumption~\ref{a:AP}.  
 The form $(\E^{\U, \QP}, \dom{\E^{\U, \QP}})$ satisfies $\BE_p(K,\infty)$ for every $1 \le p <\infty$:
 $$\cdc^{\U}(T_t^{\U, \QP}u)^{\frac{p}{2}} \le e^{-pKt}T_t^{\U, \QP}\bigl(\cdc^{\U}(u)^{\frac{p}{2}}\bigr) \quad \text{$\QP$-a.e.~} \quad u \in \dom{\E^{\U, \QP}} \fstop$$
 \end{thm}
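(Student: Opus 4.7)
The plan is to proceed in two stages: first obtain the base case $\BE_2(K,\infty)$ via Kuwada's duality applied to the $\mssW_2$-contraction of Theorem~\ref{t:WC}, and then deploy Savar\'e's self-improvement to promote it to $\BE_p(K,\infty)$ for every $p \in [1,\infty)$. Our setting is amenable to both ingredients because Proposition~\ref{p:CES}, Corollary~\ref{c:QR}, Definition~\ref{d:DFF} and Theorem~\ref{t:E=Ch} provide everything needed: $(\E^{\U,\QP},\dom{\E^{\U,\QP}})$ is a strongly local, $\tau_\mrmv$-quasi-regular, $\QP$-symmetric Dirichlet form admitting a carr\'e du champ $\cdc^{\U}$ and coinciding with the Cheeger energy $\Ch^{\mssd_\U,\QP}$.

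\emph{Step 1 (base case $\BE_2$).} Assumption~\ref{a:AP} together with Theorem~\ref{t:WC} gives
$\mssW_{2,\mssd_\U}(\mcT_t^{\U,\QP}\nu,\mcT_t^{\U,\QP}\sigma)\le e^{-Kt}\mssW_{2,\mssd_\U}(\nu,\sigma)$
on $\overline{\mcP_\Theta}^{\mssW_{2,\mssd_\U}}$. By the Kuwada-type duality theorem for extended metric measure spaces (see \cite[Thm.~3.17]{AmbErbSav16}), which requires exactly the strongly local, quasi-regular, Cheeger-energy structure we possess, this contraction is equivalent to the pointwise estimate
\begin{align*}
\cdc^{\U}(T_t^{\U,\QP}u)\le e^{-2Kt}\,T_t^{\U,\QP}\bigl(\cdc^{\U}(u)\bigr)\qquad\QP\text{-a.e.},\quad u\in\dom{\E^{\U,\QP}},
\end{align*}
i.e.~$\BE_2(K,\infty)$. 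Alternatively, this identity may be extracted directly from the log-Harnack inequality of Theorem~\ref{t:DFH}\ref{DFH-1} by a short-time expansion together with the Lipschitz estimate of Corollary~\ref{c:LIP}.

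\emph{Step 2 (self-improvement from $\BE_2$ to $\BE_p$).} With $\BE_2(K,\infty)$ in hand, I would apply Savar\'e's $\Gamma_2$-calculus self-improvement, originally developed for $\RCD(K,\infty)$ metric measure spaces in \cite{Sav14} and extended to the Dirichlet-form setting. The ingredients needed are precisely those already secured: strong locality, the existence of the square field $\cdc^\U$ with its chain and Leibniz rules (on the algebra $\mathcal C$ and inherited on $\dom{\E^{\U,\QP}}\cap L^\infty$), the Rademacher-type inequality~\eqref{in:RAMT}, the Lipschitz regularisation of Corollary~\ref{c:LIP}, and the $\tau_\mrmv$-quasi-regularity of Corollary~\ref{c:QR}, which guarantees the existence of quasi-continuous $\QP$-versions of all terms appearing in the proof and justifies pointwise (quasi-everywhere) evaluations. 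The argument tests $\BE_2$ against functions of $\cdc^{\U}(T_sT_{t-s}u)$, integrates in $s\in(0,t)$, and exploits the chain rule for $\cdc^{\U}$ applied to powers, eventually yielding the pointwise $L^1$-form $\BE_1(K,\infty)$:
\begin{align*}
\sqrt{\cdc^{\U}(T_t^{\U,\QP}u)}\le e^{-Kt}\,T_t^{\U,\QP}\bigl(\sqrt{\cdc^{\U}(u)}\bigr)\qquad\QP\text{-a.e.}
\end{align*}
From $\BE_1(K,\infty)$ the intermediate $\BE_p(K,\infty)$ for $p\in[1,\infty)$ then follows by a single application of Jensen's inequality in the form $(T_t^{\U,\QP}f)^p\le T_t^{\U,\QP}(f^p)$ (valid for $p\ge 1$ and the sub-Markov semigroup), applied to $f=\sqrt{\cdc^{\U}(u)}$.

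\emph{Main obstacle.} The delicate point is Step 2: Savar\'e's self-improvement is classically stated for bona-fide metric measure spaces whose topology coincides with the metric topology, whereas $\tau_{\mssd_\U}$ is strictly finer than $\tau_\mrmv$ and $\QP$ typically does not charge $\tau_{\mssd_\U}$-balls. What saves us is that the argument is ultimately a Dirichlet-form computation carried out on quasi-continuous $\QP$-representatives: once the $\tau_\mrmv$-quasi-regularity of Corollary~\ref{c:QR} is available and the Cheeger-energy identification of Theorem~\ref{t:E=Ch} provides the geometric meaning of $\cdc^{\U}$, the self-improvement proceeds as in the standard $\RCD$ setting. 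The verification that each step of Savar\'e's iteration makes sense in our extended setting (in particular, that the $\Gamma_2$-type integrations by parts and the $L^p$-Lipschitz continuity of $s\mapsto T_s u$ go through) is the technical heart of the proof.
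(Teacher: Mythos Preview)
Your Step 2 is correct and matches the paper exactly: once $\BE_2(K,\infty)$ is established, the paper invokes Savar\'e's self-improvement \cite[Cor.~3.5]{Sav14} (which only needs the $\tau_\mrmv$-quasi-regularity of Corollary~\ref{c:QR}) to pass to $p=1$, and then Jensen for $p>1$. You have the right architecture for this half.

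The gap is in Step 1, and you have misplaced the main obstacle. The standard Kuwada duality (in either its metric-measure or extended form) needs one of two things: either $\mcT_t^{\U,\QP}\delta_\gamma$ defined for \emph{every} $\gamma\in\U$, or a Feller-type regularisation $T_t^{\U,\QP}:L^\infty\to\mcC_b(\U,\tau)$ so that $T_t u$ has a canonical pointwise version. Neither is available here: the dual semigroup is only defined on $\mcP_\Theta$, Dirac masses outside $\Theta$ cannot be approximated in $\mssW_{2,\mssd_\U}$ by absolutely continuous measures (since $\mssd_\U$-balls have zero $\QP$-measure), and there is no $\tau_\mrmv$-continuous version of $T_t u$. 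The paper's Remark~\ref{r:TREM}(a) says precisely this: the argument of \cite[Thm.~6.2]{AmbGigSav14b} does not transfer, and invoking a black-box duality from \cite{AmbErbSav16} runs into the same issue.

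The paper instead performs the Kuwada computation by hand on the good set $\Theta$. For $u\in\Lip_b(\mssd_\U,\QP)$ and $\gamma_0,\gamma_1\in\Theta$ with $\mssd_\U(\gamma_0,\gamma_1)<\infty$, Assumption~\ref{a:AP}\ref{AP-11} forces any $\mssd_\U$-absolutely continuous curve $(\gamma_s)$ joining them to stay in $\Theta$, so $\nu_s:=\mcT_t^{\U,\QP}\delta_{\gamma_s}$ is well-defined for every $s$. The Wasserstein contraction~\eqref{WCP} makes $(\nu_s)$ an $\AC^2$ curve in $(\mcP(\U),\mssW_{2,\mssd_\U})$; lifting it via \cite[Thm.~3.1]{Lis16} to a plan on paths and using the upper gradient property of $|\mathsf D_{\mssd_\U}u|$ gives
\[
|T_t u(\gamma_1)-T_t u(\gamma_0)|\le e^{-Kt}\int_0^1 \bigl(T_t|\mathsf D_{\mssd_\U}u|^2(\gamma_s)\bigr)^{1/2}|\dot\gamma_s|\,\diff s.
\]
The crucial step is then to pass from this integral bound to the slope estimate at $\gamma_0$: one needs $T_t|\mathsf D_{\mssd_\U}u|^2$ to be $\tau_{\mssd_\U}$-continuous on $\Theta$, and this is exactly where the log-Harnack inequality~\eqref{i:LH2} enters (via \cite[Prop.~3.1]{WanYua11} it yields $\mcB_b\to\mcC_b(\Theta,\tau_{\mssd_\U})$ regularisation). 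With continuity in hand and the length property of $\Theta$, the supremum localises and one obtains $|\mathsf D_{\mssd_\U}T_t u|^2\le e^{-2Kt}T_t|\mathsf D_{\mssd_\U}u|^2$ on $\Theta$, which together with Theorem~\ref{t:E=Ch} yields $\BE_2$ on $\dom{\E^{\U,\QP}}$. Your proposal's alternative ``short-time expansion of log-Harnack'' is not how log-Harnack is used; it supplies continuity, not the gradient estimate itself.
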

 \begin{proof}
 Let $u \in \Lip_b(\mssd_\U, \QP)$, $\gamma_0, \gamma_1 \in \Theta$, and $(\gamma_s)_{s \in [0,1]} \in \AC^2([0,1], \U, \mssd_\U)$ be any $2$-absolutely continuous curve connecting $\gamma_0$ and $\gamma_1$, see \eqref{d:ABC} for the definition.  Due to \ref{AP-11} in Assumption~\ref{a:AP}, $(\gamma_s)_{s \in [0,1]} \subset \Theta$, therefore, $\nu_s:=\mathcal T_t^{\U, \QP}\delta_{\gamma_s}$ is well-defined for every $s \in [0,1]$.  Thanks to the contraction \eqref{WCP}, we have $(\nu_s)_{s \in [0,1]} \subset \AC^2([0,1], \mathcal P(\U), \mssW_{2, \mssd_\U})$. Then, by \cite[Thm.~3.1]{Lis16}, there exists $\sigma \in \mathcal P\bigl(\mathcal C([0,1], \U, \tau_\mrmv) \bigr)$ such that 
 \begin{enumerate}[(a)]
  \item $\sigma$ is concentrated in $\AC^2([0,1], \U, \mssd_\U)$;
  \item $(e_s)_\# \sigma = \nu_s$ for every $s \in [0,1]$, where $e_s: \mathcal C([0,1], \U, \tau_\mrmv) \to \U$ is $w \mapsto e_s(w)=w_s$;
  \item for $\sigma$-a.e.~$w \in \mathcal C([0,1], \U, \tau_\mrmv)$, the metric derivative $|\dot{w}|_s$ with respect to $\mssd_\U$ exists for a.e.~$s \in [0,1]$ and 
  $$|\dot{\nu}|_s= \| |\dot{w}|_s\|_{L^2(\mathcal C([0,1], \U), \sigma)} \quad \text{a.e.~$s \in [0,1]$} \comma$$
  where $|\dot{\nu}|_s$ is the metric derivative with respect to $\mssW_{2, \mssd_\U}$.
  \end{enumerate}
 Thus, using the upper gradient property of $|\mathsf D_{\mssd_\U} u|$, we have  
  \begin{align*}
 |T_t^{\U, \QP}u(\gamma_1)-T_t^{\U, \QP}u(\gamma_0)| 
 &= \biggl|\int_\U u \diff \mathcal T_t^{\U, \QP}\delta_{\gamma_1}-\int_\U u \diff \mathcal T_t^{\U, \QP}\delta_{\gamma_0}\biggl| 
 \\
 &= \biggl|\int_\U (u \circ e_1(w)- u\circ e_0(w)) \diff \sigma(w)\biggl|
   \\
  &\le \int_\U \biggl(\int_0^1|\mathsf D_{\mssd_\U} u|(w_s)|\dot{w}|_s \diff s\biggr) \diff \sigma(w) 
  \\
  &= \int_0^1\biggl(\int_\U|\mathsf D_{\mssd_\U} u|(w_s)|\dot{w}|_s \diff \sigma(w)\biggr) \diff s
      \\
  &\le \int_0^1\biggl(\int_\U|\mathsf D_{\mssd_\U} u|(w_s)^2  \diff \sigma(w)\biggr)^{1/2} \biggl(\int_\U|\dot{w}|_s^2 \diff \sigma(w)\biggr)^{1/2} \diff s
        \\
  &= \int_0^1\biggl(\int_\U|\mathsf D_{\mssd_\U} u|^2  \diff \nu_s\biggr)^{1/2} |\dot{\nu}|_s \diff s \fstop
 \end{align*}
 Combined with the contraction \eqref{WCP} and the definition of the metric derivative~\eqref{d:MD}, it holds
 \begin{align*}
 |T_t^{\U, \QP}u(\gamma_1)-T_t^{\U, \QP}u(\gamma_0)| 
 & \le  \int_0^1 \biggl(\int_\U |\mathsf D_{\mssd_\U} u|^2 \diff \nu_s\biggr)^{1/2} |\dot{\nu}_s|\diff s
 \\
& \le  e^{-Kt}\int_0^1 \biggl(\int_\U |\mathsf D_{\mssd_\U} u|^2 \diff \nu_s\biggr)^{1/2} |\dot{\gamma}_s|\diff s
   \\
 & =  e^{-Kt}\int_0^1 \bigl(T_t^{\U, \QP}|\mathsf D_{\mssd_\U} u|^2(\gamma_s)\bigr)^{1/2} |\dot{\gamma}_s|\diff s \fstop
 \end{align*}
In view of \ref{AP-11} in Assumption~\ref{a:AP} and the fact that $(\U, \mssd_\U)$ is a length space due to Prop.~\ref{p:CEEM},  $\Theta$ is a length space as well with respect to the extended distance~$(\mssd_\U)|_{\Theta\times \Theta}$ restricted on $\Theta \times \Theta$. Thus, by taking the infimum of $\int_0^1|\dot{\gamma}|_s \diff s$ over all $(\gamma_s)_{s \in [0,1]} \subset \Theta$ achieving $\mssd_\U(\gamma_0, \gamma_1)$, we obtain 
 \begin{align} \label{i:BEI}
 & |T_t^{\U, \QP}u(\gamma_1)-T_t^{\U, \QP}u(\gamma_0)|
  \\
  & \le e^{-Kt}\mssd_\U(\gamma_0, \gamma_1)\sup\Big\{\bigl(T_t^{\U, \QP}|\mathsf D_{\mssd_\U}  u|^2\bigr)^{1/2}(\zeta): \mssd_\U(\zeta, \gamma_0) \le 2\mssd_\U(\gamma_0, \gamma_1), \zeta \in \Theta\Big\} \fstop \notag
 \end{align}
Thanks to the log-Harnack inequality~\eqref{i:LH2} in Thm.~\ref{t:DFH} and \cite[Prop.~3.1]{WanYua11}, $T_t^{\U, \QP}$ has the $\mathcal B_b(\U)$-to-$\Cb(\Theta, \tau_{\mssd_\U})$-regularisation. 
As $|\mathsf D_{\mssd_\U} u| \le \Lip_{\mssd_\U}(u)<\infty$ by~\eqref{e:SLC}, this particularly implies that the function $T_t^{\U, \QP}|\mathsf D_{\mssd_\U} u|^2$  is  bounded and $\tau_{\mssd_\U}$-continuous on $\Theta$.  Combined with~\eqref{i:BEI}, it holds that
  \begin{align*} 
  |\mathsf D_{\mssd_\U} T_t^{\U, \QP}u|^2 \le e^{-2Kt}   T_t^{\U, \QP} |\mathsf D_{\mssd_\U}  u|^2 \quad \text{on} \quad \Theta\fstop
 \end{align*}
To conclude the sought statement for $u \in \dom{\E^{\U, \QP}}$, we just need to take a sequence $u_n \in \Lip_b(\mssd_\U, \QP)$ converging to $u$ in $\dom{\E^{\U, \QP}}$ such that $|\mathsf D_{\mssd_\U} u_n| \to \cdc^{\U, \QP}(u)^{1/2}$ strongly in $L^2(\QP)$. This follows by the identification of $\E^{\U, \QP}$ with the Cheeger energy $\Ch^{\mssd_\U, \QP}$ due to Thm.~\ref{t:E=Ch} and a general fact \cite[(c) Lem.~4.3]{AmbGigSav14} regarding Cheeger energies. 

 The case of $p=1$ follows from the case of $p=2$ and the $\tau_\mrmv$-quasi-regularity in Cor.~\ref{c:QR}, combined with the self-improvement result~\cite[Cor.~3.5]{Sav14}.
The case of $p >1$ follows by the case of $p=1$ and the Jensen's inequality 
 $$\Bigl(T_t^{\U, \QP}\bigl(\cdc^{\U}(u)^{\frac{1}{2}}\bigr)\Bigr)^{p} \le T_t^{\U, \QP}\bigl(\cdc^{\U}(u)^{\frac{p}{2}} \bigr) \fstop \qedhere$$ 
 \end{proof}
 \begin{rem}\ \label{r:TREM}
 
 \begin{enumerate}[(a)]
 \item 
The proof given in~\cite[Thm.~6.2]{AmbGigSav14b} in the framework of metric measure spaces does not directly apply to extended metric measure spaces. 
In our framework, we do not know whether $\mcT_t^{\U, \QP}\delta_\gamma$ can be defined for every $\gamma \in \U$. In usual metric measure spaces, this is possible because Dirac measures can be approximated by elements in $\mcP_\QP(\U)$ in terms of $\mssW_{p, \mssd_\U}$, by which we can extend $\mcT_t^{\U, \QP}$ from~$\mcP_\QP(\U)$ to Dirac measures, but this approximation property is not known in our extended metric measure setting.  Furthermore, since the regularisation property~$T_t^{\U, \QP}: L^\infty(\QP) \to \mcC_b(\U, \tau_\mrmv)$ is not available in our setting, functions relevant to the proof are not defined everywhere, rather only up to  a set of full measure. This prevents from applying the same proof to our setting because such a set of full measure could be in general very wild as a metric space and we lose e.g., the length property. Furthermore, a curve~$(\gamma_s)_{s \in [0,1]} \in \AC^2([0,1], \U, \mssd_\U)$ could stay in a set of measure zero and relevant functions  cannot be defined along such a curve. 
In the proof of Thm.~\ref{p:PBE}, these issues are settled by taking a good set~$\Theta$ in Assumption~\ref{a:AP}.
 \item The $2$-Bakry--\'Emery estimate was proven in \cite{Suz22b} for the {\it lower Dirichlet form} when $\QP=\sine_\beta$ for $\beta>0$ based on the DLR equation of $\QP$. However, the same proof does not apply to the upper Dirichlet form we used in this paper. As Thm.~\ref{p:PBE} needs no DLR equation,  we can apply our result to, e.g., $\QP=\Airy_2$, where the DLR equation remains unknown.  
 \end{enumerate}
 \end{rem}

 We say that $(\E^{\U, \QP}, \dom{\E^{\U, \QP}})$ is {\it irreducible} if $\E^{\U, \QP}(u)=0 \implies u \equiv c$ $\QP$-a.e.~with some constant $c$. 
 The following is a standard corollary of $\BE_1(K,\infty)$ with $K>0$.  
\begin{cor}  \label{c:LST} Suppose \ref{ass:CE}, ~\ref{ass:ConditionalClos} and  Assumption~\ref{a:AP} with $K > 0$. If $(\E^{\U, \QP}, \dom{\E^{\U, \QP}})$ is irreducible, then the following hold:
\begin{enumerate}[{\rm (a)}]
\item $(${\bf log-Sobolev inequality}$)$   \label{c:6-1}$$\Ent_\QP(\nu) \le \frac{2\Fis_\QP(\nu)}{K}  \cquad \nu \in \dom{\Ent_\QP} \cap \dom{\Fis_\QP} \fstop$$
\item \label{c:4-1}  $(${\bf Talagrand inequality}$)$  
$$\mssW_{2, \mssd_{\U}}^2(\nu, \QP) \le \frac{2}{K}\Ent_\QP(\nu) \cquad \nu \in \dom{\Ent_\QP} \fstop$$
%
\end{enumerate}
\end{cor}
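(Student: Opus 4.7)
Both inequalities are classical consequences of the Bakry--\'Emery gradient estimate $\BE_p(K,\infty)$ combined with ergodicity, and the strategy mirrors the metric measure space arguments in \cite{AmbGigSav14b}. The crucial input allowing the Cheeger-energy calculus to apply in our extended setting is the identification $\E^{\U,\QP}=\Ch^{\mssd_\U,\QP}$ from Thm.~\ref{t:E=Ch}, even though $\tau_{\mssd_\U}$ does not carry the support of $\QP$. Throughout I write $\nu_t:=\mcT_t^{\U,\QP}\nu$ with density $\rho_t:=T_t^{\U,\QP}\rho$ whenever $\nu=\rho\cdot\QP$.

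For (a), fix $\nu=\rho\cdot\QP\in\dom{\Ent_\QP}\cap\dom{\Fis_\QP}$. The first step is the entropy-dissipation identity
\[
\tfrac{\diff}{\diff t}\Ent_\QP(\nu_t)=-2\Fis_\QP(\nu_t) \quad \text{for a.e.~}t>0,
\]
established by chain-rule arguments for quadratic Dirichlet forms in the Cheeger-energy framework (cf.~\cite[\S4]{AmbGigSav14b}). The second step is contraction of Fisher information: applying $\BE_2(K,\infty)$ from Thm.~\ref{p:PBE} to the density $\rho_t$, integrating against $\QP$, and invoking mass preservation (Prop.~\ref{p:CES}) yields $\Fis_\QP(\nu_t)\le e^{-2Kt}\Fis_\QP(\nu)$. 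Irreducibility together with $K>0$, which delivers a spectral gap through the Poincar\'e inequality implicit in $\BE_2(K,\infty)$, forces $\rho_t\to 1$ in $L^2(\QP)$ and hence $\Ent_\QP(\nu_t)\to 0$ as $t\to\infty$. Integrating the dissipation identity on $[0,\infty)$ and substituting the Fisher contraction produces the log-Sobolev bound with the asserted constant.

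For (b), I combine (a) with the Otto--Villani heat-flow argument. By Thm.~\ref{t:E=Ch} together with \cite[Lem.~6.1]{AmbGigSav14}, the curve $t\mapsto\nu_t$ is $\mssW_{2,\mssd_\U}$-absolutely continuous with metric speed bounded by $|\dot\nu_t|^2\le 2\Fis_\QP(\nu_t)$. The $\mssW_{2,\mssd_\U}$-contraction of Thm.~\ref{t:WC} together with irreducibility forces $\nu_T\xrightarrow{\mssW_{2,\mssd_\U}}\QP$ as $T\to\infty$, so $\mssW_{2,\mssd_\U}(\nu,\QP)\le\int_0^\infty|\dot\nu_t|\,\diff t$; substituting $|\dot\nu_t|\le\sqrt{-\tfrac{\diff}{\diff t}\Ent_\QP(\nu_t)}$ via the dissipation identity and applying the Otto--Villani manipulation --- which couples Cauchy--Schwarz with the exponential entropy decay $\Ent_\QP(\nu_t)\le e^{-Kt}\Ent_\QP(\nu)$ extracted from (a) --- yields Talagrand with constant $2/K$. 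The main obstacle throughout is legitimising these chain rules and mass-preservation identities in our extended metric measure framework, where $\QP$ need not be supported on the $\mssd_\U$-topology and many $\mssd_\U$-Lipschitz functions may fail $\QP$-measurability; the identification $\E^{\U,\QP}=\Ch^{\mssd_\U,\QP}$ (Thm.~\ref{t:E=Ch}), the $\tau_\mrmv$-quasi-regularity (Cor.~\ref{c:QR}), and the pointwise estimates in Thm.~\ref{p:PBE} together reduce every step to its standard Cheeger-energy counterpart.
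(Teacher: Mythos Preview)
Your approach to (a) is the standard Bakry--Ledoux derivation and is essentially what the paper defers to when it cites \cite[Prop.~5.7.1]{BakGenLed14}. Two details are off: the entropy-dissipation constant you quote does not match the paper's normalisation of $\Fis_\QP$, and Fisher contraction $\Fis_\QP(\nu_t)\le e^{-2Kt}\Fis_\QP(\nu)$ is not what $\BE_2$ applied to $\rho_t$ gives you --- you need $\BE_1$ together with the Cauchy--Schwarz bound $(T_t|\nabla\rho|)^2\le T_t(|\nabla\rho|^2/\rho)\,T_t\rho$. Since Thm.~\ref{p:PBE} supplies $\BE_1$, this is only a presentational issue.

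For (b) you take a genuinely different route. The paper proves Talagrand first for the \emph{variational} distance $\mssW_{\E}$ via \cite[Lem.~10.7]{AmbErbSav16}, then uses the identification $\E^{\U,\QP}=\Ch^{\mssd_\U,\QP}$ (Thm.~\ref{t:E=Ch}) together with \cite[Prop.~7.4]{AmbErbSav16} to obtain $\mssW_{\E}\ge\mssW_{2,\mssd_\U}$. This bypasses any need to track $\nu_T$ in the Wasserstein topology. Your Otto--Villani argument is more self-contained but contains a circular step: you assert that ``$\mssW_{2,\mssd_\U}$-contraction together with irreducibility forces $\nu_T\xrightarrow{\mssW_{2,\mssd_\U}}\QP$''. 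Contraction only yields $\mssW_{2,\mssd_\U}(\nu_T,\QP)\le e^{-KT}\mssW_{2,\mssd_\U}(\nu,\QP)$, and finiteness of $\mssW_{2,\mssd_\U}(\nu,\QP)$ is precisely the Talagrand conclusion you are trying to prove. The fix is straightforward: your bound on $\int_0^T|\dot\nu_t|\,\diff t$ is uniform in $T$, irreducibility gives $\nu_T\xrightarrow{\tau_\mrmw}\QP$, and $\mssW_{2,\mssd_\U}$ is $\tau_\mrmw$-lower semi-continuous (Cor.~\ref{p:EMW1}), so $\mssW_{2,\mssd_\U}(\nu,\QP)\le\liminf_{T\to\infty}\mssW_{2,\mssd_\U}(\nu,\nu_T)$ closes the argument without ever asserting $\mssW_{2,\mssd_\U}$-convergence.
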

\begin{proof}
See, e.g.,  \cite[Cor.~11.5]{AmbErbSav16} or \cite[Prop.~5.7.1]{BakGenLed14} for the log-Sobolev inequality. The Talagrand inequality with respect to~the variational distance~$\mssW_{\E}$ was proven in \cite[Lem.~10.7]{AmbErbSav16}. Thanks to the identification of $\Ch^{\U, \QP}$ and $\E^{\U, \QP}$ in Thm.~\ref{t:E=Ch}, we can apply \cite[(a) Prop.~7.4]{AmbErbSav16} to get the inequality~$\mssW_{\E} \ge \mssW_{2, \mssd_{\U}}$, which concludes the statement.
\end{proof}

\begin{rem}[Irreducibility]
 The irreducibility of $(\E^{\U, \QP}, \dom{\E^{\U,\QP}})$  holds under the number rigidity and the tail triviality of $\QP$, e.g., for $\QP=\sine_2, \Airy_2, \Bessel_{\alpha, 2}\ (\alpha \ge 1), \Ginibre$. See \cite[Theorem I and \S 6 Examples]{Suz23b}.
\end{rem}

\section{EVI gradient flows and dynamical number rigidity/tail triviality}
As an application of the previous sections,  we prove that the evolution variational inequality for the dual semigroup associated with~$(\E^{\U, \QP}, \dom{\E^{\U, \QP}})$. As a consequence, the dual semigroup is identified with  the unique $\EVI$ gradient flow of the relative entropy. 

\subsection{Evolution Variational Inequality} 
 \begin{thm}[$\EVI$ gradient flow] \label{t:main} Suppose~\ref{ass:CE}, ~\ref{ass:ConditionalClos} and Assumption~\ref{a:AP}. 
 Then, the dual semigroup~$(\mathcal T_t^{\U, \QP})_{t >0}$ satisfies $\EVI_K$
$:$ for every $\nu\in \dom{\Ent_\QP}$ and $\sigma \in \mcP(\U)$ with $\mssW_{2, \mssd_{\U}}(\nu, \sigma)<\infty$, it holds that $\mathcal T^{\U, \QP}_t \sigma \in \dom{\Ent_{\QP}}$, ${\mssW}_{2, \mssd_{\U}}\bigl({\mathcal T^{\U, \QP}_t \sigma}, \nu \bigr)<\infty$ and the following inequality holds:
 \begin{align} \tag*{$(\EVI_K)$} \label{EVI} 
\frac{1}{2}\frac{\diff^+}{\diff t}{\mssW}_{2, \mssd_{\U}}\bigl({\mathcal T^{\U, \QP}_t \sigma}, \nu \bigr)^2  + \frac{K}{2}{\mssW}_{2, \mssd_{\U}}\bigl({\mathcal T^{\U, \QP}_t \sigma}, \nu \bigr)^2 \le \Ent_{\mu}({\nu}) - \Ent_{\mu}({\mathcal T^{\U, \QP}_t \sigma})\comma 
\end{align}
where $\frac{\diff^+}{\diff t}$ denotes the upper right Dini derivative.
\end{thm}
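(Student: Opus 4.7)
My approach will be to follow and adapt the Ambrosio--Gigli--Savar\'e axiomatic scheme of \cite{AmbGigSav15} to the present extended metric measure setting, drawing all structural inputs from Sections \ref{sec: Pre}--\ref{s:ICE} and the functional inequalities of Theorem~\ref{c:FWD}. The theorem splits into a well-posedness step and the EVI derivation proper.

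\textbf{Step 1 (well-posedness and regularity).} Since $\mcT_t^{\U, \QP}$ is defined a priori only on $\mcP_\QP(\U)$ via~\eqref{d:DS}, I first extend it to $\mcP_e(\U)$. Given $\sigma \in \mcP(\U)$ with $\mssW_{2, \mssd_\U}(\nu, \sigma) < \infty$, one has $\sigma \in \mcP_e(\U)$ by definition~\eqref{e:EP}. Proposition~\ref{p:DE} produces $\sigma_n \in \dom{\Ent_\QP}$ converging to $\sigma$ in $\mssW_{2, \mssd_\U}$, and the Wasserstein contraction of Theorem~\ref{t:WC} makes $(\mcT_t^{\U, \QP}\sigma_n)_n$ Cauchy in $\mssW_{2,\mssd_\U}$, allowing me to define $\mcT_t^{\U, \QP}\sigma$ as its limit. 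The claims $\mcT_t^{\U, \QP}\sigma \in \dom{\Ent_\QP}$ and finiteness of $\mssW_{2, \mssd_\U}(\mcT_t^{\U, \QP}\sigma, \nu)$ then follow directly from the $L\log L$-regularisation Corollary~\ref{c:LLL}\ref{c:LLL1}--\ref{c:LLL2} together with the triangle inequality applied along the curve $t \mapsto \mcT_t^{\U, \QP}\nu$.

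\textbf{Step 2 (the $\EVI_K$ inequality).} For $\sigma \in \dom{\Ent_\QP}$, I plan to reproduce the implication ``$\BE_2(K,\infty)$ plus quadratic Cheeger energy plus Sobolev-to-Lipschitz $\Rightarrow$ $\EVI_K$ for the heat flow of entropy'' of \cite[\S 6]{AmbGigSav15}. All the hypotheses are now in place:
\begin{itemize}
\item Theorem~\ref{t:E=Ch} identifies the Cheeger energy with the Dirichlet form $\E^{\U, \QP}$, hence $\Ch^{\mssd_\U, \QP}$ is quadratic (infinitesimally Hilbertian);
\item Theorem~\ref{p:PBE} delivers $\BE_2(K,\infty)$;
\item Corollary~\ref{c:LIP} supplies a Sobolev-to-Lipschitz property for the semigroup along the good set $\Theta$;
\item the log-Harnack inequality of Theorem~\ref{t:DFH} upgrades almost-everywhere gradient bounds to pointwise statements on $\Theta$, which is what the AGS calculus needs to differentiate entropy along Wasserstein-regular curves.
\end{itemize}
Concretely, for fixed $\nu \in \dom{\Ent_\QP}$ with $\mssW_{2, \mssd_\U}(\mcT_t^{\U, \QP}\sigma, \nu) < \infty$, I would select a constant-speed $\mssW_2$-geodesic from $\mcT_t^{\U, \QP}\sigma$ to $\nu$ using the measurable-geodesic-selection Lemma~\ref{l:MS}, differentiate the entropy along it while controlling the velocity with $\BE_2(K,\infty)$, and integrate to recover the integrated form of $\EVI_K$; the Dini-derivative formulation~\ref{EVI} then follows by a standard convexity argument. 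The general case $\sigma \in \mcP_e(\U)$ is reached by approximation, using the $\tau_\mrmv^{\times 2}$-lower semicontinuity of $\mssW_{2, \mssd_\U}$ from Corollary~\ref{p:EMW1} and the $\tau_\mrmw$-lower semicontinuity of $\Ent_\QP$ on both sides of~\ref{EVI}. Uniqueness among curves satisfying~\ref{EVI} is automatic from the $K$-contractivity that $\EVI_K$ itself forces.

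\textbf{Main obstacle.} The core technical difficulty is that the classical AGS framework presupposes a metric measure space in which the distance metrises the topology and the measure has full topological support, whereas here $\tau_{\mssd_\U}$ is strictly finer than $\tau_\mrmv$ and the $\tau_{\mssd_\U}$-support of $\QP$ is typically empty, so that every $\mssd_\U$-metric ball is $\QP$-null. Standard approximation devices by Dirac measures, heat-kernel convolutions, or mollification along the metric therefore break down, and functions defined only up to $\QP$-null sets cannot be probed along individual Wasserstein-regular curves. The plan to circumvent this is to confine all pointwise operations to the $\QP$-full-measure set $\Theta$ from Assumption~\ref{a:AP}, which is geodesically closed by~\ref{AP-11} and carries a genuine heat-kernel representation by~\ref{AP-2}, and to use the finite-dimensional $\RCD(K,\infty)$ slices $(\U^k, \mssd_{\U^k}, \mu^k)$ from~\ref{AP-4} whenever the AGS calculus would need to act on Dirac data in the limit space; the weak-convergence assertions in~\ref{AP-4} together with the lower semicontinuity of $\mssW_{2,\mssd_\U}$ transfer the finite-dimensional $\EVI_K$ to its infinite-dimensional counterpart.
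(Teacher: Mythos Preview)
Your proposal is correct and follows essentially the same route as the paper: assemble the identification $\Ch^{\mssd_\U,\QP}=\E^{\U,\QP}$ (Thm.~\ref{t:E=Ch}), the density $\mcP_e=\overline{\dom{\Ent_\QP}}^{\mssW_{2,\mssd_\U}}$ (Prop.~\ref{p:DE}), the Bakry--\'Emery estimate (Thm.~\ref{p:PBE}), the log-Harnack inequality (Thm.~\ref{t:DFH}), the $L\log L$-regularisation (Cor.~\ref{c:LLL}) and the Wasserstein contraction (Thm.~\ref{t:WC}), and then run the AGS scheme. The paper's proof is in fact just this list followed by a direct citation of \cite[Thm.~5.10]{ErbHue15}, which already carried out the adaptation of \cite[Thm.~4.17]{AmbGigSav15} to the extended metric space $(\U,\mssd_\U)$; so your ``main obstacle'' paragraph, while accurate, is already absorbed into that reference rather than requiring a fresh transfer of the finite-dimensional $\EVI_K$ via Assumption~\ref{a:AP}\ref{AP-4}.
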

\begin{proof}
Having all ingredients $\Ch^{\mssd_\U, \QP}=\E^{\U, \QP}$ in Thm.~\ref{t:E=Ch}, $\mcP_e=\overline{\dom{\Ent_\QP}}^{\mssW_{2, \mssd_\U}}$ in Prop.~\ref{p:DE}, the Bakry--\'Emery gradient estimate in Cor.~\ref{p:PBE}, the log-Harnack inequality in Thm.~\ref{t:DFH}, the $L\log L$-regularisation in Cor.~\ref{c:LLL} and the Wasserstein contraction in Thm.~\ref{t:WC},  the proof works verbatim as \cite[Thm.~5.10]{ErbHue15}, which is an adaption of~\cite[Thm.~4.17]{AmbGigSav15} to an extended metric space~$(\U, \mssd_\U)$.
%
\end{proof}

\begin{cor} \label{c:GC}Suppose~\ref{ass:CE}, ~\ref{ass:ConditionalClos} and Assumption~\ref{a:AP}.  Then, the following hold: 
\begin{enumerate}[{\rm (a)}]
\item \label{c:1} The space~$(\dom{\Ent_\QP}, {\mssW}_{2, \mssd_\U})$ is an extended geodesic metric space: for every pair $\nu, \sigma \in \dom{\Ent_\QP}$ with ${\mssW}_{2, \mssd_\U}(\nu, \sigma)<\infty$, there exists a constant speed ${\mssW}_{2, \mssd_\U}$-geodesic $(\nu_t)_{t \in [0,1]}$ connecting $\nu$ and $\sigma$:
\begin{align} \label{e:GD}
\nu_0=\nu \cquad \nu_1=\sigma \cquad {\mssW}_{2, \mssd_\U}(\nu_t, \nu_s) =|t-s|{\mssW}_{2, \mssd_\U}(\nu, \sigma) \cquad s, t \in [0, 1]\fstop
\end{align}
\item \label{c:2} $(${\bf $K$-geodesical convexity}$)$ The entropy~$\Ent_\QP$ is $K$-geodesically convex along every constant speed ${\mssW}_{2, \mssd_\U}$-geodesic $(\nu_t)_{t \in [0,1]}$:
\begin{align*}
\Ent_\QP(\nu_t) \le (1-t)\Ent_{\QP}(\nu_0) + t \Ent_{\QP}(\nu_1) -\frac{K}{2}t(1-t)\mssW_{2, \mssd_{\U}}(\nu_0, \nu_1)\cquad t \in [0,1] \fstop
\end{align*}
\item \label{c:8} $(${\bf $\RCD$ property}$)$ \label{c:8} The extended metric measure space $(\U, \tau_\mrmv, \mssd_\U, \QP)$ is an $\RCD(K,\infty)$ space. 
\item \label{c:3}  The descendent ${\mssW}_{2, \mssd_\U}$-slope of $\Ent_\QP$ coincides with the Fisher information:
\begin{align*}
|{\sf D}_{{\mssW}_{2, \mssd_\U}}^- \Ent_\QP(\nu)|^2 = 4{\sf F}_\QP(\nu) \cquad \nu \in \dom{\Fis_\QP}\ ;
\end{align*}

%

\item  \label{c:6}  $(${\bf Distorted Brunn--Minkowski inequality}$)$ Let $A_0, A_1$ be Borel sets with $\QP(A_0)\QP(A_1)>0$ and define the $t$-barycentre $(0 \le t \le 1)$ as 
$$[A_0, A_1]_t:=\{\eta \in \U: \eta=\gamma_t,\ \gamma \in {\rm Geo}(\U, \mssd_\U),\ \gamma_0 \in A_0,\ \gamma_1 \in A_1\} \fstop$$
Define $\nu_0:=\frac{\1_{A_0}}{\QP(A_0)} \cdot \mu$ and $\nu_1:=\frac{\1_{A_1}}{\QP(A_1)} \cdot \mu$. If $\mssW_{2, \mssd_{\U}}(\nu_0, \nu_1)<\infty$, 
\begin{align*}
\log \frac{1}{\mu\bigl([A_0, A_1]_t \bigr)} \le (1-t)	\log \frac{1}{\mu(A_0)} + t \log \frac{1}{\mu(A_1)} - \frac{K}{2}(1-t)t\mssW_{2, \mssd_\U}(\nu_0, \nu_1) \fstop
\end{align*}
\item  \label{c:7}  $(${\bf HWI inequality}$)$ For~$\nu_0 \in \dom{\Ent_\QP}, \nu_1 \in \mcP(\U)$ with  $\mssW_{2, \mssd_\U}(\nu_0, \nu_1)<\infty$, 
$$\Ent_{\QP}(\nu_0) \le \Ent_{\QP}(\nu_1)+2\mssW_{2, \mssd_{\U}}(\nu_0, \nu_1)\Fis_{\QP}(\nu_0)^{1/2} -\frac{K}{2}\mssW_{2, \mssd_{\U}}(\nu_0, \nu_1)^2 \fstop$$

\end{enumerate}
\end{cor}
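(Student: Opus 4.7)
All six statements (a)--(f) are standard metric-measure consequences of the $\EVI_K$ property established in Theorem~\ref{t:main}, combined with the identification $\Ch^{\mssd_\U,\QP}=\E^{\U,\QP}$ from Theorem~\ref{t:E=Ch}. The overall plan is to transcribe the Daneri--Savaré and Ambrosio--Gigli--Savaré machinery of~\cite{AmbGigSav14b, AmbGigSav15, AmbErbSav16} to our setting, while carefully handling the fact that $(\U,\tau_\mrmv,\mssd_\U,\QP)$ is only an \emph{extended} metric measure space in the sense of Definition~\ref{d:EMM}: the topology $\tau_{\mssd_\U}$ is strictly finer than $\tau_\mrmv$, and $\mssd_\U$-metric balls are typically $\QP$-negligible. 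The measurable selection of geodesics of Lemma~\ref{l:MS} and the approximating pseudo-distances $\mssd_\U^{(r)}$ from Proposition~\ref{p:PDF} will be the technical substitutes for the usual topological arguments.

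I would prove (a) and (b) simultaneously. Given $\nu_0,\nu_1\in\dom{\Ent_\QP}$ with $\mssW_{2,\mssd_\U}(\nu_0,\nu_1)<\infty$, fix an optimal coupling $\mathsf c\in\Opt(\nu_0,\nu_1)$, which is concentrated on the Borel set $D=\{\mssd_\U<\infty\}$. Lemma~\ref{l:MS} provides a universally measurable selection $\mathsf{GeoSel}: D\to{\rm Geo}(\U,\mssd_\U)$, and the pushforward $\boldsymbol{\pi}:=\mathsf{GeoSel}_\#\mathsf c$ is a dynamical optimal plan; its time-marginals $\nu_t:=(\mathrm{ev}_t)_\#\boldsymbol{\pi}$ form a constant-speed $\mssW_{2,\mssd_\U}$-geodesic in $\mcP(\U)$ connecting $\nu_0$ and $\nu_1$. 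The $K$-convexity of $\Ent_\QP$ along \emph{this} (and in fact every) $\mssW_{2,\mssd_\U}$-geodesic is then the classical implication $\EVI_K\Rightarrow\CD(K,\infty)$ of Daneri--Savaré, in the form adapted to extended metric measure spaces in~\cite[Thm.~3.2]{AmbGigSav14b}: the only ingredients required are $\EVI_K$, the density $\overline{\dom{\Ent_\QP}}^{\mssW_{2,\mssd_\U}}=\mcP_e(\U)$ from Proposition~\ref{p:DE}, and the $L^2$-contraction of $\mcT_t^{\U,\QP}$. As a byproduct the geodesic stays in $\dom{\Ent_\QP}$, proving both (a) and (b).

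For (c), infinitesimal Hilbertianity of $(\U,\tau_\mrmv,\mssd_\U,\QP)$ is immediate from Theorem~\ref{t:E=Ch} since $\E^{\U,\QP}$ is quadratic by construction. Combined with (b) this is exactly Definition~\ref{d:RCD} of $\RCD(K,\infty)$. For (d), I would apply the identity $|\mathsf D_{\mssW_{2,\mssd_\U}}^-\Ent_\QP|^2(\nu)=4\,\Ch^{\mssd_\U,\QP}(\sqrt{\rho})$, which holds in general extended metric measure spaces whenever $\Ch^{\mssd_\U,\QP}$ is quadratic and $\Ent_\QP$ is $K$-convex along $\mssW_{2,\mssd_\U}$-geodesics; see \cite[\S\,11]{AmbErbSav16}. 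Theorem~\ref{t:E=Ch} then identifies the right-hand side with $4\E^{\U,\QP}(\sqrt{\rho})=\Fis_\QP(\nu)$.

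Finally, (e) and (f) are pointwise applications of (b). For (e), take $\nu_0=\QP(A_0)^{-1}\1_{A_0}\cdot\QP$ and $\nu_1=\QP(A_1)^{-1}\1_{A_1}\cdot\QP$; the $\mssW_{2,\mssd_\U}$-geodesic $(\nu_t)$ from (a) has every $\nu_t$ supported in $[A_0,A_1]_t$ by construction of $\mathsf{GeoSel}$, so $\Ent_\QP(\nu_t)\ge-\log\QP([A_0,A_1]_t)$; inserting this lower bound and the closed-form $\Ent_\QP(\nu_i)=-\log\QP(A_i)$ into the $K$-convexity inequality of (b) yields the distorted Brunn--Minkowski bound. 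For (f), HWI follows from the standard one-line argument: if $(\nu_t)$ is the $\mssW_{2,\mssd_\U}$-geodesic from $\nu_0$ to $\nu_1$, then (b) rearranges to
\begin{equation*}
\Ent_\QP(\nu_0)-\Ent_\QP(\nu_1)\le -\frac{d^+}{dt}\Big|_{t=0}\Ent_\QP(\nu_t)-\frac{K}{2}\mssW_{2,\mssd_\U}(\nu_0,\nu_1)^2,
\end{equation*}
and the Cauchy--Schwarz-type bound on the right derivative of the entropy along an absolutely continuous curve, together with identity (d), gives $-\tfrac{d^+}{dt}|_{t=0}\Ent_\QP(\nu_t)\le 2\,\mssW_{2,\mssd_\U}(\nu_0,\nu_1)\Fis_\QP(\nu_0)^{1/2}$. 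The main obstacle throughout is conceptual rather than computational: each step above needs to be checked to survive the weakening from a metric to an extended metric measure space, which is where Lemma~\ref{l:MS}, Proposition~\ref{p:PDF}, Proposition~\ref{p:DE}, and Theorem~\ref{t:E=Ch} do the essential work.
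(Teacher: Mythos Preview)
Your proposal is essentially correct and follows the same overall route as the paper, but two of your deductions are organised differently from the paper's proof, and in one place your argument is slightly less self-contained.

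\textbf{Parts (a)--(b).} You and the paper both construct the geodesic $(\nu_t)$ via the measurable selection $\mathsf{GeoSel}$ (this is exactly the content of Corollary~\ref{p:EMW1}). The difference is in how the $K$-convexity of $\Ent_\QP$ along $(\nu_t)$ is extracted from $\EVI_K$. You cite an extended-metric version of the Daneri--Savar\'e implication directly. The paper instead \emph{regularises} first: for each fixed $t\in[0,1]$ it considers the curve $\e\mapsto \mcT_\e^{\U,\QP}\nu_t$, notes that by Corollary~\ref{c:LLL} this curve, together with the endpoints $\nu,\sigma$, lives in a single $\mssW_{2,\mssd_\U}$-component $\mcP_{[\nu]}(\U)$, which is a genuine (non-extended) metric space. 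One then applies the metric-space result \cite[Thm.~3.2]{DanSav08} verbatim to obtain
\[
\Ent_\QP(\mcT_\e^{\U,\QP}\nu_t)\le (1-t)\Ent_\QP(\nu)+t\Ent_\QP(\sigma)-\tfrac{K}{2}t(1-t)\mssW_{2,\mssd_\U}^2(\nu,\sigma),
\]
and lets $\e\downarrow 0$ using $\tau_\mrmw$-lower semicontinuity of $\Ent_\QP$. This simultaneously proves $\nu_t\in\dom{\Ent_\QP}$ and the convexity inequality, without needing an off-the-shelf extended-space version of Daneri--Savar\'e. Your approach is fine provided the cited reference really covers the extended case; the paper's regularisation trick makes the argument self-contained.

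\textbf{Part (f).} Here the two arguments are genuinely different. You derive HWI in the textbook way: differentiate the $K$-convexity inequality of (b) along the Wasserstein geodesic at $t=0$ and bound the one-sided derivative by the descending slope times the speed, then invoke (d). The paper instead works with the \emph{heat semigroup}: it sets $\mssW_t:=\mssW_{2,\mssd_\U}(\mcT_t^{\U,\QP}\nu_0,\nu_1)$, applies $\EVI_K$ from Theorem~\ref{t:main} at $t=0$, uses Corollary~\ref{c:LLL} for the continuity $\mssW_t\to\mssW_0$, and bounds $\limsup_{t\to 0^+}\frac{-(\mssW_t-\mssW_0)}{t}$ via the triangle inequality and $\lim_{t\to 0^+}\frac{\mssW_{2,\mssd_\U}(\mcT_t\nu_0,\nu_0)}{t}=|\mathsf D^-_{\mssW_{2,\mssd_\U}}\Ent_\QP|(\nu_0)$ from \cite[(3.17)]{MurSav20}, before invoking (d). Both routes are valid; yours is closer to the usual $\CD$-space derivation, the paper's stays entirely inside the $\EVI$ framework and does not require first producing a geodesic from $\nu_0$ to $\nu_1$.

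Parts (c), (d), (e) are handled the same way in both; for (e) the paper cites \cite[Thm.~4.2]{Lis16} for $\supp[\nu_t]\subset[A_0,A_1]_t$, which is precisely your ``by construction of $\mathsf{GeoSel}$''.
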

\begin{proof} 
\ref{c:1} Thanks to~Cor.~\ref{p:EMW1}, for $\nu, \sigma \in \dom{\Ent_\QP}$ with $\mssW_{2, \mssd_\U}(\nu, \sigma)<\infty$, there exists a constat speed $\mssW_{2, \mssd_\U}$-geodesic $(\nu_t)_{t \in [0,1]} \subset \mcP(\U)$ with $\nu_0=\nu$ and $\nu_1=\sigma$. Due to  Prop.~\ref{p:DE}, we have $(\nu_t)_{t \in [0,1]} \subset \mcP_e(\U)$.  It suffices to prove $(\nu_t)_{t \in [0,1]} \subset \dom{\Ent_\QP}$. Let $\nu_t^\e:=\mcT_\e^{\U, \QP}\nu_t$. Due to Cor.~\ref{c:LLL}, we have $\mcT_\e^{\U, \QP}\nu_t \in \dom{\Ent_\QP}$ and $\mssW_{2, \mssd_\U}(\mcT_\e^{\U, \QP}\nu_t, \nu)<\infty$ for every $\e>0$, by which we may think of $\e \mapsto \mcT_\e^{\U, \QP}\nu_t$ as an $\EVI$-flow in {\it non-extended} metric space. Thus, applying~the result~\cite[Thm.~3.2]{DanSav08} in metric spaces, we have 
\begin{align} \label{e:CD}
\Ent_\QP(\mcT_\e^{\U, \QP} \nu_t) \le (1-t)\Ent_\QP(\nu)+t\Ent_\QP(\sigma)-\frac{K}{2}t(1-t)\mssW_{2, \mssd_\U}^2(\nu, \sigma) \fstop 
\end{align}
Since $\mcT_\e^{\U, \QP} \nu_t \xrightarrow{\tau_\mrmw} \nu_t$ as $\e \to 0$ due to \eqref{c:LLL3}, the $\tau_\mrmw$-lower semi-continuity of the relative entropy yields $(\nu_t)_{t \in [0,1]} \subset \dom{\Ent_\QP}$.

\ref{c:2} immediately follows by~the formula~\eqref{e:CD} and the lower semi-continuity $\Ent_\QP(\nu_t) \le \liminf_{\e \to 0}\Ent_\QP(\mcT_\e^{\U, \QP} \nu_t)$.
 
 \ref{c:8} follows by \ref{c:2} and Thm.~\ref{t:E=Ch}. 
 
 \ref{c:3} follows by~\ref{c:2} and \cite[Thm.~9.3]{AmbGigSav14}.

 \ref{c:6} The same proof as~\cite[Thm.~18.5, $N=\infty$]{Vil09} works even for extended metric measure spaces. For the sake of clarity, we give the proof below. We apply~\ref{c:2} with~$\nu_0=\frac{\1_{A_0}}{\QP(A_0)} \cdot \mu$ and~$\nu_1:=\frac{\1_{A_1}}{\QP(A_1)} \cdot \mu$.   By~\ref{c:1} and \ref{c:2}, there exists a $\mssW_{2, \mssd_\U}$-constant geodesic $(\nu_t)_{t \in [0,1]}$ connecting $\nu_0$ and $\nu_1$ such that 
 \begin{align*}
\Ent_\QP(\nu_t) \le (1-t) \log\frac{1}{\nu(A_0)} + t \log\frac{1}{\nu(A_1)}-\frac{K}{2}t(1-t)\mssW_{2, \mssd_{\U}}(\nu_0, \nu_1)\cquad t \in [0,1] \fstop
\end{align*}
Let $\nu_t=f_t\cdot\mu$. Due to the result~\cite[Thm.~4.2]{Lis16} in extended metric measure spaces,  we have $\supp[\nu_t] \subset [A_0, A_1]_t$.  So, $\nu_t( [A_0, A_1]_t)=1$.  By Jensen's inequality, 
\begin{align*}
\Ent_\QP(\nu_t) &= \mu([A_0, A_1]_t)\int_{[A_0, A_1]_t} (f_t \log f_t) \frac{1}{\mu([A_0, A_1]_t)} \diff \QP 
\\
&\ge\mu([A_0, A_1]_t)\int_{[A_0, A_1]_t} f_t \frac{1}{\mu([A_0, A_1]_t)} \diff \QP \log \int_{[A_0, A_1]_t} f_t \frac{1}{\mu([A_0, A_1]_t)} \diff \QP 
\\
&=\nu_t([A_0, A_1]_t)\log \frac{\nu_t([A_0, A_1]_t)}{\mu([A_0, A_1]_t)} = \log \frac{1}{\mu([A_0, A_1]_t)} \comma
\end{align*}
which completes the proof. 
 
\ref{c:7}  Let $\mssW_{t}:=\mssW_{2, \mssd_\U}(\mcT_t^{\U, \QP}\nu_0, \nu_1)$. We may assume $\nu_0 \in \dom{\Fis_\QP}$, otherwise, we have nothing to prove. By Thm.~\ref{t:main}, ${\mssW}_t<+\infty$ and the following inequality holds:
 \begin{align}  \label{e: EVIT} 
\frac{1}{2}\frac{\diff^+}{\diff t}{\mssW}_t^2  + \frac{K}{2}{\mssW}_{t}^2 \le \Ent_{\mu}({\nu_1}) - \Ent_{\mu}({\mathcal T^{\U, \QP}_t \nu_0})\fstop 
\end{align}
By \ref{c:LLL2} in Cor.~\ref{c:LLL}, $\mssW_t \xrightarrow{t \to 0}\mssW_0=\mssW_{2, \mssd_\U}(\nu_0, \nu_1)$.  Furthermore, $\Ent_\QP({\mathcal T^{\U, \QP}_t \nu_0}) \xrightarrow{t \to 0} \Ent_\QP(\nu_0)$, which can be easily seen by the $\tau_\mrmw$-lower semi-continuity of $\Ent_\QP$ and the contraction~\eqref{e:CEH}: 
$$\Ent_\QP(\nu_0) \ge \limsup_{t \to 0} \Ent_\QP(\mathcal T^{\U, \QP}_t \nu_0) \ge  \liminf_{t \to 0} \Ent_\QP(\mathcal T^{\U, \QP}_t \nu_0)  \ge  \Ent_\QP(\nu_0) \fstop$$
Thus, applying~\eqref{e: EVIT} with $t=0$, we have 
$$\limsup_{t \to 0^+}\frac{\mssW_t^2-\mssW_0^2}{2t} + \frac{K}{2}\mssW_0^2 \le \Ent_\QP(\nu_1)-\Ent_\QP(\nu_0) \fstop$$
In particular, 
\begin{align} \label{e:KI1}
\limsup_{t \to 0^+}\frac{\mssW_t-\mssW_0}{t} 
&= \limsup_{t \to 0^+}\frac{\mssW_t^2-\mssW_0^2}{t(\mssW_t+\mssW_0)} =  \limsup_{t \to 0^+}\frac{\mssW_t^2-\mssW_0^2}{2t\mssW_0} 
\\
&\le \frac{ \Ent_\QP(\nu_1)-\Ent_\QP(\nu_0)}{\mssW_0} -\frac{K}{2}\mssW_0 \fstop \notag
\end{align}
Here we may assume $\mssW_0>0$ (otherwise there is nothing to prove), and we used $\lim_{t \to 0}\mssW_t=W_0$.
We have $-(\mssW_t-\mssW_0) \le \mssW_{2, \mssd_\U}(\mcT_t^{\U, \QP} \nu_0, \nu_0)$ by triangle inequality. By e.g.,~\cite[(3.17)]{MurSav20}, it holds that 
$$\lim_{t \to 0^+}\frac{\mssW_{2, \mssd_\U}(\mcT_t^{\U, \QP} \nu_0, \nu_0)}{t}= |{\sf D}_{{\mssW}_{2, \mssd_\U}}^- \Ent_\QP(\nu_0)| \fstop$$ 
Thus, applying~\ref{c:3}, 
$$\liminf_{t \to 0^+}\frac{-(\mssW_t-\mssW_0)}{t}=-\limsup_{t \to 0^+}\frac{\mssW_t-\mssW_0}{t} \le 2 \sqrt{{\sf F}_\QP(\nu_0)} \fstop$$
Combined with~\eqref{e:KI1}, the conclusion follows by~
$$\Ent_\QP(\nu_0) \le \Ent_\QP(\nu_1) +  2 \mssW_{2, \mssd_\U}(\nu_0, \nu_1)\sqrt{{\sf F}_\QP(\nu_0)}  - \frac{K}{2}\mssW_{2, \mssd_\U}(\nu_0, \nu_1)^2 \fstop \qedhere$$  
\end{proof}

\subsection{Gradient flow} In this section, we show that the dual semigroup $\bigl(\mathcal T_t^{\U, \QP}\nu_0\bigr)_{t \ge 0}$ is the unique $\mssW_{2, \mssd_{\U}}$-gradient flow in the sense of curve of maximal slope as well as the minimising movement. 
For $\nu \in \mcP(\U)$, we define 
$$\mcP_{[\nu]}(\U):=\{\sigma \in \mcP(\U): \mssW_{2, \mssd_\U}(\nu, \sigma)<+\infty\} \fstop$$
\begin{cor}[Gradient flow: curves of maximal slope] \label{c:GF}Suppose~\ref{ass:CE}, ~\ref{ass:ConditionalClos} and Assumption~\ref{a:AP}. 
Then, the dual semigroup~$\bigl(\mathcal T_t^{\U, \QP}\nu_0\bigr)_{t \ge 0}$ is the unique solution to the $\mssW_{2, \mssd_{\U}}$-gradient flow of $\Ent_{\QP}$ starting at $\nu_0$ in the sense of curves of maximal slope. Namely, for every $\nu_0 \in \mcP_e(\U)$, the curve $(0, \infty) \ni t \mapsto \nu_t=\mathcal T_t^{\U, \QP} \nu_0\in \dom{\Ent_\QP}$ is the unique solution to the energy equality:
\begin{align*}
\frac{\diff}{\diff t} \Ent_\QP({\nu_t}) = -|\dot\nu_t|^2 = -|{\sf D}^-_{\mssW_{2, \mssd_{\U}}} \Ent_\QP|^2(\nu_t) \quad\text{a.e.~$t>0$} \fstop
\end{align*}
\end{cor}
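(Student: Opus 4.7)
The plan is to derive both the energy identity and uniqueness directly from the $\EVI_K$ property established in Theorem~\ref{t:main}, following the standard AGS scheme \cite{AmbGigSav08, AmbGigSav15} suitably adapted to the extended metric measure setting along the lines of \cite{ErbHue15}.

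First, I would note that by Corollary~\ref{c:LLL}, $\nu_t := \mcT_t^{\U, \QP}\nu_0 \in \dom{\Ent_\QP}$ and $\mssW_{2, \mssd_\U}(\nu_t, \nu_0) < \infty$ for every $t > 0$, while the $\mssW_{2,\mssd_\U}$-contraction of Theorem~\ref{t:WC} ensures that $t \mapsto \nu_t$ is locally absolutely continuous in $(\mcP_e(\U), \mssW_{2,\mssd_\U})$, so the metric derivative $|\dot\nu_t|$ is well-defined at a.e.~$t > 0$. Next, to establish the energy identity, I would apply the $\EVI_K$ inequality from Theorem~\ref{t:main} with reference point $\nu = \nu_s$ for $0 < s$ close to $t$, and pass to the limit $s \to t$ (dividing by $t-s$ and using the semigroup property) to obtain the dissipation inequality
\begin{align*}
\frac{d}{dt}\Ent_\QP(\nu_t) \le -\tfrac{1}{2}|\dot\nu_t|^2 - \tfrac{1}{2}|{\sf D}^-_{\mssW_{2,\mssd_\U}}\Ent_\QP|^2(\nu_t) \qquad \text{a.e.~$t > 0$}.
\end{align*}

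The reverse inequality is the generic chain rule along absolutely continuous curves for lower semicontinuous, $K$-geodesically convex functionals in the sense of \cite[Prop.~1.4.4, Cor.~2.4.10]{AmbGigSav08}, which is available thanks to the $K$-geodesic convexity of $\Ent_\QP$ from Corollary~\ref{c:GC}\ref{c:2} and the identification $|{\sf D}^-_{\mssW_{2,\mssd_\U}}\Ent_\QP|^2 = 4\Fis_\QP$ from Corollary~\ref{c:GC}\ref{c:3}. Combined with the Young inequality $ab \le \tfrac12 a^2 + \tfrac12 b^2$, equality is forced everywhere, yielding the energy identity
\begin{align*}
\frac{d}{dt}\Ent_\QP(\nu_t) = -|\dot\nu_t|^2 = -|{\sf D}^-_{\mssW_{2,\mssd_\U}}\Ent_\QP|^2(\nu_t) \qquad \text{a.e.~$t>0$.}
\end{align*}

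For uniqueness, I would invoke the AGS theorem \cite[Thm.~4.0.4]{AmbGigSav08} for curves of maximal slope of $K$-geodesically convex functionals: since $\Ent_\QP$ is $K$-geodesically convex along every $\mssW_{2,\mssd_\U}$-geodesic in $\dom{\Ent_\QP}$ by Corollary~\ref{c:GC}\ref{c:2}, any two curves of maximal slope starting at $\nu_0$ must coincide. Alternatively, any such curve satisfies the $\EVI_K$ inequality, and uniqueness of $\EVI_K$ flows is immediate from the standard symmetrisation trick that yields $\frac{d^+}{dt}\mssW_{2,\mssd_\U}(\nu_t, \sigma_t)^2 + 2K\mssW_{2,\mssd_\U}(\nu_t, \sigma_t)^2 \le 0$ between any two flows, forcing equality when $\nu_0 = \sigma_0$.

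The main obstacle I anticipate is justifying the chain rule and the passage to the limit in the dissipation inequality in the extended metric setting, where $\tau_{\mssd_\U}$ is neither separable nor compatible with $\supp[\QP]$, and where the $L^\infty$-to-$\Lip$ regularisation of $T_t^{\U, \QP}$ is not available a priori everywhere. This is resolved exactly as in \cite[\S5]{ErbHue15}: one works on the full-measure set $\Theta$ of Assumption~\ref{a:AP} and leverages both the identification $\E^{\U, \QP} = \Ch^{\mssd_\U, \QP}$ from Theorem~\ref{t:E=Ch} and the $L\log L$-regularisation estimate from Corollary~\ref{c:LLL}.
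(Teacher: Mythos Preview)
Your approach is correct in spirit and would work, but the paper takes a much shorter route. Rather than reconstructing the AGS derivation of the energy identity from $\EVI_K$ (dissipation inequality from EVI, reverse inequality from the chain rule, then Young), the paper simply invokes the black-box implications from Muratori--Savar\'e~\cite{MurSav20}: \cite[Thm.~3.5]{MurSav20} gives that every $\EVI_K$ flow is a curve of maximal slope, and \cite[Thm.~4.2]{MurSav20} gives uniqueness. The only point requiring care is that \cite{MurSav20} is stated for genuine metric spaces, not extended ones; the paper handles this not via the \cite{ErbHue15}-style arguments you sketch, but by the cleaner observation that Theorem~\ref{t:main} (applied with $\sigma=\nu_0$) forces $\mssW_{2,\mssd_\U}(\nu_t,\nu_s)<\infty$ for all $s,t\ge 0$, so the entire trajectory lives in the \emph{non-extended} metric space $\mcP_{[\nu_0]}(\U)=\{\sigma:\mssW_{2,\mssd_\U}(\nu_0,\sigma)<\infty\}$, where \cite{MurSav20} applies verbatim.

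One small imprecision in your sketch: taking $\nu=\nu_s$ in EVI and passing $s\to t$ yields only $\tfrac12|\dot\nu_t|^2\le -\tfrac{\diff}{\diff t}\Ent_\QP(\nu_t)$; the slope term $|{\sf D}^-\Ent_\QP|(\nu_t)\le|\dot\nu_t|$ requires a separate limit with \emph{arbitrary} $\nu\to\nu_t$ in EVI. Combining both with the chain rule then forces the energy identity. Your route is more self-contained and makes the mechanism visible; the paper's route is more economical and sidesteps the extended-metric subtleties by a single reduction.
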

%
\begin{proof}
This is a direct application of Thm.~\ref{t:main} and \cite[Thm.~3.5]{MurSav20} with ~\cite[Thm.~4.2]{MurSav20} for the uniqueness. Although \cite{MurSav20} proves only in metric spaces, we can apply the result to our setting as follows: Noting that $\mssW_{2, \mssd_\U}(\nu_t, \nu_s)<\infty$ for every $s, t \in [0,\infty)$ by Thm.~\ref{t:main} with $\sigma=\nu_0$, we can regard $(\nu_t)$ as a curve in a (non-extended) metric space $\mcP_{[\nu_0]}(\U)$ to which we can apply the results in \cite{MurSav20}. 
\end{proof}


\begin{defs}[Minimising Movement {\cite[Dfn.~5.2 with $\eta=0$]{MurSav20}}] \ \label{d:MMS}
\begin{itemize}
\item Let $\tau>0$ and $U_\tau^0 \in \dom{\Ent_\QP}$. {\it A $\tau$-discrete minimising movement} starting at $U_\tau^0$ is any sequence $(U_\tau^n)_{n \in \N} \subset \mcP_{[U_\tau^0]}(\U)^{\times \infty}$ 
such that 
$$\frac{1}{2\tau}\mssW_{2, \mssd_\U}(U_\tau^n, U_\tau^{n-1})^2+\Ent_\QP(U_\tau^n) \le \frac{1}{2\tau}\mssW_{2, \mssd_\U}(V, U_\tau^{n-1})^2+\Ent_\QP(V)$$
for every $V \in \dom{\Ent_\QP}$ and $n \in \N$. We denote by $\MM_\tau(U_\tau^0) \subset \mcP_{[U_\tau^0]}(\U)^{\times \infty}$ the collection of all $\tau$-discrete minimising movements starting at~$U_\tau^0$;
\item A piecewise constant interpolation $t \mapsto \overline{U}_\tau(t)$ of $(U_\tau^n)_{n \in \N} \in \MM_\tau(U_\tau^0)$ is defined as 
$$\overline{U}_\tau(t):={U}^n_\tau \quad t \in ((n-1)\tau, n\tau) \cquad \overline{U}_\tau(0):=U^0_\tau \scolon$$
\item We say that a curve $u: [0,+\infty) \to \mcP_{[u_0]}(\U)$ is a {\it minimising movement (a.k.a.~JKO scheme)}, denoted by $u \in \MM(\mcP_{[u_0]}(\U), \mssW_{2, \mssd_\U}, \Ent_\QP)$,  if $u(0)=u_0 \in \dom{\Ent_\QP}$ and there exists an element~$(U_\tau^n)_{n \in \N} \in \MM_\tau(u_0)$ for sufficiently small $\tau$ such that its piecewise constant interpolation~$\overline{U}_\tau$ satisfies
$$\mssW_{2, \mssd_\U}\text{-}\lim_{\tau \downarrow 0}\overline{U}_\tau(t)=u(t) \quad t \ge 0  \fstop$$
\end{itemize}
\end{defs}

The following statement shows that the dual semigroup~$\bigl(\mathcal T_t^{\U, \QP}\nu_0\bigr)_{t >0}$ is the unique $\mssW_{2, \mssd_\U}$-gradient flow in the sense of minimising movement (a.k.a.~JKO scheme) as well. 
\begin{cor}[Gradient flow: minimising movement] \label{c:MM} Suppose~\ref{ass:CE}, ~\ref{ass:ConditionalClos} and Assumption~\ref{a:AP}. Then,  the dual semigroup $t \mapsto \mathcal T_t^{\U, \QP}\nu_0$ is the unique element of $\MM(\mcP_{[\nu_0]}(\U), \mssW_{2, \mssd_\U}, \Ent_\QP)$ for every $\nu_0 \in \dom{\Ent_\QP}$. 
\end{cor}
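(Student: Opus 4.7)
The plan is to reduce the statement to the standard theory of minimising movements for $\EVI_K$ gradient flows in genuine (i.e.~non-extended) geodesic metric spaces, and then invoke Theorem~\ref{t:main}. The key observation is that once we fix $\nu_0 \in \dom{\Ent_\QP}$, the restricted space
$$\bigl(\mcP_{[\nu_0]}(\U), \mssW_{2, \mssd_\U}\bigr)$$
is a complete geodesic (non-extended) metric space: finiteness of $\mssW_{2, \mssd_\U}$ on $\mcP_{[\nu_0]}(\U)^{\times 2}$ follows from the triangle inequality, completeness inherits from Corollary~\ref{p:EMW1}, and the geodesic property together with the fact that $\mcP_{[\nu_0]}(\U)$ is closed under taking $\mssW_{2,\mssd_\U}$-geodesics follows because $\mssd_\U$-geodesics stay within finite $\mssd_\U$-distance of their endpoints, as does any optimal coupling plan.

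Next, using Theorem~\ref{t:main}, for each $\nu_0 \in \dom{\Ent_\QP}$ the curve $t \mapsto \mcT_t^{\U,\QP}\nu_0$ stays in $\dom{\Ent_\QP} \cap \mcP_{[\nu_0]}(\U)$ and satisfies $\EVI_K$ against every test measure $\nu \in \dom{\Ent_\QP}$ with $\mssW_{2,\mssd_\U}(\nu_0,\nu)<\infty$. Combined with the $K$-geodesic convexity of $\Ent_\QP$ along $\mssW_{2,\mssd_\U}$-geodesics established in Corollary~\ref{c:GC}\ref{c:2}, we have exactly the hypotheses needed to apply the convergence theory of the minimising movement (JKO) scheme for $K$-convex functionals on geodesic metric spaces as developed in Ambrosio--Gigli--Savar\'e~\cite{AmbGigSav08} (see also \cite{DanSav08, MurSav20}).

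Concretely, I would carry out the following steps. First, verify that every discrete minimising step $U_\tau^{n-1} \mapsto U_\tau^n$ admits at least one minimiser in $\mcP_{[\nu_0]}(\U)$: this follows from the coercivity of $V \mapsto \frac{1}{2\tau}\mssW_{2,\mssd_\U}(V,U_\tau^{n-1})^2 + \Ent_\QP(V)$, the $\tau_\mrmw$-lower semi-continuity of both the relative entropy and $\mssW_{2,\mssd_\U}$ (Cor.~\ref{p:EMW1}), and the $\tau_\mrmw$-compactness of sublevel sets of the entropy, noting that the constraint $\mssW_{2,\mssd_\U}(V,U_\tau^{n-1})<+\infty$ together with $\mssW_{2,\mssd_\U}(U_\tau^{n-1},\nu_0)<\infty$ keeps us inside $\mcP_{[\nu_0]}(\U)$. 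Second, combining the $K$-geodesic convexity of $\Ent_\QP$ with the variational inequality defining discrete minimising movements, derive the standard a-priori estimates (\cite[Thm.~4.0.4]{AmbGigSav08}) on $(U_\tau^n)_{n \in \N}$ and on the piecewise-constant interpolation $\overline{U}_\tau$: a uniform bound on $\Ent_\QP(U_\tau^n)$, square-summability of the discrete velocities $\mssW_{2,\mssd_\U}(U_\tau^n,U_\tau^{n-1})/\tau$, and H\"older-in-time control of $\overline{U}_\tau$. Third, pass to the $\tau \downarrow 0$ limit to obtain a limit curve $u:[0,\infty) \to \mcP_{[\nu_0]}(\U)$, and show by the standard argument (see \cite[Thm.~4.0.4, Thm.~4.1.2]{AmbGigSav08} or \cite[Thm.~5.3]{MurSav20}) that the $K$-convexity of $\Ent_\QP$ forces $u$ to satisfy $\EVI_K$. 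Finally, the uniqueness of $\EVI_K$-flows (already guaranteed by Theorem~\ref{t:main}, see also \cite[Thm.~4.2]{MurSav20}) forces $u(t) = \mcT_t^{\U,\QP}\nu_0$, so every minimising movement starting at $\nu_0$ coincides with the dual semigroup.

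The only point requiring care is that our ambient space $(\U,\tau_\mrmv,\mssd_\U,\QP)$ is only an \emph{extended} metric measure space, so one cannot directly quote the results of~\cite{AmbGigSav08}. The main obstacle is thus to check that no step in that theory uses finiteness of $\mssW_{2,\mssd_\U}$ globally on $\mcP(\U)$: all the compactness, lower semi-continuity, and convexity arguments take place entirely inside the fiber $\mcP_{[\nu_0]}(\U)$, on which $\mssW_{2,\mssd_\U}$ is a genuine complete geodesic distance, so this reduction is harmless. This mirrors the strategy already used in Corollary~\ref{c:GC}\ref{c:1} and in Corollary~\ref{c:GF}, where results from the metric-space theory were applied on $\mcP_{[\nu_0]}(\U)$ to curves $t \mapsto \mcT_t^{\U,\QP}\nu_0$.
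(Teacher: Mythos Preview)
Your proposal is correct and follows essentially the same approach as the paper: both reduce to the genuine metric space $\bigl(\mcP_{[\nu_0]}(\U),\mssW_{2,\mssd_\U}\bigr)$ and then invoke the minimising-movement theory for $\EVI_K$ flows. The paper is simply more economical: rather than re-deriving existence of discrete minimisers, a-priori estimates, and convergence to an $\EVI$ limit, it cites \cite[Cor.~5.7]{MurSav20} directly and only verifies its hypotheses (that $\dom{\Ent_\QP}$ is an approximate length subset, via Corollary~\ref{c:GC}, and that entropy sublevel sets are $\mssW_{2,\mssd_\U}$-complete). One point the paper makes explicit that you glossed over: the fiber $\mcP_{[\nu_0]}(\U)$ need not be \emph{separable} under $\mssW_{2,\mssd_\U}$, so one must use a reference (such as \cite{MurSav20}) that does not assume separability; your citation of \cite{AmbGigSav08} alone would be insufficient here.
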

\begin{proof}
Thanks to Cor.~\ref{c:LLL}, $\mssW_{2, \mssd_\U}(\nu_0, \mcT_t^{\U, \QP}\nu_0)<+\infty$ for every $t \ge 0$. So we can think of $t \mapsto \mcT_t^{\U, \QP}\nu$ as a curve in a non-extended metric space~$\mcP_{[\nu_0]}(\U)=\{\sigma \in \mcP(\U): \mssW_{2, \mssd_\U}(\nu_0, \sigma)<+\infty\}$. To complete the proof, we apply~\cite[Cor..~5.7]{MurSav20} with $X=\mcP_{[\nu_0]}$, $\mssd=\mssW_{2, \mssd_\U}$ and $\varphi=\Ent_\QP$. We note that the separability of the topology induced by distance is not assumed there, see~\cite[(3.1)]{MurSav20}.  The domain~$\dom{\Ent_\QP}$ is geodesic due to~Cor.~\ref{c:GC}, thus in particular, it is an approximate length subset in the sense of \cite[Dfn.~2.5]{MurSav20}. Now it suffices to show that the sublevel set~$E_c:=\{\Ent_\QP \le c\}$ is $\mssW_{2, \mssd_\U}$-complete for every $c \in \R_+$. Let $(\nu_n)_{n \in \N}\subset E_c$ be a $\mssW_{2, \mssd_\U}$-Cauchy sequence. Due to the compactness of $E_c$ with respect to the weak topology~$\tau_\mrmw$, we can take a $\tau_\mrmw$-converging (non-relabelled) subsequence with limit $\nu$. By the $\tau_\mrmw$-lower semi-continuity of $\mssW_{2, \mssd_\U}$, we have 
$$\mssW_{2, \mssd_\U}(\nu_n, \nu) \le \mssW_{2, \mssd_\U}(\nu_n, \nu_m)+\mssW_{2, \mssd_\U}(\nu_m, \nu) \le \mssW_{2, \mssd_\U}(\nu_n, \nu_m)+\liminf_{k \to \infty}\mssW_{2, \mssd_\U}(\nu_m, \nu_k)\comma$$
the RHS of which can be arbitrarily small. The proof is complete. 
\end{proof}

 \subsection{Dynamical number rigidity/tail triviality}
Due to the quasi-regularity in Cor.~\ref{c:QR}, there exists $\U_0 \subset \U$ such that $\U \setminus \U_0$ is a set of capacity zero and for every $\gamma \in \U_0$, there exists a diffusion process $(\mathsf X_t, \mathbb P_\gamma)$ such that $P_t^{\U, \QP}f(\gamma):=\mathbb E_\gamma[f(\mathsf X_t)]=T_t^{\U, \QP}f(\gamma)$ $\QP$-a.e.~$\gamma$ for $f \in \mcB_b(\U)$, see \cite[Thm.~3.5 in Chapter IV, Thm.~1.5 in Chapter V]{MaRoe90}. Furthermore, $\gamma \mapsto P_t^{\U, \QP}f(\gamma)$ is quasi-continuous, see \cite[Thm~2.13 and Prop.~2.15 in Chapter V]{MaRoe90}. In the following argument, we identify $P_t^{\U, \QP}$ and $T_t^{\U, \QP}$ and regard $T_t^{\U, \QP}f$ as being quasi-continuous. We denote the  transition probability $p_t^{\U, \QP}(\gamma, \diff \eta)$ of $(\mathsf X_t, \mathbb P_\gamma)$ by $\mcT_t^{\U, \QP}\delta_{\gamma}(\diff \eta) \in \mcP(\U)$. 
\begin{cor} \label{t:NRF} Suppose~\ref{ass:CE}, ~\ref{ass:ConditionalClos}.  
\begin{enumerate}[(i)]
\item \label{i:NRF} If  $\QP$ satisfies~\ref{ass:NR}, then, for every fixed $t>0$,  there exists a set $\Sigma \subset \U$ of $\E^{\U, \QP}$-capacity zero such that   $\mathcal T_t^{\U, \QP} \delta_\gamma$  satisfies~\ref{ass:NR} for every $\gamma \in \Sigma$;
\item \label{iii:NRF} If Assumption~\ref{a:AP} holds and $\QP$ satisfies~\ref{ass:NR} $($resp.~\ref{ass:T}$)$, then~$\mathcal T_t^{\U, \QP}\nu$  satisfies \ref{ass:NR} $($resp.~\ref{ass:T}$)$ for every $t>0$ and~$\nu \in \mcP_e(\U)$.
\end{enumerate}
\end{cor}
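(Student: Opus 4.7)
The plan rests on two observations: both \ref{ass:NR} and \ref{ass:T} are \emph{hereditary} under absolute continuity, and the heat semigroup $\mcT^{\U,\QP}_t$ pushes $\mcP_e(\U)$ into $\dom{\Ent_\QP}$ for every $t>0$. For the hereditary step: if $\nu\ll\QP$ and $\QP$ satisfies \ref{ass:NR}, then the set $\Omega_E$ guaranteed by \ref{ass:NR} for $\QP$ has $\QP(\Omega_E)=1$, hence $\nu(\Omega_E)=1$, and the same selector $\Phi_E$ witnesses \ref{ass:NR} for $\nu$. Likewise for \ref{ass:T}: any tail event $\Xi$ has $\QP(\Xi)\in\{0,1\}$, and absolute continuity forces $\nu(\Xi)\in\{0,1\}$ as well. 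This reduces the corollary to producing absolute continuity of the time-marginal at each positive time.

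\textbf{Proof of \ref{iii:NRF}.} This becomes immediate: by the $L\log L$-regularisation of Cor.~\ref{c:LLL}, $\mcT^{\U,\QP}_t\nu\in\dom{\Ent_\QP}$ for every $\nu\in\mcP_e(\U)$ and $t>0$, so $\mcT^{\U,\QP}_t\nu\ll\QP$, and the hereditary step carries \ref{ass:NR} (resp.~\ref{ass:T}) over from $\QP$ to $\mcT^{\U,\QP}_t\nu$.

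\textbf{Proof of \ref{i:NRF}.} Assumption~\ref{a:AP} is not available here, so I would instead exploit the $\tau_\mrmv$-quasi-regularity from Cor.~\ref{c:QR}: for each bounded Borel $A\subset\dUpsilon$, $\gamma\mapsto T_t^{\U,\QP}\1_A(\gamma)$ admits a quasi-continuous $\QP$-version that coincides pointwise with $\gamma\mapsto\mcT_t^{\U,\QP}\delta_\gamma(A)$ outside an exceptional set produced by the Ma--R\"ockner diffusion. Fix $t>0$ and pick a countable family $\mathcal F:=\{B_R:R\in\Q_+\}$ of balls. For each $B\in\mathcal F$, \ref{ass:NR} of $\QP$ yields $\Omega_B\subset\dUpsilon$ with $\QP(\Omega_B)=1$ and a selector $\Phi_B$. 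The $\QP$-invariance of $T_t^{\U,\QP}$ together with $T_t^{\U,\QP}\1_{\Omega_B}\le T_t^{\U,\QP}\1=1$ and $\int T_t^{\U,\QP}\1_{\Omega_B}\diff\QP=\QP(\Omega_B)=1$ forces $T_t^{\U,\QP}\1_{\Omega_B}=1$ $\QP$-a.e.; the quasi-continuous version therefore equals $1$ outside a set $N_B$ of $\E^{\U,\QP}$-capacity zero. Setting $\Sigma:=\bigcup_{B\in\mathcal F}N_B$, still of capacity zero, one obtains $\mcT_t^{\U,\QP}\delta_\gamma(\Omega_B)=1$ for every $B\in\mathcal F$ and every $\gamma\notin\Sigma$, which certifies \ref{ass:NR} for $\mcT_t^{\U,\QP}\delta_\gamma$ on every rational ball. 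The extension to an arbitrary bounded Borel $E\subset B_R$ is elementary: the decomposition $\zeta(E)=\zeta(B_R)-\zeta(B_R\setminus E)$ shows that
\[
\Phi_E(\zeta):=\Phi_{B_R}(\zeta\mrestr{B_R^c})-\zeta(B_R\setminus E)
\]
is an admissible selector on $\Omega_{B_R}$, since the first piece is $\sigma(\pr_{B_R^c})\subset\sigma(\pr_{E^c})$-measurable and the second is trivially $\sigma(\pr_{E^c})$-measurable as $B_R\setminus E\subset E^c$.

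\textbf{Anticipated obstacle.} The delicate point is the uniform-in-$B$ compatibility of the various quasi-continuous representatives with a \emph{single} Markov transition kernel $\mcT_t^{\U,\QP}\delta_\gamma(\diff\eta)$. Once the Ma--R\"ockner diffusion is fixed via Cor.~\ref{c:QR}, the kernel is determined and each quasi-continuous version of $T_t^{\U,\QP}\1_A$ coincides with $\gamma\mapsto\mcT_t^{\U,\QP}\delta_\gamma(A)$ off a common exceptional set, so controlling countably many such sets in $\mathcal F$ suffices to keep the union $\Sigma$ of capacity zero. The corresponding treatment of \ref{ass:T} in part~\ref{iii:NRF} is comparatively painless and requires no new input beyond Cor.~\ref{c:LLL}; no analogous quasi-regularity argument is needed there because $\mcT_t^{\U,\QP}\nu$ is already $\ll\QP$ from the outset.
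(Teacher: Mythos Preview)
Your proposal is correct and follows essentially the same route as the paper's own proof. Both parts match: for \ref{iii:NRF} you invoke the $L\log L$-regularisation (Cor.~\ref{c:LLL}) to get $\mcT_t^{\U,\QP}\nu\ll\QP$ and then use that \ref{ass:NR} and \ref{ass:T} pass to absolutely continuous measures; for \ref{i:NRF} you reduce to countably many balls, use $\QP$-invariance and conservativeness to get $T_t^{\U,\QP}\1_{\Omega_B}=1$ $\QP$-a.e., upgrade to a q.e.\ statement via quasi-regularity (Cor.~\ref{c:QR}), take the countable union of exceptional sets, and extend to arbitrary bounded Borel sets by the same selector formula $\Phi_E(\zeta)=\Phi_{B_R}(\zeta\mrestr{B_R^c})-\zeta(B_R\setminus E)$ --- all exactly as in the paper.
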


\begin{proof}
\ref{i:NRF}: Observe that,  to verify the number rigidity, it suffices to verify the sought property on {\it every countable ball} $B_r=\{x \in \R^n: |x|<r\}$ $(r \in \N)$ instead of {\it every bounded set $B$}. Indeed, suppose that the sought property holds on every $B_r$ ($r \in \N$). For a bounded set $B$, take a sufficiently large ball $B_r$ containing $B$. Let $\Phi_{B_r}: \U(B_r^c) \to \N$ be a function witnessing the sought property in $B_r$, i.e., $\gamma(B_r)=\Phi_{B_r}(\gamma_{B_r^c})$. Define $\Phi_B:\U(B^c) \to \N$ as $\Phi_B(\eta):=\Phi_{B_r}(\eta_{B_r^c})-\eta({B_r \setminus B})$. Then, for $\QP$-a.e.~$\gamma$, we have $\gamma(B)=\gamma(B_r) - \gamma(B_r \setminus B) = \Phi_{B_r}(\gamma_{B_r^c})- \gamma(B_r \setminus B)=\Phi_B(\gamma_{B^c})$, which witnesses the sought property in $B$.

Now take $B_r=\{x \in \R^n: |x|<r\}$ for $r \in \N$.  
 By the hypotheis~\ref{ass:NR}, there exists~$\Omega_r  \subset \U$ with $\QP(\Omega_r)=1$ and a Borel function~$\Phi_{B_r}: \U(B^c_r) \to \N_0$ such that $\Phi_{B_r}(\gamma_{B^c_r})=\gamma(B_r)$ for every $\gamma \in \Omega_r$. 
Due to the conservativeness~$T_t^{\U, \QP}\1=\1$ and the $L^2(\QP)$-symmetry $(T_t^{\U, \QP}u, v)_{L^2}=(u, T_t^{\U, \QP}v)_{L^2}$, we have the invariance~$\int_{\U} T_t^{\U, \QP}u \diff \QP = \int_{\U}u \diff \QP$ for every $u \in L^2(\QP)$. Thus,
$$1=\QP(\Omega_r)= \int_{\U} T_t^{\U, \QP}\1_{\Omega_r} \diff \QP  =  \int_{\U}  \mathcal T_t^{\U, \QP}\delta_\gamma(\Omega_r) \diff \QP(\gamma) \fstop$$
Hence,  $T_t^{\U, \QP}\1_{\Omega_r}(\gamma)=\mathcal T_t^{\U, \QP}\delta_\gamma(\Omega_r)=1$ for every $t>0$ and for $\QP$-a.e.~$\gamma$. 
Due to the quasi-regularity in Cor.~\ref{c:QR},  $T_t^{\U, \QP}\1_{\Omega_r}$ is quasi-continuous. Thus, there exists a $\tau_\mrmv$-closed nest $(K_n)_{n \in \N}$ such that $K_n \nearrow \cup_{n \in \N}K_n$, $\U \setminus \cup_{n \in \N}K_n$ is exceptional  and~$T_t^{\U, \QP}\1_{\Omega_r} \equiv 1$ on $K_n \cap \supp_{\tau_\mrmv}[\QP]$ for every $n \in \N$, where $\supp_{\tau_\mrmv}[\QP]$ is the support of $\QP$ with respect to the topology~$\tau_\mrmv$. Thus, $\mathcal T_t^{\U, \QP} \delta_\gamma(\Omega_r)= 1$ for q.e.~$\gamma$, i.e., it holds for every $\gamma \in \Sigma_r$ with some $\Sigma_r \subset \U$ such that $\Cap_{\E^{\U, \QP}}(\Sigma^c_r)=0$.  
Take $\Omega=\cap_{r \in \N} \Omega_r$ and $\Sigma=\cap_{r \in \N}\Sigma_r$. Then, $\Cap_{\E^{\U, \QP}}(\Sigma^c)=0$, $\mcT_t^{\U, \QP}\delta_\gamma(\Omega)=1$ for every $\gamma \in \Sigma$.  For each $r \in \N$, we have $\gamma(B_r)=\Phi_{B_r}(\gamma_{B_r^c})$ for every $\gamma \in \Omega$.  This concludes that, for every $\gamma \in \Sigma$, the probability measure $\mcT_t^{\U, \QP}\delta_\gamma$ is number rigid. 

 

\smallskip
\ref{iii:NRF}: This is a direct consequence from the regularisation property~$\mathcal T_t^{\U, \QP}: \mcP_e(\U) \to \dom{\Ent_\QP}$ by Cor.~\ref{c:LLL}, yielding $\mathcal T_t^{\U, \QP}\nu \ll \QP$ for positive $t>0$. Due to the inheritance of the number rigidity and the tail triviality to absolutely continuous probability measures, the proof is complete. 
\end{proof}

\begin{ese} \label{exa: R} \ 
\begin{itemize}
\item The number rigidity has been verified for a variety of point processes: $\Ginibre_2$ and GAF (\cite{GhoPer17}), the two-dimensional one-component plasma (\cite{Leb24}), $\Ginibre_\beta$ $\mathrm{sine}_\beta$ (\cite[Thm.\ 4.2]{Gho15}, \cite{ChhNaj18}, \cite{DerHarLebMai20}), $\mathrm{Airy}$, ~$\mathrm{Bessel}$, and $\mathrm{Gamma}$ (\cite{Buf16}), and Pfaffian (\cite{BufNikQiu19}) point processes. We refer the readers also to the survey \cite{GhoLeb17}. 
\item The tail triviality has been verified in the following cases: 
\begin{enumerate}[(i)]
\item (Determinantal point processes) Let $X$ be a locally compact Polish space. 
Then, all determinantal point processes whose kernel are locally trace-class positive contraction satisfy the tail triviality (see \cite[Theorem 2.1]{Lyo18} and \cite{BufQiuSha21,  OsaOsa18, ShiTak03b}). In particular, $\mathrm{sine}_2$, $\mathrm{Riesz}_{2}$, $\mathrm{Bessel}_{\alpha,2}$, $\mathrm{Airy}_2$ and Ginibre point processes are tail trivial. 
\item (Extremal Gibbs measure)  A canonical Gibbs measure $\mu$ is tail  trivial iff $\mu$ is extremal (see \cite[Cor.\ 7.4]{Geo11}). In particular, Gibbs measures of the Ruelle type with sufficiently small activity constants are extremal (see \cite[Thm.\ 5.7]{Rue70}).
\end{enumerate}

\end{itemize}
\end{ese}

 We show that there are elements in  $\mcP_e(\U)=\{\nu \in \mcP(\U): \mssW_{2, \mssd_\U}(\nu, \sigma)<+\infty,\ \exists \sigma \in \dom{\Ent_\QP}\}$ such that they are neither absolutely continuous with respect to $\QP$ nor tail trivial. 
\begin{prop} \label{p:ENT}
Let $\QP=\sine_2$ or $\QP=\Airy_2$. Then, $\mcP_e(\U)$ contains an element $\sigma$ such that it is neither absolutely continuous with respect to $\QP$ nor tail trivial.  
\end{prop}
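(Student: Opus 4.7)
The plan is to exhibit $\sigma$ as a two-point Dirac mixture $\sigma := \tfrac{1}{2}(\delta_{\gamma_0} + \delta_{\gamma_1})$ where $\gamma_0, \gamma_1 \in \Theta$ are chosen at finite $\mssd_\U$-distance but distinguished by an appropriately chosen tail event. Concretely, I would fix a $\QP$-typical $\gamma_0 \in \Theta$ and enumerate its particles $\gamma_0 = \sum_{k} \delta_{x_k}$ (possible as $\QP$-typical configurations are locally finite with no accumulation points). I then pick $(\epsilon_k) \in \ell^2$ with $\epsilon_k \neq 0$ for infinitely many $k$---say $\epsilon_k = 1/(|k|+1)$, adjusted so that $x_k + \epsilon_k \notin \{x_\ell : \ell \in \Z\}$ whenever $\epsilon_k \neq 0$---and set $\gamma_1 := \sum_k \delta_{x_k + \epsilon_k}$. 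The matching $x_k \mapsto x_k + \epsilon_k$ gives $\mssd_\U(\gamma_0, \gamma_1)^2 \le \sum_k \epsilon_k^2 < \infty$, whence $\gamma_1 \in \Theta$ by \iref{AP-11} of Assumption~\ref{ia:AP}.

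The non-tail-triviality and non-absolute-continuity of $\sigma$ are now almost immediate. The set $A := \{\zeta \in \U : \zeta(\{x_k\}) \geq 1 \text{ for all sufficiently large } |k|\}$ is invariant under modifications of $\zeta$ inside any bounded ball $B_R$ (such modifications affect only the finitely many indices with $x_k \in B_R$), so $A \in \mathscr T(\U)$. By construction $\gamma_0 \in A$ whereas $\gamma_1 \notin A$, since $\gamma_1(\{x_k\}) = 0$ for each of the infinitely many $k$ with $\epsilon_k \neq 0$; thus $\sigma(A) = 1/2$ and \ref{ass:T} fails. Because $\QP$ is diffuse (a standard fact for determinantal point processes with continuous correlation kernel, see Example~\ref{exa: R}), $\QP(\{\gamma_i\}) = 0$ while $\sigma(\{\gamma_i\}) = 1/2$, so $\sigma \not\ll \QP$.

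The main step is to verify $\sigma \in \mcP_e(\U)$, i.e., to exhibit some $\nu \in \dom{\Ent_\QP}$ with $\mssW_{2, \mssd_\U}(\sigma, \nu) < \infty$. The key observation is that $[\gamma_0] := \{\eta \in \U : \mssd_\U(\gamma_0, \eta) < \infty\}$ is a tail event---any modification of $\eta$ inside a bounded ball changes the optimal matching cost by at most a bounded amount---whence tail triviality of $\QP$ forces $\QP([\gamma_0]) \in \{0, 1\}$. I would choose $\gamma_0$ so that $\QP([\gamma_0]) = 1$; by $\tau_\mrmv^{\times 2}$-lower semi-continuity of $\mssd_\U$ (Prop.~\ref{p:PDF}), the sublevel $[\gamma_0]_M := \{\mssd_\U(\gamma_0, \cdot) \le M\}$ is Borel with $\QP([\gamma_0]_M) \uparrow 1$ as $M \to \infty$. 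For $M$ large enough, $\nu_M := \QP(\cdot \mid [\gamma_0]_M)$ has bounded density $1/\QP([\gamma_0]_M)$ relative to $\QP$, so $\nu_M \in \dom{\Ent_\QP}$ with $\Ent_\QP(\nu_M) = -\log \QP([\gamma_0]_M) < \infty$, and by construction $\mssW_{2, \mssd_\U}(\delta_{\gamma_0}, \nu_M) \le M$. The triangle inequality then yields $\mssW_{2, \mssd_\U}(\sigma, \nu_M) \le M + \tfrac{1}{2}\|\epsilon\|_{\ell^2} < \infty$, establishing $\sigma \in \mcP_e(\U)$.

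The hard part will be rigorously establishing $\QP([\gamma_0]) = 1$ for some $\gamma_0 \in \Theta$ (equivalently, for $\QP$-a.e.\ $\gamma_0$, by the $0$/$1$ dichotomy above). For $\QP = \sine_2$ and $\QP = \Airy_2$ this reduces to showing that two independent $\QP$-samples lie in a common $\mssd_\U$-equivalence class with positive probability, which in turn reflects the strong integrability and rigidity of these processes. A convenient route is via the $\QP$-reversible diffusion $\mssX_t$ associated to $\E^{\U, \QP}$ (existing by $\tau_\mrmv$-quasi-regularity, Cor.~\ref{c:QR}): using the $\mssW_{2, \mssd_\U}$-contraction of Thm.~\ref{t:WC} together with the finite $\mssd_\U$-quadratic variation of $t \mapsto \mssX_t$, one sees that the diffusion remains within a single $\mssd_\U$-class, and $\QP$-reversibility then forces this class to carry full mass.
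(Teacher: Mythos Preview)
Your construction has a fatal gap at precisely the point you flag as ``the hard part'': the claim that $\QP([\gamma_0])=1$ for some (or $\QP$-a.e.) $\gamma_0$ is false for $\QP\in\{\sine_2,\Airy_2\}$. As noted explicitly in \S\ref{ss:IP} of the paper, every $\mssd_\U$-ball has $\QP$-measure zero, hence $[\gamma_0]=\bigcup_{M}\{\mssd_\U(\gamma_0,\cdot)\le M\}$ has $\QP([\gamma_0])=0$ for every $\gamma_0$. Consequently any $\nu\in\dom{\Ent_\QP}\subset\mcP_\QP(\U)$ is concentrated on $[\gamma_0]^c$, giving $\mssW_{2,\mssd_\U}(\delta_{\gamma_0},\nu)=\infty$; thus $\delta_{\gamma_0}\notin\mcP_e(\U)$ and your mixture $\sigma=\tfrac12(\delta_{\gamma_0}+\delta_{\gamma_1})$ is not in $\mcP_e(\U)$ either.

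Both of your proposed routes to $\QP([\gamma_0])=1$ break down. The set $[\gamma_0]$ is \emph{not} a tail event: if $\eta=\gamma_0$ and $\eta'=\gamma_0+\delta_z$ for a single point $z$ in a bounded ball, then $\eta$ and $\eta'$ agree outside every ball, yet any matching between $\gamma_0$ and $\eta'$ must absorb the extra point by shifting infinitely many pairs by a gap of order one, so $\mssd_\U(\gamma_0,\eta')=\infty$. Your diffusion argument also fails: even granting that trajectories stay in a single $\mssd_\U$-class, ergodicity only gives $\QP([\gamma_0])\in\{0,1\}$, and the actual value is $0$.

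The paper avoids Dirac masses entirely. It constructs a Borel map $\Phi\colon\U\to\U$ moving (for each $k$) at most one point inside a thin shell $I_k$ of width $1/k$, so that $\mssd_\U(\gamma,\Phi(\gamma))^2\le\sum_k 1/k^2<\infty$ \emph{uniformly} in $\gamma$; this makes $\nu:=\Phi_\#\QP$ satisfy $\mssW_{2,\mssd_\U}(\QP,\nu)<\infty$ without any statement about equivalence classes. Then $\sigma:=\tfrac12\nu+\tfrac12\QP\in\mcP_e(\U)$, and the tail event $T=\{\gamma:\gamma(\{a_k\})=1\text{ for infinitely many }k\}$ (with $a_k$ the shell midpoints) has $\nu(T)=1$, $\QP(T)=0$, hence $\sigma(T)=1/2$. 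The substantive work is showing that $\QP$-a.e.\ configuration hits infinitely many shells; this uses the negative association of determinantal processes and the asymptotics of the one-point density.
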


\begin{proof} 
As explained in Example~\ref{exa: R}, both measures $\QP=\sine_2$ and $\QP=\Airy_2$ possess the tail triviality. Since any absolutely continuous probability measure with respect to $\QP$ inherits the tail triviality, 
we only need to construct a non tail trivial measure~$\sigma$,  such $\sigma$ has to be non-absolutely continuous.

\smallskip
Let $\Upsilon \subset \U$ be the space of configurations in $\R$ without multiplicity. Noting $\sine_2(\Upsilon)=\Airy_2(\Upsilon)=1$, we restrict the space to $\Upsilon$ in the following argument. 
Define the shell~$I_k=[-k-\frac{1}{k}, -k)$ for $k \in \N$. Note that $(I_k)_{k \in \N}$ are disjoint sets in the non-negative real $\R_-$ and the total length of the shells is $\sum_{k \in \N} |I_k|= \sum_{k=1}^\infty 1/k = +\infty$. We first show that infinitely many shells contain at least one point from $\QP$-a.e.~$\gamma$, namely, we prove the following claim:

\smallskip
{\it Claim 1.}  $\QP\Bigl(\{\gamma: \# \{k: \gamma(I_k) \ge 1\}=\infty\}\Bigr)=1$.  
\begin{proof}[Proof of Claim 1]
Let $A_k:=\{\gamma: \gamma(I_k) \ge 1\}$ and $p_k:=\QP(A_k)$. Let $f^{(1)}=f^{(1)}_\QP$ be the intensity measure of $\QP$ (see  Dfn.~\ref{d:DPP}). Since $\QP$ is the law of a determinantal point process with local trace class kernel (say $K$), we have 
$$\QP\bigl( \{\gamma(I_k)=0\}\bigr) = \det(I-K_{I_k}) \le \exp(-{\rm tr} K_{I_k}) = \exp\biggl( - \int_{I_k} f^{(1)}(x) \diff x\biggr) \comma$$
where $K_{I_k}$ is the restriction of the operator $K$ in $I_k$ and $\det$ is the Fredholm determinant, see Rem.~\ref{r:TCO}. Here, the inequality follows by the fact that spectra $\{\lambda_i\}$ of $K_{I_k}$ lie in $[0,1]$ and $\prod (1-\lambda_i) \le e^{-\sum_i \lambda_i}$. Hence, 
\begin{align} \label{e:DPEE}
p_k \ge 1- \exp\biggl( - \int_{I_k} f^{(1)}(x) \diff x\biggr) \fstop
\end{align}
Since $f^{(1)}$ is constant for $\QP=\sine_2$ and $f^{(1)} \sim (1/\pi) \sqrt{-x}$ as $x \to -\infty$ for $\QP=\Airy_2$, we can take $c>0$ such that $f^{(1)} \ge c$ on $I_k$ for every $k \ge M$ for some sufficiently large $M$. Thus, using $1-e^{-t} \ge t/2$ for $t \in [0,1]$
$$\int_{I_k} f^{(1)} \diff x  \ge c |I_k|= \frac ck \quad \implies \quad p_k \ge 1 - e^{-c/k} \ge \frac c{2k} \fstop$$
Thus 
\begin{align} \label{e:DPEE0}
\sum_{k \in \N} p_k = +\infty \fstop
\end{align}
Due to the negative association property of determinantal point processes (\cite[Thm.~1.4]{Gho15}), for any finite $J \subset \N$,  
\begin{align} \label{e:DPEE1}
\QP\Bigl( \bigcap_{k \in J} \{\gamma(I_k)=0\}\Bigr) \le \prod_{k \in J} \QP\Bigl(  \{\gamma(I_k)=0\} \Bigr)  = \prod_{k \in J} (1-p_k) \fstop
\end{align}
Let $B_m=\bigcap_{k \ge m } \{\gamma(I_k)=0\}$ (no hit in any shell from $m$ onward). Applying~\eqref{e:DPEE1} with $J=\{m, m+1, \ldots, n\}$ and then $n \to \infty$, we have 
$$\QP(B_m) \le \prod_{k \ge m} (1-p_k) \le \exp\Bigl(- \sum_{k \ge m} p_k \Bigr) = 0 \cquad m \in \N \comma$$
where we used $1-x \le e^{-x}$ and \eqref{e:DPEE0}.  Let $B=\cup_{m \in \N}B_m$ (the event that only finitely many $I_k$ are hit). Then, 
$$\QP(B)=0\comma$$
which completes the proof. 
\end{proof}

{\it We resume the proof of Prop.~\ref{p:ENT}.} Define $a_k:=-k-\frac{1}{2k}$ (the midpoint in the shell $I_k$) and $\Phi: \Upsilon \to \Upsilon$ as follows: For a configuration $\gamma$, for each $k$ with $\gamma(I_k) \ge 1$, pick the leftmost point in $\supp[\gamma] \cap I_k$ and move it to $a_k$; if $\gamma(I_k)=0$, do nothing. By construction, 
$$\mssd_\U(\gamma, \Phi(\gamma))^2 \le \sum_{k \in \N} \frac{1}{k^2}<+\infty \comma$$
which is uniformly bounded in $\gamma$. Define $\nu:=\Phi_\#\QP$. Then,  
$$\mssW_{2, \mssd_\U}(\QP, \nu)^2 \le  \int_{\Upsilon} \mssd_\U(\gamma, \Phi(\gamma))^2 \diff \QP(\gamma)<+\infty \fstop$$
Define the following mixed measure 
$$\sigma:=\frac{1}{2} \nu + \frac{1}{2}\QP \fstop$$
Let $\pi$ be the optimal coupling of $\nu$ and $\QP$, and $\Delta_\#\QP:=({\rm id}, {\rm id})_\#\QP$ be the diagonal coupling. Then, $(1/2) \pi + (1/2)\Delta_\#\QP$ is a coupling of $\sigma$ and $\QP$, which yields
$$\mssW_{2, \mssd_\U}(\sigma, \QP)^2 \le\frac{1}{2} \mssW_{2, \mssd_\U}(\nu, \QP)^2 + \frac{1}{2} \mssW_{2, \mssd_\U}(\QP, \QP)^2<+\infty\fstop$$
Consider the tail event 
$$T=\{\gamma \in \Upsilon: \gamma(a_k) =1 \ \text{for infinitely many $k$}\} \fstop$$
Due to Claim 1, $\nu(T)=1$ but $\QP(T)=0$. Thus, $\sigma(T)=1/2$, which disproves the tail-triviality. 
\end{proof}
\begin{rem}
A remaining  question is whether $\mcP_e(\U)$ contains an element $\sigma$ that is not number rigid. A simpler form of this question is the following: Suppose that $\QP=\sine_2$ or $\QP=\Airy_2$. Can we find $\nu$ such that  $\mssW_{2, \mssd_\U}(\nu, \QP)<+\infty$ but $\nu$ is not number rigid. We conjecture that such $\nu$ does not exist and $\mssW_{2, \mssd_\U}(\nu, \QP)<+\infty$ forces~$\nu$ to be number rigid. 
\end{rem}

\section{Verifications of the main assumptions for $\sine_2$ and $\Airy_2$} \label{sec: Ver}
In this section, we verify the main assumptions \ref{ass:CE}, ~\ref{ass:ConditionalClos} and Assumption~\ref{a:AP} for several concrete point processes $\QP$.

\subsection{Quasi-Gibbs measures} 
Let~$\Phi\colon \R^n\rar\R\cup\{+\infty\}$ be a Borel measurable function and $\Psi\colon (\R^n)^\tym{2}\rar\R \cup\{+\infty\}$ be a Borel measurable and symmetric function.
We define a \emph{Hamiltonian}~$\msH_r\colon \dUpsilon(B_r)\rar \R$ as
\begin{align*}
\msH_r\colon \gamma\longmapsto \int_{B_r} \Phi(x) \diff \gamma(x)+\int_{(B_r)^{\times 2} \setminus {\sf diag}}\Psi(x,y) \diff \gamma^{\otimes 2}(x,y) \cquad \gamma\in \dUpsilon(B_r)\comma
\end{align*}
where ${\sf diag}:=\{(x, x) \in B_r^{\times 2}\}$ is the diagonal set. The function~$\Phi$ (resp.~$\Psi$ ) is called \emph{free potential} (resp.~\emph{interaction potential}) for $\msH$.
We define 
$$\mathcal K_r^\eta:=\{k \in \N_0: \QP_r^\eta(\U^k(B_r))>0\} \fstop$$
 Recall that $\mssm_r$ is the $n$-dimensional Lebesgue measure restricted on $B_r \subset \R^n$.
\begin{defs}[Quasi-Gibbs measures, {\cite[Dfn.~2.2]{OsaTan20}}]\label{d:QuasiGibbs}
We say that a Borel probability~$\QP$ on~$\U$ is a \emph{$(\Phi,\Psi)$-quasi-Gibbs measure} if, for $\QP$-a.e.~$\eta\in\dUpsilon$, every~$r>0$ and every~$k\in \mathcal K_r^\eta$, there exists a constant~$c_{r,\eta,k} \in (0, \infty)$ such that
\begin{align}\label{eq:localACquasiGibbs}
c_{r,\eta,k}^{-1}\, e^{-\msH_r} \cdot \mssm_r^{\odot k} \ \leq \QP_r^{\eta, k} \ \leq \ c_{r,\eta,k}\, e^{-\msH_r} \cdot \mssm_r^{\odot k}\fstop
\end{align}

\end{defs}

\begin{rem}\label{r:QuasiGibbs} \ 
\begin{enumerate}[$(a)$]
\item\label{i:r:QuasiGibbs:1} The definition of quasi-Gibbs measures in~\cite[Dfn.~2.2]{OsaTan20} looks slightly different from Dfn.~\ref{d:QuasiGibbs} as we assume~\eqref{eq:localACquasiGibbs} only for~$k\in \mathcal K_r^\eta$ instead of every $k\in \N_0$. These two definitions are, however, equivalent since the definitions of~$\QP_r^{\eta, k}$ in this article is {\it the restriction} on $\U^k(B_r)$:
$$\QP_r^{\eta, {k}}:=\QP_r^\eta\mrestr{\U^k(B_r)} \comma$$ 
 while the corresponding measure in~\cite[Dfn.~2.2]{OsaTan20} has been defined as the measure {\it conditioned} on $\U^k(B_r)$.
\item\label{i:r:QuasiGibbs:3}``$\QP$ belongs to $(\Phi,\Psi)$-quasi-Gibbs measures'' {\it does not necessarily} mean that $\QP$ is governed by the free potential $\Phi$ in the sense of the DLR equation. The symbol $\Phi$ here just plays a role as {\it representative} of the class of $(\Phi,\Psi)$-quasi-Gibbs measures {\it modulo perturbations by adding locally finite free potentials}. 
Noting that the constant $c_{r,\eta,k}$ can depend on $r, \eta, k$, if $\QP$ is $(\Phi,\Psi)$-quasi-Gibbs, then $\QP$ is $(\Phi+\Phi',\Psi)$-quasi-Gibbs as well whenever $\Phi'|_{B_r}$ is bounded for every $r \in \N$. Thus, if $\Phi$ is locally finite, we may write $(0,\Psi)$-quasi-Gibbs instead of $(\Phi,\Psi)$-quasi-Gibbs. 
\end{enumerate}
\end{rem}

The class of quasi-Gibbs measures includes all canonical Gibbs measure, and the laws of some determinantal/permanental point processes, as for instance: $\sine_\beta$ and $\Airy_{\beta}$. We discuss these two examples below. 
\begin{ese}[$\mathsf{sine}_\beta$] \label{exa: S}
By \cite[Thm.\ 2.2]{Osa13} and \cite[Thm.~1.1]{DerHarLebMai20}, $\mathsf{sine}_\beta$  with~$\beta>0$ is a Gibbs (thus also quasi-Gibbs) measure with the interaction potential
$$\Psi(x, y):=-\beta\log|x-y|, \quad x, y \in \R \fstop$$ 
The corresponding SDE is given by 
\begin{align*}
\diff X^i_t = \frac{\beta}{2} \lim_{r \to \infty}\sum^\infty_{j: |X_t^i-X_t^j|<r} \frac{\diff t}{X_t^i-X_t^j} + \diff B_t^i \cquad i \in \N  \comma
\end{align*}
where $B_t^1, B_t^2, \ldots$ are independent Brownian motions in~$\R$.  The pathwise uniqueness and the existence of the strong solution have been proven in~ \cite[Thm.~1.2]{Tsa16} for $1 \le \beta <\infty$. When $\beta=1,2,4$, the unlabelled solution is associated with the upper Dirichlet form, see \cite[Lem.~8.4]{Tsa16} and \cite[Thm.~24, 25]{Osa12}. Since the upper Dirichlet form coincides with  $(\E^{\U, \QP}, \dom{\E^{\U, \QP}})$ due to Thm.~\ref{t:IUL} with Rem.~\ref{r:TCO}, the unlabelled solution is associated with $(\E^{\U, \QP}, \dom{\E^{\U, \QP}})$.
\end{ese}

\begin{ese}[$\mathsf{Airy}_\beta$] \label{exa: A}
By \cite[Thm.\ 4.7]{OsaTan14}, the $\mathsf{Airy}_\beta$ point process with $\beta=1, 2, 4$ belongs to the class of $(0,\Psi)$-quasi-Gibbs measures with the interaction potential
$$\Psi(x, y):=-\beta\log|x-y|, \quad x, y \in \R.$$
The corresponding SDE is the following:  letting $\hat{\rho}(x)=\frac{\sqrt{-x}}{\pi} \1_{\{x<0\}}$, 
\begin{align*}   
\diff X^i_t = \frac{\beta}{2}\lim_{r \to \infty}\Biggl( \biggl(\sum_{\substack{j: j \neq i\\  |X_t^j|<r}}^\infty  \frac{1}{X_t^i-X_t^j} \biggr) -\int_{-r}^r \frac{\hat{\rho}(x)}{-x} \diff x \Biggr) \diff t + \diff B_t^i \cquad i \in \N  \fstop
\end{align*}
The unlabelled solution to this SDE has been identified with the diffusion process associated with $(\E^{\U, \QP}, \dom{\E^{\U, \QP}})$ in~\cite[Thm.~2.2]{OsaTan14} combined with Thm.~\ref{t:IUL} with Rem.~\ref{r:TCO}. 
%
For the existence and the pathwise uniqueness of the strong solution for $\beta=1,2,4$, see~\cite[Thm.~2.3, 2.4]{OsaTan14}. 
\end{ese}

\subsection{Assumption~\ref{ass:CE} and~\ref{ass:ConditionalClos}}
For quasi-Gibbs measures, \ref{ass:CE} follows immediately by \eqref{eq:localACquasiGibbs}. 
 The condition~\ref{ass:ConditionalClos} holds if for $k \in \N_0$ and $r \in \N$,
\begin{align} \label{a:CCC}
e^{-\msH_r}|_{\U^k(B_r)} \ \text{ is bounded and lower semi-continuous in $\U^k(B_r)$}  \fstop
\end{align}
See \cite[Lem.~3.2]{Osa96}. In particular, Examples~\ref{exa: S} and \ref{exa: A} satisfy this condition. 
\subsection{Verification of  Assumption~\ref{a:AP} for $\sine_2$ and $\Airy_2$} 
In this section, we introduce a set $\mathfrak Y_\QP \subset \U$, which play the role of $\Theta$ in Assumption~\ref{a:AP} for $\QP \in \{\sine_2, \Airy_2\}$. 
We briefly recall $\sine_2$ and $\Airy_2$, which are particular determinantal point processes in $\R$. For $A \subset \R$ and $\gamma \in \U$, we write~$\mssn_{A}(\gamma):=\int_\R \1_A \diff \gamma$, which counts the number of points in $\gamma$ belonging to~$A$.  
\begin{defs} \label{d:DPP} Let $\QP$ be a Borel probability measure on~$(\U, \tau_\mrmv)$.
\begin{itemize}
\item A locally Lebesgue-integrable function $f_\QP^{(k)}: \R^k \to \R_+$ is called {\it $k$-point correlation function} of $\QP$ if for any $m \in \N$, any disjoint bounded Borel subsets $A_1, A_2, \ldots, A_m \subset \R$  and any $k_1, k_2, \ldots, k_m \in \N$ with $k=\sum_{i=1}^m k_i$, we have 
$$\int_{\U}\prod_{i=1}^m\frac{\mssn_{A_i}(\gamma)!}{(\mssn_{A_i}(\gamma)-k_i)!}\diff \QP(\gamma) = \int_{A_1^{\times k_1}\times \cdots \times A_m^{\times k_m} }f_\QP^{(k)}(x_1, \ldots, x_k)\diff x_1 \cdots \diff x_k \fstop$$
\item $\QP$ is (the law of) {\it a determinantal point process with kernel $K: \R^2 \to \mathbb C$} if $\QP$ has the $k$-point correlation function given by
$$f_\QP^{(k)}(x_1, x_2, \ldots, x_k)={\rm det}\bigl[K(x_i, x_j)\bigr]_{i, j=1}^k \cquad k \in \N \fstop$$
\item $\sine_2$ is the determinantal point process whose kernel is 
$$K(x, y)=\frac{\sin(\pi(x-y))}{\pi(x-y)} \fstop$$
\item $\Airy_2$ is the determinantal point process whose kernel is 
$$K(x, y)=\frac{{\rm Ai}(x){\rm Ai}'(y)-{\rm Ai}'(x){\rm Ai}(y)}{x-y} \comma$$
where ${\rm Ai}'(x)=\frac{\diff {\rm Ai}(x)}{\diff x}$ and 
$$\Ai(x)=\frac{1}{\pi}\int_0^\infty\cos\Bigl(\frac{t^3}{3}+xt\Bigr) \diff t \fstop$$
\end{itemize}
\end{defs}
Let $\rho_\QP$  be the intensity measure of $\QP$: 
$$\rho_\QP(A):=\int_{\U} \mssn_A(\gamma) \diff \QP(\gamma) = \int_A f^{(1)}_\QP \diff x \in [0,\infty] \cquad A \in \msB(\R) \fstop$$ 
By computing the $1$-correlation function~$f_\QP^{(1)}$, we have that 
\begin{itemize}
\item $\rho_{\sine_2}$ is the Lebesgue measure in $\R$;
 \item $\rho_{\Airy_2}(A)=\int_A (\Ai'(x)^2-x\Ai(x)^2) \diff x$;
\item the $1$-correlation function $f^{(1)}_{\Airy_2}=\frac{\mssd \rho_{\Airy_2}}{\mssd x}$ has the following asymptotic formula (\cite[Lem.~5.3]{OsaTan14}):
\begin{align}\label{e:IE}
f^{(1)}_{\Airy_2}(x) = 
\begin{cases}
O_{x \to \infty}(e^{-\frac{4}{3}x^{3/2}}) \ &\quad \text{on} \quad [0,\infty) \semicolon
\\
\frac{(-x)^{1/2}}{\pi}\bigl(1+O_{x \to \infty}((-x)^{-3/2})\bigr)& \quad  \text{on} \quad (-\infty, 0] \fstop 
\end{cases}
\end{align}
In particular, $f^{(1)}_{\Airy_2}$ is bounded in $[0, \infty)$. 
\end{itemize}
We set the list of assumptions for $\gamma$:
\begin{itemize}
\item  $\gamma(\R)=\infty$, and there exist $L_0 \in \N$, $\e>0$  such that
\begin{align} \label{e:sum11}
\bigl|\rho_\QP([0,L])-\gamma([0,L])\bigr| \lesssim L^\e \cquad \bigl|\rho_\QP([-L,0])-\gamma([-L,0])\bigr| \lesssim  L^\e 
\end{align}
for every $L \ge L_0$. Here $a(L) \lesssim b(L)$ means $a(L) \le cb(L)$ for some constant $c>0$ independent of $L$.
\item there exist  $\kappa \in (1/2, 1)$ and $m \in \N$ such that
\begin{align}\label{e:sum2}
m(\gamma, \kappa):=\max_{k \in \mathbb Z}\gamma\Bigl([g^\kappa(k), g^\kappa(k+1)]\Bigr) \le m \comma
\end{align}
where $g^\kappa(x):={\rm sgn}(x)|x|^\kappa$ for $x \in \R$. 
\end{itemize}
We define the following sets:
\begin{itemize}
 \item $\mathfrak Y_{m, L_0}^{\kappa, \e}(\sine_2)$ is the set of configurations $\gamma$ satisfying~\eqref{e:sum11} with $\QP=\sine_2$ and~\eqref{e:sum2};
 \item  $\mathfrak Y_{m, L_0}^{\kappa, \e}(\Airy_2)$ is the set of configurations $\gamma$ satisfying~\eqref{e:sum11} with $\QP=\Airy_2$ and~\eqref{e:sum2}.
 \end{itemize}
\begin{defs} \label{d:EMS} Let $\QP \in \{\sine_2, \Airy_2\}$. Define
 $$\mathfrak Y_{\QP}:=\bigcup_{\substack{\kappa \in (0, \kappa_\QP)\\  \e\in(0,1)}}\bigcup_{L_0, m \in \N}\mathfrak Y_{m, L_0}^{\kappa, \e}(\QP) \comma$$
 where $\kappa_{\sine_2}=1$ and $\kappa_{\Airy_2}=\frac23$. 
 \end{defs}

\begin{thm}\label{t:DB}
 Assumption~\ref{a:AP} holds with $\Theta=\mathfrak Y_\QP$ and $K=0$ for $\QP=\sine_2$ and $\QP=\Airy_2$.
\end{thm}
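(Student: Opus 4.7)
The plan is to verify conditions~\ref{AP-11}, \ref{AP-2}, \ref{AP-4} of Assumption~\ref{a:AP} with $\Theta = \mathfrak{Y}_\QP$ and $K=0$ in turn.

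For~\ref{AP-11}, I would show that $\mathfrak{Y}_\QP$ is stable under finite $\mssd_\U$-perturbations. Given $\gamma \in \mathfrak{Y}_\QP$ and $\eta$ with $\mssd_\U(\gamma,\eta) < \infty$, fix an $\ell^2$-optimal matching between atoms of $\gamma$ and $\eta$. The number of matched pairs whose linking segment crosses either endpoint $\pm L$ is bounded by $C(\gamma, \eta)\sqrt{L}$ via Cauchy--Schwarz applied to the matching cost, so the discrepancy in the linear statistics on $[-L, L]$ for $\eta$ inherits the bound~\eqref{e:sum11} with slightly enlarged $\e$ and $L_0$. The maximum-gap condition~\eqref{e:sum2} transfers by the same matching, possibly with a larger $m$. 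The reverse inclusion is trivial since $\mssd_\U(\gamma,\gamma) = 0$ whenever $\gamma \in \mathfrak{Y}_\QP$. For~\ref{AP-2}, I would combine (i)~the $\tau_\mrmv$-quasi-regularity of $(\E^{\U, \QP}, \dom{\E^{\U, \QP}})$ established in Cor.~\ref{c:QR}, which produces a $\QP$-reversible diffusion $(\mssX_t, \mbbP_\gamma)$ defined for every $\gamma$ outside an exceptional set, with (ii)~the strong pathwise well-posedness of the infinite-dimensional SDEs~\ref{DBMB} and~\ref{DBME} for every initial condition in $\mathfrak{Y}_\QP$, due to Tsai~\cite{Tsa16} and Osada--Tanemura~\cite{OsaTan14,OsaTan20}, whose admissible initial sets are precisely of the form defining $\mathfrak{Y}_\QP$. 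This identifies the quasi-regular diffusion with the strong unlabelled solution pointwise on $\mathfrak{Y}_\QP$, and we set $p_t^{\U, \QP}(\gamma, \diff \eta) := \mathrm{Law}_{\mbbP_\gamma}(\mssX_t)$.

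For~\ref{AP-4}, I would take $\mu^N$ on $\U^N \cong \R^N/\mathfrak{S}_N$ to be the $N$-particle ``pre-limit'' measure whose bulk (resp.~soft-edge) rescaling converges to $\sine_2$ (resp.~$\Airy_2$), namely the Gaussian $\beta=2$ ensemble with density proportional to $\prod_{i<j}|x_i - x_j|^2 e^{-\sum V_N(x_i)}$ for a uniformly convex confining potential $V_N$. Convexity of $V_N$ together with convexity of the interaction $-2\log|x_i - x_j|$ along each radial direction gives uniform convexity of the Hamiltonian $H_N$ on the Weyl chamber, and hence $\RCD(K_N, \infty)$ for $(\U^N, \mssd_{\U^N}, \mu^N)$ with $K_N \ge 0$; in particular each $(\U^N, \mssd_{\U^N}, \mu^N)$ is $\RCD(0,\infty)$. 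See~\cite{TasSuz25} for the finite-$N$ Bakry--\'Emery theory at $\beta = 2$. The approximating configurations $\gamma^{(N)}, \zeta^{(N)} \in \U^N$ are constructed by truncating $\gamma$ and $\zeta$ to their $N$ closest atoms to the origin (after the appropriate bulk/edge scaling); the optimal $\ell^2$-matching between $\gamma$ and $\zeta$ then restricts to a matching between $\gamma^{(N)}$ and $\zeta^{(N)}$, yielding $\limsup_{N\to\infty}\mssd_\U(\gamma^{(N)}, \zeta^{(N)}) \le \mssd_\U(\gamma, \zeta)$. The weak convergence $p^{\U^N,\mu^N}_t(\gamma^{(N)}, \cdot) \xrightarrow{\tau_\mrmw} p^{\U, \QP}_t(\gamma, \cdot)$ follows from the finite-particle approximation theory of unlabelled DBMs developed by Osada--Tanemura, combined with the identification of the limiting diffusion provided by~\ref{AP-2}.

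The hard part will be~\ref{AP-4}. The logarithmic interaction $-2\log|x_i - x_j|$ is singular on the diagonal, so the Bakry--\'Emery computation of $\RCD(K_N, \infty)$ on the quotient $\U^N$ is not entirely standard; one must justify the geodesic $K_N$-convexity of the Boltzmann entropy either through a mollification of the singular potential with uniform-in-mollification Hessian bounds, or through a direct displacement-convexity argument on the open Weyl chamber $\{x_1 < \cdots < x_N\}$ (on which $\mu^N$ is fully supported) with reflecting behaviour at the boundary coming from the logarithmic singularity. The second delicate point is the uniform-in-$\gamma$ weak convergence of the $N$-particle heat kernels to the infinite-particle one for initial data in $\mathfrak{Y}_\QP$; this requires quantitative tightness estimates on the tagged-particle motion of the $N$-particle DBM (which are where the defining conditions of $\mathfrak{Y}_\QP$ enter decisively), together with the identification of the limit via the Dirichlet-form convergence implicit in Thm.~\ref{t:IUL}.
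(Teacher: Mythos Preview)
Your plan is essentially the same as the paper's proof: the paper verifies \ref{AP-11}--\ref{AP-4} with $\Theta=\mathfrak Y_\QP$, takes the finite-particle invariant measures $\QP^k$ to be the rescaled G$\beta$E($\beta=2$) measures~\eqref{d:kDBMIV},~\eqref{d:kABMIV}, establishes $\RCD(0,\infty)$ via convexity of $\msH^k=\Psi^k+\Phi^k$ on the Weyl chamber (citing~\cite[Prop.~3.3]{Suz22b} for the singular-Hamiltonian approximation you flag as the delicate point), and invokes~\cite[Prop.~3.4]{OsaTan16} for the weak convergence of kernels and~\cite{KatTan11,OsaTan16} for the pointwise-in-$\mathfrak Y_\QP$ existence of $p_t^{\U,\QP}(\gamma,\cdot)$.

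One genuine imprecision: your argument for \ref{AP-11} via ``the number of matched pairs crossing $\pm L$ is $\lesssim\sqrt L$ by Cauchy--Schwarz on the matching cost'' does not work as stated. Cauchy--Schwarz on $\sum_i|x_i-y_i|^2<\infty$ gives no $L$-dependent bound on crossings without additional density input; pairs crossing $L$ can have arbitrarily small displacement. The paper instead uses that $\sum_i|x_i-y_i|^2<\infty$ forces all but finitely many displacements below any fixed $\delta>0$, hence for $L$ large one has the sandwich $\eta\bigl((D'_+,L-\delta]\bigr)\le\gamma([0,L])\le\eta\bigl([-D_+,L+\delta]\bigr)$ for fixed constants $D_+,D'_+$; the discrepancy is then $\rho_\QP([L-\delta,L+\delta])+O(L^\e)$, and the intensity asymptotics~\eqref{e:IE} give $\rho_\QP([L-\delta,L+\delta])=O(L^{1/2})$ for $\Airy_2$ (and $O(1)$ for $\sine_2$). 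Your conclusion is right, but the mechanism is the eventual smallness of displacements plus density control, not Cauchy--Schwarz.
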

\begin{proof}
Due to~\cite[Line -14 on p.~190]{OsaTan16}, we have $\QP(\mathfrak Y_\QP)=1$ for~$\QP\in \{\sine_2, \Airy_2\}$.
In the following, we verify \ref{AP-11}--\ref{AP-4} in~Assumption~\ref{a:AP}.

\paragraph{Verification of \ref{AP-11}} Let $\QP$ be either $\sine_2$ or $\Airy_2$. Let $\gamma\in \U$ and $\eta \in \mathfrak Y_\QP$ with $\mssd_\U(\gamma, \eta)<\infty$.
We take a labelling $\gamma=\sum_{i \in \Z} \delta_{x_i}$ with $x_i \le x_{i+1}$ for $i \in \Z$. 
Take an optimal matching~$\mssc=\sum_{i \in \Z} \delta_{(x_i, y_i)}$ for $\mssd_\U(\gamma, \eta)$, i.e., $\eta=\sum_{i \in \Z}\delta_{y_i}$ and 
\begin{align}\label{e:IED}
\mssd_\U(\gamma, \eta)^2=\sum_{i \in \Z }|x_i-y_i|^2 <\infty\fstop
\end{align}
We may assume $y_i \le y_{i+1}$: Indeed, if there exists $i \in \Z$ such that $y_{i} > y_{i+1}$, then we can swap the coupling pairs $(x_i, y_i)$ and $(x_{i+1}, y_{i+1})$ to $(x_i, y_{i+1})$ and $(x_{i+1}, y_{i})$ to obtain a smaller sum than (or the same sum as) the original sum. 

\smallskip
We show that $\gamma \in \mathfrak Y_{\QP}$ by checking that $\gamma$ satisfies \eqref{e:sum11} and~\eqref{e:sum2}.
\smallskip
 \\
 \underline{Proof of \eqref{e:sum11}}:  
 Let $x_{i_0^+} \ge 0$ (resp.~$x_{i_0^-} \le 0$) be the non-negative (resp.~non-positive) element in $\gamma$ closest to the origin~$0$ and define $D_+:=|x_{i_0^+}-y_{i_0^+}|$ and $D_-:=|x_{i_0^-}-y_{i_0^-}|$, where $y_{i_0^+}$ (resp.~$y_{i_0^-}$) is the element in $\eta$ matched to $x_{i_0^+}$ (resp.~$x_{i_0^-}$)~in \eqref{e:IED}. If there are multiple such elements $x_{i_0^+} \ge 0$ (resp.~$x_{i_0^-} \le 0$), we choose one attaining the largest~$D_+$ (resp.~$D_-$). Define $D'_+:=x_{i_0^+}+D_+$ (resp.~$D'_-:=x_{i_0^-}-D_-$). The finiteness of the infinite series~\eqref{e:IED} particularly implies that, for every $\delta>0$, there exists $L_1=L_1(\delta) \in \N$ depending on $\delta$ such that, for every $L \ge L_1$, 
 \begin{align} \label{e:CEP}
 &\eta\bigl((D'_+, L-\delta]\bigr) \le \gamma\bigl([0, L]\bigr) \le \eta\big([-D_+, L+\delta]\big) 
 \\
 & \eta\big([-L+\delta, -D_-')\big) \le \gamma\big([-L, 0]\big) \le \eta\big([-L-\delta, D_-]\big)\fstop  \notag
 \end{align}
 Since $\eta$ satisfies \eqref{e:sum11}, we can find some $L_0=L_0(\delta)$ and $\e \in (0,1)$ such that~for every $L \ge L_0 \vee L_1$, we have 
 \begin{align*}
&\rho_{\QP}([0, L])-\gamma([0,L]) 
\\
& \le  \rho_{\QP}([0, L])-\eta((D_+',L-\delta])  
\\
&=  \rho_{\QP}([0, L])-\eta([0,L-\delta]) +\eta([0,D_+']) 
 \\
 &= \rho_{\QP}([0, L])- \rho_{\QP}([0, L-\delta])  + \rho_\QP([0, L-\delta])-\eta([0,L-\delta]) +\eta([0,D_+']) 
   \\
 &=  \rho_{\QP}([L-\delta, L]) + \rho_\QP([0, L-\delta]) - \eta([0,L-\delta]) + \eta([0,D_+'])
  \\
 &\lesssim  \rho_{\QP}([L-\delta, L]) + L^\e \fstop
 \end{align*}
 Similarly, 
 \begin{align*}
&\gamma([0,L])- \rho_{\QP}([0, L]) 
\\
& \le \eta([-D_+,L+\delta])-  \rho_{\QP}([0, L])
\\
& \le \eta([0,L+\delta]) + \eta([-D_+, 0])-  \rho_{\QP}([0, L+\delta])  + \rho_{\QP}([0, L+\delta])  -\rho_{\QP}([0, L])
  \\
 &= \rho_{\QP}([L, L+\delta]) +  \eta([0,L+\delta]) -  \rho_{\QP}([0, L+\delta])+ \eta([-D_+, 0])
  \\
 &\lesssim \rho_{\QP}([L, L+\delta]) + (L+\delta)^\e  
   \\
 &\lesssim \rho_{\QP}([L, L+\delta]) + L^\e  \comma
 \end{align*}
 where the last line follows because there exists a constant $L_2=L_2(\delta)$  depending on~$\delta$ such that for every $L \ge L_2$, we have $(L+\delta)^\e \le CL^\e$ with some constant~$C=C(\delta)>0$.
 Thus, we conclude
 \begin{align*}
 &\Bigl|\gamma([0,L])- \rho_{\QP}([0, L])  \Bigr| 
 \\
 &\lesssim\max\Bigl\{ \rho_{\QP}([L-\delta, L]) + L^\e,  \rho_{\QP}([L, L+\delta]) + L^\e\Bigr\} \fstop
 \end{align*}
Having a similar argument wth $D_-$ and $D_-'$ in \eqref{e:CEP} instead of~$D_+$ and $D_+'$ , we have the following estimate for the negative interval:
  \begin{align*}
 &\Bigl| \gamma([-L, 0])- \rho_{\QP}([-L, 0])    \Bigr| 
 \\
 &\lesssim \max\Bigl\{ \rho_{\QP}([-L, -L+\delta]) + L^\e,  \rho_{\QP}([-L-\delta, -L]) + L^\e \Bigr\} \fstop
 \end{align*}
 Due to the fact that $\rho_{\sine_2}$ is the Lebesgue measure in $\R$,  the sought conclusion~\eqref{e:sum11}  holds for $\QP=\sine_2$.  Using  the asymptotic formula \eqref{e:IE}, we can see that   all the terms $\rho_{\QP}([L-\delta, L])$, $\rho_{\QP}([L, L+\delta])$, $\rho_{\QP}([-L, -L+\delta])$ and $\rho_{\QP}([-L-\delta, -L])$ have the asymptotic rate at most $O_{L \to \infty}\bigl(L^{1/2}\bigr)$. This provides the sought conclusion~\eqref{e:sum11} for $\QP=\Airy_2$. 

\vspace{2mm}
 \noindent\underline{Proof of~ \eqref{e:sum2}}: Using a similar argument to \eqref{e:CEP} and the hypothesis that $\eta$ satisfies~\eqref{e:sum2}, it is easy to see that there exists $m' \in \N$  such that 
\begin{align*}
m(\gamma, \kappa):=\max_{k \in \mathbb Z}\gamma\Bigl([g^\kappa(k), g^\kappa(k+1)]\Bigr) \le m' \comma
\end{align*}
which concludes that  $\gamma$ satisfies \eqref{e:sum2}. 
\vspace{2mm}

\paragraph{Verification of \ref{AP-2}} Thanks to Thm.~\ref{t:IUL} and Rem.~\ref{r:TCO}, our Dirichlet form $\E^{\U, \QP}$ coincides with the upper Dirichlet form $\overline{\E}^{\U, \QP}$ used in \cite[Thm.~2.1]{OsaTan16}. 
Thus, the existence of the kernel $p_t^{\U, \QP}(\gamma, \diff \eta)$ follows by~\cite[Thm.~2.6]{KatTan11} and \cite[Thm.~2.1]{OsaTan16} for $\QP=\sine_2$, and ~\cite[Thm.~2.8]{KatTan11} and \cite[Thm.~2.1]{OsaTan16} for $\QP=\Airy_2$, where the kernel is obtained as the transition probability of the strong Markov process constructed there. 
Furthermore, it is associated with $\overline{\E}^{\U,\QP}$ (thus also with~${\E}^{\U,\QP}$) due to~\cite[Thm.~2.2]{OsaTan16}. Thus, $\tilde{T}_t^{\U, \QP}u$ is a $\QP$-representative of $T_t^{\U, \QP}u$, which completes the verification.
\paragraph{Verification of \ref{AP-4}} 
Let $\gamma, \eta \in \mathfrak Y_\QP$ with $\mssd_\U(\gamma, \eta)<\infty$ and take the same labelling as  in~\eqref{e:IED}. 
Define $\gamma^{(2k)}=\sum_{i=-k}^{k}\delta_{x_i}$ and $\eta^{(2k)}=\sum_{i=-k}^{k}\delta_{y_i}$. Then, we have 
$$\mssd_{\U}(\gamma, \eta)^2 = \lim_{k \to \infty} \sum_{i=-k}^k|x_i-y_i|^2 \ge \limsup_{k \to \infty} \mssd_{\U^{2k}}(\gamma^{(2k)}, \eta^{(2k)}) \fstop$$
Due to the weak convergence result~\cite[Prop.~3.4]{OsaTan16} that  the laws of the infinite particles of interacting Brownian motions  corresponding to $(\E^{\U, \QP}, \dom{\E^{\U, \QP}})$  can be weakly approximated by the laws of the finite-particle counterpart (see \eqref{d:kDBM} and \eqref{d:kABM} below),  we have a probability measure $\QP^k$ on $\U^k$ (see \eqref{d:kDBMIV} and \eqref{d:kABMIV} below) and the heat  kernel $p_t^{\U^k, \QP^k}(\gamma, \diff \zeta)$ such that, as $k \to \infty$
\begin{align*}
&p^{\U^{2k}, \QP^{2k}}_t(\gamma^{(2k)}, \diff \zeta) \xrightarrow{\tau_\mrmw} p^{\U, \QP}_t(\gamma, \diff \zeta) 
\\
&p^{\U^{2k}, \QP^{2k}}_t(\eta^{(2k)}, \diff \zeta) \xrightarrow{\tau_\mrmw} p^{\U, \QP}_t(\eta, \diff \zeta)  \fstop
\end{align*}
Here the convergence of the kernels follows by the general fact that the weak convergence of the laws of stochastic processes implies the weak convergence of the one-dimensional time-marginal laws. 
The kernel $p_t^{\U^k, \QP^k}(\gamma, \diff \eta)$ is  the transition probability kernel of the unlabelled solution to the following $k$-particle Dyson-type SDE with the invariant probability measure $\QP^k$: 
\smallskip
\\
\underline{Case $\QP=\sine_2$} 
 (\cite[(3.9)]{OsaTan16}):
\begin{align}  \label{d:kDBM}
\diff X_t^i=    - \frac{X^i_t}{k}\diff t + \sum_{j: j \neq i}^k \frac{\diff t}{X_t^i-X_t^j} + \diff B^i_t, \quad i \in \{1,2,\ldots, k\} \comma 
\end{align}
where $B^1,B^2,\ldots, B^k$ are independent Brownian motions. By using Ito's formula, we can readily see that the Dirichlet form~$(\E^{\U^{k}, \mu^{k}}, \dom{\E^{\U^{k}, \mu^{k}}})$ corresponding to the unlabelled solution to \eqref{d:kDBM}  is written as 
$$\E^{\U^{k}, \mu^{k}}:=\frac{1}{2}\int_{\U^{k}} |\nabla^{\odot k} u|^2 \mssd \QP^k \cquad u \in \Lip_{c}(\U^{k}, \mssd_{\U^k}) \comma$$
where $\dom{\E^{\U^{k}, \mu^{k}}}$ is the closure of $\Lip_{c}(\U^{k}, \mssd_{\U^k})$ and 
\begin{align} \label{d:kDBMIV}
&\frac{\diff \QP^k}{\diff \mssm^{\odot k}} = \frac{1}{Z}e^{-\Psi^{k}-\Phi^k} \cquad Z= \int_{\U^k}  e^{-\Psi^{k}-\Phi^k} \diff \mssm^{\odot k} \comma
\\
&\Psi^k(\gamma):=-2\sum_{i<j}^k\log|x_i-x_j| \cquad \Phi^k(\gamma)=\frac{1}{2k}\sum_{i=1}^k|x_i|^2 \cquad \gamma =\sum_{i=1}^k \delta_{x_i} \fstop \notag
\end{align}
\smallskip
\\
\underline{Case $\QP=\Airy_2$}
 (\cite[(3.12)]{OsaTan16}): 
\begin{align} \label{d:kABM}
\diff X^i_t = -\frac{X_t^i+2k^{2/3}}{2k^{1/3}} \diff t + \sum_{\substack{j: j \neq i} }^k \frac{\diff t}{X_t^i-X_t^j}   + \diff B_t^i \cquad i \in \{1,\ldots, k\}  \comma
\end{align}
where $B^1,B^2,\ldots, B^k$ are independent Brownian motions. 
In this case, the Dirichlet form $(\E^{\U^{k}, \mu^{k}}, \dom{\E^{\U^{k}, \mu^{k}}})$ corresponding to the unlabelled solution to \eqref{d:kABM} is written as 
$$\E^{\U^{k}, \mu^{k}}:=\frac{1}{2}\int_{\U^{k}} |\nabla^{\odot k} u|^2 \diff \QP^k \cquad u \in \Lip_{c}(\U^{k}, \mssd_{\U^k}) \comma$$
where $\dom{\E^{\U^{k}, \mu^{k}}}$ is the closure of $\Lip_{c}(\U^{k}, \mssd_{\U^k})$ and 
\begin{align} \label{d:kABMIV}
&\frac{\diff \QP^k}{\diff \mssm^{\odot k}} = \frac{1}{Z}e^{-\Psi^{k}-\Phi^k}  \cquad  Z= \int_{\U^k}  e^{-\Psi^{k}-\Phi^k} \diff \mssm^{\odot k} \comma
\\
&\Psi^k(\gamma):=-2\sum_{i<j}^k\log|x_i-x_j| \cquad \Phi^k(\gamma)=\frac{1}{4k^{1/3}}\sum_{i=1}^k\bigl(x_i+2k^{2/3}\bigr)^2 \cquad \gamma =\sum_{i=1}^k \delta_{x_i} \fstop \notag
\end{align}
The $\RCD$ property of the approximating space $(\U^k, \mssd_{\U^k},\QP^k)$ can be verified as follows:
Recalling that $\U^k \cong \R^k/\mathfrak S_k$, we may identify $\U^k $ with the convex set~$W:=\{(x_i)_{i=1}^k \in \R^k: x_1 \ge x_2 \ge \cdots \ge x_k\}$. 
As the Hessian matrix of~$\msH^k=\Psi^k+\Phi^k$ for each $\QP \in \{\sine_2, \Airy_2\}$ is non-negative definite and smooth in the interior $\mathring{W}$ of $W$, the Hamiltonian $\msH^k$ is geodesically convex in $W\cong\U^k$. Hence, the space $(\U^k, \mssd_{\U^k}, \QP^{k})$ is an $\RCD(0,\infty)$ and the Dirichlet form $(\E^{\U^{k}, \mu^{k}}, \dom{\E^{\U^{k}, \mu^{k}}})$ coincides with the Cheeger energy $(\Ch^{\mssd_{\U^{k}}, \mu^{k}}, \dom{\Ch^{\mssd_{\U^{k}}, \mu^{k}}})$, see e.g., the proof of \cite[Prop.~3.3]{Suz22b}, where $\RCD(0,\infty)$ property was proved by approximating the singular Hamiltonian $\msH^k$ by non-singular Hamiltonians away from the diagonal of $W$ and using the stability of the $\RCD$ property under this approximation.  \qedhere
\end{proof}
\section*{Appendix: Measurable selection of geodesics in extended metric measure spaces}
In this Appendix, we  construct a measurable map selecting geodesics in extended metric measure spaces. The arguments in this section are independent of configuration spaces and may be of general interest  for optimal transportation problems in extended metric measure spaces. 
\begin{proof}[Proof of Lemma~\ref{l:MS}] 
Consider the (possibly multi-valued) map $G: D \to \mathsf{Geo}(X, \mssd)$ that associates to each pair $(x, y) \in D$ the set $G(x, y)$ of constant speed $\mssd$-geodesics connecting $x$ and $y$. Take $\mssd_n \nearrow \mssd$ witnessing  Definition~\ref{d:EMM} with $n \in \mathbb N$.  We consider the (possibly multi-valued) map $G_{n, \e}: D \to \mathsf{Geo}(X, \mssd_n, \e)$
 that associates to each pair $(x, y) \in D$ the set $G_{n, \e}(x, y)$ of $\e$-constant speed $\mssd_n$-geodesics connecting $x$ and $y$,
  where 
$$\mathsf{Geo}(X, \mssd_n, \e):=\Bigl\{g \in C\bigl([0,1];(X, \tau)\bigr): \mssd_n(g_t, g_s) \le |t-s|(\mssd_n(g_0, g_1)+\e) , t, s \in [0,1]\Bigr\}$$
and $G_{n, \e}(x, y)$ is defined as 
$$\Bigl\{g \in C\bigl([0,1];(X, \tau)\bigr): \mssd_n(g_t, g_s) \le |t-s|(\mssd_n(x, y)+\e),\ g_0=x, \ g_1=y,\  t, s \in [0,1]\Bigr\} \fstop$$
We prove that the graph $\mathsf{Graph}(G_{n, \e})\subset X^{\times 2} \times  \mathsf{Geo}(X, \mssd_n, \e)$ defined as
$$\mathsf{Graph}(G_{n, \e}):=\bigl\{\bigl((x, y), g\bigr): (x, y) \in X^{\times 2},\ g \in G_{n, \e}(x, y)\bigr\}$$
is  closed with respect to the topology $\tau^{\times 2} \times \tau_\mrmu$. Fix $n \in \N$ and $\e>0$ and take $\bigl((x^{(m)}, y^{(m)}), g^{(m)}\bigr) \in \mathsf{Graph}(G_{n, \e})$  converge to $\bigl((x, y), g\bigr)$ with respect to~$\tau^{\times 2} \times \tau_\mrmu$. By the $\tau^{\times 2}$-continuity of~$\mssd_n$, 
\begin{align*}
\mssd_n(g_t, g_s) &= \lim_{m \to \infty}\mssd_n(g^{(m)}_t, g^{(m)}_s) 
\\
&\le |t-s|\lim_{m \to \infty}(\mssd_n(x^{(m)}, y^{(m)})+\e)
\\
& = |t-s|(\mssd_n(x, y)+\e) \comma
\end{align*}
which concludes $\bigl((x, y), g\bigr) \in \mathsf{Graph}(G_{n, \e})$ and the $\tau^{\times 2} \times \tau_\mrmu$-closedness of~$\mathsf{Graph}(G_{n, \e})$. 
We now prove that the graph 
$$\mathsf{Graph}(G):=\bigl\{\bigl((x, y), g\bigr): (x, y) \in D,\ g \in G(x, y)\bigr\} \subset  D \times  \mathsf{Geo}(X, \mssd)$$ is  $\mathscr B(\tau^{\times 2} \times \tau_\mrmu)$-measurable. It suffices to prove that 
$$\mathsf{Graph}(G)=\bigcap_{k \in \N} \liminf_{n\to \infty}\mathsf{Graph}(G_{n, \e_k}) \bigcap  D \times  \mathsf{Geo}(X, \mssd) \comma$$
where $\e_k=\frac{1}{k}$. 
We prove the inclusion $\subset$. Take $((x, y), g) \in \mathsf{Graph}(G)$. By definition, $((x, y), g) \ \in D \times  \mathsf{Geo}(X, \mssd)$ and 
$$\mssd(g_t, g_s)=|t-s|\mssd(x, y) \fstop$$
By the monotonicity~$\mssd_n \nearrow \mssd$ as $n \nearrow \infty$, we have that, for any $\e_k$, there exists $n_0=n_0(x, y, \e_k)>0$ such that for every $n \ge n_0$, $\mssd(x, y) \le \mssd_n(x, y) + \e_k$. Thus, 
$$\mssd_n(g_t, g_s) \le \mssd(g_t, g_s)=|t-s|\mssd(x, y) \le |t-s|(\mssd_n(x, y)+\e_k) \fstop$$
Thus, for every~$\e_k$ and $x, y \in X$ with $\mssd(x, y)<\infty$, there exists a constant $n_0=n_0(x, y, \e_k) \in \N$ depending on $x, y$ and $\e_k$ such that, for every $n \ge n_0(x, y, \e_k)$,  we have $((x, y), g) \in \mathsf{Graph}(G_{n, \e_k})$, which precisely means 
$$((x, y), g) \in \bigcap_{k \in \N} \bigcup_{n=1}^\infty\bigcap_{m=n}^\infty\mathsf{Graph}(G_{m, \e_k}) = \bigcap_{k \in \N } \liminf_{n\to \infty}\mathsf{Graph}(G_{n, \e_k}) \fstop$$
We now prove the opposite inclusion $\supset$. Take 
$$((x, y), g) \in \bigcap_{k \in \N} \liminf_{n\to \infty}\mathsf{Graph}(G_{n, \e_k}) \bigcap D \times  \mathsf{Geo}(X, \mssd) \fstop$$ 
By definition, for every $\e_k$, there exists a constant $n_1=n_1(x, y, \e_k) \in \N$ such that for every $n \ge n_1$, 
$$\mssd_n(g_t, g_s) \le |t-s|(\mssd_n(x, y)+\e_k) \fstop$$
By the monotone convergence $\mssd_n \nearrow \mssd$ as $n \nearrow \infty$, taking $n \to \infty$ and then $\e_k \to 0$, we have 
$$\mssd(g_t, g_s) \le |t-s|\mssd(x, y) \comma$$
which concludes $((x, y), g) \in \mathsf{Graph}(G)$ by recalling \eqref{e:GEI}. 
Thus, $\mathsf{Graph}(G)$ is $\mathscr B(\tau^{\times 2} \times \tau_\mrmu)$-measurable. Since $\tau$ and  $\tau_\mrmu$ are both second countable, $\mathscr B(\tau^{\times 2} \times \tau_\mrmu)=\mathscr B(\tau^{\times 2})\otimes \msB( \tau_\mrmu)$, thus $\mathsf{Graph}(G)$ is also  $\mathscr B(\tau^{\times 2})\otimes \msB( \tau_\mrmu)$-measurable.  By Aumann/Sainte-Beuve's measurable selection theorem~\cite[Thm.~6.9.13, Vol.~II]{Bog07}, for every~$\nu \in \mathcal P(D)$ there exists a $\mathscr B(\tau^{\times 2})^{\nu}/\mathscr B(\tau_\mrmu)$-measurable single-valued map $\mathsf{GeoSel}:D \to \mathsf{Geo}(X, \mssd)$  such that $\mathsf{GeoSel}(x, y)$ is a constant speed geodesic connecting $x$ and $y$. 
\end{proof}
\begin{rem} \ 
\begin{enumerate}[(a)]
\item In contrast to metric spaces (cf.~\cite[Lem.~2.11]{AmbGig12}), $\mathsf{Graph}(G)$ is not necessarily $\tau^{\times 2} \times \tau_\mrmu$-closed in the case of extended metric  spaces because $\mssd$ does not metrise $\tau$ in general. 

\item To apply the Aumann/Sainte-Beuve measurable selection theorem (e.g., in \cite[Thm.~6.9.13, Vol.~II]{Bog07}), we need the completed $\sigma$-algebra $\mathscr B(\tau^{\times 2})^{\nu}$. We do not know whether we can choose a Borel selection $\mathsf{GeoSel}(x, y)$ in this generality. However, the measurability with respect to the completion is sufficient for  this paper, particularly to discuss push-forward measures by the map~$\mathsf{GeoSel}$.
\end{enumerate}  
\end{rem}
\begin{proof}[Proof of Proposition~\ref{p:EMW}] 
The space~$(\mathcal P(X), \tau_\mrmw, \mssW_{p, \mssd})$ is a complete extended metric topological space due to~\cite[Prop.~5.3, 5.4]{AmbErbSav16} when $p=2$. The proof for general $1\le p<\infty$ is verbatim, so we omit the proof.  We prove the geodesic property. 
Take $\nu, \sigma \in \mcP(X)$ with $\mssW_{p, \mssd}(\nu, \sigma)<\infty$, and take an optimal coupling $\mssc \in {\rm Opt}(\nu , \sigma)$: 
$$\mssW_{p, \mssd}(\nu, \sigma)^p = \int_{X^{\times 2}} \mssd(x, y)^p \diff \mssc(x, y)<\infty \fstop$$
Thus, $\mssc(\mssD)=1$ and  we can regard $\mssc \in \mcP(D)$ by restricting $\mssc$ on~$D$. Due to~Lem.~\ref{l:MS}, we can define the push-forward~$\mathbf c:=\mathsf{GeoSel}_\#\mssc \in \mcP(\mathsf{Geo}(X, \mssd))$. By \cite[Prop.~4.1]{Lis16}, the push-forward $\nu_t:=(e_t)_\# \mssc \in \mcP(X)$ is a constant speed geodesic in $(\mcP(X), \mssW_{p, \mssd})$ connecting $\nu$ and $\sigma$, where $e_t: \mathsf{Geo}(X, \mssd) \to X$ is defined as $e_t(g)=g_t$. 
\end{proof}

\begingroup
\small         
\bibliographystyle{alpha}
\bibliography{/Users/suzukikouhei/MasterBib.bib}
\endgroup

\end{document}